\theoremstyle{plain}
\newtheorem{theorem}{Theorem}[section]
\newtheorem{proposition}[theorem]{Proposition}
\newtheorem{lemma}[theorem]{Lemma}
\newtheorem{corollary}[theorem]{Corollary}
\newtheorem{question}[theorem]{Question}
\theoremstyle{definition}
\newtheorem{definition}[theorem]{Definition}
\newtheorem{example}[theorem]{Example}
\newtheorem{remark}[theorem]{Remark}
\newcommand{\nc}{\newcommand}
\nc{\on}{\operatorname}
\nc{\Q}{\mathbb{Q}}
\nc{\Z}{\mathbb{Z}}
\nc{\cl}{\mathrm{cl}}
\newcommand{\pitch}{{\mathbin{\raisebox{1.5 ex}{\reflectbox{\scalebox{1}[-1]{{$\mathbf{\pitchfork}$}}}}}}}
\nc{\fraka}{{\mathfrak a}} \nc{\bba}{{\mathbf a}}
\nc{\frakb}{{\mathfrak b}}
\nc{\frakc}{{\mathfrak c}}
\nc{\frakd}{{\mathfrak d}}
\nc{\frake}{{\mathfrak e}}
\nc{\frakf}{{\mathfrak f}}
\nc{\frakg}{{\mathfrak g}}
\nc{\frakh}{{\mathfrak h}}
\nc{\fraki}{{\mathfrak i}}
\nc{\frakj}{{\mathfrak j}}
\nc{\frakk}{{\mathfrak k}}
\nc{\frakl}{{\mathfrak l}}
\nc{\frakm}{{\mathfrak m}}
\nc{\frakn}{{\mathfrak n}}
\nc{\frako}{{\mathfrak o}}
\nc{\frakp}{{\mathfrak p}}
\nc{\frakq}{{\mathfrak q}}
\nc{\frakr}{{\mathfrak r}}
\nc{\fraks}{{\mathfrak s}}
\nc{\frakt}{{\mathfrak t}}
\nc{\fraku}{{\mathfrak u}}
\nc{\frakv}{{\mathfrak v}}
\nc{\frakw}{{\mathfrak w}}
\nc{\frakx}{{\mathfrak x}}
\nc{\fraky}{{\mathfrak y}}
\nc{\frakz}{{\mathfrak z}}
\nc{\frakA}{{\mathfrak A}}
\nc{\frakB}{{\mathfrak B}}
\nc{\frakC}{{\mathfrak C}}
\nc{\frakD}{{\mathfrak D}}
\nc{\frakE}{{\mathfrak E}}
\nc{\frakF}{{\mathfrak F}}
\nc{\frakG}{{\mathfrak G}}
\nc{\frakH}{{\mathfrak H}}
\nc{\frakI}{{\mathfrak I}}
\nc{\frakJ}{{\mathfrak J}}
\nc{\frakK}{{\mathfrak K}}
\nc{\frakL}{{\mathfrak L}}
\nc{\frakM}{{\mathfrak M}}
\nc{\frakN}{{\mathfrak N}}
\nc{\frakO}{{\mathfrak O}}
\nc{\frakP}{{\mathfrak P}}
\nc{\frakQ}{{\mathfrak Q}}
\nc{\frakR}{{\mathfrak R}}
\nc{\frakS}{{\mathfrak S}}
\nc{\frakT}{{\mathfrak T}}
\nc{\frakU}{{\mathfrak U}}
\nc{\frakV}{{\mathfrak V}}
\nc{\frakW}{{\mathfrak W}}
\nc{\frakX}{{\mathfrak X}}
\nc{\frakY}{{\mathfrak Y}}
\nc{\frakZ}{{\mathfrak Z}}
\nc{\bbA}{{\mathbb A}}
\nc{\bbB}{{\mathbb B}}
\nc{\bbC}{{\mathbb C}}
\nc{\bbD}{{\mathbb D}}
\nc{\bbE}{{\mathbb E}}
\nc{\bbF}{{\mathbb F}} \nc{\bbf}{{\mathbf f}}
\nc{\bbG}{{\mathbb G}}
\nc{\bbH}{{\mathbb H}}
\nc{\bbI}{{\mathbb I}}
\nc{\bbJ}{{\mathbb J}}
\nc{\bbK}{{\mathbb K}}
\nc{\bbL}{{\mathbb L}}
\nc{\bbM}{{\mathbb M}}
\nc{\bbN}{{\mathbb N}}
\nc{\bbO}{{\mathbb O}}
\nc{\bbP}{{\mathbb P}}
\nc{\bbQ}{{\mathbb Q}}
\nc{\bbR}{{\mathbb R}}
\nc{\bbS}{{\mathbb S}}
\nc{\bbT}{{\mathbb T}}
\nc{\bbU}{{\mathbb U}}
\nc{\bbV}{{\mathbb V}}
\nc{\bbW}{{\mathbb W}}
\nc{\bbX}{{\mathbb X}}
\nc{\bbY}{{\mathbb Y}}
\nc{\bbZ}{{\mathbb Z}}
\nc{\calA}{{\mathcal A}}
\nc{\calB}{{\mathcal B}}
\nc{\calC}{{\mathcal C}}
\nc{\calD}{{\mathcal D}}
\nc{\calE}{{\mathcal E}}
\nc{\calF}{{\mathcal F}}
\nc{\calG}{{\mathcal G}}
\nc{\calH}{{\mathcal H}}
\nc{\calI}{{\mathcal I}}
\nc{\calJ}{{\mathcal J}}
\nc{\calK}{{\mathcal K}}
\nc{\calL}{{\mathcal L}}
\nc{\calM}{{\mathcal M}}
\nc{\calN}{{\mathcal N}}
\nc{\calO}{{\mathcal O}}
\nc{\calP}{{\mathcal P}}
\nc{\calQ}{{\mathcal Q}}
\nc{\calR}{{\mathcal R}}
\nc{\calS}{{\mathcal S}}
\nc{\calT}{{\mathcal T}}
\nc{\calU}{{\mathcal U}}
\nc{\calV}{{\mathcal V}}
\nc{\calW}{{\mathcal W}}
\nc{\calX}{{\mathcal X}}
\nc{\calY}{{\mathcal Y}}
\nc{\calZ}{{\mathcal Z}}
\nc{\scrA}{{\mathscr A}}
\nc{\scrB}{{\mathscr B}}
\nc{\scrC}{{\mathscr C}}
\nc{\scrD}{{\mathscr D}}
\nc{\scrE}{{\mathscr E}}
\nc{\scrF}{{\mathscr F}}
\nc{\scrG}{{\mathscr G}}
\nc{\scrH}{{\mathscr H}}
\nc{\scrI}{{\mathscr J}}
\nc{\scrJ}{{\mathscr I}}
\nc{\scrK}{{\mathscr K}}
\nc{\scrL}{{\mathscr L}}
\nc{\scrM}{{\mathscr M}}
\nc{\scrN}{{\mathscr N}}
\nc{\scrO}{{\mathscr O}}
\nc{\scrP}{{\mathscr P}}
\nc{\scrQ}{{\mathscr Q}}
\nc{\scrR}{{\mathscr R}}
\nc{\D}{{\on{D}}}
\nc{\Div}{{\on{Div}}}
\nc{\Perv}{{\on{Perv}}}
\nc{\bnu}{{\bar{ \nu}}}
\nc{\olO}{\bar{\calO}}
\nc{\al}{{\alpha}} 
\nc{\be}{{\beta}}
\nc{\ga}{{\gamma}} \nc{\Ga}{{\Gamma}}
\nc{\hGa}{\hat{\Gamma}}
\nc{\ve}{{\varepsilon}} 
\nc{\la}{{\lambda}} \nc{\La}{{\Lambda}}
\nc{\om}{\omega} \nc{\Om}{\Omega} 
\nc{\sig}{{\sigma}} \nc{\Sig}{{\Sigma}}
\nc{\dR}{{\mathrm{dR}}}
\nc{\Perf}{{\mathrm{Perf}}}
\nc{\PSch}{{\mathrm{PSch}}}
\nc{\Gm}{{\mathbb{G}_m}}
\nc{\colim}{{\on{colim}}}
\nc{\et}{\mathrm{\acute{e}t}}
\nc{\mer}{{\on{mer}}}
\nc{\sht}{{\on{sht}}}
\nc{\sss}{{\on{ss}}}
\nc{\Sets}{{\on{Sets}}}
\nc{\vvb}{{\on{Vect}^{\calO^\sharp}_{\on{v}}}}
\nc{\Vect}{{\on{Vect}}}
\nc{\onv}{{\on{v}}}
\nc{\DM}{{\frakD\frakM}}
\nc{\DMG}{{\frakD\frakM_{\calG}}}
\nc{\IC}{{\mathfrak{B}}}
\nc{\ICG}{{\frakB(G)}}
\nc{\SHT}{{\calS\calH\calT}}
\nc{\VEC}{{\on{Bun}}_{\on{FF}}^{\on{mer}}}
\nc{\Catex}{{\on{Cat}^{\otimes, \on{ex}}_1}}
\nc{\Cat}{{\on{Cat}^\otimes_1}}
\nc{\Fil}{\calF{il}}
\nc{\Dm}{{\on{DM}}}
\nc{\Sht}{\on{Sht}}
\nc{\WSht}{\on{WSht}}
\nc{\Isoc}{\on{Isoc}}
\nc{\BC}{\calB\calC}
\nc{\PFS}{\bbP_{{\on{FS}}}}
\def\preceqdot{\mathrel{\preceq\kern-.5em\raise.22ex\hbox{$\cdot$}}}
\DeclareMathAlphabet{\rhomalpha}{LS1}{stixscr}{m}{n}
\nc{\Spa}{\on{{Spa}}}
\nc{\Spd}{\on{{Spd}}}
\nc{\tnb}{\psi_{\rm tame}}
\nc{\oM}{\overline{{M}}}
\nc{\op}{{\on{op}}}
\nc{\ad}{{\on{ad}}}
\nc{\alg}{{\on{alg}}}
\nc{\Ad}{{\on{Ad}}}
\nc{\Adm}{{\on{Adm}}} \nc{\aff}{{\on{af}}}
\nc{\Aut}{{\on{Aut}}}
\nc{\Bun}{{\on{Bun}}}
\nc{\cha}{{\on{char}}}
\nc{\der}{{\on{der}}}
\nc{\Der}{{\on{Der}}}
\nc{\diag}{{\on{diag}}}
\nc{\End}{{\on{End}}}
\nc{\Fl}{{\calF\!\ell}}
\nc{\Tr}{{\on{Transp}}}
\nc{\TR}{{\calT\!\calR}}
\nc{\Gal}{{\on{Gal}}}
\nc{\Gr}{{\on{Gr}}}
\nc{\Hk}{{\on{Hk}}}
\nc{\rH}{{\on{H}}}
\nc{\Hom}{{\on{Hom}}}
\nc{\id}{{\on{id}}}
\nc{\Id}{{\on{Id}}}
\nc{\ind}{{\on{ind}}}
\nc{\Ind}{{\on{Ind}}}
\nc{\Lie}{{\on{Lie}}}
\nc{\Pic}{{\on{Pic}}}
\nc{\pr}{{\on{pr}}}
\nc{\Res}{{\on{Res}}}
\nc{\res}{{\on{res}}} \nc{\Sat}{{\on{Sat}}}
\nc{\spc}{{\on{sc}}}
\nc{\Sp}{{\on{sp}}}
\nc{\drv}{{\on{der}}}
\nc{\sgn}{{\on{sgn}}}
\nc{\Spec}{{\on{Spec}}\,}
\nc{\Spf}{\on{Spf}} 
\nc{\Sph}{\on{Sph}}
\nc{\St}{{\on{St}}}
\nc{\tr}{{\on{tr}}}
\nc{\Mod}{{\mathrm{-Mod}}}
\nc{\Hilb}{{\on{Hilb}}} 
\nc{\Ext}{{\on{Ext}}} 
\nc{\vs}{{\on{Vec}}}
\nc{\ev}{{\on{ev}}}
\nc{\nO}{{\breve{\calO}}}
\nc{\tS}{{\tilde{S}}}
\nc{\spe}{{\on{sp}}}
\nc{\loc}{{\on{loc}}}
\nc{\pre}{{\on{pre}}}
\nc{\dimt}{{\on{dim.trg}}}
\nc{\co}{\colon}
\nc{\dia}{{\diamondsuit}}
\nc{\nscrR}{{\mathscr{R}^{\on{nr}}}}
\nc{\GL}{{\on{GL}}}
\nc{\Gl}{\on{Gl}} 
\nc{\GSp}{{\on{GSp}}}
\nc{\gl}{{\frakg\frakl}}
\nc{\SL}{{\on{SL}}} 
\nc{\SU}{{\on{SU}}} 
\nc{\SO}{{\on{SO}}}
\nc{\PGL}{{\on{PGL}}}
\nc{\Conv}{{\on{Conv}}}
\nc{\Rep}{{\on{Rep}}}
\nc{\Dom}{{\on{Dom}}}
\nc{\red}{{\on{red}}}
\nc{\For}{{\on{for}}}
\nc{\Red}{{\on{Red}}}
\nc{\thi}{{\on{th}}}
\nc{\an}{{\on{an}}}
\nc{\act}{{\on{act}}}
\nc{\nr}{{\on{nr}}}
\nc{\ctf}{{\on{ctf}}}
\nc{\str}{{\on{-}}} 
\nc{\os}{{\bar{s}}}
\nc{\oeta}{{\bar{\eta}}}
\nc{\hookto}{\hookrightarrow}
\nc{\longto}{\longrightarrow}
\nc{\leftto}{\leftarrow}
\nc{\onto}{\twoheadrightarrow}
\nc{\lonto}{\twoheadleftarrow}
\nc{\pot}[1]{ [\hspace{-0,5mm}[ {#1} ]\hspace{-0,5mm}] }
\nc{\rpot}[1]{ (\hspace{-0,7mm}( {#1} )\hspace{-0,7mm}) }
\nc{\smallpot}{{ <\hspace{-1,0mm}<}}
\numberwithin{equation}{section}
\begin{document}
	
\title{Stacks of $p$-adic shtukas and spatial kimberlites.}
	
	\author[I. Gleason]{Ian Gleason}

	\address{National University of Singapore, 10 Lower Kent Ridge Road, Singapore 119076}
	\email{ianandreigf@nus.edu.sg}
	
	\begin{abstract}
		The main purpose of this article is to show that the special Newton polygon map from the stack of $p$-adic shtukas to the stack of $G$-bundles on the Fargues--Fontaine curve is representable in diamonds and sufficiently nice for cohomological considerations (i.e. fdcs). 
		The second purpose is to show that the $\bar{\bbF}_p$-fibers of the special Newton polygon map behave like formal schemes, and in particular, satisfy henselianity properties with respect to their reduced locus. 
		These two goals achieved in this article are two of the crucial ingredients used in our collaboration with Hamman, Ivanov, Louren\c{c}o and Zou to construct the equivalence that compares the schematic and analytic local Langlands categories of Zhu and of Fargues--Scholze.  
		To achieve these goals, we introduce and study spatial kimberlites, which is a better behaved variant of the theory previously developed by the author.
	\end{abstract}

	\maketitle
	\tableofcontents
	
	\section{Introduction}
	Since their introduction by Drinfeld, moduli spaces of shtukas have played a prominent role in our understanding of the Langlands program for characteristic $p$ global fields \cite{DrinfeldGL2,LafforgueGLn,VLafforgueGlobalG}. 
	Shtukas themselves seem to also be intimately related to Grothendieck's vision of motives, and to some extent this is how they were discovered. 
	Indeed, the concept of a Drinfeld module \cite{DrinfeldEllipticModules}, which is a precursor to the concept of a shtuka, stems from Drinfeld's desire to construct an analogue of the modular curve, which in the Langlands program we like to think of as the moduli space of motives attached to elliptic curves. 
	Although we are far from realizing Grothendieck's vision of motives, and we have not learned how to make mathematical sense of classifying spaces of motives, Shimura varieties and stacks of shtukas are avatars of these sought-after classifying spaces, and to date they keep guiding our investigations in the Langlands program. 
	It is through this line of reasoning, by sequences of analogies, and by relying on the deep developments in $p$-adic Hodge theory and perfectoid geometry that Scholze arrived to the definition of the stack of $p$-adic shtukas that we will denote by 
	\[\Sht_\calG\]
	in this article.

	Vaguely speaking, $p$-adic Hodge theory aims to understand the theory of $p$-adic motives, and although the community already recognized and was guided by strong analogies between Drinfeld shtukas and the structures appearing in $p$-adic Hodge theory, it was only after the introduction of perfectoid spaces and Scholze's reinterpretation of the tilting correspondence that one could write down $\Sht_\calG$ in exactly the same footing as Drinfeld's shtukas which is one of the main aims of \cite{SW20}.\footnote{Strictly speaking, in \cite{SW20}, the definition of $\Sht_\calG$ appears only indirectly. Indeed, although the introduction of \S 23 seems to prepare the reader to understand the definition of $\Sht_\calG$ the only definition provided is \cite[Definition 23.1.1]{SW20} for reasons that are unknown to us. To the author's best knowledge, the first time the v-stack $\Sht_\calG$ appeared in the literature treated as a geometric object is in \cite{GI_Bunmer,zhang2023pel} although several many previous references already study the functor of points of this stack.}  

	Although Scholze's definition of $\Sht_\calG$ as the classifying space of $p$-adic motives is far from the final one, and our understanding of $p$-adic Hodge theory keeps improving over the years, the author of this article believes that, for the purposes of studying the $\ell$-adic \'etale cohomology of such a stack (in the case $\ell\neq p$), the version that appears in \cite{SW20} is already the correct geometric object to study. 
\subsection{Why do we study $\calD_{\acute{e}t}(\Sht_\calG)$?}
The study of the \'etale cohomology of local Shimura varieties and their relation to the local Langlands correspondence has a rich tradition.
It started with Drinfeld's work \cite{drinfel1976coverings} which studied the so-called Drinfeld's upper half-space $\Omega_E^d$. 
Drinfeld conjectured that its cohomology realized all supercuspidal representations of $\mathrm{GL}_d(E)$; this conjecture was later made precise by Carayol \cite{carayol1990non} who formulated it in terms of an action by the product group $W_E\times D^\times \times \mathrm{GL}_d(E)$ whose description realized simultaneously the local Langlands and Jacquet--Langlands correspondences.
Kottwitz, following pioneering works of Carayol, Faltings, Genestier, and Harris, formulated a conjecture \cite[Conjecture 5.1]{rapoport1995non} which has become a landmark in the area. 
In their book \cite{RZ96}, Rapoport--Zink construct and study in a systematic way a family of spaces that are now known as Rapoport--Zink spaces. 
One of the main reasons that motivated Rapoport and Zink to undertake this task was to construct the geometric spaces that the Kottwitz' conjecture talked about. 
Rapoport--Zink spaces are moduli spaces of $p$-divisible groups with additional structures, their theory is very rich and their study has been a very active area of research in arithmetic geometry since their introduction to this day.

About a decade ago, the theory went through a very serious transformation following Scholze's introduction of perfectoid spaces \cite{Sch12}, Fargues--Fontaine's reformulation of $p$-adic Hodge theory through the curve \cite{FF18}, and Scholze--Weinstein's perfectoid Dieudonn\'e theory \cite{SW13}.  
These developments motivated Rapoport and Viehmann to write \cite{RV14} where they point towards a first formulation of local Shimura varieties detached from its connection to $p$-divisible groups.
Shortly after, in his Berkeley lectures \cite{SW20}, Scholze introduces $p$-adic shtukas and explains a general construction of local Shimura varieties by reinterpreting them as moduli spaces of $p$-adic shtukas (a.k.a $p$-adic motives). 
The final transformative input came with Fargues' reinterpretation of the local Langlands correspondence as \textit{``geometric Langlands for the Fargues--Fontaine curve''}.
Indeed, in \cite{fargues_geometrization_of_the_local_langlands_correspondence_overview} Fargues explains how the Kottwitz conjecture would follow from the existence of eigensheaves (the geometric Langlands version of eigenforms), objects that geometric Langlands had been studying for traditional curves.     
Fargues' insight materialized in the monumental work of Fargues--Scholze \cite{FS21} in which the categorical local Langlands conjecture (CLLC) is formulated, and which settles the modern foundations of the field. 
In this framework, Fargues and Scholze describe the cohomology of local Shimura varieties attached to the tuple $(G,b,\mu,K)$ as computing the $b$-fiber of the $\mu$-Hecke operator applied to the $K$-compact induction of the trivial representation \cite[\S IX.3]{FS21}.

In summary, from the modern perspective, we study $\calD_{\acute{e}t}(\Sht_\calG)$, or more precisely, the map 
\[\sigma_!:\calD_{\acute{e}t}(\Sht_\calG)\to \calD_{\acute{e}t}(\Bun_G)\]
in order to compute Hecke operators and understand the categorical local Langlands conjecture, something that we had secretly being doing for decades without being completely aware of it.

In this framework, the stack $\Sht_\calG$ (more precisely its bounded version $\Sht_{\calG,\mu}$) ``glues'' all the integral local Shimura varieties into one map, that we call the special Newton polygon map and that we denote by 
\[\sigma:\Sht_\calG\to \Bun_G.\]
Indeed, given integral local Shimura datum $(G,\calG,b,\mu)$ one has (up to isomorphism) a unique map $b:\Spd(\bar{\bbF}_p)\to \Bun_G$ and the $\bar{\bbF}_p$-fiber
\begin{center}
\begin{tikzcd}
	\Sht_{\calG,\mu}(b)	\arrow{r} \arrow{d}  & \Spd(\bar{\bbF}_p) \arrow{d} \\
 \Sht_{\calG,\mu} \arrow{r} &\Bun_G 
\end{tikzcd}
\end{center}
is the integral local Shimura variety attached to $(G,\calG,b,\mu)$.
\\

Let us add another answer to the question: \textit{Why should we try to understand $\calD_{\acute{e}t}(\Sht_\calG)$ and $\sigma_!$?}.

In recent years, the geometric Langlands community have advertised the perspective that the categorical local Langlands conjecture should follow from a $2$-categorical geometric local Langlands conjecture by a formal procedure (taking a trace of Frobenius in a higher categorical sense). 
The tame part of this perspective has been put forward in the recent work of Zhu \cite{zhu2025tamecategoricallocallanglands}.

A priori, the connection between the categorical local Langlands conjecture coming from geometric Langlands and Fargues--Scholze's formulation are unrelated.  
Nevertheless, our main motivations to study
\[\sigma_!:\calD_{\acute{e}t}(\Sht_{\calG,\mu})\to \calD_{\acute{e}t}(\Bun_G)\]
is because this map provides the bridge that connects the two formulations.  
Indeed, in the past few years we have dedicated our work to set foundations to explain precisely how the local geometric Langlands relates the Fargues--Scholze's formulation of CLLC. 
A substantial part of this program is already achieved in \cite{GI_Bunmer}, in our forthcoming collaborative work \cite{GHILZ25}, and in the present article. 
We will continue this line of thought in future works.
\subsection{The main result.}
We fix some notation.
Let $p$ and $\ell$ be prime numbers with $p\neq \ell$.
Let $\Lambda$ be a $\bbZ/\ell^n\bbZ$-algebra.
Let $E$ be a $p$-adic local field with ring of integers $O_E$, uniformizer $\pi\in O_E$, and residue field $\bbF_q=O_E/\pi$. 
Let $\calG$ be a parahoric group scheme over $O_E$ with reductive generic fiber $G$. 
Let $\mu:\bbG_m\to G$ be a conjugacy class of geometric cocharacters with field of definition $F$.
Let $\breve{F}$ denote the completion of the maximal unramified extension of $F$.
Let $\Sht_{\calG,\mu}\to \Spd O_{\breve{F}}$ denote the stack of local shtukas with modification bounded by $\mu$ (see \S \ref{section on stack of shtukas}).
Let $\Bun_G$ denote the stack of $G$-bundles on the Fargues--Fontaine curve \cite[Definition I.2.4]{FS21}.
Let $b\in B(G)$ be an element of the Kottwitz set.
It defines a $G$-bundle $\calE_b\in \Bun_G(\Spd \overline{\bbF}_q)$. 
Consider the special Newton polygon map 
\[\sigma:\Sht_{\calG,\mu}\to \Bun_G.\]
We let $\Sht_{\calG,\mu}(b):=\Sht_{\calG,\mu}\times_{\Bun_G,\calE_b} \ast$.  
\begin{theorem}{(\Cref{shtukasareactualllyspatial})}
	\label{shtukasareactualllyspatial2}
	The following statements hold.
	\begin{enumerate}
		\item The map $\sigma:\Sht_{\calG,\mu}\to \Bun_G$ is representable in locally spatial diamonds (more precisely it is fdcs as in \cite[Definition 5.4]{Mann2022NuclearSheaves}). 
			In particular,
			\[\sigma_!:\calD_{\acute{e}t}(\Sht_{\calG,\mu},\Lambda)\to \calD_{\acute{e}t}(\Bun_G,\Lambda)\]
			exists.
		\item The v-stack $\Sht_{\calG,\mu}$ is an Artin v-stack (see \cite[Definition IV.1.1]{FS21}).  
		\item The v-sheaf $\Sht_{\calG,\mu}(b)$ is a locally spatial kimberlite\footnote{See \S \ref{intro the methods} in the introduction below for an explanation of this term.}  whose reduced special fiber is the affine Deligne--Lusztig variety $X_{\calG,\mu}(b)\simeq \Sht_{\calG,\mu}(b)^\red$. 
	\end{enumerate}
\end{theorem}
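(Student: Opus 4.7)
The strategy is to establish (1) by descent along a chart of the target $\Bun_G$ together with a Grassmannian chart of the source, to deduce (2) from the same Grassmannian chart, and to obtain (3) as the $b$-fiber of (1) equipped with the additional formal-kimberlite structure supplied by the machinery developed in this paper.

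For (1) and (2), the key geometric input is the presentation of $\Bun_G$ from \cite{FS21}: each Newton stratum $\Bun_G^b$ is uniformized by the classifying stack of the automorphism v-sheaf $\widetilde G_b$ of $\calE_b$, and the resulting cover exhibits $\Bun_G$ as an Artin v-stack. Pulling $\sigma$ back along such a chart identifies the base change with the integral local Shimura variety attached to $(G,\calG,b,\mu)$ from \cite{SW20}, which is already known to be a locally spatial diamond; v-descent then gives local spatiality of $\sigma$ itself. On the source side, the natural map
\[
(\Gr_{\calG,\mu})^{\diamondsuit}\longrightarrow \Sht_{\calG,\mu}
\]
that forgets the trivialization of the underlying $\calG$-torsor is a Frobenius-twisted $L^+\calG$-torsor, hence cohomologically smooth; since $\Gr_{\calG,\mu}$ is a proper spatial diamond, this chart verifies the Artin v-stack axioms of (2). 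To upgrade (1) to fdcs in the sense of \cite{Mann2022NuclearSheaves}, I would verify its three defining conditions: partial properness (from the partial properness of $\Gr_{\calG,\mu}$ together with $\mu$-boundedness), quasicompactness of small enough diagonals (from the spatial structure of affine Schubert varieties), and a uniform $\ell$-cohomological dimension bound (from the explicit dimension of $\Gr_{\calG,\mu}$ combined with a v-cohomological dimension estimate for $L^+\calG$).

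For (3), the fiber $\Sht_{\calG,\mu}(b)$ is a locally spatial diamond by base change of (1) along $\calE_b$. To upgrade it to a locally spatial kimberlite I would apply the kimberlite criteria developed in the body of the paper: a Chen--Kisin--Viehmann-style product formula realizes $\Sht_{\calG,\mu}(b)$ v-locally as a twisted form of an open subdiamond of $\Gr_{\calG,\mu}$, from which one extracts a canonical formal model over $\Spd O_{\breve F}$ together with a specialization map to its reduced locus; verifying henselianity then reduces, by v-descent, to the corresponding statement for the affine Schubert variety, which is standard. The identification of the reduced locus with $X_{\calG,\mu}(b)$ is a moduli-theoretic unwinding: over a perfect field of characteristic $p$, a $\calG$-shtuka whose underlying Fargues--Fontaine bundle is $\calE_b$ is the same datum as a $\calG$-lattice whose Frobenius-twisted modification has relative position bounded by $\mu$, i.e.\ exactly the defining functor of $X_{\calG,\mu}(b)$. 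The hardest step is the fdcs verification in (1): one must control the $\ell$-cohomological dimension of the fibers uniformly across all Newton strata despite the non-trivial contracting dynamics present in $\widetilde G_b$ on non-basic strata; a secondary delicate point, in (3), is constructing the formal model of $\Sht_{\calG,\mu}(b)$ compatibly across its ADLV-strata, which is precisely what the spatial-kimberlite framework of this paper is designed to handle.
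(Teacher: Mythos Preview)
Your proposal contains a genuine circularity that undermines the argument for (1). You write that after pulling $\sigma$ back along a Newton-stratum chart one obtains ``the integral local Shimura variety attached to $(G,\calG,b,\mu)$ from \cite{SW20}, which is already known to be a locally spatial diamond.'' This is not known: \cite{SW20} establishes local spatiality only for the \emph{generic fiber} local Shimura varieties over $\Spd\breve E$, not for the integral object $\Sht_{\calG,\mu}(b)$ over $\Spd O_{\breve F}$. Whether the integral moduli space is representable in locally spatial diamonds is precisely one of the open questions the paper is written to resolve (see the discussion around \Cref{naturalquestions}). Relatedly, ``v-descent then gives local spatiality'' is too quick: local spatiality does not descend along arbitrary v-covers, and the paper instead invokes \cite[Lemma 13.5]{Sch17}, which requires as input that each geometric fiber is already locally spatial --- i.e.\ exactly the unproven statement.

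The paper's logical flow is the reverse of yours: it proves $(3)\Rightarrow(1)\Rightarrow(2)$, with essentially all the work concentrated in (3). For (3), the argument does not use a Chen--Kisin--Viehmann product formula. Instead it builds an explicit spatial-kimberlite chart $W\calG_\mu\to\Sht_{\calG,\mu}$ (the $W^+\calG$-torsor over the local model $\calM_{\calG,\mu}$), takes its formal neighborhood along the $b$-locus using \Cref{keytubularneighborhood} (the stability of spatial kimberlites under formal neighborhoods, which is the technical heart of the paper), and then shows via a delicate lifting argument that the pullback $\WSht_{\calG,\mu}(b)$ is v-locally of the form $\Spd R^+\times\widetilde G_b$. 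Here the earlier result that $\widetilde G_b$ is a spatial kimberlite (via positive Banach--Colmez spaces) is essential. Your proposal does not identify any of these ingredients; the references to ``kimberlite criteria'' and a CKV-type decomposition are too vague to substitute for them.
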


\begin{remark}
	Formulating and showing \Cref{shtukasareactualllyspatial2}.(3) was the author's original PhD. thesis goal. A weaker statement was already shown in \cite{Gle24}. 
\end{remark}

\begin{remark}
	\label{relevance LLC}
	That the map $\sigma:\Sht_{\calG,\mu}\to \Bun_G$ is fdcs (\Cref{shtukasareactualllyspatial2}.(1)) plays a decisive role in our collaborative work \cite{GHILZ25} in which we define a comparison functor
	\begin{equation}
\pitch:\on{Shv}^!(\calB(G),\Lambda)\overset{\simeq}{\to} \calD_{\acute{e}t}(\Bun_G,\Lambda).
	\end{equation}
Here the left-hand side is the schematic local Langlands category defined and studied extensively by Zhu \cite{zhu2025tamecategoricallocallanglands}, and the right-hand side is the analytic local Langlands category defined and studied extensively by Fargues and Scholze \cite{FS21}. 
The functor $\pitch$ is defined as an appropriately functorial colimit of functors of the form $\sigma_!$ precomposed with an analytification functor. 
Making precise sense of this colimit is one of the challenges tackled in \cite{GHILZ25}.
\end{remark}

\subsection{The methods.}
\label{intro the methods}
We prove \Cref{shtukasareactualllyspatial2} by a chain of implications 
\[(3)\implies (1) \implies (2),\]
and then we show $(3)$.
The implication $(1)\implies (2)$ follows directly from \cite[Proposition IV.1.8.(iii)]{FS21} and \cite[Theorem IV.1.19]{FS21}. 
The implication $(3)\implies (1)$ follows from \cite[Lemma 13.5]{Sch17} and the fact that maps of locally spatial kimberlites are always representable in locally spatial diamonds (see \Cref{representabilityinlocallyspatialkimberlites}).
Most of our work goes into formulating and showing $(3)$ (i.e. developing a theory of spatial kimberlites). \\

	Kimberlites were introduced by the author in \cite{Gle24} to compute the connected components of local Shimura varieties and moduli spaces of shtukas \cite{Gle21}.
	Kimberlites are analogues of formal schemes in Scholze's theory of diamonds and v-sheaves \cite{Sch17}.

	The theory of kimberlites has had measurable success answering questions in the theory of local and global Shimura varieties \cite{Gle21}, \cite{GL22}, \cite{AGLR22}, \cite{gleason2022connected}, \cite{PR21}, \cite{pappas2024integrallocalshimuravarieties} \cite{takaya2025relativerepresentabilityparahoriclevel}.
	One reason for this, is that many constructions in $p$-adic Hodge theory are often better understood when one restricts the test category to be the category of perfectoid spaces. 
	This naturally gives rise to diamonds, v-sheaves and v-stacks in the site of perfectoid spaces. 
	Often, some of these v-sheaves are natural examples of kimberlites.


	For many purposes, the theory of kimberlites already developed in \cite{Gle24}, \cite{AGLR22}, \cite{GL22} and \cite{GI_Bunmer} provides sufficient abstract tools. 
	This is the case specially when one only needs to work with geometric objects that are ``$\pi$-adic''. 

Unfortunately, the theory of kimberlites falls short in answering some foundational questions outside the $\pi$-adic case (see \Cref{naturalquestions}).
From the point of view of the theory of local Shimura varieties, Rapoport--Zink spaces and integral models of moduli spaces of local shtukas, this is problematic since these objects are only rarely $\pi$-adic.

Spatial kimberlites (\Cref{spatial-kimberlite-defi}) fix many of the shortcomings of the theory of kimberlites.
For example, we can prove that formal completions of spatial kimberlites are again spatial kimberlites (see \Cref{keytubularneighborhood}), and that the category of spatial kimberlites is stable under finite limits (see \Cref{stable-under-cartesian}). 
And as we have alluded already, every map of spatial kimberlites is automatically representable in locally spatial diamonds (see \Cref{representabilityinlocallyspatialkimberlites}). This also shows that spatial kimberlites are automatically Artin v-sheaves.
That the category of spatial kimberlites is stable under such natural constructions plays a decisive role in showing that each $\Sht_{\calG,\mu}(b)$ is a locally spatial kimberlite. 
Indeed, we unfold the geometry of $\Sht_{\calG,\mu}(b)$ using these types of natural constructions, and it is important to know that in each of the steps the property of being a (locally) spatial kimberlite is preserved.
To unfold the geometry of $\Sht_{\calG,\mu}(b)$ we rely on results from \cite{FS21}, \cite{AGLR22}, \cite{Gle21}, \cite{güthge2024perfectprismaticfcrystalspadicshtukas}, and \cite{GI_Bunmer}.

\subsection{Spatial kimberlites and cohomology.}
Although the main goal of the article is to show the existence of the map 
\[\sigma_!:\calD_{\acute{e}t}(\Sht_{\calG,\mu},\Lambda)\to \calD_{\acute{e}t}(\Bun_G,\Lambda),\]
we also show a theorem which gives sufficient control on this operation, which is our second main theorem.
Intuitively speaking, it shows that spatial kimberlites, in similarity with formal schemes, are henselian.
Let us clarify what we mean by this.

Let $X$ be a spatial kimberlite and suppose that the reduced locus $X^\red$ (see \cite[Definition 3.12]{Gle24}) is representable by the perfection of a scheme that is proper and perfectly finitely presented over $\Spec \overline{\bbF}_p$. 
We let $\ast=\Spec \overline{\bbF}_p$.
Let $\pi:X\to \ast$ denote the structure map and let $j:X^{\on{an}}\to X$ denote the inclusion of the analytic locus.
Suppose that $X^{\on{an}}\to \ast$ is of $\on{tr.deg.}<\infty$.
Our result roughly says that $X$ as above satisfies the two henselian properties along $X^\red$.
More precisely the result reads as follows.
\begin{theorem}{(\Cref{nearbycyclestheorem})}
	\label{nearbycyclestheorem2}
	Let the notation be as above.
	For all $A\in \calD_{\acute{e}t}(X^{\on{an}},\Lambda)$ we have that 
	\[\pi_!j_*A\simeq 0\simeq \pi_*j_!A.\]
\end{theorem}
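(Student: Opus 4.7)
My plan is to reduce each of the two vanishings to a \emph{henselianity} comparison along $X^\red$ via the localization triangle for the decomposition $X = X^{\on{an}} \cup X^\red$, and then to prove these henselianity statements by combining proper base change with the structure theory of spatial kimberlites developed earlier in the paper. The crucial input is the stability of spatial kimberlites under formal completion (\Cref{keytubularneighborhood}), which lets us access the local geometry of $X$ near each closed point $x \in X^\red$.

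The two vanishings should be Verdier dual, so it suffices to prove one of them. Concretely, since $\pi$ is fdcs, the six-functor formalism of \cite{Mann2022NuclearSheaves} provides a dualizing functor $\bbD$ together with identities $\bbD_X \circ j_* \simeq j_! \circ \bbD_{X^{\on{an}}}$ (for the open immersion $j$) and $\bbD_\ast \circ \pi_! \simeq \pi_* \circ \bbD_X$. Combining them gives $\bbD_\ast(\pi_! j_* A) \simeq \pi_*(j_! \bbD A)$, so at least on constructible subcategories (the general case following by a continuity argument exploiting that $\pi$ is fdcs) it suffices to prove $\pi_* j_! B = 0$ for all $B \in \calD_{\acute{e}t}(X^{\on{an}}, \Lambda)$.

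For the latter, apply $\pi_*$ to the localization triangle $j_! B \to j_* B \to i_* \psi(B)$, where $\psi := i^* j_*$ is the nearby-cycles functor and $i : X^\red \hookrightarrow X$ is the closed immersion. Writing $\pi^{\on{an}} := \pi \circ j$ and $\pi^\red := \pi \circ i$, one obtains the triangle
\[ \pi_* j_! B \to \pi^{\on{an}}_* B \to \pi^\red_* \psi(B), \]
so the vanishing $\pi_* j_! B = 0$ is equivalent to the henselianity equivalence $\pi^{\on{an}}_* B \xrightarrow{\sim} \pi^\red_* \psi(B)$. Since $\pi^\red$ is proper (by the properness hypothesis on $X^\red$), proper base change along $i$ reduces this to a local computation at geometric points $x \in X^\red$: one must show $\psi(B)_x \simeq R\Gamma(\widehat X_x^{\on{an}}, B|_{\widehat X_x^{\on{an}}})$, where $\widehat X_x$ is the formal completion of $X$ at $x$ (a spatial kimberlite by \Cref{keytubularneighborhood}) and $\widehat X_x^{\on{an}}$ its analytic locus.

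The main obstacle is exactly this local identification of nearby cycles at $x$ with cohomology over the analytic locus of $\widehat X_x$. It expresses the strong henselianity that spatial kimberlites are engineered to enjoy: \'etale neighborhoods of $x$ in $X$ should be cofinally given by v-opens of $\widehat X_x$, and the v-\'etale topology should respect this formal-to-analytic comparison. Establishing this requires the spatial and coherent properties of $X$ (to commute the filtered limits and colimits appearing in the stalkwise computation of $\psi$) together with the precise behaviour of the analytic locus under formal completion set up earlier in the paper.
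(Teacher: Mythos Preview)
Your proposal has two genuine gaps.

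\textbf{The duality reduction is not valid.} The identity $\bbD_X \circ j_* \simeq j_! \circ \bbD_{X^{\on{an}}}$ you invoke is the wrong one: what holds in general is $\bbD_X \circ j_! \simeq j_* \circ \bbD_{X^{\on{an}}}$, while the formula you wrote fails for non-constructible objects. Likewise $\bbD_\ast \circ \pi_* \not\simeq \pi_! \circ \bbD_X$ in general. So neither direction of the duality you need is available for arbitrary $A\in\calD_{\acute{e}t}(X^{\on{an}},\Lambda)$. The appeal to a ``continuity argument'' does not fix this, and $\bbD$ is not conservative off the constructible subcategory, so even a restricted duality would not give the full statement. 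In the paper the two halves are proved by completely different methods: the vanishing $\pi_* j_! = 0$ is a direct citation of \cite[Lemma 4.3]{GL22} (and holds in much greater generality), while $\pi_! j_* = 0$ is the genuinely new and harder statement.

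\textbf{Your argument for $\pi_* j_! B = 0$ is circular.} You reduce to the stalkwise identification $\psi(B)_x \simeq R\Gamma(\widehat{X}_x^{\on{an}}, B)$ and then call it ``the main obstacle'' without proving it. But this identification is essentially the henselianity statement you are trying to prove; \Cref{keytubularneighborhood} tells you $\widehat{X}_x$ is a spatial kimberlite, not that \'etale cohomology on $X$ near $x$ is computed by the tube. Also note that $\pi^{\on{an}}_* B$ and $\pi^{\red}_* \psi(B)$ are both objects over the point $\ast$, so ``checking at geometric points $x\in X^\red$'' via proper base change does not literally make sense as written.

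The paper's proof of the hard direction $\pi_! j_* = 0$ is entirely different: one base-changes along the $\ell$-cohomologically smooth cover $s:\Spd k\rpot{t}\to\ast$, then on $X\times\Spd k\pot{t}$ uses a radius function $\kappa:[0,\infty]$ (built from a totally disconnected cover of $X^{\on{an}}$) to produce an explicit cofinal family of quasicompact opens $U_{[a,\infty]}$. Writing out $\pi_{s,!}$ as a colimit over these and carefully commuting a $j_*$ past a $k_!$ (using that the relevant $i^!$ vanishes), the expression is recognized as computing the functors of \cite[Definition~IV.5.2]{FS21}, which vanish by \cite[Theorem~IV.5.3]{FS21}. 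That external vanishing input is the real engine; nothing in your outline plays its role.
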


\begin{remark}
	\Cref{nearbycyclestheorem2} combined with \Cref{shtukasareactualllyspatial2}.(3) is the main cohomological tool to study the fibers of $\sigma_!$, and provides one of the main computational techniques used in \cite{GHILZ25} to show that the functor $\pitch$ is an equivalence.   
\end{remark}

	\subsection*{Acknowledgements}
	This paper was written during stays at Max-Planck-Institut für Mathematik, Universität Bonn and the National University of Singapore, we are thankful for the hospitality of these institutions. 
	The project has received funding by DFG via the Leibniz-Preis of Peter Scholze, and from NUS PYP funding scheme.
	We would like to thank Peter Scholze and Pol van Hoften for conversations related to this work. 
	
	\section{Notation}
	We start by fixing the context. 
	    For the rest of the paper we fix a prime $p\in \bbZ$.
	    We fix a perfect field $k$ in characteristic $p$, sometimes we will explicitly restrict to the cases $\bbF_p$ or $\overline{\bbF}_p$.
	    We work in the category of perfectoid spaces over $k$ \cite[$\mathsection 3$]{Sch17} endowed with the v-topology \cite[Definition 8.1]{Sch17}, which we denote by $\Perf$.
	    We let $\PSch$ denote the category of perfect schemes over $k$ endowed with the schematic v-topology \cite{bhatt_scholze_projectivity_of_the_witt_vector_affine_grassmannian}.
	    All of our geometric objects will be either small v-stacks on $\Perf$ \cite[$\mathsection 12$]{Sch17} or small scheme theoretic v-stacks on $\PSch$.
	    We denote these categories by $\widetilde{\Perf}$ and $\widetilde{\PSch}$ respectively.
	    The final objects are $\Spd(k,k)$ and $\Spec k$ respectively and all constructions are performed with respect to this base. 
	    We use the symbol $\ast$ to denote these final objects. 
	    From now on we omit $k$ from the notation.

	    Unless we explicitly say otherwise, we restrict the notation $\Spa(R,R^+)$ to denote an affinoid perfectoid space. 
	    For more general Huber pairs $(A,A^+)$, we will mostly only consider its associated v-sheaf $\Spd(A,A^+)$ (see \cite[Definition 10.1.1]{SW20}).
	    When $(A,A^+)$ is a Huber pair such that $A^+=A^\circ$ we write $\Spd A$ instead of $\Spd(A,A^+)$ to denote the v-sheaf attached to $(A,A^+)$.

   Let $B$ be a perfect characteristic $p$ topological ring endowed with the $I$-adic topology for a finitely generated ideal $I\subseteq B$.
   We call any v-sheaf of the form $\Spd(B,B)$ an \textit{affine formal v-sheaf}. 
   Since $B=B^\circ$, following our convention above, we will write $\Spd B$ instead of $\Spd(B,B)$. 
   An important example, is when $(R,R^+)$ is a perfectoid Huber pair. 
   In this case, $\Spd R^+$ is an affine formal v-sheaf associated to the topological ring $R^+$.

   Given a perfectoid Huber pair $(R,R^+)$ we write $R^+_\red$ for $(R^+/R^{\circ \circ})$, which we will consider as a ring endowed with the discrete topology. 
   One can verify that this is a perfect $k$-algebra. 

	\section{Theory of kimberlites}
	The purpose of this paper is to introduce and study spatial kimberlites. 
	This notion refines that of prekimberlites and of kimberlites introduced in \cite{Gle24}. 
	In this section, we review some of the basic theory of kimberlites. 
	For the sake of completeness, we also write down some statements that although easy to deduce are not explicitly stated in \cite{Gle24}.
	We review prekimberlites, valuative prekimberlites, kimberlites and we finish by discussing spatial kimberlites.
	\subsection{Prekimberlites}
	Recall that the rule $\Spec A\mapsto \Spd(A,A)$ extends to a fully faithful functor $\diamond:\PSch\to \widetilde{\Perf}$ \cite[Proposition 18.3.1]{SW20} \cite[$\mathsection 3$]{Gle24}, and that this functor extends uniquely to a continuous functor $\diamond:\widetilde{\PSch}\to \widetilde{\Perf}$. 
Moreover, $\diamond$ admits a right adjoint functor \cite[Definition 3.12]{Gle24} 
	\[\red:\widetilde{\Perf}\to \widetilde{\PSch}.\]
	We let $X^\Red=(X^\red)^\diamond$, it comes with a canonical map coming from adjunction $X^\Red\to X$.
	\begin{definition}{\cite[Definition 3.20]{Gle24}}
		A map $X\to Y$ is \textit{formally adic} if the following diagram is Cartesian 
		\begin{center}
		\begin{tikzcd}
		 Y^\Red \arrow{r} \arrow{d}  & \arrow{d} Y \\
		 X^\Red \arrow{r} & X.
		\end{tikzcd}
		\end{center}
	\end{definition}

	\begin{definition}
		If $f:X\to Y$ is a closed immersion of v-sheaves, we say that $X$ is \textit{formally closed} if $f$ is formally adic.
	\end{definition}

\begin{definition}{\cite[Definition 3.27]{Gle24}}
	A v-sheaf is \textit{formally separated} if the diagonal is a closed immersion and formally adic (i.e. a formally closed immersion).	
	\end{definition}

	Here is a key lemma used throughout \cite{Gle24} that unfortunately has an awkward formulation \textit{loc.cit.} (see \cite[Lemma 3.31]{Gle24}).
	The slogan is that $\Spa(R,R^+)$ is formally dense in $\Spd(R^+)$. 

	\begin{lemma}
		\label{fixed up version formally dense}
		If $\Spa(R,R^+)$ is a perfectoid space and $\calF\subseteq \Spd(R^+)$ is a formally closed subsheaf containing $\Spa(R,R^+)$, then $\calF=\Spd(R^+)$.
	\end{lemma}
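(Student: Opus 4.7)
The plan is to verify $\calF=\Spd(R^+)$ test-wise. Given any morphism $\varphi\colon \Spa(S,S^+)\to \Spd(R^+)$, I would pull $\calF$ back along $\varphi$; since both closed immersions and formally adic morphisms are stable under base change (the latter follows directly from the Cartesian definition together with the fact that $\red$, being a right adjoint, preserves fiber products), the result is a formally closed subsheaf $\calF_S\subseteq \Spa(S,S^+)$ containing the preimage of $\Spa(R,R^+)$. Since the question of whether $\calF_S=\Spa(S,S^+)$ is v-local on the source, after v-refinement I may assume $(S,S^+)=(C,C^+)$ is a geometric point, so that $\varphi$ corresponds to a continuous ring map $R^+\to C^+$ sending a pseudo-uniformizer $\pi\in R^+$ to some topologically nilpotent element $c\in C^+$.

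If $c$ is nonzero in $C$, then $\varphi$ already factors through $\Spa(R,R^+)\subseteq \calF$ and we are done. The essential case is $c=0$, which I would treat by realizing $\varphi$ as a ``limit'' of analytic maps through an auxiliary parameter. Fix a pseudo-uniformizer $\varpi\in C^+$ and form the perturbation
\[
\tilde\varphi\colon R^+\longrightarrow C^+\langle T^{1/p^\infty}\rangle,\qquad \pi\mapsto \varpi T,
\]
which is continuous for the $\pi$-adic topology on the source because $\varpi T$ is topologically nilpotent in the target. On the open punctured disc $\{|T|\ne 0\}$ inside $\Spa(C\langle T^{1/p^\infty}\rangle,\, C^+\langle T^{1/p^\infty}\rangle)$ the element $\varpi T$ is a unit, so the restriction of $\tilde\varphi$ there factors through $\Spa(R,R^+)\subseteq \calF$; meanwhile the origin $\{T=0\}$ recovers $\varphi$ (since $c=0$). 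Pulling $\calF$ back along $\tilde\varphi$ therefore yields a formally closed subsheaf $\calF''$ of the closed perfectoid disc which contains the open punctured disc, and the plan is to conclude that $\calF''$ must also contain the origin, whence $\varphi$ factors through $\calF$.

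The principal obstacle, and the reason the formally adic hypothesis is genuinely needed, is precisely this final propagation step: one must promote the containment from the punctured disc to the origin at the level of v-subsheaves rather than merely topologically. Closedness alone is not enough, since v-sheaf closed immersions can in principle carry information beyond their underlying closed subset. The formally adic property supplies exactly the extra rigidity required, via the Cartesian diagram with $(-)^\Red$, to transport the containment across the specialization $\{T=0\}\subseteq $ (closed disc). Carrying out this propagation rigorously is the technical heart of \cite[Lemma 3.31]{Gle24}, and the virtue of the present cleaner formulation is to repackage that content into the crisp slogan that $\Spa(R,R^+)$ is formally dense in $\Spd(R^+)$.
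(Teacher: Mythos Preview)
Your reduction to geometric points is fine, and you correctly isolate the essential case $c=0$, i.e.\ a point of $\Spd(R^+_\red)$. The gap is in the deformation step: the ``perturbation'' $\tilde\varphi\colon R^+\to C^+\langle T^{1/p^\infty}\rangle$ is not a well-defined ring homomorphism. You only prescribe $\pi\mapsto \varpi T$, but $R^+$ is an arbitrary integral perfectoid ring, not a power series ring in $\pi$; specifying the image of a single pseudo-uniformizer does not determine, nor guarantee the existence of, a continuous ring map restricting to $\varphi$ at $T=0$. Concretely, you are asking to lift the composite $R^+\twoheadrightarrow R^+_\red\to C^+$ to a map $R^+\to C^+\langle T^{1/p^\infty}\rangle$ hitting a nonzero topologically nilpotent element under $\pi$, and there is no reason such a lift exists for general $R^+$ (relations in $R^+$ involving $\pi$ obstruct it).

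The paper's proof bypasses this entirely. Since $\calF\hookrightarrow \Spd(R^+)$ is a closed immersion it is qcqs, so by \cite[Lemma 12.11]{Sch17} one need only check surjectivity on geometric points. Analytic points already lie in $\Spa(R,R^+)\subseteq\calF$; for the non-analytic points one uses the formally adic hypothesis to identify $\calF\times_{\Spd(R^+)}\Spd(R^+_\red)$ with $\calF^\Red$, and then invokes \cite[Lemma 3.31]{Gle24} directly for the surjectivity of $\calF^\Red\to\Spd(R^+_\red)$. Note that you end up citing the same lemma for your final ``propagation'' step anyway, so even if the deformation could be repaired, the detour through the closed disc would not avoid the key input---it only relocates it.
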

	\begin{proof}
		Since the map is a closed immersion, it is qcqs and it suffices to show that it is surjective on geometric points (see \cite[Lemma 12.11]{Sch17}). 	
		By hypothesis, $\calF$ contains the open subset $\Spa(R,R^+)\subseteq \Spd(R^+)$, so it suffices to show that $\calF^\Red\to \Spd(R^+_\red)$, but this is the content of \cite[Lemma 3.31]{Gle24}. 
	\end{proof}

	\begin{definition}{\cite[Definition 4.6]{Gle24}}
		Given a v-sheaf $X$ and a map $f:\Spa (R,R^+)\to X$ we say that $X$ \textit{formalizes} $f$ if there exists a dashed arrow completing the commutative diagram below
		\begin{center}
		\begin{tikzcd}
			\Spa (R,R^+)\arrow{r}{f} \arrow{d}  & X  \\
		 \Spd R^+\arrow[dashed]{ru}  .
		\end{tikzcd}
		\end{center}
		Any such arrow is called a \emph{formalization} of $f$.
		We say that $X$ is \textit{v-formalizing} if for any $f$ as above there is a v-cover $g:\Spa( R',R'^+)\to \Spa (R,R^+)$ such that $X$ formalizes $g\circ f$. 
	\end{definition}

	Whenever $X$ is formally separated and $f:\Spa(R,R^+)\to X$ is a map, a formalization of $f$, if it exists, is unique \cite[Proposition 4.9]{Gle24}.

	\begin{definition}{\cite[Definition 4.11]{Gle24}}
		\label{specializing v-sheaf}
		We say that $X$ is \textit{specializing} if it is formally separated and v-formalizing.	
	\end{definition}

As shown in \cite[Proposition 4.14]{Gle24} if $X$ is a specializing v-sheaf then it has a continuous specialization map 
	$\on{sp}:|X|\to |X^\red|$, but not much more can be said about general specializing v-sheaves.

	\begin{definition}{\cite[Definition 4.15]{Gle24}}
		\label{defiitioon f prekimbelrite}
		If $X$ is a specializing v-sheaf we say that it is a \textit{prekimberlite} if $X^\red\in \widetilde{\PSch}$ is representable by a perfect scheme and $X^\Red\to X$ is a closed immersion.
		We let $X^{\on{an}}:=X\setminus X^\Red$ and we call this the \textit{analytic locus} of $X$.
	\end{definition}

    \begin{definition}
	    Let $X$ be a prekimberlite.
	    \begin{enumerate}
    \item We say that $X$ is an \textit{affine prekimberlite} if $X^\red$ is an affine scheme.
    \item We say that $X$ is a \textit{pointed prekimberlite} if $|X^\red|$ has one element.
	    \end{enumerate}
    \end{definition}

    \begin{proposition}
	    \label{stabilityprekimberlites}
	    The categories of prekimberlites and affine prekimberlites are stable under fiber product and contain $\ast$.	
	    In particular, these categories are stable under finite limits.
    \end{proposition}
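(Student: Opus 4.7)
The plan is to verify each of the defining conditions of a prekimberlite---formal separatedness, v-formalization, representability of the reduced locus, and closedness of the canonical map $W^{\Red} \to W$---is stable under fiber products. The base case $W = \ast$ is immediate: $\ast^{\red} = \Spec k$ is a (reduced) affine perfect scheme and $\ast^{\Red} = \ast$, so $\ast$ is trivially an affine prekimberlite. Stability under finite limits then follows formally from stability under binary fiber products together with the presence of the terminal object, since any finite limit can be built from these (equalizers as pullbacks of diagonals).

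For formal separatedness of $W := X \times_Z Y$, I would factor the diagonal $\Delta_W : W \to W \times W$ as $W \to W \times_Z W \to W \times W$. The second map is a base change of $\Delta_Z$, which is formally closed by hypothesis, while the first map, after identifying $W \times_Z W$ with $X \times_Z X \times_Z Y \times_Z Y$, is a composition of base changes of $\Delta_X$ and $\Delta_Y$, each formally closed. Since formally closed immersions are stable under base change and composition, $\Delta_W$ is itself formally closed.

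For v-formalization, a map $f : \Spa(R, R^+) \to W$ corresponds to a pair $(f_X, f_Y)$ of compatible maps into $X$ and $Y$. Using v-formalization of $X$ and $Y$, pass to a common v-cover $\Spa(R', R'^+) \to \Spa(R, R^+)$ along which both components formalize to $\widetilde{f}_X, \widetilde{f}_Y : \Spd R'^+ \to X, Y$. Their compositions with $X \to Z$ and $Y \to Z$ are two formalizations of the same map $\Spa(R', R'^+) \to Z$, and by uniqueness of formalization for the formally separated $Z$ (\cite[Proposition 4.9]{Gle24}) they agree, yielding the desired lift to $W$.

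For the reduced locus and the closed immersion $W^{\Red} \hookrightarrow W$: since $\red$ is right adjoint to $\diamond$ it preserves limits, so $W^{\red} = X^{\red} \times_{Z^{\red}} Y^{\red}$ is a perfect scheme (and affine when the three factors are, yielding the affine case). For the closed immersion, note that $X^{\Red} \to X \to Z$ factors through $Z^{\Red}$ by adjunction, and similarly for $Y^{\Red}$; since $Z^{\Red} \hookrightarrow Z$ is a monomorphism, $X^{\Red} \times_{Z^{\Red}} Y^{\Red}$ coincides with $X^{\Red} \times_Z Y^{\Red}$, whose map into $X \times_Z Y$ is the composition of two base changes of the closed immersions $X^{\Red} \hookrightarrow X$ and $Y^{\Red} \hookrightarrow Y$, hence is a closed immersion. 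The main obstacle is to identify this closed subsheaf with $W^{\Red} = (X^{\red} \times_{Z^{\red}} Y^{\red})^{\diamond}$; this requires $\diamond$ to commute with the fiber product in question, which reduces ultimately to the affine identity $\Spd(A \otimes_C B, A \otimes_C B) = \Spd(A, A) \times_{\Spd(C,C)} \Spd(B,B)$ for perfect $k$-algebras. With this identification in hand, the universal property of the counit pins the two closed subsheaves down as equal, completing the verification.
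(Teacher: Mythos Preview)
Your argument is correct and covers the same ground as the paper's proof, but with a different organization. The paper dispatches formal separatedness and v-formalization in one stroke by citing \cite[Proposition 4.10]{Gle24} (stability of specializing v-sheaves under fiber products), whereas you unpack both conditions by hand; your factorization $\Delta_W = (\Delta_{W/Z}) \circ (W \times_Z W \hookrightarrow W \times W)$ works since, as you implicitly use, $\Delta_{X/Z}$ is literally the base change of $\Delta_X$ along $X \times_Z X \to X \times X$. For the closed immersion $W^\Red \to W$, the paper takes a slightly slicker route: it realizes $W$ as the pullback of the absolute product $X \times Y$ along the (formally closed) diagonal of $Z$, so that $W \to X \times Y$ is formally adic and hence $W^\Red \to W$ is the base change of $(X \times Y)^\Red \to X \times Y$, reducing to the absolute-product case $X^\Red \times Y^\Red \hookrightarrow X \times Y$. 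Your direct identification $W^\Red \cong X^\Red \times_Z Y^\Red$ works too; both approaches ultimately rest on the fact you correctly isolate as the main point, that $\diamond$ commutes with fiber products of perfect schemes.
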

    \begin{proof}
	    Let $X$, $Y$ and $Z$ be prekimberlites, and let $W=X\times_Y Z$.
	    By \cite[Proposition 4.10]{Gle24}, the category of specializing v-sheaves is stable under fiber products.
	    Moreover, since the reduction functor commutes with limits, it is clear that $W^\red$ is a scheme (respectively affine scheme) as long as $X^\red$, $Y^\red$ and $Z^\red$ are. 
	    We write $W$ as the base change of the map $X\times Z\to Y\times Y$ along the diagonal of $Y$.
	    Since $Y$ is formally separated, the map $W\to X\times Z$ is formally adic and it suffices to show that $X^\Red \times Z^\Red\to X\times Z$ is a closed immersion, but this is clear since $X^\Red\to X$ and $Z^\Red\to Z$ are. 
    \end{proof}

	For prekimberlites one can construct a v-sheaf theoretic specialization map (or Heuer specialization map). 
Given $S\in \PSch$ one can construct a v-sheaf $S^{\diamond/\circ\circ}$ \cite[Definition 4.23]{Gle24}\footnote{The symbol $\diamond/\circ\circ$ suggest the similarity with the $\diamond$ functor up to a quotient by the space of topological nilpotent elements. In \cite{Gle24} we initially took the minimalistic notation $\diamond/\circ$ instead of $\diamond/\circ\circ$.}, \cite[Definition 5.1]{Heu21} by v-sheafifying the formula
\[S^{\diamond/\circ\circ_{\on{pre}}}:\Spa (R,R^+)\mapsto S(\Spec R^+_\red).\]
As it turns out it is only necessary to sheafify for the analytic topology \cite[Lemma 5.2]{Heu21}. 
When $X$ is a prekimberlite, we let $X^{\on{H}}:=(X^\red)^{\diamond/\circ\circ}$. 
We have a v-sheaf theoretic specialization map \cite[$\mathsection 4.4$]{Gle24} 
\[\on{SP}_X:X\to X^{\on{H}},\]
that we define as follows.
\begin{definition}
	\label{construction H specialization}
The v-sheaf theoretic specialization map is constructed as follows. If $\alpha\in X(R,R^+)$ and $\alpha$ is formalizable we let $\tilde{\alpha}\in X(\Spd R^+)$ be its unique formalization.
Applying the reduction functor gives $\tilde{\alpha}^\red\in X^\red(\Spec R^+_\red)$, which is an element in the presheaf $(X^{\red})^{(\diamond/\circ\circ)_{\on{pre}}}(R,R^+)$.
This describes a map $\on{SP}_{X,\on{pre}}:X^{\on{frml}}\to (X^\red)^{(\diamond/\circ\circ)_{\on{pre}}}$, where the source is the sub-presheaf of formalizable maps in $X$.
Now, $\on{SP}_X$ is the sheafification of $\on{SP}_{X,\on{pre}}$.
Unless we need to clarify which respect to which space we take the Heuer specialization map, we will omit $X$ from the notation and simply write $\on{SP}$.
\end{definition}

\begin{remark}
	The reader may notice that the construction of \Cref{construction H specialization} can be applied to any specializing v-sheaf. 
	But the remarks concerning sheafification with respect to the analytic topology (i.e. \cite[Lemma 5.2]{Heu21}) no longer hold in this generality.
\end{remark}

We have the following lemma which although does not mention \Cref{construction H specialization} directly, it secretly uses it. 
It essentially says that for specializing sheaves, being a prekimberlite is a Zariski local condition.

\begin{lemma}{\cite[Lemma 2.32]{Gle21}}
	\label{lemmaspecializing}
	Let $X$ be a specializing v-sheaf and let $Y=X^\red$. 
	Suppose that $Y$ is representable by a perfect scheme. 
	Let $U\subseteq Y$ be an open subset and let $V$ denote the only open subsheaf of $X$ with $|V|=\on{sp}^{-1}(|U|)$. 
	Then the following hold.
	\begin{enumerate}
		\item $V$ is a specializing v-sheaf with $V^\red=U$.
		\item The map $V\to X$ is formally adic.
	\end{enumerate}
	Moreover, if there is an open cover $\{U_i\to Y\}_{i\in I}$ such that $V_i:=\on{sp}_X^{-1}(U_i)$ is a prekimberlite. Then $X$ is a prekimberlite. 
\end{lemma}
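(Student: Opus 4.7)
For part (1), I first verify that $V^\red = U$. By the adjunction between $\diamond$ and $\red$, a point $\Spec A^\diamond \to V$ corresponds to a map $\Spec A^\diamond \to X$ whose image in $|X|$ lies in the open subset $|V|$; under the specialization map $\on{sp}\colon |X|\to |Y|$ this is equivalent to the associated morphism $\Spec A \to Y$ factoring through $U$, yielding $V^\red = U$. Formal separation of $V$ follows because its diagonal $V \to V \times V$ factors as $V \to V \times_X V \to V \times V$, where the first map is an isomorphism (since $V \to X$ is a monomorphism) and the second is the base change of the formally closed diagonal of $X$, hence itself a formally closed immersion. For the v-formalizing property: given $f\colon \Spa(R, R^+) \to V$, apply v-formalizing of $X$ to obtain a v-cover $g\colon \Spa(R',R'^+)\to \Spa(R,R^+)$ and a formalization $\tilde\alpha\colon \Spd R'^+ \to X$ of $g\circ f$. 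The image of $|\Spa(R',R'^+)|$ under $f\circ g$ lies in $|V|$, and since $|V|=\on{sp}^{-1}(|U|)$ and $\on{sp}$ is continuous, the reduction $\tilde\alpha^\red\colon \Spec R'^+_\red \to Y$ factors through $U$. As every point of $|\Spd R'^+|$ specializes into $|\Spec R'^+_\red|$, it follows that $\tilde\alpha$ factors through $V$.

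For part (2), I compute $X^\Red \times_X V$ directly. Since $V \hookrightarrow X$ is an open immersion, the pullback $Y^\diamond \times_X V$ is the open subsheaf of $Y^\diamond = X^\Red$ whose underlying space is the preimage of $|V|$ under the canonical map $|Y^\diamond| \to |X|$ coming from adjunction. This map composed with $\on{sp}$ is the identity on $|Y|$, so the preimage of $|V| = \on{sp}^{-1}(|U|)$ equals $|U|$. Hence $X^\Red \times_X V = U^\diamond = V^\Red$, and $V \to X$ is formally adic.

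For the last statement, $X$ is specializing by hypothesis, so it remains to check that $Y = X^\red$ is representable by a scheme and that $X^\Red \to X$ is a closed immersion. The first is obtained by gluing the schemes $U_i = V_i^\red$ along their overlaps, using the identification $(V_i \cap V_j)^\red = U_i \cap U_j$ provided by applying part (1) to $V_i \cap V_j$ viewed as an open subsheaf of $V_i$. For the second, I use that being a closed immersion is v-local on the target: since $\{U_i\}$ covers $|Y|$, the equality $\bigcup |V_i| = \on{sp}^{-1}(|Y|) = |X|$ shows $\{V_i\to X\}$ is an open cover of $X$; by part (2), the base change of $X^\Red\to X$ along $V_i \to X$ is $V_i^\Red \to V_i$, which is a closed immersion because each $V_i$ is a prekimberlite.

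The main obstacle is the careful handling of the specialization map in part (1), particularly verifying that the formalization $\tilde\alpha$ produced from the v-formalizing property of $X$ genuinely factors through $V$. This step essentially exploits that $V$ was defined as the preimage of $U$ under $\on{sp}$, so that continuity of the specialization map transfers the containment $|\Spa(R',R'^+)| \subseteq |V|$ into a factorization of $\tilde\alpha^\red$ through $U$; without this specific definition of $V$, the formalization might escape the open subsheaf.
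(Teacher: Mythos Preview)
The paper does not supply a proof for this lemma, citing \cite[Lemma 2.32]{Gle21} instead. Your argument is essentially correct and proceeds along the natural lines, but a few points deserve more care.

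In the v-formalizing step, the phrase ``since $\on{sp}$ is continuous'' is not the relevant justification. What you actually use is the functoriality of the topological specialization map for maps of specializing v-sheaves, giving $\on{sp}_X \circ |\tilde\alpha| = |\tilde\alpha^\red| \circ \on{sp}_{\Spd R'^+}$, together with the surjectivity of the restriction $|\Spa(R',R'^+)| \to |\Spec R'^+_\red|$. These two facts are what allow you to pass from ``the image of $|\Spa(R',R'^+)|$ lies in $\on{sp}_X^{-1}(|U|)$'' to ``$\tilde\alpha^\red$ factors through $U$''; the same commutative square then gives $|\tilde\alpha|(|\Spd R'^+|) \subseteq \on{sp}_X^{-1}(|U|) = |V|$, which is cleaner than invoking that points ``specialize into $|\Spec R'^+_\red|$''.

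In the final paragraph, you write that it remains to check that $Y = X^\red$ is representable by a scheme, but this is already a standing hypothesis of the lemma; your gluing argument is correct but redundant. The only thing actually left for the ``Moreover'' clause is that $X^\Red \to X$ is a closed immersion, and your argument via the open cover $\{V_i\}$ and part~(2) is correct.

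For formal separation, your assertion that the base change of a formally closed immersion is again formally closed is true (it follows because $\red$ preserves limits and $\diamond$ preserves fiber products of perfect schemes), but this is worth a line of justification rather than being taken for granted.
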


If $X$ is a prekimberlite and $Z\subseteq X^\red$ is a locally closed subset we can define a prekimberlite $\widehat{X}_{/Z}$ \cite[Proposition 4.21]{Gle24} such that $Z=(\widehat{X}_{/Z})^\red$ and such that it fits in the following Cartesian diagram 
\begin{equation}
	\label{formalnbhoods}
\begin{tikzcd}
	\widehat{X}_{/Z} \arrow{r} \arrow{d}{\on{SP}}  & X \arrow{d}{\on{SP}} \\
 Z^{\diamond/\circ\circ} \arrow{r} & X^{\on{H}}. 
\end{tikzcd}
\end{equation}
When $Z$ is constructible, the map $\widehat{X}_{/Z}\to X$ is an open immersion \cite[Proposition 4.22]{Gle24}, but it is often \textbf{not} formally adic.
Intuitively speaking, the rule $X\mapsto \widehat{X}_{/Z}$ corresponds to taking the completion of $X$ along $Z$.
\begin{definition}{\cite[Definition 4.18, 4.38]{Gle24}}
	\label{formalnbbhoosoughtotbedefined}
	The prekimberlite $\widehat{X}_{/Z}$ obtained from diagram \eqref{formalnbhoods} is called the \textit{formal neighborhood} of $X$ along $Z$. The v-sheaf ${X}^\circledcirc_{/Z}:=X^{\on{an}}\cap \widehat{X}_{/Z}$ is called the \textit{tubular neighborhood} of $X$ along $Z$.
\end{definition}

If $X$ is a prekimberlite and $U\to X^\red$ is an \'etale map of separated schemes we can construct another prekimberlite $\widehat{X}_{/U}$ such that $U=(\widehat{X}_{/U})^\red$ and such that it fits in the following Cartesian diagram
\begin{equation}
	\label{formaletlaenbhosds}
\begin{tikzcd}
	\widehat{X}_{/U} \arrow{r} \arrow{d}{\on{SP}}  & X \arrow{d}{\on{SP}} \\
 U^{\diamond/\circ\circ} \arrow{r} & X^{\on{H}}. 
\end{tikzcd}
\end{equation}
The map is $\widehat{X}_{/U}\to X$ is always formally adic and \'etale. 
Moreover, \cite[Theorem 4.27]{Gle24} intrinsically characterizes all v-sheaves constructed in this way. 
Indeed, if $Y$ is a prekimberlite endowed with a formally adic and \'etale map to $X$, then $Y^\red\to X^\red$ is an \'etale separated map of schemes and \cite[Theorem 4.27]{Gle24} shows that 
\[Y\simeq \widehat{X}_{/Y^\red}.\]
One can regard \cite[Theorem 4.27]{Gle24} as an analogue of the invariance of the \'etale site under nil-thickenings.

\begin{definition}{\cite[Definition 4.26]{Gle24}}
The prekimberlite $\widehat{X}_{/U}$ obtained from diagram \eqref{formaletlaenbhosds} is called the \textit{\'etale formal neighborhood} of $X$ along $U$.
\end{definition}

\begin{remark}
The category of \'etale formal neighborhoods of \cite[Definition 4.26]{Gle24} can be used to define the \textit{naive nearby cycles functor} \cite[Remark 4.29]{Gle24} studied and successfully applied in \cite{GL22}.
\end{remark}

The following statement turned out to be trickier than the author originally expected. 
Luckily for us, Kim's theorem simplifies the situation (see \cite[Theorem 1.3]{kim2024descendingfiniteprojectivemodules}). 

\begin{proposition}
	\label{finite-etale-lemma}
	If $X$ is a prekimberlite and $f:Y\to X$ is a finite \'etale map of v-sheaves, then the following statements hold.
	\begin{enumerate}
		\item $Y$ is a prekimberlite.
		\item $f$ is formally adic.
		\item $Y=\widehat{X}_{/Y^\red}$ (i.e. $Y$ is an \'etale formal neighborhood of $X$).
	\end{enumerate}
\end{proposition}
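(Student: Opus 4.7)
The plan is to establish the three claims in the order $(2) \Rightarrow (1) \Rightarrow (3)$, with the main input being Kim's theorem on descent of finite projective modules.

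I would first analyze the base change $Y \times_X X^\Red \to X^\Red = (X^\red)^\diamond$, which is finite étale. Kim's theorem \cite[Theorem 1.3]{kim2024descendingfiniteprojectivemodules} implies that the finite étale site of $(X^\red)^\diamond$ is equivalent to that of the perfect scheme $X^\red$, so this cover descends uniquely to a finite étale map of perfect schemes $Z \to X^\red$ with $Z^\diamond \cong Y \times_X X^\Red$. Applying $(-)^\red$ (right adjoint to $\diamond$) and using that $(X^\Red)^\red \to X^\red$ is an isomorphism, one identifies $Y^\red \cong Z$ as a perfect scheme; hence
\[
Y^\Red = (Y^\red)^\diamond = Z^\diamond \cong Y \times_X X^\Red,
\]
which is exactly the statement of $(2)$, namely that $f$ is formally adic.

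For $(1)$, I would verify the prekimberlite axioms for $Y$ using $(2)$. The closed immersion $Y^\Red \hookrightarrow Y$ is obtained by base-changing the closed immersion $X^\Red \hookrightarrow X$ along $f$, hence is itself closed. For formal separatedness, the diagonal $\Delta_Y$ factors as $Y \hookrightarrow Y \times_X Y \hookrightarrow Y \times Y$, where the first map is a clopen immersion (as $f$ is finite étale, in particular unramified and separated) and the second is the base change of the closed formally adic $\Delta_X$; the composite is therefore closed and formally adic. For v-formalizability, given $\varphi: \Spa(R,R^+) \to Y$, the composite $f \circ \varphi$ formalizes after a v-cover to some $\widetilde{f\varphi}: \Spd R_1^+ \to X$, and the base change $T := \Spd R_1^+ \times_X Y$ is a finite étale cover of $\Spd R_1^+$ equipped with a section over the formally dense open $\Spa(R_1, R_1^+) \subseteq \Spd R_1^+$ (cf.\ \Cref{fixed up version formally dense}). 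Applying Kim's theorem a second time — now to the perfect scheme $\Spec R_{1,\red}^+$ — one descends $T$ to a finite étale cover of perfect schemes, where a section corresponds to a clopen idempotent; the section over $\Spa(R_1, R_1^+)$ pins this idempotent down uniquely by formal density, which, after possibly refining the v-cover along the trivializing clopen component, yields a formalization $\Spd R_2^+ \to Y$ of $\varphi$.

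Finally, $(3)$ is then immediate from the intrinsic characterization of étale formal neighborhoods: with $Y$ a prekimberlite and $f: Y \to X$ formally adic and étale, \cite[Theorem 4.27]{Gle24} applies verbatim and produces the isomorphism $Y \cong \widehat{X}_{/Y^\red}$. The main obstacle throughout is the v-formalizability step: without Kim's theorem one would not even know that $Y^\red$ is representable by a scheme, which is exactly why the author remarks that this statement is trickier than initially expected. Once Kim's theorem is in hand, the remaining work is the careful sectional extension across the analytic/formal boundary using formal density.
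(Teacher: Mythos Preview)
Your arguments for (2) and for formal separatedness of $Y$ in (1) are essentially correct and match the paper's approach. The gap is in the v-formalizing step.

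You reduce correctly to the situation of a finite \'etale cover $T \to \Spd R_1^+$ equipped with a section over the analytic locus $\Spa(R_1,R_1^+)$, and you want a global section. You then write: ``Applying Kim's theorem a second time --- now to the perfect scheme $\Spec R_{1,\red}^+$ --- one descends $T$ to a finite \'etale cover of perfect schemes.'' But Kim's theorem only controls finite \'etale covers of the v-sheaf $(\Spec R_{1,\red}^+)^\diamond$, which is the \emph{reduced locus} $(\Spd R_1^+)^\Red$, not $\Spd R_1^+$ itself. Restricting $T$ to the reduced locus and descending via Kim gives a scheme $T' \to \Spec R_{1,\red}^+$, but this does not let you reconstruct $T$ or produce a section of $T$ over all of $\Spd R_1^+$. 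Moreover, the section you start with lives over the \emph{analytic} locus, which is disjoint from the reduced locus, so ``formal density'' does not pin down any idempotent in $T'$. What you are implicitly using is that finite \'etale covers of $\Spd R_1^+$ are determined by their reduction --- but that is essentially (3) for $X = \Spd R_1^+$, so the argument is circular.

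The paper handles this differently. After reducing to $X = \Spd R^+$ with $\Spa(R,R^+)$ a product of points, it passes to the $S_n$-torsor of trivializations and reduces to showing that every finite \'etale cover of $X$ admits a section. A spreading argument over $\pi_0(X)$ (using the auxiliary space $\Spd(R^+\pot{t})^{\on{an}}$, which is cohomologically smooth with connected fibers over $X$) reduces further to $X = \Spd C^+$ with $C^+$ an algebraically closed valuation ring. The crucial step is then to show that any degree-$n$ finite \'etale cover $Y \to \Spd C^+$ has exactly $n$ connected components: one computes $H^0(Y,\bbF_\ell) = H^0(X, f_*\bbF_\ell)$ via the excision triangle $j_! f^{\on{an}}_* \bbF_\ell \to f_*\bbF_\ell \to i_* f^{\Red}_*\bbF_\ell$, and the first term contributes nothing by the vanishing $R\Gamma(X, j_! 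A) = 0$ of \cite[Lemma 4.3]{GL22}. This gives $\pi_0(Y) \cong \pi_0(Y^\Red)$, and only now does Kim's theorem (applied to $Y^\Red$) together with strict henselianity of $C^+_\red$ finish the argument.
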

\begin{proof}
	Let $Z=Y\times_X X^\Red$. 
	Since by hypothesis $X^\Red$ is representable by a scheme, it follows from \cite[Theorem 1.3]{kim2024descendingfiniteprojectivemodules} that $Z$ is also representable by a scheme. 
	More precisely, $Z=T^\diamond$ for $T\to X$ a finite \'etale morphism.
	From \cite[Lemma 3.32]{Gle24}, it follows that $f:Y\to X$ is formally adic, and that $T=Y^\red$ (this already takes care of \Cref{finite-etale-lemma}.(2)). 
	Since $Y\to X$ is separated and $X\to \ast$ is separated, it follows that $\Delta_Y:Y\to Y\times Y$ is a closed immersion.
	By \cite[Lemma 3.30]{Gle24}, $\Delta_Y$ is formally adic. 
	We conclude that $Y$ is formally separated. 
	Since the adjunction map $Y^\Red\to Y$ is a closed immersion, and $Y^\red$ is a scheme, to show that $Y$ is a prekimberlite it suffices to show that $Y$ is v-formalizing.
	Once we know that $Y$ is a prekimberlite, it follows from \cite[Theorem 4.27]{Gle24} that necessarily $Y\simeq \widehat{X}_{/Y^\red}$.

	Let us show that $Y$ is v-formalizing. 
	We first make some reductions. 
	Let $g:\Spa(R,R^+)\to Y$ be a map, without loss of generality we may assume that $X$ formalizes $f\circ g$, and that $\Spa(R,R^+)$ is a product of points since these form a basis for the v-topology (see \cite[Remark 1.3]{Gle24}). 
	It suffices to show that any map $\Spa(R,R^+)\to Y\times_X \Spd(R^+,R^+)$ is formalizable, or in other words, without loss of generality we may and do assume $X=\Spd(R^+,R^+)$.
	Since the degree function of the finite \'etale map $f$ is locally constant, after decomposing $X$ as a union of open and closed subsets, we may reduce to the case that $f$ is of constant degree $n$ for some $n\in \bbN$.
	We consider $Y_1$ to be the v-sheaf of isomorphism between $Y$ and the constant sheaf of cardinality $n$ as v-sheaves over $X$.
	Then $Y_1\to X$ is a finite \'etale $S_n$-torsor.
	Any section $X\to Y_1$ provides an isomorphism $Y\simeq X \times \{1,\dots, n\}$, and one can explicitly construct formalizations for $Y$ of such form.
	In other words, we have reduced the problem to showing that any $S_n$-torsor over $X=\Spd(R^+,R^+)$ is trivial. 
	
	Since the map $Y_1\to X$ is also finite \'etale, it suffices to show that any finite \'etale map towards $X$ admits a section. 
	To ease the notation, we let $Y=Y_1$.
	We reduce to the case $X=\Spd(C^+,C^+)$ with $C^+$ a valuation ring with algebraically closed fraction field as follows.
	Consider $\Spd(R^+\pot{t},R^+\pot{t})$ and consider its analytic locus $\widetilde{X}:=\Spd(R^+\pot{t},R^+\pot{t})\setminus V(t,\varpi)$ where $\varpi\in R^+$ is a pseudo-uniformizer. 
	It is not hard to see that $\widetilde{X}$ is a qcqs perfectoid space and that the map $\widetilde{X}^{\on{an}}\to X$ is a cohomologically smooth surjection (see \cite[\S 23]{Sch17}) with geometrically connected fibers.
	Fix $x\in \pi_0(X)$, for any $U\subseteq \pi_0(X)$ that is open and closed and contains $x$ we have spaces $X_U=\Spd(R^+_U,R^+_U)$, $Y_U:={X_U}\times_X Y$, $\widetilde{X_U}:=X_U\times_X \widetilde{X}$ and $\widetilde{Y_U}:=\widetilde{X_U}\times_X Y$, obtained by cutting along the appropriate idempotent.
	Passing to limits we also have spaces $X_x=\Spd(C_x^+,C_x^+)$, $Y_x$, $\tilde{X}_x$ and $\tilde{Y}_x$.
	Here $C_x^+$ is a valuation ring with algebraically closed non-Archimedean fraction field.
	Suppose we know that $Y_x\to X_x$ admits a section. Then $\tilde{Y}_x\to \tilde{X}_x$ admits a section, and by \cite[Lemma 12.17]{Sch17} this section spreads to a section $\tilde{X_U}\to \tilde{Y_U}$ for some $U$.
	Since the map $\tilde{X_U}\to \tilde{Y_U}$ is finite \'etale, its image is an open and closed subset $\tilde{T}\subseteq \tilde{Y_U}$.
	Moreover, if we let $h$ denote the map $h:\tilde{Y_U}\to Y_U$, then $T=h^{-1}(h(T))$.
	Indeed, the fibers of $h$ are connected and $T$ is already open and closed in $\tilde{Y_U}$.
	We may descend $\tilde{T}$ to an open and closed subset $T\subseteq Y_U$ whose basechange along $h$ is $\tilde{T}$. 
	Then $T\to X_U$ is an isomorphism, since it becomes one after basechange. 
	We have shown that as long as $Y_x\to X_x$ admits a section, this section spreads to a section of $Y_U\to X_U$ for some open and closed subset $U\subseteq \pi_0(X)$, which finishes our reduction step.

	Finally, we show that any finite \'etale map $f:Y\to X$ splits whenever $X=\Spd(C^+,C^+)$ with $C^+$ a valuation ring with algebraically closed fraction field.
	We show that if $Y\to X$ is of degree $n$, then $Y$ has $n$ connected components. 
	Passing to one such component $Y_0$ gives a map $Y_0\to X$ which is finite \'etale of degree $1$, so it must be an isomorphism by \cite[Lemma 12.5]{Sch17}.
	Now, we may compute $\on{H}^0(Y,\bbF_\ell)$ as $H^0(X,f_*\bbF_\ell)$.
	Moreover, we have an exact triangle 
	\begin{equation}
		\label{some-exact-triangle}
	j_!f^{\on{an}}_*\bbF_\ell\to f_*\bbF_\ell \to i_*f^{\on{Red}}_* \bbF_\ell
	\end{equation}
	coming from excision. 
	Here $i:X^{\on{Red}}\to X$ is the closed immersion coming from adjunction, $j:X^{\on{an}}\to X$ is the open complement of $i$ and the maps $f^{\on{an}}:Y^{\on{an}}\to X^{\on{an}}$ and $f^{\on{Red}}:Y^\Red\to X^\Red$ are induced from $f:Y\to X$ by basechange and the fact that (as we saw above) $f$ is formally adic. 

	By \cite[Lemma 4.3]{GL22}, $R\Gamma(X,j_!f^{\on{an}}_*\bbF_\ell)$ vanishes.
	This shows that $Y$ has as many connected components as $Y^\Red$.
	Using Kim's theorem again (\cite[Theorem 1.3]{kim2024descendingfiniteprojectivemodules}), we see that $Y^\Red\simeq X^\Red\times \{1,\dots, n\}$ whenever $f$ has degree $n$.
	Since $X^\Red=\Spec(C^+_\red)^\diamond$, and $C^+_\red$ is a valuation ring with algebraically closed fraction field hence strictly henselian.
\end{proof}

\begin{remark}
One could have avoided using \cite[Theorem 1.3]{kim2024descendingfiniteprojectivemodules} at the expense of making the argument in \Cref{finite-etale-lemma} substantially longer.  
This type of argument boils down to analyzing finite \'etale maps over v-sheaves of the form $\Spd(k^+,k^+)$ where $k^+$ is a valuation ring endowed with the discrete topology. 
This analysis can be approached by similar methods using exact triangles as in \Cref{some-exact-triangle}, and approximation arguments. 
Before \cite[Theorem 1.3]{kim2024descendingfiniteprojectivemodules} was available, we had such an approach in mind. 
Nevertheless, we consider \cite[Theorem 1.3]{kim2024descendingfiniteprojectivemodules} and its proof method a better way to proceed. 
\end{remark}

 \subsection{Valuative prekimberlites}
 Let us recall the following definition \cite[Definition 18.4]{Sch17}.
\begin{definition}
	We say that a map $X\to Y$ of v-sheaves \emph{satisfies the valuative criterion of partial properness} if for any commutative diagram of the form
	\begin{center}
	\begin{tikzcd}
		\Spa(R,R^\circ) \arrow{r} \arrow{d}  & X \arrow{d} \\
		\Spa (R,R^+)\arrow{r} & Y
	\end{tikzcd}
	\end{center}
	with $\Spa(R,R^+)$ affinoid perfectoid, there exists a unique map $\Spa(R,R^+)\to X$ making the diagram above commutative.
	We say that $X\to Y$ is partially proper if it is separated and satisfies the valuative criterion of partial properness.
\end{definition}

The specialization map for prekimberlites, $\on{SP}:X\to X^{\on{H}}$, is always separated. 
Indeed, the inclusion $X\times_{X^{\on{H}}}X\subseteq X\times X$ is a separated map and since $X$ is formally separated $\Delta:X\to X\times X$ is a closed immersion. 
Consequently, $X\to X\times_{X^{\on{H}}}X$ also is.
\begin{definition}{\cite[Definition 4.30]{Gle24}}
	\label{valuative prekimberlite defi}
	We say that a prekimberlite $X$ is \textit{valuative} if $\on{SP}:X\to X^{\on{H}}$ is partially proper.
\end{definition}

For valuative prekimberlites the topological specialization map, 
\[\on{sp}:|X|\to |X^\red|,\] 
is specializing \cite[Proposition 4.33]{Gle24}.
Moreover, the valuative property is stable under natural constructions like taking formal neighborhoods or \'etale formal neighborhoods \cite[Proposition 4.34]{Gle24}. 
We prove some further stability properties of the valuative property.

\begin{proposition}
	\label{formallyadicclosedisvaluative}
	Suppose $X$ is a valuative prekimberlite and that $Z\to X$ is a formally adic closed immersion.
	Then $Z$ is a valuative prekimberlite.
\end{proposition}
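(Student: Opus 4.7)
The plan is to verify the two conditions defining a valuative prekimberlite separately: first that $Z$ is a prekimberlite, and then that $\on{SP}_Z \colon Z \to Z^{\on{H}}$ is partially proper. For the second step, rather than checking the valuative criterion directly, the strategy is to factor $\on{SP}_Z$ as a composition $Z \to \widehat{X}_{/Z^{\red}} \to Z^{\on{H}}$ of two partially proper maps, thereby reducing matters to the partial properness of $\on{SP}_X$ and of closed immersions.

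For the prekimberlite structure of $Z$, each defining property is inherited from $X$ by standard base-change arguments exploiting that $Z \to X$ is a formally adic closed immersion. Namely, $Z^{\Red} = Z \times_X X^{\Red}$ is a closed sub-v-sheaf of $X^{\Red} = (X^{\red})^{\diamond}$, so by \cite[Lemma 3.32]{Gle24} one concludes that $Z^{\red}$ is a closed perfect subscheme of $X^{\red}$ and that $Z^{\Red} \to Z$ is a closed immersion. Formal separatedness follows from factoring $\Delta_Z$ as $Z \xrightarrow{\sim} Z \times_X Z \to Z \times Z$: the first map is an isomorphism since $Z \to X$ is a monomorphism, and the second is a base change of the formally adic closed immersion $\Delta_X$. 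To see that $Z$ is v-formalizing, given $\Spa(R, R^+) \to Z$ one composes with $Z \to X$ and, after a v-cover $\Spa(R_1, R_1^+) \to \Spa(R,R^+)$, extends to $\Spd R_1^+ \to X$ using the v-formalizing property of $X$; then \Cref{fixed up version formally dense} applied to the formally adic closed sub-v-sheaf $\Spd R_1^+ \times_X Z \hookrightarrow \Spd R_1^+$, which contains $\Spa(R_1, R_1^+)$, implies that the extension factors through $Z$, yielding the desired $\Spd R_1^+ \to Z$.

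For the factorization itself, the second map $\widehat{X}_{/Z^{\red}} \to Z^{\on{H}}$ is by construction the projection from the defining pullback $\widehat{X}_{/Z^{\red}} = X \times_{X^{\on{H}}} Z^{\on{H}}$, hence the base change of $\on{SP}_X$ along the closed immersion $Z^{\on{H}} \hookrightarrow X^{\on{H}}$; since $X$ is valuative and partial properness is stable under base change, this map is partially proper. The first map $Z \to \widehat{X}_{/Z^{\red}}$ is well-defined by the universal property of the pullback once one notes that $Z \to X \to X^{\on{H}}$ factors through $Z^{\on{H}}$ by functoriality of the Heuer specialization, and it is itself a closed immersion: since $Z \to X$ factors through $\widehat{X}_{/Z^{\red}}$, the natural map $Z \to Z \times_X \widehat{X}_{/Z^{\red}}$ is an isomorphism, identifying $Z \to \widehat{X}_{/Z^{\red}}$ with the base change of the closed immersion $Z \to X$ along the open immersion $\widehat{X}_{/Z^{\red}} \to X$ provided by \cite[Proposition 4.22]{Gle24} (applicable since the closed subset $Z^{\red} \subseteq X^{\red}$ is constructible). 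Composing the two partially proper maps yields partial properness of $\on{SP}_Z$. The main subtlety that remains is the auxiliary fact that closed immersions of v-sheaves are partially proper, which reduces to the observation that any closed sub-v-sheaf of $\Spa(R,R^+)$ containing $\Spa(R, R^{\circ})$ is all of $\Spa(R,R^+)$.
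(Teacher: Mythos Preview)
Your proof is correct in substance, but the parenthetical claim that $Z^{\red}\subseteq X^{\red}$ is constructible is unjustified and in general false (a closed subscheme of a perfect scheme need not be cut out by finitely many equations). Fortunately you do not need it: base change of a closed immersion along \emph{any} map of v-sheaves is again a closed immersion, so the identification $Z\cong Z\times_X \widehat{X}_{/Z^{\red}}$ already shows $Z\to \widehat{X}_{/Z^{\red}}$ is a closed immersion without appealing to \cite[Proposition 4.22]{Gle24}. With that adjustment your argument is complete.

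Your route differs from the paper's, which is much shorter. Rather than introducing the formal neighborhood $\widehat{X}_{/Z^{\red}}$ as an intermediate object and factoring $\on{SP}_Z$ through it, the paper argues by cancellation: the composite $Z\to X\to X^{\on{H}}$ is partially proper (closed immersion followed by $\on{SP}_X$), the map $Z^{\on{H}}\to X^{\on{H}}$ is injective and hence separated, so $Z\to Z^{\on{H}}$ is partially proper. Your factorization makes the geometry more explicit, but the paper's cancellation is a cleaner one-line maneuver that avoids formal neighborhoods entirely. Likewise, your direct verification that $Z$ is a prekimberlite is fine, but the paper simply cites \cite[Proposition 4.41.(2)]{Gle24} for this step.
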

\begin{proof}
	By \cite[Proposition 4.41.(2)]{Gle24} $Z$ is a prekimberlite. 
	The map $Z^{\on{H}}\to X^{\on{H}}$ is separated since it is injective.
	The map $Z\to  X^{\on{H}}$ is partially proper and by cancellation $Z\to Z^{\on{H}}$ is partially proper.
\end{proof}

\begin{proposition}
	\label{valuativeprekimbfiberprod}
	The category of valuative prekimberlites is stable under fiber product.	
\end{proposition}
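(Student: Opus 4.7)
The plan is to let $W := X \times_Y Z$ where $X, Y, Z$ are valuative prekimberlites, and verify that $\on{SP}_W \colon W \to W^{\on{H}}$ is partially proper. First, $W$ is a prekimberlite by \Cref{stabilityprekimberlites}, and the separatedness of $\on{SP}_W$ is automatic from formal separatedness of $W$ (as recalled just before \Cref{valuative prekimberlite defi}). So the whole issue is to verify the valuative criterion of partial properness for $\on{SP}_W$.

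A preliminary step I would do is to identify $W^{\on{H}}$ explicitly as the fiber product $X^{\on{H}} \times_{Y^{\on{H}}} Z^{\on{H}}$. The reduction functor $\red$ is a right adjoint, so it commutes with limits: $W^\red = X^\red \times_{Y^\red} Z^\red$. The functor $S \mapsto S^{\diamond/\circ\circ}$ on $\PSch$ is obtained by v-sheafifying the presheaf $\Spa(R,R^+) \mapsto S(\Spec R^+_\red)$, which manifestly preserves finite limits; since v-sheafification also preserves finite limits, we get the desired identification. Functoriality of the construction in \Cref{construction H specialization} shows that $\on{SP}_W$ is identified with the product map $\on{SP}_X \times_{\on{SP}_Y} \on{SP}_Z$ under this identification.

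With this identification in hand, I would solve the lifting problem as follows. Given a commutative square
\begin{center}
\begin{tikzcd}
\Spa(R,R^\circ) \arrow{r}{h} \arrow{d} & W \arrow{d}{\on{SP}_W} \\
\Spa(R,R^+) \arrow{r}{g} & W^{\on{H}} = X^{\on{H}} \times_{Y^{\on{H}}} Z^{\on{H}}
\end{tikzcd}
\end{center}
I project $h$ and $g$ to their $X$-, $Y$-, and $Z$-components. Since each of $X$, $Y$, $Z$ is a valuative prekimberlite, the valuative criterion produces unique lifts $\tilde h_X \colon \Spa(R,R^+) \to X$, $\tilde h_Y \colon \Spa(R,R^+) \to Y$, and $\tilde h_Z \colon \Spa(R,R^+) \to Z$. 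To assemble these into a map to $W$, it suffices to check that the compositions $\Spa(R,R^+) \xrightarrow{\tilde h_X} X \to Y$ and $\Spa(R,R^+) \xrightarrow{\tilde h_Z} Z \to Y$ both equal $\tilde h_Y$. Each composition is a lift of the original $Y$-component of $h$ over $Y^{\on{H}}$, so uniqueness in the valuative criterion for $Y$ forces them to agree with $\tilde h_Y$. Uniqueness of the lift $\Spa(R,R^+) \to W$ itself follows from uniqueness in each factor.

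The main point that needs care is the identification $W^{\on{H}} \simeq X^{\on{H}} \times_{Y^{\on{H}}} Z^{\on{H}}$ and the compatibility of the Heuer specialization map with fiber products; once this is in place, the rest is a formal diagram chase. Apart from that, everything reduces to applying the hypothesis on $X$, $Y$, $Z$ factorwise.
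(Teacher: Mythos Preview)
Your proof is correct and self-contained, but the paper takes a different route. Instead of verifying the lifting property componentwise, the paper first rewrites $W = X \times_Y Z$ as the pullback of $X \times Z \to Y \times Y$ along the diagonal $\Delta_Y$, which is a formally adic closed immersion since $Y$ is formally separated; then \Cref{formallyadicclosedisvaluative} reduces to showing that the \emph{product} $X \times Z$ is valuative. For the product, the paper factors $\on{SP}_{X\times Z}\colon X \times Z \to X^{\on{H}} \times Z^{\on{H}}$ as $X \times Z \to X^{\on{H}} \times Z \to X^{\on{H}} \times Z^{\on{H}}$, each arrow being a base change of a partially proper map, hence partially proper; the composition therefore is too.

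Your approach avoids the dependence on \Cref{formallyadicclosedisvaluative} and is arguably more transparent: once you have the identification $W^{\on{H}} \simeq X^{\on{H}} \times_{Y^{\on{H}}} Z^{\on{H}}$ and the compatibility of $\on{SP}$ with fiber products, the lifting is a straightforward diagram chase. The paper's approach is slicker in that it packages the argument into closure properties of partially proper maps (base change and composition), and it foregrounds the diagonal trick, which recurs throughout the paper (e.g.\ in \Cref{stabilityprekimberlites} and \Cref{stable-under-cartesian}). Both proofs implicitly use that $(-)^{\on{H}}$ commutes with finite limits; you make this explicit, while the paper leaves it tacit in the product case.
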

\begin{proof}
If $X=Z\times_W Y$ we can rewrite this as the base change of $Z\times Y\to W\times W$ along $W\to W\times W$ which is a formally adic closed immersion.
By \Cref{formallyadicclosedisvaluative}, it suffices to show that $Z\times Y$ is valuative. 
Now the map $Z\times Y\to Z^{\on{H}}\times Y$ is partially proper, and the map $Z^{\on{H}}\times Y\to Z^{\on{H}}\times Y^{\on{H}}$ also is, so their composition is partially proper as well.
\end{proof}

A technical subtlety when studying the valuative property of prekimberlites is that, in most cases, if $S$ is a separated scheme, then $S^{\diamond/\circ\circ}$ is far from being a separated v-sheaf.
    Nevertheless, they satisfy a weaker form of separatedness. 
    \begin{definition}
	    Let $X\to Y$ be a map of small v-sheaves.
	    \begin{enumerate}
		    \item We say that $X\to Y$ is \textit{weakly separated} if the diagonal $\Delta:X\to X\times_Y X$ is partially proper.
			    When $Y=\ast$ we simply say that $X$ is a \textit{weakly separated v-sheaf}.
		    \item We say that $X\to Y$ is \textit{weakly partially proper} if it is weakly separated and satisfies the valuative criterion of partial properness.  
			    When $Y=\ast$ we simply say that $X$ is a \textit{weakly partially proper v-sheaf}.
	    \end{enumerate}
    \end{definition}
    \begin{remark}
    Note that a map $X\to Y$ is separated (respectively partially proper) if and only if it is weakly separated and quasiseparated (respectively weakly partially proper and quasiseparated).
    \end{remark}

\begin{proposition}
	\label{partial proper v-local}
	Let $f:X\to Y$ be a map of v-sheaves and $g:Z\to Y$ a v-cover with base change map $\tilde{f}:X\times_Y Z \to Z$. 
	If $\tilde{f}$ satisfies the valuative criterion of partial properness, then $f$ satisfies the valuative criterion of partial properness.
	In particular, if $\tilde{f}$ is partially proper, then $f$ is partially proper.
\end{proposition}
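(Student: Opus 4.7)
The plan is to prove both uniqueness and existence in the valuative criterion for $f$ by pulling back to $Z$ via the v-cover $g$, applying the hypothesis on $\tilde{f}$, and then v-descending the result. So fix an affinoid perfectoid $\Spa(R,R^+)$ together with a commutative square consisting of $\alpha\co\Spa(R,R^\circ)\to X$ and $\beta\co\Spa(R,R^+)\to Y$ with $f\circ\alpha=\beta|_{\Spa(R,R^\circ)}$. Because $g\co Z\to Y$ is a v-cover, we may choose a v-cover $h\co\Spa(R',R'^+)\to\Spa(R,R^+)$ refining $\beta$ together with a map $\beta'\co\Spa(R',R'^+)\to Z$ lifting $\beta\circ h$.

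For existence, first note that $(\alpha\circ h|_{\Spa(R',R'^\circ)},\beta'|_{\Spa(R',R'^\circ)})$ defines a map $\Spa(R',R'^\circ)\to W:=X\times_Y Z$ over $\beta'|_{\Spa(R',R'^\circ)}$. Applying the valuative criterion to $\tilde{f}\co W\to Z$ produces a unique extension $\Spa(R',R'^+)\to W$ over $\beta'$, whose projection to $X$ yields a candidate $\gamma'\co\Spa(R',R'^+)\to X$ refining the would-be lift of $\beta$. To descend $\gamma'$, set $\Spa(R'',R''^+):=\Spa(R',R'^+)\times_{\Spa(R,R^+)}\Spa(R',R'^+)$ with projections $p_1,p_2$, and refine further by a v-cover $k\co\Spa(R''',R'''^+)\to\Spa(R'',R''^+)$ together with a lift $\delta\co\Spa(R''',R'''^+)\to Z$ of $\beta\circ h\circ p_1\circ k=\beta\circ h\circ p_2\circ k$. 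Then $(\gamma'\circ p_i\circ k,\delta)$ gives two maps $\Spa(R''',R'''^+)\to W$ over $\delta$; on the analytic locus both equal $(\alpha\circ h\circ p_1\circ k|_{\Spa(R''',R'''^\circ)},\delta|_{\Spa(R''',R'''^\circ)})$ since $h\circ p_1=h\circ p_2$ and $\gamma'|_{\Spa(R',R'^\circ)}=\alpha\circ h|_{\Spa(R',R'^\circ)}$ by construction. The uniqueness half of the valuative criterion for $\tilde{f}$ forces the two maps into $W$ to coincide, hence $\gamma'\circ p_1\circ k=\gamma'\circ p_2\circ k$, and v-descent along $k$ followed by descent along $h$ (using that $X$ is a v-sheaf) produces the sought $\gamma\co\Spa(R,R^+)\to X$.

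Uniqueness of $\gamma$ is similar but shorter: if $\gamma_1,\gamma_2\co\Spa(R,R^+)\to X$ both lift $\beta$ and restrict to $\alpha$ on $\Spa(R,R^\circ)$, then $(\gamma_i\circ h,\beta')\co\Spa(R',R'^+)\to W$ are two lifts of $\beta'$ agreeing on $\Spa(R',R'^\circ)$, so by the uniqueness for $\tilde{f}$ they coincide; v-descent along $h$ gives $\gamma_1=\gamma_2$. The final sentence of the statement then follows from the standard fact that separatedness is v-local on the target (this is already known and is recorded elsewhere in \cite{Sch17}).

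The main subtlety I expect is the descent step in existence: a priori $\gamma'\circ p_1$ and $\gamma'\circ p_2$ are two lifts over $Y$ of the same map, and one cannot invoke the valuative criterion for $f$ to compare them without circularity. The trick is to pass to a further v-cover where both lifts can be upgraded to maps into $W$ lifting a common map to $Z$, where the \emph{uniqueness} clause of the valuative criterion for $\tilde{f}$ (not its existence clause) can be applied. Once that comparison is in hand, descent is formal because $X$ is a v-sheaf.
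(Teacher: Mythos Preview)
Your proof is correct and follows the same approach as the paper: lift to $Z$ along a v-cover, apply the valuative criterion for $\tilde{f}$, and descend using that $X$ is a v-sheaf. Your treatment is in fact more careful than the paper's, which dispatches the descent in one sentence (``Uniqueness allows us to descend''). One small simplification: the further v-cover $k$ with the fresh lift $\delta$ is not needed, since on $\Spa(R'',R''^+)$ you already have a lift of $\beta\circ h\circ p_1=\beta\circ h\circ p_2$ to $Z$, namely $\beta'\circ p_1$ (or $\beta'\circ p_2$); pairing both $\gamma'\circ p_i$ with $\beta'\circ p_1$ yields two maps to $W$ over $\beta'\circ p_1$ that agree on $\Spa(R'',R''^\circ)$, and uniqueness for $\tilde f$ finishes.
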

\begin{proof}
	That separatedness is v-local is the content of \cite[Proposition 10.11.(ii)]{Sch17}. 
	So we are immediately reduced to showing the first statement.

	Suppose we are given a commutative diagram
	\begin{center}
	\begin{tikzcd}
		\Spa(R,R^\circ) \arrow{r} \arrow{d}  & X \arrow{d} \\
		\Spa (R,R^+)\arrow{r} & Y.
	\end{tikzcd}
	\end{center}
	By assumption, there exists a v-cover $\Spa(R_0,R_0^+)\to \Spa(R,R^+)$ for which we can form a commutative diagram of the form
	\begin{center}
	\begin{tikzcd}
		\Spa(R_0,R_0^\circ) \arrow{r} \arrow{d}  & X\times_Y Z \arrow{d} \ar{r} & X \ar{d}  \\
		\Spa (R_0,R_0^+)\arrow{r} & Z \ar{r} & Y.
	\end{tikzcd}
	\end{center}
	By hypothesis, we can find a lift $\Spa(R_0,R_0^+)\to X\times_Y Z$ and hence we find a map $\Spa(R_0,R_0^+)\to X$.
	Moreover, by assumption this map is unique.
	Uniqueness allows us to descend this to a map of the form $\Spa(R,R^+)\to X$ fitting in the original commutative diagram.
\end{proof}

    \begin{proposition}
	    \label{closedimmersiongivespartiallyproper}
	    Suppose that $Z\to X$ is a closed immersion of schemes, then $Z^{\diamond/\circ\circ}\to X^{\diamond/\circ\circ}$ is a partially proper injection. 
	    In particular, if $X\to Y$ is a separated map of schemes, then $X^{\diamond/\circ\circ}\to Y^{\diamond/\circ\circ}$ is weakly separated. 
    \end{proposition}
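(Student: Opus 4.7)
The approach has two parts. First, I would show that $Z^{\diamond/\circ\circ}\to X^{\diamond/\circ\circ}$ is an injection of v-sheaves, which takes care of separatedness and of the uniqueness of lifts in the valuative criterion. On the presheaf level, $Z(\Spec R^+_{\red})\hookrightarrow X(\Spec R^+_{\red})$ is injective since closed immersions of schemes are monomorphisms, and sheafification is exact, so the induced map on v-sheaves is also injective; in particular the diagonal is an isomorphism and the map is already separated.

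Second, I would verify the existence half of the valuative criterion. Given a solid square with $f:\Spa(R,R^\circ)\to Z^{\diamond/\circ\circ}$ and $g:\Spa(R,R^+)\to X^{\diamond/\circ\circ}$, it suffices to show that $g$ itself factors through the subsheaf $Z^{\diamond/\circ\circ}$, the lift being then unique. Since factoring through a monomorphism of v-sheaves is a v-local condition on the source, after passing to suitable v-covers I may represent $g$ in the presheaf by a morphism $\beta:\Spec R^+_{\red}\to X$ and $f$ by a morphism $\alpha:\Spec R^\circ_{\red}\to Z$; commutativity then translates to the assertion that the composition $\Spec R^\circ_{\red}\hookrightarrow\Spec R^+_{\red}\xrightarrow{\beta} X$ factors through $Z$. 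The key ring-theoretic input is that the inclusion $R^+_{\red}=R^+/R^{\circ\circ}\hookrightarrow R^\circ/R^{\circ\circ}=R^\circ_{\red}$ is injective (since $R^{\circ\circ}\subseteq R^+\subseteq R^\circ$). If $\calI\subseteq\calO_X$ denotes the ideal sheaf of $Z$, the pullback ideal $J\subseteq R^+_{\red}$ must vanish after extension to $R^\circ_{\red}$, and therefore vanishes by injectivity, forcing $\beta$ to factor through $Z$ as desired.

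The ``in particular'' statement follows by applying the main claim to the diagonal. Indeed, sheafification is exact and the defining presheaf $S\mapsto S(\Spec R^+_{\red})$ commutes with finite limits, so the functor $S\mapsto S^{\diamond/\circ\circ}$ commutes with fiber products, giving $(X\times_Y X)^{\diamond/\circ\circ}\simeq X^{\diamond/\circ\circ}\times_{Y^{\diamond/\circ\circ}}X^{\diamond/\circ\circ}$. For $X\to Y$ separated, the diagonal $\Delta:X\to X\times_Y X$ is a closed immersion, so the first part of the proposition exhibits the diagonal of $X^{\diamond/\circ\circ}\to Y^{\diamond/\circ\circ}$ as a partially proper injection, which is precisely weak separatedness. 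The main obstacle I anticipate is the bookkeeping in the sheafification reduction, since the v-covers needed to present $f$ and $g$ on the presheaf level may not match up directly; this is circumvented by the v-local nature of the factorization property, which decouples the cover chosen for $g$ from the cover chosen for $f$.
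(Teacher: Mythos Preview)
Your argument is correct and rests on the same ring-theoretic core as the paper's proof: the injection $R^+_\red\hookrightarrow R^\circ_\red$ forces the pullback of the ideal of $Z$ to vanish. The paper takes a slightly more economical route by first invoking v-locality of partial properness on the target (\Cref{partial proper v-local}) together with the fact that an open cover $U\to X$ of schemes induces an open cover $U^{\diamond/\circ\circ}\to X^{\diamond/\circ\circ}$, thereby reducing to $X=\Spec A$, $Z=\Spec B$. In that case a section of $X^{\diamond/\circ\circ}$ on $\Spa(R,R^+)$ is literally a ring map $A\to R^+_\red$, and the lift $B\to R^+_\red$ is produced in one line from the surjection $A\twoheadrightarrow B$ and the containment $R^+_\red\subseteq R^\circ_\red$; no ideal-sheaf language is needed.

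Your worry about mismatched v-covers in the sheafification step is in fact moot. As the paper records (citing Heuer), the presheaf $(R,R^+)\mapsto S(\Spec R^+_\red)$ already satisfies the v-sheaf condition on affinoid perfectoids, so both $f$ and $g$ are represented on the nose by scheme morphisms out of $\Spec R^\circ_\red$ and $\Spec R^+_\red$, with no passage to a cover required. Once you use this, your global ideal-sheaf argument collapses to exactly the paper's affine computation. Your explicit treatment of the ``in particular'' clause (via left-exactness of $(-)^{\diamond/\circ\circ}$ and applying the first part to the diagonal) is a detail the paper leaves to the reader.
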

    \begin{proof}
	    \Cref{partial proper v-local} says that being partially proper is v-local on the target. 
	    If $U\to X$ is an open cover of schemes, then $U^{\diamond/\circ\circ}\to X^{\diamond/\circ\circ}$ is also an open cover, so we may assume that $X=\Spec A$ and $Z=\Spec B$.
	    A map $\Spa(R,R^\circ)\to Z^{\diamond/\circ\circ}$ is the same as a map of rings $B\to R^\circ_{\red}$, and a map $\Spa(R,R^+)\to X^{\diamond/\circ\circ}$ is the same as a map of rings $A\to R^+_{\red}$. But we have an inclusion of rings $R_\red^\circ\supseteq R_\red^+$, and a surjection of rings $A\to B$. 
	    This constructs the unique lift $B\to R_\red^+$. 
    \end{proof}

    \begin{example}
	    When $X=\bbA^1$, then $X^{\diamond/\circ\circ}$ is not separated over $\ast$. Indeed, let $Y$ fit in the following Cartesian diagram
	    \begin{center}
	    \begin{tikzcd}
	     Y\arrow{r} \arrow{d}  & \arrow{d} X\times X \\
	     X^{\diamond/\circ\circ}\arrow{r} & X^{\diamond/\circ\circ}\times X^{\diamond/\circ\circ}.
	    \end{tikzcd}
	    \end{center}
	    Then $Y$ is the formal neighborhood of $X\times X$ along the diagonal. 
	    This is a partially proper open subsheaf of $X\times X$, but not a closed subsheaf. 
    \end{example}

\begin{lemma}
	\label{enhancedporperness}
Let $A\subseteq K$ be rings with the property that every local ring of $\Spec A$ is a valuation ring and has non-empty intersection with $\Spec K$, and suppose that the map $\Spec K\to \Spec A$ is pro-open. Let $X\to Y$ be a proper map of schemes, then the following commutative diagram has a unique solution for the dashed arrow
\begin{center}
\begin{tikzcd}
	\Spec K\arrow{r}{f} \arrow{d}  &X \arrow{d} \\
 \Spec A\arrow{r}\ar[dashed]{ur} &  Y.
\end{tikzcd}
\end{center}
\end{lemma}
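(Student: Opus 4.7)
The plan is to apply the classical valuative criterion of properness at each stalk of $\Spec A$ and then glue the resulting local lifts. After base change along $\Spec A\to Y$, the problem reduces to constructing a unique section $\sigma\colon \Spec A\to X_A$ of the proper morphism $X_A:=X\times_Y \Spec A\to \Spec A$ extending the tautological lift $\Spec K\to X_A$ of $f$.

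For each prime $\mathfrak{p}\in \Spec A$, the local ring $A_{\mathfrak{p}}$ is a valuation ring, and the base change $\Spec(K\otimes_A A_{\mathfrak{p}})$ is a non-empty pro-open subset of $\Spec A_{\mathfrak{p}}$ (non-emptiness comes from the hypothesis that $\Spec A_{\mathfrak{p}}$ meets $\Spec K$, while pro-openness is preserved under base change). Since the spectrum of a valuation ring is a chain of primes, every non-empty open contains the generic point; hence so does every non-empty pro-open. Restriction yields a morphism $\Spec L_{\mathfrak{p}}\to X_A$ over $\Spec A_{\mathfrak{p}}$, where $L_{\mathfrak{p}}=\on{Frac}(A_{\mathfrak{p}})$. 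The classical valuative criterion applied to the proper morphism $X_A\times_{\Spec A}\Spec A_{\mathfrak{p}}\to \Spec A_{\mathfrak{p}}$ then produces a unique lift $\sigma_{\mathfrak{p}}\colon \Spec A_{\mathfrak{p}}\to X_A$. Uniqueness in the classical criterion makes these mutually compatible: for $\mathfrak{q}\subseteq \mathfrak{p}$, both $\sigma_{\mathfrak{p}}|_{\Spec A_{\mathfrak{q}}}$ and $\sigma_{\mathfrak{q}}$ lift the same morphism from $\Spec L_{\mathfrak{q}}$ and therefore coincide.

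To globalize, I would use a standard limit/approximation argument to spread each $\sigma_{\mathfrak{p}}$ to a morphism $U_{\mathfrak{p}}\to X_A$ on an open neighborhood $U_{\mathfrak{p}}\ni\mathfrak{p}$, invoking Grothendieck's limit formalism after finite-presentation reductions of $X\to Y$. The open sets $U_{\mathfrak{p}}$ cover $\Spec A$, and on overlaps the spread-out sections agree stalkwise by the uniqueness established above, hence agree as morphisms and glue into the desired section $\sigma$. Uniqueness of $\sigma$ itself is immediate: two candidate lifts agree at every stalk by the uniqueness in the classical valuative criterion, so they agree as morphisms.

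The main obstacle is precisely this spreading step: since $Y$ is not assumed noetherian, extending a stalk-valued point of a finite-type morphism to an open neighborhood requires care. The cleanest route is to appeal to Grothendieck's approximation theorems after reducing $X\to Y$ to the finitely presented case by Zariski-locally pulling back along an approximation of $\Spec A\to Y$, but depending on how general the hypotheses on $Y$ actually are, one may need a more direct argument that exploits the specific pro-open structure of $\Spec K\to \Spec A$ in order to bypass the limit formalism.
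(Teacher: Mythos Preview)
Your stalk-local argument is correct and matches the paper's: at each $\mathfrak{p}\in\Spec A$ the pro-open hypothesis forces $\Spec(K\otimes_A A_\mathfrak{p})$ to contain the generic point of $\Spec A_\mathfrak{p}$, and the valuative criterion produces a unique lift $\sigma_\mathfrak{p}$. The difficulty you flag in the globalization step is, however, a genuine gap. Since $X\to Y$ is only proper---hence of finite type, but not necessarily of finite presentation---spreading $\sigma_\mathfrak{p}\colon\Spec A_\mathfrak{p}\to X_A$ to an open $U_\mathfrak{p}$ via limit arguments would require, after choosing an affine chart $\Spec A[x_1,\dots,x_n]/I\subseteq X_A$ containing the image, that all relations in $I$ hold over some $A_f$; when $I$ is not finitely generated there is no reason a single $f$ suffices. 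Your proposed ``finite-presentation reductions of $X\to Y$'' are not a standard maneuver in this setting and would themselves need substantial justification.

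The paper avoids the issue by a different globalization. It takes the schematic image $Z\subseteq X_A$ of $\Spec K\to X_A$, a globally defined closed subscheme, and shows directly that $Z\to\Spec A$ is an isomorphism. For any $z\in Z$ over $x\in\Spec A$, the construction of the schematic image (combined with its compatibility with localization) yields injections
\[
A_x\hookrightarrow\calO_{Z,z}\hookrightarrow K_x\hookrightarrow\on{Frac}(A_x);
\]
since $A_x$ is a valuation ring and $\calO_{Z,z}$ dominates it inside its fraction field, $A_x\to\calO_{Z,z}$ is an isomorphism. This forces $z$ to be the unique point over $x$ and shows $Z\to\Spec A$ is a homeomorphism inducing isomorphisms on all stalks, hence an isomorphism of schemes; its inverse is the desired section. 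No spreading, gluing, or approximation is needed.
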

\begin{proof}
Uniqueness follows from separatedness since by hypothesis $A\subseteq K$, and $\Spec K$ is Zariski dense in $\Spec A$. 
Without losing generality we may assume $Y=\Spec A$. 
Let $Z\subseteq X$ be the schematic image of $f$ \cite[Tag 01R7]{StaProj}.
The points in $|Z|$ are those points that specialize from a point in $f(\Spec K)$ \cite[Tag 02JQ]{StaProj}.
For every $x\in \Spec A$, we consider the sequence of maps \[\Spec \on{Frac}(A_x)\to \Spec K_x\to Z_x\to X_x\to \Spec A_x,\]
 obtained from localizing at the local ring of $x$.
 It is still true that $Z_x$ is the schematic image of $\Spec \on{Frac}(K_x)\to X_x$ \cite[Tag 081I]{StaProj}. 
 By the definition of the scheme theoretic image, for every $z\in |Z|$ mapping to $x$, we get injective maps of local rings 
 \[A_x\to \calO_{Z_x,z}\to K_x\to  \on{Frac}(A_x).\] 
 The map $A_x\to \calO_{Z_x,z}$ is necessarily an isomorphism since $A_x$ is a valuation ring and $\calO_{Z_x,z}\subseteq \on{Frac}(A_x)$ dominates $A_x$.
The inverse of this isomorphism provides a lift of $\Spec A_x\to Z_x\to X_x$, and by uniqueness of the lift we know that there is exactly one point $z\in |Z_x|$ with image $x$. 
This shows that $|Z|\to |\Spec A|$ is a homeomorphism inducing isomorphisms of local rings.
This suffices to prove that $Z\to \Spec A$ is an isomorphism of schemes.
\end{proof}

\begin{proposition}
	\label{valuativepreki}
	Suppose that $f:X\to Y$ is a perfectly finitely presented proper map of perfect schemes. Then $X^{\diamond/\circ\circ}\to Y^{\diamond/\circ\circ}$ is weakly partially proper. 
\end{proposition}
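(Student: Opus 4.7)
The plan is to verify the two ingredients of weak partial properness separately. For weak separatedness, note that since $f$ is proper it is separated, so its diagonal $\Delta_f\colon X\to X\times_Y X$ is a closed immersion of perfect schemes. By \Cref{closedimmersiongivespartiallyproper} the induced map $X^{\diamond/\circ\circ}\to (X\times_Y X)^{\diamond/\circ\circ}$ is then partially proper. The presheaf $S\mapsto S^{(\diamond/\circ\circ)_{\on{pre}}}$ is manifestly compatible with fiber products, and since sheafification preserves finite limits the natural map $(X\times_Y X)^{\diamond/\circ\circ}\to X^{\diamond/\circ\circ}\times_{Y^{\diamond/\circ\circ}}X^{\diamond/\circ\circ}$ is an isomorphism. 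Therefore the relative diagonal of $X^{\diamond/\circ\circ}\to Y^{\diamond/\circ\circ}$ is partially proper, i.e., the map is weakly separated.

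For the valuative criterion of partial properness, I would begin with an arbitrary test square and use \Cref{partial proper v-local} to reduce to the case where the test affinoid is a geometric point $\Spa(C,C^+)$ with $C$ a complete algebraically closed non-archimedean field and $C^+\subseteq C^\circ$ a valuation subring. On such ``totally disconnected'' test objects the analytic sheafification entering the definition of $(-)^{\diamond/\circ\circ}$ has no effect, so the data unfolds into a commutative square of perfect schemes
\[
\begin{tikzcd}
\Spec C^\circ_{\red} \ar[r] \ar[d] & X \ar[d,"f"] \\
\Spec C^+_{\red} \ar[r] & Y,
\end{tikzcd}
\]
in which $C^+_{\red}=C^+/C^{\circ\circ}\hookrightarrow C^\circ_{\red}=C^\circ/C^{\circ\circ}$ is the inclusion of a valuation ring into a field. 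Every localization of $C^+_{\red}$ is again a valuation ring, and the image of $\Spec C^\circ_{\red}\to \Spec C^+_{\red}$ is the generic point, which is pro-open and meets every local ring of $\Spec C^+_{\red}$. Since $f$ is proper, \Cref{enhancedporperness} yields a unique lift $\Spec C^+_{\red}\to X$, which translates back to the desired unique lift $\Spa(C,C^+)\to X^{\diamond/\circ\circ}$.

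The main obstacle will be the bookkeeping at the bottom of the second step: one must carefully verify that for the geometric-point test objects obtained after v-covering, morphisms $\Spa(C,C^+)\to S^{\diamond/\circ\circ}$ really do correspond to honest scheme morphisms $\Spec C^+_{\red}\to S$, so that no information is lost in passing to the analytic sheafification. Once this identification is secured, the substance of the argument is discharged by \Cref{enhancedporperness}, which performs the scheme-theoretic lift, and the proposition follows.
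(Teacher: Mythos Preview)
Your proposal is essentially correct and follows the paper's own route: weak separatedness comes from \Cref{closedimmersiongivespartiallyproper} applied to the diagonal, and the valuative criterion is discharged by translating into a scheme diagram and invoking \Cref{enhancedporperness}. One small imprecision: \Cref{partial proper v-local} lets you v-cover the \emph{target} $Y^{\diamond/\circ\circ}$, not reduce the test object to a single geometric point; the paper instead notes that it suffices to test on a basis for the v-topology and takes $\Spa(R,R^+)$ strictly totally disconnected, where $X^{\diamond/\circ\circ}(R,R^+)=X(\Spec R^+_{\red})$ and every local ring of $\Spec R^+_{\red}$ is a valuation ring, so \Cref{enhancedporperness} applies directly---your geometric-point case is the connected special case of this.
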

\begin{proof}
	The valuative criterion of partial properness is v-local on the target (see \Cref{partial proper v-local}), so we may assume that $Y=\Spec A$ is affine.
	Moreover, it suffices to test the criterion on a basis for the v-topology.
	Let $\Spa (R,R^+)$ be a strictly totally disconnected space. 
	Now, $X^{\diamond/\circ\circ}(R,R^+)=X(\Spec R^+_\red)$ since $\Spa( R,R^+)$ splits all open covers. 
	Analogously, $Y^{\diamond/\circ\circ}(R,R^+)=Y(\Spec R^+_\red)$.
	Fix a diagram
	\begin{center}
	\begin{tikzcd}
	 \Spec R^\circ_\red \arrow{r} \arrow{d}  & X \arrow{d} \\
 \Spec R^+_\red \arrow{r} \ar[dashed]{ru} & Y 
	\end{tikzcd}
	\end{center}
	It follows from \Cref{enhancedporperness}, that one can find a unique solution to the dashed arrow above.
	Indeed, the connected components of $\Spec R^+_\red$ are the spectrum of a valuation ring, $\Spec R_\red^\circ\subseteq \Spec R_\red^+$ is pro-open and it meets every connected component.
%
\end{proof}

    Weakly separated maps satisfy, almost by definition, the valuative criterion of separatedness.

\begin{proposition}
	\label{partiallyseparatedisvaluativelyseparated}
	Let $X\to Y$ be a weakly separated map of small v-sheaves and consider the commutative diagram
	\begin{center}
	\begin{tikzcd}
		\Spa(R,R^\circ) \arrow{r} \arrow{d}  & X \arrow{d} \\
		\Spa(R,R^+) \arrow{r} \arrow[dashed]{ru} & Y.
	\end{tikzcd}
	\end{center}
	There exist at most one dashed arrow making the diagram commutative.
\end{proposition}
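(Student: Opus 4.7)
The plan is to extract uniqueness directly from the valuative criterion of partial properness applied to the diagonal $\Delta\colon X\to X\times_Y X$, which is partially proper by the definition of weakly separated. The whole argument is a formal manipulation of the universal property of the fiber product.

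First, suppose $\alpha,\beta\colon \Spa(R,R^+)\to X$ are two hypothetical solutions to the dashed arrow. I would package them into a single map $(\alpha,\beta)\colon \Spa(R,R^+)\to X\times_Y X$; this is well-defined because the commutativity of the outer square forces $\alpha$ and $\beta$ to coincide after postcomposition with $X\to Y$. Writing $f_0\colon \Spa(R,R^\circ)\to X$ for the given top arrow, the restriction of $(\alpha,\beta)$ along $\Spa(R,R^\circ)\hookrightarrow \Spa(R,R^+)$ agrees with $\Delta\circ f_0$, since both $\alpha$ and $\beta$ restrict to $f_0$. This assembles into a commutative square whose right-hand vertical is $\Delta$, whose bottom horizontal is $(\alpha,\beta)$, and whose top horizontal is $f_0$.

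Next, I would apply the valuative criterion of partial properness for $\Delta$ to this square, obtaining a unique map $\gamma\colon \Spa(R,R^+)\to X$ with $\Delta\circ\gamma=(\alpha,\beta)$. Since $\Delta\circ\gamma=(\gamma,\gamma)$, projecting onto each of the two factors of $X\times_Y X$ yields $\alpha=\gamma$ and $\beta=\gamma$, so $\alpha=\beta$. I do not foresee any genuine obstacle here: the proof is purely formal and uses nothing beyond unpacking the definition of weakly separated together with the universal property of the fiber product.
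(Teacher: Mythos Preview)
Your proof is correct and follows essentially the same approach as the paper's: both package two hypothetical lifts into a single map $(\alpha,\beta)\colon \Spa(R,R^+)\to X\times_Y X$, observe that its restriction to $\Spa(R,R^\circ)$ factors through the diagonal, and then invoke partial properness of $\Delta_{X/Y}$ to obtain a lift $\gamma$ with $\Delta\circ\gamma=(\alpha,\beta)$, forcing $\alpha=\beta$.
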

\begin{proof}
	Suppose that we have a map $(f_1,f_2):\Spa(R,R^+)\to X\times_Y X$ such that $\Spa(R,R^\circ)\to X\times_Y X$ factors through the relative diagonal $\Delta_{X/Y}:X\to X\times_Y X$. 
	Since the diagonal is partially proper, there is a unique map $\Spa(R,R^+)\to X$ extending the map $\Spa(R,R^\circ)$ and whose composition with $\Delta_{X/Y}$ gives $(f_1,f_2)$. This proves $f_1=f_2$.
\end{proof}

\begin{proposition}
	\label{standardpropertiesweaklysp}
	Let $f:X\to Y$, $g:Y\to Z$ and $q:\widetilde{Y}\to Y$ be maps of v-sheaves. Let $h=g\circ f$, let $\tilde{f}:\widetilde{X}\to \widetilde{Y}$ the base change of $f$ along $q$. The following hold.
	\begin{enumerate}
		\item If $f$ and $g$ are weakly separated, then $h$ is weakly separated.
		\item If $f$ is weakly separated, then $\tilde{f}$ is weakly separated.
		\item If $q$ is a v-cover and $\tilde{f}$ is weakly separated, then $f$ is weakly separated.
		\item If $h$ is weakly separated and $g$ is weakly separated, then $f$ is weakly separated.
	\end{enumerate}
\end{proposition}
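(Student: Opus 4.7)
The unifying idea is that each of the four items reduces to a statement about partial properness of an appropriate diagonal, together with the two basic facts that partial properness is preserved by composition and by base change, and that by \Cref{partial proper v-local} the valuative criterion is v-local on the target (separatedness being v-local by \cite[Proposition 10.11.(ii)]{Sch17}). I would state once at the outset the key Cartesian identification
\[
\begin{tikzcd}
X \arrow[r,"\Delta_f"] \arrow[d,equal] & X\times_Y X \arrow[d] \\
X \arrow[r,"\Delta_h"] & X\times_Z X,
\end{tikzcd}
\]
whose right vertical arrow is itself the base change of $\Delta_g\co Y\to Y\times_Z Y$ along $X\times_Z X\to Y\times_Z Y$. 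A brief unfolding of the definitions shows the square is Cartesian; this single diagram carries the content of (1) and (4).

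For (1), I would factor $\Delta_h$ as $\Delta_f$ followed by the right vertical map. By hypothesis $\Delta_g$ is partially proper, hence so is its base change (the right vertical); composing with the partially proper $\Delta_f$ yields $\Delta_h$ partially proper, so $h$ is weakly separated. For (4), the same diagram shows that $\Delta_f$ is a base change of $\Delta_h$; since $\Delta_h$ is partially proper, so is $\Delta_f$. In particular the hypothesis that $g$ be weakly separated is not needed for (4), but I would keep it in the statement for symmetry with the classical separatedness case.

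For (2) and (3), a routine computation gives
\[
\widetilde X\times_{\widetilde Y}\widetilde X \;=\; (X\times_Y X)\times_Y \widetilde Y,
\]
so the natural map $\widetilde X\times_{\widetilde Y}\widetilde X\to X\times_Y X$ is the base change of $q\co \widetilde Y\to Y$ along $X\times_Y X\to Y$, and $\Delta_{\tilde f}$ is identified with the base change of $\Delta_f$ along this map. For (2) this immediately gives $\Delta_{\tilde f}$ partially proper from $\Delta_f$ partially proper. For (3) the displayed map is additionally a v-cover (base change of the v-cover $q$), so \Cref{partial proper v-local} applied to $\Delta_f$ yields the converse.

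There is no real obstacle: the only non-formal input is \Cref{partial proper v-local} for item (3), and the rest is the mechanical verification that certain squares of fiber products are Cartesian. I would write these verifications compactly and let the four items share the setup of the Cartesian diagram above rather than redoing the computation each time.
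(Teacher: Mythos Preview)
Your proof is correct and follows essentially the same approach as the paper: both reduce (1)--(3) to the stability of partial properness under composition, base change, and v-descent, using the same identifications of diagonals. For (4) the paper uses cancellation (noting $X\times_Y X\to X\times_Z X$ is injective hence separated) rather than your Cartesian-square base change, but these are equivalent observations; your remark that the hypothesis on $g$ is superfluous in (4) is correct, and indeed the paper's own argument does not use it either.
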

\begin{proof}
	Recall that being partially proper is stable under base change, composition and can be verified v-locally on the target.

	For the first statement consider the map $X\to X\times_Y X\to X\times_Z X$, it suffices to prove $X\times_Y X\to X\times_Z X$ is partially proper, but it comes from base change of $X\times_Z X\to Y\times_Z Y$ along $Y\to Y\times_Z Y$ which is partially proper.

	For the second and third statements, note that $\widetilde{X}\to \widetilde{X}\times_{\widetilde{Y}} \widetilde{X}\to \widetilde{Y}$ is the base change of $X\to X\times_Y X\to Y$ along $q$ (see also \Cref{partial proper v-local}). 

	For the fourth statement, note that $X\times_Y X\to X\times_Z X$ is injective and in particular separated. 
	Since $X\to X\times_Z X$ is assumed to be partially proper, by cancellation then $X\to X\times_Y X$ also is.
\end{proof}

\begin{proposition}
	\label{standardpropertiesweaklypar}
	Let the notation be as in \Cref{standardpropertiesweaklysp}. The following hold.
	\begin{enumerate}
		\item If $f$ and $g$ are weakly partially proper, then $h$ is weakly partially proper.
		\item If $f$ is weakly partially proper, then $\tilde{f}$ is weakly partially proper.
		\item If $q$ is a v-cover and $\tilde{f}$ is weakly partially proper, then $f$ is weakly partially proper.
		\item If $h$ is weakly partially proper and $g$ is weakly separated, then $f$ is weakly partially proper.
	\end{enumerate}
\end{proposition}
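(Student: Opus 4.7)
The plan is to reduce each of the four assertions to the already-established weakly separated analogue in \Cref{standardpropertiesweaklysp} together with the corresponding statement for the valuative criterion of partial properness, which is a standard formal property of liftings. In every case the weakly separated part is immediate, so the real content lies in checking the valuative criterion and (especially in (4)) verifying that the lift so produced actually solves the \emph{original} lifting problem. Throughout, uniqueness of lifts is controlled by \Cref{partiallyseparatedisvaluativelyseparated}.

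For (1), given a diagram $\Spa(R,R^\circ)\to X$ over $\Spa(R,R^+)\to Z$, the plan is to first compose along $f$ and invoke the valuative criterion for $g$ to produce a map $\Spa(R,R^+)\to Y$ extending $\Spa(R,R^\circ)\to Y$, and then apply the valuative criterion for $f$ to lift this further to $\Spa(R,R^+)\to X$. The weakly separated conclusion is \Cref{standardpropertiesweaklysp}.(1). For (2), I would produce the lift by composing $\Spa(R,R^\circ)\to \widetilde{X}\to X$, using the valuative criterion for $f$ to obtain $\Spa(R,R^+)\to X$, and then combining with the given map $\Spa(R,R^+)\to \widetilde{Y}$ via the universal property of the fiber product $\widetilde{X}=X\times_Y\widetilde{Y}$; weak separatedness is \Cref{standardpropertiesweaklysp}.(2).

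For (3), the valuative criterion of partial properness is v-local on the target by \Cref{partial proper v-local}, so the same argument used there applies verbatim to transfer the criterion from $\tilde f$ to $f$; the weakly separated part is \Cref{standardpropertiesweaklysp}.(3). For (4), given a diagram $\Spa(R,R^\circ)\to X$ compatible with $\Spa(R,R^+)\to Y$, I would further compose with $g$ and use the valuative criterion for $h$ to obtain a lift $\ell\co \Spa(R,R^+)\to X$. The subtle point to check is that $f\circ \ell$ coincides with the prescribed map $\Spa(R,R^+)\to Y$; both maps into $Y$ restrict to the same map on $\Spa(R,R^\circ)$, and since $g$ is weakly separated, \Cref{partiallyseparatedisvaluativelyseparated} forces $f\circ\ell$ to equal the original map, so $\ell$ is a solution of the original lifting problem. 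Weak separatedness of $f$ follows from \Cref{standardpropertiesweaklysp}.(4).

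The main obstacle is precisely this compatibility verification in (4): the valuative criterion for $h$ produces \emph{some} lift to $X$, but one must argue that this lift sits over the originally given map to $Y$ rather than over another one. This is where the hypothesis that $g$ (and not just $h$) is weakly separated enters in an essential way, through \Cref{partiallyseparatedisvaluativelyseparated}; everything else in the proposition is a straightforward unwinding of definitions combined with the standard stability properties of partially proper morphisms already recalled in \Cref{partial proper v-local}.
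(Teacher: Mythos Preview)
Your proof is correct. Parts (1)--(3) match the paper's argument essentially verbatim; in fact your treatment of (3) is cleaner, since you simply invoke \Cref{partial proper v-local} where the paper re-runs that descent argument by hand.

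For (4) you take a different route than the paper. The paper argues by the standard graph factorization: since $g$ is weakly separated, $\Delta_{Y/Z}\colon Y\to Y\times_Z Y$ is partially proper, hence its base change (the graph $\Gamma_f\colon X\to X\times_Z Y$) is weakly partially proper by (2); the projection $X\times_Z Y\to Y$ is the base change of $h$, hence weakly partially proper by (2); and then (1) gives that $f=\pr_Y\circ\Gamma_f$ is weakly partially proper. Your approach instead verifies the valuative criterion for $f$ directly, using \Cref{partiallyseparatedisvaluativelyseparated} to force $f\circ\ell$ to coincide with the prescribed map to $Y$. Both arguments are short and correct; yours has the virtue of making transparent exactly where the weak separatedness of $g$ is used, while the paper's cancellation argument is the more packaged, reusable form.
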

\begin{proof}
	For the first statement, fix compatible maps $t:\Spa(R,R^+)\to Z$ and $\Spa(R,R^\circ)\to X$, we can lift $t$ first to $Y$ and then to $X$.
	The second statement follows from the universal property of base change.

	For the third statement we fix compatible maps $\Spa(R,R^\circ)\to X$ and $\Spa(R,R^+)\to Y$. 
	We fix a v-cover $\Spa(T,T^+)\to \Spa(R,R^+)$ such that the map $\Spa(T,T^+)\to Y$ lifts to $\Spa(T,T^+)\to \widetilde{Y}$ and we let $\Spa(S,S^+)=\Spa(R,R^+)\times_{\Spa(T,T^+)} \Spa(R,R^+)$. 
	We consider the commutative diagram.
	\begin{center}
	\begin{tikzcd}
	& &\Spa(R,R^\circ) \ar{rd} & &	\\
		\Spa(S,S^\circ) \arrow[shift right]{r} \arrow[shift left]{r} \arrow{d}  &	\Spa(T,T^\circ) \arrow{ru} \arrow{r}{o} \arrow{d}  &\widetilde{X} \arrow{r} \arrow{d}  &  X \arrow{d} \\
		\Spa(S,S^+) 	\arrow[shift right]{r} \arrow[shift left]{r} 	& \Spa(T,T^+) \ar[dashed]{ru}{l} \ar{rd} \arrow{r}{o^+} & \widetilde{Y}  \arrow{r} & Y\\
										& &\Spa(R,R^+) \ar[dashed]{ruu}{e} \ar{ru} & &	
	\end{tikzcd}
	\end{center}
	The existence of the dashed arrow labeled $e$ follows from the existence of the dashed arrow labeled $l$. Indeed, since $X$ is weakly separated and the two maps $\Spa(S,S^\circ)\to X$ agree, then the two maps $\Spa(S,S^+)\to X$ induced by $l$ also agree and define descent datum for a map $\Spa(R,R^+)\to X$.

	For the fourth statement, note that $Y\to Z$ is by assumption weakly separated so $Y\to Y\times_Z Y$ is partially proper and in particular weakly partially proper. 
	By cancellation $X\to Y$ must be weakly partially proper.
\end{proof}

In general, being valuative cannot be verified v-locally. 
That is, there are maps of prekimberlites $f:Y\to X$ such that $Y$ is valuative, $f$ is a v-cover, but $X$ is not valuative. 
We have the following special case.

    \begin{proposition}
	    \label{valuativityisappropriatelyv-local}
    Let $f:X\to Y$ be a map of prekimberlites. Suppose that $Y$ is valuative and that there is a valuative prekimberlite $Z$ and a v-cover $g:Z\to Y$ such that $W=X\times_Z Y$ is valuative, then $X$ is valuative. 
    \end{proposition}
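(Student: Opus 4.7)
The plan is to verify that $\on{SP}_X:X\to X^{\on{H}}$ is partially proper, which is the content of $X$ being valuative. Separatedness of $\on{SP}_X$ is automatic because $X$ is formally separated, as noted immediately before Definition \ref{valuative prekimberlite defi}, so we need only establish the valuative criterion of partial properness. Suppose we are given compatible maps $\alpha:\Spa(R,R^\circ)\to X$ and $\beta:\Spa(R,R^+)\to X^{\on{H}}$; the goal is to produce a unique lift $\Spa(R,R^+)\to X$.

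The first step is to lift to $Y$. Composing $\beta$ with the natural map $X^{\on{H}}\to Y^{\on{H}}$ and observing compatibility with $f\circ \alpha:\Spa(R,R^\circ)\to Y$, valuativity of $Y$ produces a unique map $\gamma:\Spa(R,R^+)\to Y$ restricting to $f\circ\alpha$ on the analytic locus. Next, since $g:Z\to Y$ is a v-cover, there is an affinoid perfectoid v-cover $h:\Spa(R_0,R_0^+)\to \Spa(R,R^+)$ and a map $\delta:\Spa(R_0,R_0^+)\to Z$ with $g\circ \delta = \gamma\circ h$.

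The second step lifts over $W:=X\times_Y Z$. Writing $\tilde h:\Spa(R_0,R_0^\circ)\to \Spa(R,R^\circ)$ for the restriction of $h$ to the analytic locus, the maps $\alpha\circ \tilde h$ and $\delta$ restricted to the analytic locus assemble into a map $\Spa(R_0,R_0^\circ)\to W$. Crucially, one has $W^{\on{H}}\cong X^{\on{H}}\times_{Y^{\on{H}}} Z^{\on{H}}$: the reduction functor commutes with limits as a right adjoint, and the presheaf $S\mapsto (\Spa(R,R^+)\mapsto S(\Spec R^+_\red))$ preserves finite limits, as does its v-sheafification. Granting this, the pair $(\beta\circ h,\on{SP}_Z\circ \delta)$, which agrees in $Y^{\on{H}}$ by the preceding constructions, assembles into a map $\Spa(R_0,R_0^+)\to W^{\on{H}}$ compatible with $\on{SP}_W$. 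Valuativity of $W$ then produces a unique lift $\Spa(R_0,R_0^+)\to W$; composing with the projection to $X$ yields $\tau_0:\Spa(R_0,R_0^+)\to X$.

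The final step is to descend $\tau_0$ to the desired $\Spa(R,R^+)\to X$. Pulling back $\tau_0$ along the two projections from $\Spa(S,S^+):=\Spa(R_0,R_0^+)\times_{\Spa(R,R^+)} \Spa(R_0,R_0^+)$ gives two maps to $X$ which agree on $\Spa(S,S^\circ)$ (both equal the pullback of $\alpha$) and agree after composition with $\on{SP}_X$ (both equal the pullback of $\beta$); the valuative criterion of separatedness for the already-established separated morphism $\on{SP}_X$ forces them to coincide, so $\tau_0$ descends to the required lift, and uniqueness once more follows from separatedness of $\on{SP}_X$. The main technical point to handle is the identification $W^{\on{H}}\cong X^{\on{H}}\times_{Y^{\on{H}}} Z^{\on{H}}$ together with tracking the various compatibilities across the three valuative criteria being applied; these are conceptually routine but bookkeeping-heavy.
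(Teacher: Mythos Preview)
Your proposal is correct and follows essentially the same argument as the paper: lift to $Y$ by valuativity, pass to a v-cover to lift to $Z$, use valuativity of $W$ together with the identification $W^{\on{H}}\cong X^{\on{H}}\times_{Y^{\on{H}}}Z^{\on{H}}$ to obtain a lift to $W$ and hence to $X$, then descend via separatedness of $\on{SP}_X$. One minor terminological quibble: $\Spa(R,R^\circ)$ is not the ``analytic locus'' of $\Spa(R,R^+)$ (both are already perfectoid), but this does not affect the argument.
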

    \begin{proof}
	    We fix a commutative diagram and we try to solve for the dashed arrow
	    \begin{equation}
		    \label{dashedarrowlift}
	    \begin{tikzcd}
		    \Spa(R,R^\circ) \arrow{r}{o_X} \arrow{d}  & X \arrow{d} \\
		    \Spa(R,R^+) \arrow{r}{o_X^H} \ar[dashed]{ru} & X^{\on{H}}. 
	    \end{tikzcd}
	    \end{equation}
	    Since $Y$ is valuative we get a unique map $o^+_Y:\Spa (R,R^+)\to Y$ lifting the natural map $o_Y:\Spa(R,R^\circ)\to Y$.
	    Since $Z\to Y$ is surjective we may find a v-cover $c_1:\Spa(T_1,T_1^+)\to \Spa (R,R^+)$ and we can choose a map $e_{Z}^+:\Spa(T_1,T_1^+)\to Z$ lifting the natural map $o^+_Y\circ c_1:\Spa(T_1,T_1^+)\to Y$.
	    We also have, by construction, a map $o_X\circ c_1^\circ:\Spa(T_1,T_1^\circ)\to X$ whose induced map $f\circ o_X\circ c_1^\circ$ agrees with $g\circ e_Z^+$ when the latter map is restricted to $\Spa(T_1,T_1^\circ)\subseteq   \Spa(T_1,T_1^+)$.
	    This gives a map $\Spa(T_1,T_1^\circ)\to W$ and since $W$ is valuative we also get a map $\Spa(T_1,T_1^+)\to W$. 
	    Indeed, the map $\Spa(T_1,T_1^+)\to W^{\on{H}}$ is well-defined since $W^{\on{H}}=X^{\on{H}}\times_{Z^{\on{H}}} Y^{\on{H}}$.
	    Projecting to $X$ we get a map $\Spa(T_1,T_1^+)\to X$ lifting $o_X\circ c_1^\circ$ and compatible with $\Spa(T_1,T_1^+)\to X^{\on{H}}$.
	    By separatedness of $X\to X^{\on{H}}$, this lift is unique and did not depend of the choice of $e^+_{Z}$.
	    This shows that the dashed arrow in the diagram
	    \begin{center}
	    \begin{tikzcd}
		    \Spa(T_1,T_1^\circ) \arrow{r} \arrow{d}  & X \arrow{d} \\
		    \Spa(T_1,T_1^+) \arrow{r}\ar[dashed]{ur} & X^{\on{H}} 
	    \end{tikzcd}
	    \end{center}
	    can be uniquely lifted. 
	    Moreover, if $\Spa(T_2,T_2^+)=\Spa(T_1,T_1^+)\times_{\Spa(R,R^+)}\Spa(T_1,T_1^+)$ 
	    we get a commutative diagram
	    \begin{center}
	    \begin{tikzcd}
		    \Spa(T_2,T_2^\circ) \ar[shift left]{r} \ar[shift right]{r} \ar{d}	&    \Spa(T_1,T_1^\circ) \arrow{r} \arrow{d} & \Spa(R,R^\circ) \ar{d} \ar{r}  & X \arrow{d} \\
		    \Spa(T_2,T_2^+) \ar[shift left]{r} \ar[shift right]{r} 	& \Spa(T_1,T_1^+) \arrow{r} \ar{rru} & \Spa(R,R^+) \ar{r}  & X^{\on{H}},
	    \end{tikzcd}
	    \end{center}
	    and since $X(R,R^+)=\on{Eq.}(X(T_1,T_1^+)\rightrightarrows X(T_2,T_2^+))$, the arrow in the diagram \eqref{dashedarrowlift} can be uniquely constructed. 
    \end{proof}

Just as a separated v-sheaf admits a canonical compactification, which is a canonical way to turn a separated v-sheaf into a partially proper one \cite[Proposition 18.6]{Sch17}, an analogous statement holds for weakly separated v-sheaves.
We discuss slight generalizations of some statements proved in \cite[$\mathsection 18$]{Sch17}. 
\begin{definition}{\cite[Proposition 18.6]{Sch17}}
Suppose that $Y$ is a weakly separated v-sheaf, we let $\overline{Y}$ be the v-sheafification of the formula 
\[\overline{Y}^{\on{pre}}(R,R^+)=Y(R,R^\circ).\] 
We call $\overline{Y}$ the \textit{canonical compactification} of $Y$. 
\end{definition}
\begin{remark}
	Note that if $Y$ is a separated v-sheaf, then $\overline{Y}$ agrees with the usual canonical compactification of \cite[$\mathsection 18$]{Sch17}.	
\end{remark}
\begin{proposition}
	\label{evaluateonstrictlytotally}
	If $Y$ is a weakly separated v-sheaf and $\Spa( R,R^+)$ is totally disconnected, then $\overline{Y}(R,R^+)=\overline{Y}^{\on{pre}}(R,R^+)$.
\end{proposition}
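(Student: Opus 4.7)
The plan is to show that $\overline{Y}^{\on{pre}}$ already satisfies v-descent at every totally disconnected affinoid perfectoid, so that no further sheafification is necessary. Fix a v-cover $f \co \Spa(R',R'^+) \to \Spa(R,R^+)$, let $\Spa(R'',R''^+)$ denote the fiber product of $f$ against itself in the category of affinoid perfectoid spaces, and set
\[
\Spa(\tilde{R}'',\tilde{R}''^{\,\circ}) := \Spa(R',R'^\circ) \times_{\Spa(R,R^\circ)} \Spa(R',R'^\circ).
\]
The continuous inclusion $R^\circ \hookrightarrow R'^\circ$ produces a natural open immersion $j \co \Spa(R'',R''^{\circ}) \hookrightarrow \Spa(\tilde{R}'',\tilde{R}''^{\,\circ})$, and the base change of $f$ along the open immersion $\Spa(R,R^\circ) \hookrightarrow \Spa(R,R^+)$ is itself a v-cover with source $\Spa(R',R'^\circ)$.

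Since $Y$ is a v-sheaf, I have an identification
\[
Y(R,R^\circ) \;\simeq\; \on{Eq}\bigl(Y(R',R'^\circ) \rightrightarrows Y(\tilde{R}'',\tilde{R}''^{\,\circ})\bigr).
\]
The target equalizer in the statement uses $Y(R'',R''^\circ)$ instead of $Y(\tilde R'',\tilde R''^{\,\circ})$, so it suffices to prove that restriction along $j$ is injective on pairs of sections pulled back from $Y(R',R'^\circ)$. Two such sections yield a map $(\psi_1,\psi_2) \co \Spa(\tilde{R}'',\tilde{R}''^{\,\circ}) \to Y \times Y$ which factors through the diagonal $\Delta_Y$ precisely on the open subspace $\Spa(R'',R''^{\circ})$. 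Here is where weak separatedness enters: $\Delta_Y$ is partially proper, so \Cref{partiallyseparatedisvaluativelyseparated} will force $\psi_1 = \psi_2$ provided I can arrange for $j$ to have the shape $\Spa(A,A^\circ) \hookrightarrow \Spa(A,A^+)$ required by the valuative criterion of separatedness.

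This shape is exactly what total disconnectedness of $\Spa(R,R^+)$ delivers. Writing $R^+$ as a (bounded) product of valuation rings indexed by the profinite set $\pi_0\Spa(R,R^+)$, I pass to geometric connected components, over which the base reduces to $\Spa(C,C^+)$ for a perfectoid field $C$ with valuation subring $C^+$. Over each such component, the open immersion $j$ is identified with $\Spa(A,A^\circ) \hookrightarrow \Spa(A,A^+)$ for a perfectoid Huber pair $(A,A^+)$ built from $R'$ after base change; applying \Cref{partiallyseparatedisvaluativelyseparated} componentwise yields $\psi_1 = \psi_2$, completing the descent.

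The main obstacle is precisely the identification in the final step: verifying that after restricting to a geometric connected component of the totally disconnected base, the natural open immersion $j \co \Spa(R'',R''^\circ) \hookrightarrow \Spa(\tilde R'',\tilde R''^{\,\circ})$ takes the form $\Spa(A,A^\circ) \hookrightarrow \Spa(A,A^+)$, so that the weak valuative criterion is applicable. This step is the reason total disconnectedness (rather than some more general base) is required; everything else is a direct combination of v-descent for $Y$ and the definition of weak separatedness.
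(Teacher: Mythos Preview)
Your strategy—verify v-descent for $\overline{Y}^{\on{pre}}$ at a totally disconnected $(R,R^+)$ by combining the sheaf property of $Y$ with the weak valuative criterion—is the same as the paper's (which simply invokes Scholze's argument with \Cref{partiallyseparatedisvaluativelyseparated} replacing \cite[Proposition 10.10]{Sch17}). However, you have misplaced the role of total disconnectedness, and one intermediate claim is false.

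The assertion that the base change of $f$ along $\Spa(R,R^\circ)\hookrightarrow\Spa(R,R^+)$ has source $\Spa(R',R'^\circ)$ is incorrect: the source is $\Spa(R',T^+)$ where $T^+$ is the integral closure of the subring generated by $R'^+$ and the image of $R^\circ$, and one can have $T^+\subsetneq R'^\circ$ even over a totally disconnected base (take $(R,R^+)=(K,O_K)$ a single geometric point and $R'^+\subsetneq O_{R'}$ of higher rank; then $T^+=R'^+$). What one actually needs is that $g\colon\Spa(R',R'^\circ)\to\Spa(R,R^\circ)$ is itself a v-cover, and \emph{this} is where total disconnectedness enters: it forces every point of $\Spa(R,R^\circ)$ to be of rank $1$ (on each connected component $\Spa(K,K^+)$ the map $R^\circ\to O_K$ is surjective, so the open $\Spa(R,R^\circ)$ cuts out exactly the generic point), and then any lift to $\Spa(R',R'^+)$ may be replaced by its rank-$1$ generalization in $\Spa(R',R'^\circ)$ without changing the image. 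Without total disconnectedness $\Spa(R,R^\circ)$ may contain higher-rank points and $g$ need not be surjective. By contrast, what you flag as the ``main obstacle'' is automatic: the self-fiber-product of $\Spa(R',R'^\circ)$ over $\Spa(R,R^\circ)$ is $\Spa(R'',S^+)$ for a ring of integral elements $S^+$, and any such ring satisfies $S^+\subseteq R''^\circ$; so $j$ already has the shape $\Spa(A,A^\circ)\hookrightarrow\Spa(A,A^+)$ and \Cref{partiallyseparatedisvaluativelyseparated} applies directly, with no passage to connected components needed at that step.
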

\begin{proof}
	The same proof as in \cite[Proposition 18.6]{Sch17} works with the role of \cite[Proposition 10.10]{Sch17} loc. cit.~ replaced by \Cref{partiallyseparatedisvaluativelyseparated}. 
\end{proof}
Note that the construction $Y\mapsto \overline{Y}^{\on{pre}}$ is functorial and commutes with arbitrary limits of v-sheaves. 
By \Cref{evaluateonstrictlytotally}, the same can be said about $Y\mapsto \overline{Y}$.
\begin{proposition}
	If $Y$ is a weakly separated v-sheaf, then $\overline{Y}$ is weakly partially proper and the map $Y\to \overline{Y}$ is initial among maps $Y\to Z$ with $Z$ a weakly partially proper v-sheaf. 
\end{proposition}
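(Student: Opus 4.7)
The plan is to adapt the proof of \cite[Proposition 18.7]{Sch17} by systematically replacing ``separated'' and ``partially proper'' with their weak analogues. The central tool will be \Cref{evaluateonstrictlytotally}, which identifies $\overline{Y}(R,R^+) = Y(R,R^\circ)$ whenever $\Spa(R,R^+)$ is totally disconnected. Since $\Spa(R,R^\circ)$ is again totally disconnected and $(R^\circ)^\circ = R^\circ$, applying the same lemma again yields $\overline{Y}(R,R^\circ) = Y(R,R^\circ)$; thus the restriction map $\overline{Y}(R,R^+) \to \overline{Y}(R,R^\circ)$ is tautologically a bijection on a basis of the v-topology, and this pointwise bijection drives the rest of the argument.

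First, I would verify that $\overline{Y}$ is weakly separated. The diagonal is automatically a separated monomorphism, so it suffices to verify the valuative criterion of partial properness for $\Delta\co \overline{Y} \to \overline{Y} \times \overline{Y}$; by \Cref{partial proper v-local} this may be checked after a v-cover of the target by totally disconnected spaces. The bijection above immediately produces the required lift on totally disconnected $\Spa(R,R^+)$, and uniqueness is automatic from $\Delta$ being a monomorphism.

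Second, I would verify the valuative criterion of partial properness for $\overline{Y} \to \ast$. On totally disconnected $\Spa(R,R^+)$ this is again immediate from the bijection. For arbitrary affinoid perfectoid $\Spa(R,R^+)$, I would choose a totally disconnected v-cover $\Spa(R_0,R_0^+) \to \Spa(R,R^+)$, produce a lift $\Spa(R_0,R_0^+) \to \overline{Y}$ from the previous case, and descend it to a map $\Spa(R,R^+) \to \overline{Y}$. The descent step requires the two pullbacks to $\Spa(R_1,R_1^+) := \Spa(R_0,R_0^+) \times_{\Spa(R,R^+)} \Spa(R_0,R_0^+)$ to coincide, which is exactly where the weak separation established in the previous paragraph is invoked via \Cref{partiallyseparatedisvaluativelyseparated}: both pullbacks restrict to the common map $\Spa(R_1,R_1^\circ) \to \Spa(R,R^\circ) \to \overline{Y}$, and weak separation forces the two lifts to agree.

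Third, for the universal property, suppose $Z$ is weakly partially proper and $f\co Y \to Z$ is given. Combining the valuative criterion and weak separation of $Z \to \ast$ produces a natural bijection $Z(R,R^+) \simeq Z(R,R^\circ)$ on totally disconnected $\Spa(R,R^+)$, so I would define the desired factorization on the basis by the composite
\begin{equation*}
\overline{Y}(R,R^+) = Y(R,R^\circ) \xrightarrow{f} Z(R,R^\circ) \xleftarrow{\sim} Z(R,R^+),
\end{equation*}
and extend uniquely to a morphism of v-sheaves $\tilde{f}\co \overline{Y} \to Z$. That $\tilde{f}$ factors the original $f$, and that it is unique with this property, is then a direct check on the basis, using that $Y \to \overline{Y}$ is the identity on $(R,R^\circ)$-points and that $\tilde f$ on $(R,R^+)$-points is forced by naturality of the restriction isomorphism on $Z$. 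The main technical obstacle I anticipate is the descent step in the second paragraph, which depends on invoking weak separation at precisely the right moment.
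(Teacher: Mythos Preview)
Your proposal is correct and follows essentially the same approach as the paper: both arguments hinge on \Cref{evaluateonstrictlytotally} to reduce to the tautological bijection $\overline{Y}(R,R^+)=Y(R,R^\circ)=\overline{Y}(R,R^\circ)$ on totally disconnected spaces, first verifying weak separation of $\overline{Y}$ via the diagonal and then the valuative criterion for $\overline{Y}\to\ast$. Your descent argument in the second paragraph simply spells out what the paper compresses into the assertion ``for a weakly separated map to satisfy the valuative criterion of partial properness it suffices to verify it on a basis for the v-topology,'' and your explicit construction of $\tilde f$ on the basis is equivalent to the paper's slicker use of functoriality $\overline{Y}\to\overline{Z}=Z$.
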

\begin{proof}
	The diagonal $\overline{Y}\to \overline{Y}\times \overline{Y}$ is injective and consequently separated. 
For a weakly separated map to satisfy the valuative criterion of partial properness it suffices to verify it on a basis for the v-topology.
On a totally disconnected space $\Spa(R,R^+)$ we have that $\overline{Y}^{\on{pre}}(R,R^+)=\overline{Y}(R,R^+)=Y(R,R^\circ)$. 
This allows us to verify directly first that $\Delta_{\overline{Y}}$ and then that $\overline{Y}\to \ast$ satisfy the left lifting property of maps with source $\Spa(R,R^\circ)\to \Spa(R,R^+)$.

Moreover, if $Y\to Z$ is a map and $Z$ is weakly partially proper, then we get a map $\overline{Y}\to \overline{Z}=Z$.
On the other hand, given a map $\overline{Y}\to Z$ we can restrict it to a map $Y\to Z$. 
These operations are inverses of each other.
\end{proof}
	
If $Y\to Z$ is a map of v-sheaves that are weakly separated, then $Y\to Z$ is weakly separated, and we can define a relative compactification $\overline{Y}^{/Z}=\overline{Y}\times_{\overline{Z}}Z$.
In this case, $Y\to \overline{Y}^{/Z}\to Z$ is initial among maps $Y\to W\to Z$ with $W\to Z$ weakly partially proper. 
\begin{definition}
	We say that a weakly separated map of v-sheaves $Y\to Z$ is \textit{weakly compactifiable} if it can be factored as $Y\to W\to Z$ where $W\to Z$ is weakly partially proper and $Y\to W$ is an open immersion. 
\end{definition}
\begin{remark}
Note that a separated map of v-sheaves is compactifiable if and only if it is weakly compactifiable.  	
\end{remark}
\begin{proposition}
	A weakly separated map of v-sheaves $Y\to Z$ is weakly compactifiable if and only if $Y\to \overline{Y}^{/Z}$ is an open immersion.
\end{proposition}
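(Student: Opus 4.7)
My plan is to handle the two implications separately. The ``if'' direction is immediate: if $Y \to \overline{Y}^{/Z}$ is an open immersion, then $Y \hookrightarrow \overline{Y}^{/Z} \to Z$ is itself a witness to weak compactifiability, since by construction $\overline{Y}^{/Z} \to Z$ is weakly partially proper.

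For the forward direction, suppose $Y \hookrightarrow W \to Z$ is any factorization witnessing weak compactifiability, with open immersion $i : Y \hookrightarrow W$. Using the universal property of the relative compactification stated in the discussion just above the proposition (namely, $Y \to \overline{Y}^{/Z} \to Z$ is initial among maps $Y \to W' \to Z$ with $W' \to Z$ weakly partially proper), I obtain a unique $Z$-morphism $\psi : \overline{Y}^{/Z} \to W$ under $Y$. The goal becomes to show that the square
\[
\begin{tikzcd}
Y \arrow{r} \arrow{d}{\id_Y} & \overline{Y}^{/Z} \arrow{d}{\psi} \\
Y \arrow{r}{i} & W
\end{tikzcd}
\]
is Cartesian; granting this, $Y \to \overline{Y}^{/Z}$ arises as the base change of the open immersion $i$ along $\psi$, hence is itself an open immersion, which is exactly what we want.

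To verify the Cartesian-square claim I will evaluate on sections over totally disconnected affinoid perfectoids $\Spa(R, R^+)$, since these form a basis for the v-topology. Using $\overline{Y}^{/Z} = \overline{Y} \times_{\overline{Z}} Z$ together with \Cref{evaluateonstrictlytotally}, an $(R, R^+)$-point of $\overline{Y}^{/Z}$ is a compatible pair $(y, z) \in Y(R, R^\circ) \times_{Z(R, R^\circ)} Z(R, R^+)$; applying the same universal property to $\id_W$ identifies $W \cong \overline{W}^{/Z}$, so the analogous description holds for $W$, and $\psi$ acts on such sections as $(y, z) \mapsto (i(y), z)$. An $(R, R^+)$-point of $\overline{Y}^{/Z} \times_W Y$ is thus a triple $(y, z, y')$ whose images in $W(R, R^+)$ agree; since $i$ is a monomorphism (being an open immersion), matching the first coordinate forces $y = y'|_{(R, R^\circ)}$, and matching the $Z$-coordinate forces $z$ to be the image of $y'$. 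The datum reduces to $y'$ alone, yielding $\overline{Y}^{/Z} \times_W Y \cong Y$ on the basis and proving the claim.

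The main subtlety I anticipate is confirming that $\overline{Y}^{/Z}$ really enjoys the required relative universal property in the category of $Z$-objects (the excerpt states the absolute version over $\ast$ explicitly and asserts the relative version via base change), so that the morphism $\psi$ produced above exists and is unique under $Y$. Once this is in hand, the remaining work is essentially bookkeeping on totally disconnected affinoids, using monomorphism of open immersions to eliminate the extra data $(y, z)$.
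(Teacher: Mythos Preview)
Your proof is correct and follows essentially the same strategy as the paper: produce the comparison map $\psi:\overline{Y}^{/Z}\to W$ from the universal property, and then deduce that $Y\to\overline{Y}^{/Z}$ is the pullback of the open immersion $Y\hookrightarrow W$ along $\psi$. The paper's argument differs only in that it takes a small shortcut: rather than computing the fiber product directly, it observes that it suffices to show $\psi$ is an \emph{injection}, and checks this on $(R,R^\circ)$-points (where $\overline{Y}^{/Z}(R,R^\circ)=Y(R,R^\circ)$, so the map is just $i$) using that $\psi$ is automatically weakly separated. Your route via the identification $W\cong\overline{W}^{/Z}$ and explicit description of $W(R,R^+)$ on totally disconnected spaces achieves the same end with a bit more bookkeeping; the paper's version avoids needing that identification.
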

\begin{proof}
	Let $Y\to W\to Z$ be a factorization, by the universal property of $Y\to \overline{Y}^{/Z}$ we get a map $\overline{Y}^{/Z}\to W$ which we claim is an injection. 
	Indeed, it suffices to show $\overline{Y}^{/Z}(R,R^\circ)\to W(R,R^\circ)$ is injective since $\overline{Y}^{/Z}\to W$ is weakly partially separated. 
	But the map $\overline{Y}^{/Z}(R,R^\circ)\to W(R,R^\circ)$ identifies with the map $Y(R,R^\circ)\to W(R,R^\circ)$ which is injective.
	Since $Y\to W$ is an open immersion $Y\to \overline{Y}^{/Z}$ is also an open immersion.
\end{proof}
\begin{proposition}
	\label{compositionofcompacti}
	Let $f:X\to Y$ and $g:Y\to Z$ be maps of weakly separated v-sheaves and let $h=g\circ f$. The following hold.
	\begin{enumerate}
		\item If $f$ and $g$ are weakly compactifiable, then $h$ is weakly compactifiable. 
		\item If $h$ is weakly compactifiable, then $f$ is weakly compactifiable. 
	\end{enumerate}
\end{proposition}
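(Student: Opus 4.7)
The plan is to reduce both claims to a single sub-claim: if $A \to B$ is a weakly separated map of v-sheaves admitting a factorization $A \to B_0 \to B$ with $A \to B_0$ weakly partially proper and $B_0 \hookrightarrow B$ an open immersion, then $A \to B$ is weakly compactifiable, and more precisely $A \hookrightarrow \overline{A}^{/B}$ identifies $A$ with the open subsheaf $U := \overline{A}^{/B} \times_B B_0$. Intuitively this asserts that a ``weakly locally closed immersion is weakly compactifiable'', and it serves as a relative analogue of the characterization just proven above.

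To prove the sub-claim, I would test on a basis of strictly totally disconnected affinoid perfectoids $\Spa(R,R^+)$. Combining \Cref{evaluateonstrictlytotally} with the definition $\overline{A}^{/B} = \overline{A} \times_{\overline{B}} B$ yields $\overline{A}^{/B}(R, R^+) = A(R, R^\circ) \times_{B(R, R^\circ)} B(R, R^+)$, so $U(R, R^+) = A(R, R^\circ) \times_{B(R, R^\circ)} B_0(R, R^+)$. A point of $U(R, R^+)$ consists of a map $\Spa(R, R^\circ) \to A$ together with a compatible map $\Spa(R, R^+) \to B_0$; the valuative criterion of partial properness for $A \to B_0$ produces a lift $\Spa(R, R^+) \to A$, which is unique by \Cref{partiallyseparatedisvaluativelyseparated}. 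This functorially identifies $U(R, R^+) = A(R, R^+)$, hence $U \cong A$ as v-sheaves, and $A \hookrightarrow \overline{A}^{/B}$ is an open immersion with image $U$.

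Granting the sub-claim, for part (1) I would apply it with $A = W_1$, $B_0 = Y$, $B = W_2$, where $X \hookrightarrow W_1 \to Y$ and $Y \hookrightarrow W_2 \to Z$ are the given compactifications: the composite $W_1 \to Y \hookrightarrow W_2$ is then weakly compactifiable, and setting $W := \overline{W_1}^{/W_2}$, the composition $W \to W_2 \to Z$ is weakly partially proper by \Cref{standardpropertiesweaklypar} while $X \hookrightarrow W_1 \hookrightarrow W$ is open, which compactifies $h$. For part (2), let $X \hookrightarrow W_0 \to Z$ be the given compactification of $h$ and set $W := W_0 \times_Z Y$, which is weakly partially proper over $Y$ by base change. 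The map $X \to W$ factors as $X \xrightarrow{\Gamma_f} X \times_Z Y \hookrightarrow W$, where the first arrow is the base change of $\Delta_{Y/Z} : Y \to Y \times_Z Y$ (hence partially proper since $g$ is weakly separated) and the second is the base change of $X \hookrightarrow W_0$ (hence an open immersion). Applying the sub-claim with $A = X$, $B_0 = X \times_Z Y$, $B = W$ produces $W' := \overline{X}^{/W}$ with $X \hookrightarrow W'$ open and $W' \to W \to Y$ weakly partially proper, so $f$ is weakly compactifiable. The main obstacle is proving the sub-claim; once it is in hand, the two parts follow by routine manipulation of the given factorizations.
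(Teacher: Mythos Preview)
Your argument is correct. The sub-claim is sound: the computation of $U(R,R^+)$ on totally disconnected test objects via \Cref{evaluateonstrictlytotally} is right, and the valuative criterion for the weakly partially proper map $A\to B_0$ gives exactly the inverse you describe. Both applications in (1) and (2) are set up correctly; in particular, the intermediate objects $W_1$, $W_2$, $W$, $W'$ are all weakly separated over $\ast$ (by composing with the given weakly partially proper maps and \Cref{standardpropertiesweaklysp}), so the relative compactifications you invoke are well-defined.

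The paper's proof is organized differently. It works entirely with the canonical compactifications $\overline{X}^{/Y}$, $\overline{X}^{/Z}$, $\overline{Y}^{/Z}$ and the Cartesian diagram relating them, reducing both parts to the single observation that $e_3:\overline{X}^{/Y}\to\overline{X}^{/Z}$ is the base change of $e_4:Y\to\overline{Y}^{/Z}$. Part (1) becomes: $e_4$ open $\Rightarrow$ $e_3$ open $\Rightarrow$ $e_2=e_3\circ e_1$ open. Part (2) becomes: $e_2$ open and $e_3$ injective $\Rightarrow$ $e_1$ open. This avoids choosing auxiliary compactifications $W_1, W_2, W_0$ entirely and is shorter. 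Your route, by contrast, isolates a reusable lemma (``weakly partially proper followed by open immersion is weakly compactifiable'') and builds the compactifications more explicitly; this is slightly longer but arguably more transparent about where each hypothesis is used, and the sub-claim may have independent value.
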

\begin{proof}
	Consider the following commutative diagram with Cartesian squares
\begin{center}
\begin{tikzcd}
	X \ar{r}{e_1} \ar{rd}{e_2}	& \overline{X}^{/Y} \ar{r} \ar{d}{e_3} &Y \ar{d}{e_4} & \\
	& 	\overline{X}^{/Z}	\arrow{r} \arrow{d}  & \overline{Y}^{/Z} \ar{r} \ar{d} \arrow{d} & Z \ar{d}  \\
	& \overline{X}\arrow{r} & \overline{Y} \ar{r} & \overline{Z}. 
\end{tikzcd}
\end{center}
If $e_1$ and $e_4$ are open immersions, then $e_3$ and $e_2$ are open immersions. 
Conversely, if $e_2$ is an open immersion, then $e_1$ is an open immersion since $e_3$ is an injection.
\end{proof}

\begin{corollary}
	If $f:X\to Y$ is a map of valuative prekimberlites and $X^\red\to Y^\red$ is perfectly finitely presented and separated, then $X\to Y$ is compactifiable.
\end{corollary}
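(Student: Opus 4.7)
The plan is to combine a Nagata-type compactification at the level of reduced loci with the Heuer specialization $\on{SP}$, using the valuative hypothesis on $X$ to produce the required open factorization.

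First, I would apply Nagata's theorem to the perfectly finitely presented separated map $g: X^\red \to Y^\red$. Choosing a finitely presented representative, Nagata-compactifying it, and then taking perfections yields a factorization
\[
X^\red \xhookrightarrow{\ j\ } Z \xrightarrow{\ p\ } Y^\red
\]
with $j$ an open immersion and $p$ a perfectly finitely presented proper map of perfect schemes. Applying $(-)^{\diamond/\circ\circ}$, I would obtain an open immersion $j^{\diamond/\circ\circ}:X^H \hookrightarrow Z^{\diamond/\circ\circ}$ (a routine check from the definition: open immersions of perfect schemes pull back to rational opens on perfectoid test spaces) and a weakly partially proper map $p^{\diamond/\circ\circ}:Z^{\diamond/\circ\circ}\to Y^H$ by \Cref{valuativepreki}.

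Second, I would introduce the auxiliary v-sheaf $V := Y\times_{Y^H}X^H$ together with the canonical map from $X$ induced by $f$ and $\on{SP}_X$. Since $\on{SP}_Y:Y\to Y^H$ is separated (as noted just above \Cref{valuative prekimberlite defi}), its base change $V\to X^H$ is separated, hence weakly separated. The composite $X\to V\to X^H$ equals $\on{SP}_X$, which is partially proper because $X$ is valuative, so \Cref{standardpropertiesweaklypar}(4) shows that $X\to V$ is weakly partially proper. On the other hand, $V\to Y$ factors as $V\to Y\times_{Y^H}Z^{\diamond/\circ\circ}\to Y$, where the first arrow is an open immersion (base change of $j^{\diamond/\circ\circ}$) and the second is weakly partially proper (base change of $p^{\diamond/\circ\circ}$ via \Cref{standardpropertiesweaklypar}(2)); hence $V\to Y$ is weakly compactifiable.

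Applying \Cref{compositionofcompacti}(1) to the composite $X\to V\to Y$ (with $X\to V$ trivially weakly compactifiable through the identity factorization) would give that $X\to Y$ is weakly compactifiable. Finally, $f$ is separated because $X$ and $Y$, being prekimberlites, are separated over $\ast$ and separatedness cancels in the target, so the remark identifying compactifiable and weakly compactifiable for separated maps upgrades the conclusion. The main subtle point will be the second step: showing that $X\to V$ is weakly partially proper is where the valuative hypothesis genuinely enters, and it is precisely this interaction between the Heuer specializations of $X$ and $Y$ that lets a Nagata compactification on reduced loci propagate to a compactification of $f$; the auxiliary facts (Nagata in the perfect setting, preservation of open immersions under $(-)^{\diamond/\circ\circ}$) are routine.
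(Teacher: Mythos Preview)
Your proof is correct and takes essentially the same approach as the paper: Nagata compactification of $X^\red\to Y^\red$, transport to the $(-)^{\diamond/\circ\circ}$ level via \Cref{valuativepreki}, and the valuative hypothesis on $X$ entering through partial properness of $\on{SP}_X$. The paper organizes the argument slightly more economically by avoiding the auxiliary fiber product $V=Y\times_{Y^H}X^H$: it shows directly that $f^{\on{H}}:X^{\on{H}}\to Y^{\on{H}}$ is weakly compactifiable (via Nagata and \Cref{valuativepreki}), composes with the partially proper $\on{SP}_X$, rewrites $f^{\on{H}}\circ\on{SP}_X=\on{SP}_Y\circ f$, and then cancels $\on{SP}_Y$ using \Cref{compositionofcompacti}(2) rather than your use of \Cref{compositionofcompacti}(1).
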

\begin{proof}
	By \Cref{compositionofcompacti}, it suffices to show that the map $f^{\on{H}}:X^{\on{H}}\to Y^{\on{H}}$ is weakly compactifiable. 
	Indeed, $f^{\on{H}}\circ{\on{SP}}_X$ would be weakly compactifiable and since $f^{\on{H}}\circ{\on{SP}}_X={\on{SP}}_Y\circ f$, \Cref{compositionofcompacti} above shows that $f$ would also be weakly compactifiable.
	Since $f$ is already separated, we would conclude that it is compactifiable.

	Now, by Nagata's theorem \cite[Tag 0F41, 0ATT]{StaProj}, we can find a perfectly finitely presented compactification $X^\red\subseteq S\to Y^\red$. 
	This induces an open immersion $X^{\on{H}}\to S^{\diamond/\circ \circ}$ and by \Cref{valuativepreki} a weakly partially proper map $S^{\diamond/\circ\circ}\to Y^{\on{H}}$.
\end{proof}

\subsection{Kimberlites}
Now that we have discussed valuative prekimberlites, we discuss kimberlites.

    \begin{definition}{\cite[Remark 4.37, Definition 4.35]{Gle24}}
	    \label{definitionofkimberlite}
	    Let $X$ be a valuative prekimberlite.
	    \begin{enumerate}
		    \item We say that $X$ is an \textit{affine kimberlite} if $X^\red$ is an affine scheme and $X^{\on{an}}$ is a spatial diamond.
		    \item We say that $X$ is a kimberlite if for all open affine subschemes, $U\subseteq X^\red$, the induced Zariski open formal neighborhood $\widehat{X}_{/U}$ is an affine kimberlite.
	    \end{enumerate}
    \end{definition}

    \begin{proposition}
	    \label{twodefinitionsofkimberlite}
	    Let $X$ be a valuative prekimberlite. 
	    Suppose that $X^{\on{an}}$ is a quasiseparated\footnote{As shown in \Cref{quasiseparatednessofprekimberlites}, the quasiseparatedness is automatic. We leave it as it is to point out that this is a subtlety implicitly used in our argument.} locally spatial diamond and consider the topological specialization map $\on{sp}_{X^{\on{an}}}:|X^{\on{an}}|\to |X^\red|$. The following hold.
	    \begin{enumerate}
		    \item $X$ is a kimberlite if and only if $\on{sp}_{X^{\on{an}}}$ is quasicompact. 
		    \item If $X$ is a kimberlite, then the map is spectral and closed. 
	    \end{enumerate}
    \end{proposition}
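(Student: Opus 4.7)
The plan is to translate the condition of being a kimberlite into a purely topological condition on $\on{sp}_{X^{\on{an}}}$ by identifying Zariski open formal neighborhoods with preimages under the specialization map. The key geometric identification is that for any open $U \subseteq X^\red$, the open subsheaf $\widehat{X}_{/U} \subseteq X$ has underlying topological space $\on{sp}^{-1}(|U|)$; this follows by combining \Cref{lemmaspecializing} with the Cartesian diagram \eqref{formalnbhoods}. Intersecting with $|X^{\on{an}}|$ then gives the crucial equality $|(\widehat{X}_{/U})^{\on{an}}| = \on{sp}_{X^{\on{an}}}^{-1}(|U|)$.

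For (1), if $X$ is a kimberlite, then for every affine open $U \subseteq X^\red$ the locus $(\widehat{X}_{/U})^{\on{an}}$ is a spatial diamond, so $\on{sp}_{X^{\on{an}}}^{-1}(|U|)$ is quasicompact. Since such $U$ form a basis of the quasicompact opens of the perfect scheme $X^\red$, $\on{sp}_{X^{\on{an}}}$ is quasicompact. Conversely, if $\on{sp}_{X^{\on{an}}}$ is quasicompact then for each affine open $U \subseteq X^\red$ the locus $(\widehat{X}_{/U})^{\on{an}}$ is a quasicompact open subsheaf of the quasiseparated locally spatial diamond $X^{\on{an}}$, hence quasicompact and quasiseparated, hence a spatial diamond. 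This shows each $\widehat{X}_{/U}$ is an affine kimberlite and therefore $X$ is a kimberlite.

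For (2), both $|X^{\on{an}}|$ and $|X^\red|$ are locally spectral, and the continuous map $\on{sp}_{X^{\on{an}}}$ is quasicompact by (1); by the identification above, preimages of quasicompact opens of $|X^\red|$ are spatial (hence spectral), which yields spectrality. For closedness I invoke the standard topological fact that a continuous, quasicompact, specialization-lifting map between locally spectral spaces is closed, and I verify specialization-lifting for $\on{sp}_{X^{\on{an}}}$ by factoring it through the Heuer specialization: the map $X^{\on{an}} \to X^{\on{H}}$ obtained by base change of $\on{SP}: X \to X^{\on{H}}$ along the open immersion $X^{\on{an}} \hookrightarrow X$ is partially proper because $X$ is valuative, so at the level of topological spaces $|X^{\on{an}}| \to |X^{\on{H}}|$ lifts specializations. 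The structure map $|X^{\on{H}}| \to |X^\red|$ also lifts specializations (as one sees directly from the construction of $X^{\on{H}} = (X^\red)^{\diamond/\circ\circ}$ via $R \mapsto R^+_\red$, or by \Cref{valuativepreki}). Composing the two yields specialization-lifting for $\on{sp}_{X^{\on{an}}}$.

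The main subtlety lies in the first step: verifying cleanly that the two open subsheaves of $X$ that naturally appear, namely $\widehat{X}_{/U}$ from \cite[Proposition 4.21]{Gle24} and the subsheaf characterized topologically in \Cref{lemmaspecializing}, agree for affine open $U$. Once this identification is in place, (1) is a direct translation of the definition of a kimberlite, and (2) combines elementary locally spectral topology with the valuativity hypothesis via the factorization through $X^{\on{H}}$.
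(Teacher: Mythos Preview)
Your argument for (1) is essentially identical to the paper's: both use the identification $|(\widehat{X}_{/U})^{\on{an}}| = \on{sp}_{X^{\on{an}}}^{-1}(|U|)$ and then invoke \cite[Proposition 11.19.(iii)]{Sch17} to pass between ``quasicompact open in a quasiseparated locally spatial diamond'' and ``spatial''. This part is fine.

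For (2) the paper simply cites \cite[Theorem 4.40]{Gle24}, which already records that for a smelted kimberlite the specialization map is spectral, specializing, and closed. You instead attempt to reprove this, and here there is a genuine imprecision. The assertion that ``the map $X^{\on{an}} \to X^{\on{H}}$ \dots\ is partially proper because $X$ is valuative'' is not correct as stated: valuativity says $\on{SP}\colon X \to X^{\on{H}}$ is partially proper, but the restriction to the open subsheaf $X^{\on{an}}$ need not be. Concretely, if one lifts a specialization via the valuative criterion for $\on{SP}$, the resulting map $\Spa(C,C^+)\to X$ could a priori send the closed point into $|X^\Red|$ rather than $|X^{\on{an}}|$; you have not ruled this out. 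What one actually needs is the argument behind \cite[Proposition 4.33]{Gle24} (compare also the construction in \Cref{annoyinglemmaonsurjectivityofspecialization} of the present paper), where one composes valuations on the residue field to produce a point of $X^{\on{an}}$ with the prescribed specialization. Your invocation of \Cref{valuativepreki} is also misplaced: that result concerns $(-)^{\diamond/\circ\circ}$ applied to a proper map of schemes, not the passage $|X^{\on{H}}|\to |X^\red|$ you have in mind. The overall strategy---specialization-lifting plus quasicompactness yields closedness---is correct, but the verification of specialization-lifting needs the more careful argument from \cite{Gle24}, not the shortcut you wrote.
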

    \begin{proof}
	    The hypothesis say that the pair $(X,X^\an)$ is a smelted kimberlite as in \cite[Definition 4.35]{Gle24}.
	    By \cite[Theorem 4.40]{Gle24}, the topological specialization map is automatically specializing and a spectral map of locally spectral spaces.
	    Whether $\on{sp}_{X^\an}$ is quasicompact or not can be tested on an open cover of $|X^\red|$. 
	    Fix an open cover of $X^\red$ by affine subschemes 
	    \[\coprod_{i\in I} U_i\to X.\]
	    Then $|(\widehat{X}_{/U_i})^\an|={\on{sp}}_{X^\an}^{-1}(U_i)$.
	    If $X$ is a kimberlite, then by hypothesis $(\widehat{X}_{/U_i})^\an$ is a spatial diamond and consequently quasicompact \cite[Proposition 11.18.(i)]{Sch17}. 
	    Conversely, if ${\on{sp}}_{X^\an}$ is quasicompact, then $|\widehat{X}_{/U_i})^\an|\subseteq |X^\an|$ is a quasicompact and quasiseparated topological space. 
	    By \cite[Proposition 11.19.(iii)]{Sch17}, $(\widehat{X}_{/U_i})^\an$ is a spatial diamond as we wanted to show. 
	    The second statement is \cite[Theorem 4.40]{Gle24}.
    \end{proof}

\begin{remark}
    That the specialization map is continuous for both the usual and the constructible topology, is a key technical ingredient in the computation of the connected components of affine Deligne--Lusztig varieties and moduli spaces of shtukas \cite{Gle21}, \cite{gleason2022connected}.
\end{remark}

    \begin{proposition}{\cite[Proposition 4.41.(4)]{Gle24}}
	    \label{cite prop 4.41}
    If $X$ is a kimberlite and $Z\to X$ is a formally adic closed immersion, then $Z$ is also a kimberlite.
    \end{proposition}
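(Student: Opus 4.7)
The plan is to verify the criterion of \Cref{twodefinitionsofkimberlite}: once we know that $Z$ is a valuative prekimberlite, that $Z^{\on{an}}$ is a quasiseparated locally spatial diamond, and that $\on{sp}_{Z^{\on{an}}}\colon |Z^{\on{an}}|\to |Z^\red|$ is quasicompact, $Z$ will automatically be a kimberlite. The first property is already given by \Cref{formallyadicclosedisvaluative}, and quasiseparatedness of $Z^{\on{an}}$ is automatic by the footnote to \Cref{twodefinitionsofkimberlite}; so the real content is realizing $Z^{\on{an}}$ as a closed sub-diamond of $X^{\on{an}}$ and controlling the specialization topologically.

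First I would identify $Z^{\on{an}}=Z\times_X X^{\on{an}}$. Since $Z\to X$ is formally adic, $Z^\Red=Z\times_X X^\Red$, and complementing gives the identity; hence $Z^{\on{an}}\hookrightarrow X^{\on{an}}$ is the base change of a closed immersion and is itself a closed immersion. On the other hand, because $X$ is a kimberlite, $X^{\on{an}}=\bigcup_U \widehat{X}_{/U}^{\on{an}}$ where $U$ ranges over affine opens of $X^\red$ and each $\widehat{X}_{/U}^{\on{an}}$ is a spatial diamond (by the affine kimberlite condition), so $X^{\on{an}}$ is a locally spatial diamond. Pulling back this cover along $Z^{\on{an}}\hookrightarrow X^{\on{an}}$ realizes $Z^{\on{an}}$ locally as a closed sub-v-sheaf of a spatial diamond, which is again a spatial diamond; thus $Z^{\on{an}}$ is locally spatial.

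To address quasicompactness of $\on{sp}_{Z^{\on{an}}}$, I would consider the commuting square
\begin{equation*}
\begin{tikzcd}
|Z^{\on{an}}|\arrow{r}\arrow{d}{\on{sp}_{Z^{\on{an}}}} & |X^{\on{an}}|\arrow{d}{\on{sp}_{X^{\on{an}}}}\\
|Z^\red|\arrow{r} & |X^\red|
\end{tikzcd}
\end{equation*}
whose horizontal maps are closed immersions. Given a quasicompact open $V\subseteq |Z^\red|$, one can cover $V$ by finitely many sets of the form $D(f_j)\cap|Z^\red|$ with $D(f_j)$ a distinguished open inside an affine open of $X^\red$; their union $U=\bigcup_j D(f_j)$ is then a quasicompact open of $|X^\red|$ with $U\cap|Z^\red|=V$. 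Consequently $\on{sp}_{Z^{\on{an}}}^{-1}(V)=\on{sp}_{X^{\on{an}}}^{-1}(U)\cap|Z^{\on{an}}|$ is closed inside $\on{sp}_{X^{\on{an}}}^{-1}(U)$, which is quasicompact by \Cref{twodefinitionsofkimberlite} applied to $X$. Closed subspaces of quasicompact spaces are quasicompact, giving the desired quasicompactness.

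The main subtlety I anticipate is ensuring that the formally adic hypothesis on $Z\to X$ is invoked where it is truly needed: without it, the equality $Z^{\on{an}}=Z\times_X X^{\on{an}}$ can fail, the identification of $Z^{\on{an}}$ as a closed sub-diamond of $X^{\on{an}}$ collapses, and the two specialization maps are no longer compatible. Once this observation is in place, the remainder is routine bookkeeping on base change of closed immersions and on lifting quasicompact opens across closed immersions of perfect schemes.
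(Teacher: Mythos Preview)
The paper itself does not supply a proof; the proposition is simply recorded as a citation to \cite[Proposition~4.41.(4)]{Gle24}. Your argument is in substance correct and likely close to what is done there: reduce to checking that $Z^{\on{an}}\hookrightarrow X^{\on{an}}$ is a closed immersion (using formal adicness), and then transfer spatiality and quasicompactness of the specialization map from $X$ to $Z$.

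There is, however, one logical wrinkle. You justify quasiseparatedness of $Z^{\on{an}}$ by invoking the footnote to \Cref{twodefinitionsofkimberlite}, which defers to \Cref{quasiseparatednessofprekimberlites}. But in this paper the proof of \Cref{quasiseparatednessofprekimberlites} \emph{uses} \Cref{cite prop 4.41} (applied to a formally adic closed immersion into $\Spd R_1^+\times\Spd R_2^+$). So as written your argument is circular within the paper's internal order of results.

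The circularity is easily broken. One clean route is to bypass \Cref{twodefinitionsofkimberlite} altogether and verify the definition directly. Since $X$ is formally separated, $X^\red$ is a separated scheme; hence any affine open $V\subseteq Z^\red$ extends to a finite union $U=\bigcup_i U_i\subseteq X^\red$ of affine opens with $U\cap Z^\red=V$ and each $U_i\cap U_j$ again affine. Then $(\widehat{Z}_{/V})^{\on{an}}$ is the finite union of the closed subdiamonds $Z^{\on{an}}\cap(\widehat{X}_{/U_i})^{\on{an}}$ of the spatial diamonds $(\widehat{X}_{/U_i})^{\on{an}}$, with quasicompact pairwise intersections $Z^{\on{an}}\cap(\widehat{X}_{/U_i\cap U_j})^{\on{an}}$; this makes $(\widehat{Z}_{/V})^{\on{an}}$ qcqs and locally spatial, hence spatial by \cite[Proposition~11.19.(iii)]{Sch17}. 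Alternatively, one can observe that the only instance of \Cref{cite prop 4.41} needed in the proof of \Cref{quasiseparatednessofprekimberlites} is for the ambient kimberlite $\Spd R_1^+\times\Spd R_2^+$, whose analytic locus is already spatial, so that special case follows immediately from \cite[Proposition~11.20]{Sch17} and no genuine circularity remains.
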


    Recall that, in the theory of v-sheaves, being quasiseparated is an absolute notion which disagrees with the relative notion of being quasiseparated over the final object. 
    In particular, there are plenty of v-sheaves separated over $\ast$ that are not quasiseparated in an absolute sense.
    To wit, $\ast$ itself is not quasiseparated.
    By definition, prekimberlites are separated over $\ast$. A consequence of this is that the analytic locus of a prekimberlite is always quasiseparated. 

    \begin{proposition}
	    \label{quasiseparatednessofprekimberlites}
    Let $X$ be a prekimberlite, then $X^\an$ is quasiseparated. 	
    \end{proposition}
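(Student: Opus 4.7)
The plan is to trace the quasiseparatedness of $X^\an$ directly back to the formal separatedness that is built into the definition of a prekimberlite. By \Cref{defiitioon f prekimbelrite}, any prekimberlite is specializing, and by \Cref{specializing v-sheaf} every specializing v-sheaf is formally separated. In particular, the absolute diagonal
\[
\Delta_X : X \to X \times X
\]
is a (formally adic) closed immersion. Since closed immersions of v-sheaves are quasicompact, this will already force $X$ itself to be quasiseparated in the absolute sense; the task then is to transfer this to the open subsheaf $X^\an$.

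To this end, I would check that the diagonal of $X^\an$ is recovered as a pullback of $\Delta_X$. The key observation is that $X^\an \times X^\an$ is an open subsheaf of $X \times X$, and the inverse image of this open along $\Delta_X$ is precisely $\{x \in X : (x,x) \in X^\an \times X^\an\} = X^\an$, because open immersions are monomorphisms and so $(x,x) \in X^\an \times X^\an$ if and only if $x \in X^\an$. This gives a Cartesian square
\[
\begin{tikzcd}
X^\an \ar{r}{\Delta_{X^\an}} \ar[hook]{d} & X^\an \times X^\an \ar[hook]{d} \\
X \ar{r}{\Delta_X} & X \times X,
\end{tikzcd}
\]
where the vertical arrows are open immersions.

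Since closed immersions are stable under base change, I would then conclude that $\Delta_{X^\an}$ is itself a closed immersion, hence quasicompact, which is exactly the assertion that $X^\an$ is quasiseparated in the absolute sense. I do not anticipate any real obstacle here: the only subtle point, as emphasized in the paragraph preceding the proposition, is distinguishing the absolute notion of quasiseparatedness from the relative one over $\ast$, and this is handled automatically because formal separatedness is by definition the statement that the \emph{absolute} diagonal is a closed immersion (rather than merely the relative one).
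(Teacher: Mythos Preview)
Your argument has a genuine gap, and it is precisely the subtlety that the paragraph before the proposition is warning about. You conclude that because $\Delta_{X^\an}: X^\an \to X^\an \times X^\an$ is a closed immersion (hence quasicompact), $X^\an$ is quasiseparated in the absolute sense. But ``the diagonal $Y \to Y \times Y$ is quasicompact'' is the statement that $Y$ is quasiseparated \emph{over $\ast$}; it is \emph{not} the absolute notion of quasiseparatedness. There is no ``absolute diagonal'' distinct from the diagonal over $\ast$: the product $X^\an \times X^\an$ is by definition the fiber product over the final object $\ast = \Spd(k,k)$, so your last sentence is based on a nonexistent distinction. Indeed, your argument applied verbatim to the prekimberlite $X = \ast$ (where $\Delta_\ast$ is the identity, certainly a closed immersion) would ``prove'' that $\ast$ is quasiseparated, contradicting the remark just before the proposition.

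Concretely, absolute quasiseparatedness asks that for any two affinoid perfectoids $S_i \to X^\an$ the fiber product $S_1 \times_{X^\an} S_2$ be quasicompact. Your route would need $S_1 \times_\ast S_2$ to be quasicompact, so that its closed subsheaf $S_1 \times_{X^\an} S_2$ inherits this; but $S_1 \times_{\Spd k} S_2$ is typically \emph{not} quasicompact when $k$ carries the discrete topology. The paper's proof avoids this by using the v-formalizing axiom, not just formal separatedness: one replaces the $S_i$ by v-covers admitting formally adic formalizations $\Spd R_i^+ \to X$, notes that $\Spd R_1^+ \times \Spd R_2^+$ \emph{is} an affine formal v-sheaf, and then the formally adic closed immersion $V := \Spd R_1^+ \times_X \Spd R_2^+ \hookrightarrow \Spd R_1^+ \times \Spd R_2^+$ forces $V$ to be a kimberlite by \Cref{cite prop 4.41}. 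Hence $W = V^\an$ is a spatial diamond, in particular quasicompact. The passage through integral models $\Spd R_i^+$ is what makes the absolute product manageable; this step is not optional.
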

    \begin{proof}
	    Let $\Spa (R_1,R^+_1)\to X^\an$ and $\Spa( R_2,R^+_2)\to X^\an$ be two maps with $\Spa (R_i,R_i^+)\in\Perf$, we must show that 
	    \[W:=\Spa (R_1,R^+_1)\times_{X^\an} \Spa (R_2,R^+_2)\] is quasicompact.	
    After replacing $R_1$ and $R_2$ by a v-cover we can find formally adic formalizations $\Spd R^+_1\to X$ and $\Spd R_2^+\to X$, and we may write $W$ as the analytic locus in $V:=\Spd R^+_1\times_X \Spd R_2^+$.
    Furthermore, we may rewrite $V$ as the base change of the map $\Spd R_1^+\times \Spd R_2^+\to X\times X$ along the diagonal map $\Delta: X\to X\times X$.
    In this way, we get a formally adic closed immersion $V\to \Spd R_1^+\times \Spd R_2^+$ where the target is a formal v-sheaf.  
By \Cref{cite prop 4.41}, $V$ is a kimberlite and $W=V^\an$ is a spatial diamond.
    In particular, it is quasicompact.
    \end{proof}

    We have the following stability property.
    \begin{proposition}{\cite[Proposition 4.42]{Gle24}}
    If $X$ is a kimberlite and $Y\to X$ is an \'etale formal neighborhood, then $Y$ is a kimberlite. 
    \end{proposition}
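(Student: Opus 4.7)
The plan is to verify the two conditions of \Cref{definitionofkimberlite}: that $Y$ is a valuative prekimberlite, and that for every affine open $V\subseteq Y^\red$ the Zariski formal neighborhood $\widehat{Y}_{/V}$ is an affine kimberlite. For the first part, by definition of étale formal neighborhood we have $Y=\widehat{X}_{/U}$ for some étale separated map of schemes $U\to X^\red$; hence $Y$ is a prekimberlite by construction, and it is valuative by \cite[Proposition 4.34]{Gle24} (cited in the text just before \Cref{formallyadicclosedisvaluative}).

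For the second part, fix an affine open $V\subseteq U=Y^\red$. Since $V$ is quasi-compact and $U\to X^\red$ is étale, we can cover $V$ by finitely many affine opens $V_1,\dots,V_n$ with the property that each $V_i$ factors through an affine open $U_i\subseteq X^\red$. By transitivity of the étale formal neighborhood construction (using the characterization in \cite[Theorem 4.27]{Gle24}), we have $\widehat{Y}_{/V_i}\simeq\widehat{X}_{/V_i}$, and each such v-sheaf is an étale formal neighborhood of the affine kimberlite $\widehat{X}_{/U_i}$.

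Next I would show each $\widehat{Y}_{/V_i}$ is an affine kimberlite. Its reduced locus $V_i$ is affine by construction, so it remains to verify that $(\widehat{Y}_{/V_i})^{\on{an}}$ is a spatial diamond. The map $V_i\to U_i$ is étale and qcqs between affine schemes, and I would argue this implies the induced étale map on analytic loci $(\widehat{X}_{/V_i})^{\on{an}}\to(\widehat{X}_{/U_i})^{\on{an}}$ is qcqs with spatial target; hence its source is spatial. Valuativity is preserved by \cite[Proposition 4.34]{Gle24}, so each $\widehat{Y}_{/V_i}$ qualifies as an affine kimberlite.

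The main obstacle is the final gluing step: deducing that $(\widehat{Y}_{/V})^{\on{an}}$ itself is spatial, not merely locally spatial. Quasiseparatedness follows from \Cref{quasiseparatednessofprekimberlites}. Local spatiality follows from the previous step because $|(\widehat{Y}_{/V})^{\on{an}}|=\bigcup_i|(\widehat{Y}_{/V_i})^{\on{an}}|$ as an open cover. For quasi-compactness, I would invoke \Cref{twodefinitionsofkimberlite}: each specialization map $\on{sp}_i:|(\widehat{Y}_{/V_i})^{\on{an}}|\to|V_i|$ is spectral, and these glue to a spectral specialization map $\on{sp}:|(\widehat{Y}_{/V})^{\on{an}}|\to|V|$ that is quasi-compact in particular; since $|V|$ is quasi-compact (affine), so is $|(\widehat{Y}_{/V})^{\on{an}}|$. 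Combined with local spatiality and quasiseparatedness, this yields spatiality. Hence $\widehat{Y}_{/V}$ is an affine kimberlite for every affine open $V\subseteq Y^\red$, and $Y$ is a kimberlite.
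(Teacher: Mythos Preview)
The paper does not supply its own proof of this statement; it is simply cited from \cite[Proposition 4.42]{Gle24}. So there is nothing in-paper to compare against.

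Your argument is essentially correct and follows the natural strategy. The reduction to affine opens $V_i\subseteq V$ factoring through affine opens $U_i\subseteq X^\red$, the identification $\widehat{Y}_{/V_i}\simeq\widehat{X}_{/V_i}$ via \cite[Theorem 4.27]{Gle24}, and the observation that the induced \'etale map on analytic loci is qcqs (by \cite[Lemma 4.25]{Gle24}) with spatial target, all work as you describe.

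One minor wrinkle in your gluing step: invoking ``spectral, hence quasi-compact'' for the glued specialization map is slightly circular, since spectrality of the source $|(\widehat{Y}_{/V})^{\on{an}}|$ is part of what you want to establish. The clean fix is to note directly that $|(\widehat{Y}_{/V})^{\on{an}}|=\bigcup_{i=1}^n|(\widehat{Y}_{/V_i})^{\on{an}}|$ is a finite union of quasi-compact opens, hence quasi-compact; combined with quasiseparatedness (\Cref{quasiseparatednessofprekimberlites}) and local spatiality, this yields spatiality via \cite[Proposition 11.19.(iii)]{Sch17}. Equivalently, you may feed this directly into \Cref{twodefinitionsofkimberlite} by observing that $\on{sp}^{-1}(|V_i|)$ is quasi-compact for each $i$ and finitely many $V_i$ cover $V$. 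Either way, the substance of your argument is sound.
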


    We also have the following useful lemma which was implicitly proved in \cite[Proposition 2.34]{AGLR22}. We recall the proof for the convenience of the reader, since the setup is slightly different.
    
    \begin{lemma}
	    \label{annoyinglemmaonsurjectivityofspecialization}
	    Let $X$ be an affine kimberlite. 
	    Let $Z\subseteq X^\red$ denote the closed subscheme with $|Z|=\on{sp}(X^{\on{an}})$.
	    Suppose that $f:\Spa (R,R^+)\to X^{\on{an}}$ is a v-cover and that $X$ formalizes $f$. 
	    Then $|\Spd R^+_\red|\to |Z^{\diamond}|$ is surjective. 
	    In particular, $\Spd R^+_\red\to Z^\diamond$ is a v-cover.  
    \end{lemma}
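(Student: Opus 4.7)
The plan is to deduce everything from a single commutative square of topological spaces, produced by the naturality of the specialization map applied to the formalization $\tilde{f}\colon \Spd R^+\to X$ of $f$. Recall that specialization is a continuous natural transformation; applying $\red$ to $\tilde{f}$ yields a map of perfect schemes $\tilde{f}^\red\colon (\Spd R^+)^\red=\Spec R^+_\red\to X^\red$, and on topological spaces we obtain the commutative square
\begin{center}
\begin{tikzcd}
|\Spa(R,R^+)| \arrow{r}{|f|} \arrow{d}{\on{sp}_{\Spd R^+}} & |X^{\on{an}}| \arrow{d}{\on{sp}_X}\\
|\Spec R^+_\red| \arrow{r}{|\tilde{f}^\red|} & |X^\red|.
\end{tikzcd}
\end{center}
Since $X^\red$ is a perfect scheme and both $\Spec R^+_\red$ and $Z$ are reduced, the topological fact that the image of $|\tilde{f}^\red|$ lands in $|Z|=\on{sp}(X^{\on{an}})$ upgrades to a scheme-theoretic factorization $\Spec R^+_\red \to Z\hookrightarrow X^\red$; the containment on topological spaces is forced by the commutativity of the square and the fact that $\on{sp}_X(|X^{\on{an}}|)\subseteq |Z|$ by definition.

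The first main step is then to invoke the hypothesis that $f$ is a v-cover, which guarantees that $|f|$ is surjective. By \Cref{twodefinitionsofkimberlite} the map $\on{sp}_X\colon |X^{\on{an}}|\to|X^\red|$ is spectral with image $|Z|$, so the composite $\on{sp}_X\circ |f|\colon |\Spa(R,R^+)|\to|Z|$ is surjective. Commuting around the square, this equals $|\tilde{f}^\red|\circ\on{sp}_{\Spd R^+}$, and therefore $|\tilde{f}^\red|\colon|\Spec R^+_\red|\to |Z|$ must itself be surjective. Because the $\diamond$-functor is fully faithful on perfect schemes and identifies the topological spaces $|\Spec R^+_\red|=|\Spd R^+_\red|$ and $|Z|=|Z^\diamond|$, this is precisely the stated surjectivity of $|\Spd R^+_\red|\to|Z^\diamond|$.

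For the ``in particular'' clause, I would argue as follows. Since $X$ is affine kimberlite, $X^\red$ is affine, and hence the reduced closed subscheme $Z\subseteq X^\red$ is affine. Thus $\tilde{f}^\red\colon \Spec R^+_\red\to Z$ is a surjective map between affine perfect schemes, hence a v-cover in the schematic v-topology of Bhatt--Scholze. Applying the continuous functor $\diamond\colon\widetilde{\PSch}\to\widetilde{\Perf}$ yields a v-cover $\Spd R^+_\red\to Z^\diamond$, completing the argument.

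The only subtle point is the commutativity of the topological square, which boils down to the naturality of the (Heuer) specialization map $\on{SP}$ for the morphism $\tilde{f}$, compatible with its topological avatar $\on{sp}$. This is essentially a bookkeeping exercise with the constructions recalled earlier in the paper, so I expect no serious obstacle; the remainder of the proof is a diagram chase together with the scheme-theoretic v-cover statement.
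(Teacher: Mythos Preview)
Your commutative square and the deduction that $|\tilde f^\red|\colon|\Spec R^+_\red|\to |Z|$ is surjective are both fine, but the step that follows is wrong. The functor $\diamond$ does \emph{not} identify $|S^\diamond|$ with $|\Spec S|$ for a perfect scheme $S$: for affine $S=\Spec A$ one has $|\Spd(A,A)|\cong|\Spa(A,A)|$, a Riemann--Zariski type space whose points are recorded by a support prime $\frakp$ together with a further valuation (equivalently a specialization prime $\frakq\supseteq\frakp$). This is exactly why the paper begins its proof by fixing $z\in|Z^\diamond|$ and extracting both $\frakp_z$ and $\frakq_z$. So knowing only surjectivity on scheme-theoretic points does not yet give surjectivity on $|(-)^\diamond|$.

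Your attempted rescue in the ``in particular'' clause has the same defect: a surjective qcqs map of perfect affine schemes need not be a schematic v-cover. For a perfect valuation ring $V$ with fraction field $K$ and residue field $k$, the map $\Spec k\sqcup\Spec K\to\Spec V$ is surjective but the identity of $\Spec V$ cannot be lifted along any extension of valuation rings; hence it is not a v-cover. In our situation one must produce, for each valuation-type point of $Z^\diamond$, a single connected component of $\Spec R^+_\red$ hitting both $\frakp_z$ and $\frakq_z$, and surjectivity alone does not guarantee this.

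What your argument is missing is precisely the \emph{valuativity} of the kimberlite $X$. The paper's proof uses it in an essential way: after finding a formalizable geometric point $\Spa(C,C^+)\to X^{\an}$ whose reduction hits $\frakp_z$, one composes valuations to form a smaller ring $C^+_V\subseteq C^+$ with $(C^+_V)_\red$ surjecting onto a valuation ring joining $\frakp_z$ to $\frakq_z$, and then the partial properness of $\on{SP}_X$ extends the map to $\Spa(C,C^+_V)\to X^{\an}$. Only after this extension can one pull back through the v-cover $f$ to land in $\Spd R^+_\red$ and hit $z$. So the diagram chase you propose is a good first step (it recovers the $\frakp_z$ part), but it cannot be completed without invoking valuativity to handle the extra valuation data carried by points of $|Z^\diamond|$.
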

    \begin{proof}
	    Let $X^\red=\Spec A$ and let $z\in |Z^{\diamond}|\subseteq |X^{\on{Red}}|=|\Spd(A,A)|$.
	    To $z$ we can attach a support ideal $\frakp_z$ and a specialization ideal $\frakq_z$ with $\frakp_z\subseteq\frakq_z\subseteq A$ and with $\frakp_z, \frakq_z\in Z$ \cite[Definition 2.4]{Gle24}.
	    There is a perfect valuation ring $V$, and a map $\Spec V \to Z$ whose special point maps to $\frakq_z$ and whose generic point maps to $\frakp_z$.
	    Moreover, $z$ is in the image of the induced map $|\Spec V^\diamond| \to |Z^\diamond|$. 

	    Since $|Z|=\on{sp}(X^{\on{an}})$ there is a formalizable geometric point $\Spa(C,C^+)\to X^\an$ such that the closed point of $\Spec C^+_\red$ maps to $\frakp_z$ under the map $(\Spd C^+)^\red\to X^\red$.
	    Since the closed point of $\Spec C^+_\red$ maps to the same point as the generic point of $\Spec V$, we can compose the valuations to obtain a valuation ring $C^+_{\red,V}\subseteq C^+_\red$ and a map $\Spec C^+_{\red,V}\to \Spec A$ such that $\Spec C^+_{\red,V}/\frakp_z\to \Spec A$ factors through $\Spec V$ and gives an extension of valuation rings with $(\Spec C^+_{V,\red})_{(\frakp_z)}=\Spec C_\red^+$. 
	    Let $C^+_V\subseteq C^+$ denote the unique valuation subring containing $C^{\circ \circ}$ and such that $(C^+_V)_\red=C^+_{V,\red}$.
	    We obtain a commutative diagram 
	    \begin{center}
	    \begin{tikzcd}
		    \Spa(C,C^+) \arrow{r} \arrow{d}  & X^\an \arrow{r} & X \ar{d}{\on{SP}} \\
		    \Spa(C,C^+_V) \arrow{rr} & & X^{\on{H}}.
	    \end{tikzcd}
	    \end{center}
	    By valuativity of $X$, this lifts to a map $\Spa(C,C^+_V)\to X^{\on{an}}$.
	    Replacing $(C,C^+)$ by a v-cover we may assume that the map factors through a map $\Spa(C,C^+_V)\to \Spa (R,R^+)$.
	    The formalization $\Spd C^+_V\to X$ induces a map $\Spec C^+_{V,\red}\to Z$ that factors (and surjects onto) $\Spec V$.
	    In particular, $z$ is in the image of $|\Spd C^+_{V,\red}|\to |\Spd R^+_\red|\to |X|$.
	    Which shows that $|\Spd R^+_\red|\to |Z^\diamond|$ is surjective.
	    The final statement follows from \cite[Lemma 2.26]{Gle24} combined with \cite[Lemma 12.11]{Sch17}.
    \end{proof}

    The theory of kimberlites developed in \cite{Gle24}, \cite{GL22}, \cite{AGLR22} and \cite{GI_Bunmer} is sufficient for many purposes. 
    Nevertheless, it falls short in answering the following natural foundational questions.\footnote{We have not found a proof or a counterexample to the statements in \Cref{naturalquestions}. 
    We suspect that in all the cases a counterexample should exist.} 
Spatial kimberlites resolve these issues. 
\begin{question}
	\label{naturalquestions}
Let $X$ be and $Y$ be kimberlites.	
\begin{enumerate}
	\item Is $X\to \ast$ representable in locally spatial diamonds?
	\item Is $X\times Y$ a kimberlite?
	\item Is $X$ pro-\'etale locally formalizing?
	\item If $S\subseteq X^\red$ is constructible, is $\widehat{X}_{/S}$ a kimberlite?
\end{enumerate}
\end{question}

Our motivation to define spatial kimberlites is to deduce \Cref{naturalquestions}, particularly \Cref{naturalquestions}.(4), from the stronger axioms. 

    \section{Spatial Kimberlites}


    \begin{definition}
	    \label{affinespatialdefi}
	    We say that an affine kimberlite $X$ is \textit{spatial} if the following conditions hold.
	    \begin{enumerate}
		    \item $X$ formalizes geometric points. 
		    \item There is a qcqs formally adic v-cover $f:Y\to X$, where $Y$ is an affine formal v-sheaf (i.e. $Y=\Spd B$ for $B$ an $I$-adic ring).  
	    \end{enumerate}
    \end{definition}

    The following proposition explains the relevance of the axiom of \Cref{affinespatialdefi}.(1).

    \begin{proposition}
	    \label{prekimbproetaleformalizing} 
	    Let $X$ be a prekimberlite that formalizes geometric points. 
	    If $S=\Spa (R,R^+)$ is a strictly totally disconnected perfectoid space and $f:S\to X$ is a map, then $X$ formalizes $f$.
    \end{proposition}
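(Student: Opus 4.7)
The plan is to upgrade the hypothesis on geometric points to all strictly totally disconnected test spaces by leveraging two automatic features of prekimberlites: $X$ is v-formalizing by \Cref{defiitioon f prekimbelrite}, and $X$ is formally separated, so formalizations are unique when they exist by \cite[Proposition 4.9]{Gle24}.

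First I would apply the v-formalizing property to $f\colon S \to X$. This yields a v-cover $g\colon S' = \Spa(R',R'^+) \to S$ such that the composition $f\circ g$ formalizes to some $\widetilde{fg}\colon \Spd R'^+ \to X$. Replacing $S'$ by a further v-cover, we may assume $S'$ is strictly totally disconnected, since such spaces form a basis for the v-topology.

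Next I would check descent compatibility on the double fiber product. Setting $S'' = S' \times_S S' = \Spa(R'',R''^+)$ with projections $p_1, p_2$, both $\widetilde{fg}\circ \tilde p_1$ and $\widetilde{fg}\circ \tilde p_2$ are formalizations of the common map $fgp_1 = fgp_2 \colon S'' \to X$. Uniqueness forces them to coincide, producing descent data for $\widetilde{fg}$ along $\Spd R'^+ \to \Spd R^+$.

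The main obstacle is to promote this descent data to a genuine map $\widetilde f \colon \Spd R^+ \to X$. The subtlety is that while $g$ is a v-cover on the analytic locus, the induced $\Spd R'^+ \to \Spd R^+$ need not be a v-cover of formal v-sheaves, because surjectivity on the reduced loci $\Spec R^+_\red$ may fail. This is precisely where the hypothesis enters. For every connected component $\Spa(C_x,C_x^+)\hookrightarrow S$ (with $x \in \pi_0(S)$), the induced map $\Spa(C_x,C_x^+)\to X$ formalizes by hypothesis to some $\widetilde{f_x}\colon \Spd C_x^+\to X$; by uniqueness, $\widetilde{f_x}$ agrees with the restriction of $\widetilde{fg}$ along any lift of $x$ to a component of $\pi_0(S')$, which exists because $g$ is a v-cover. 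Since $S$ is strictly totally disconnected, both $\Spa(R,R^+)$ and the reduced locus of $\Spd R^+$ are governed by the profinite set $\pi_0(S)$ together with the stalk data $\{C_x^+\}_{x\in\pi_0(S)}$. Consequently, the collection consisting of $f$ on the analytic locus and $\{\widetilde{f_x}\}_{x\in\pi_0(S)}$ on the geometric fibers determines $\widetilde f \colon \Spd R^+\to X$ v-locally, with compatibility enforced by uniqueness of formalizations. Verifying that this v-local data satisfies the sheaf condition—using uniqueness applied both pointwise and on the v-cover $g$—is the crux of the argument and produces the desired formalization of $f$.
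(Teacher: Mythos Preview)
Your setup is correct, but the argument has a genuine gap in the descent step, and the final paragraph does not close it.

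First, the obstacle you name is not quite the right one. Once $S'$ is chosen strictly totally disconnected, the map $\Spd R'^+\to \Spd R^+$ is in fact surjective (the paper uses this without comment) and, being a map of affine formal v-sheaves, is qcqs; so it \emph{is} a v-cover. The real problem is that the descent condition must be checked on $W\coloneqq \Spd R'^+\times_{\Spd R^+}\Spd R'^+$, not on $\Spd R''^+$. These agree only on analytic loci: $W^{\an}=S''$, but $W$ has additional non-analytic points that are not hit by the natural map $\Spd R''^+\to W$. Uniqueness of formalizations gives you agreement of the two pullbacks on $\Spd R''^+$, hence on $W^{\an}$, but says nothing directly about the rest of $W$. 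The equalizer of the two maps $W\rightrightarrows X$ is a formally closed subsheaf containing $W^{\an}$, yet \Cref{fixed up version formally dense} does not apply because $W$ is not a priori of the form $\Spd A^+$ for a perfectoid pair.

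Second, your attempted fix---assembling $\widetilde f$ from $f$ on $\Spa(R,R^+)$ together with the fiberwise formalizations $\widetilde{f_x}$ on each $\Spd C_x^+$---does not produce a map on $\Spd R^+$. The open subsheaf $\Spa(R,R^+)$ together with the closed subsheaves $\{\Spd C_x^+\}_{x\in\pi_0(S)}$ do not form a v-cover of $\Spd R^+$, so there is no cover along which to glue and no sheaf condition to invoke. Saying the data ``determines $\widetilde f$ v-locally'' is not a proof.

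The paper avoids descent entirely. It considers the graph $S\to X\times \Spd R^+$, takes the sheaf-theoretic image $\calF\subseteq X\times \Spd R^+$ of the formalization $\Spd R'^+\to X\times \Spd R^+$, and shows that the projection $\calF\to \Spd R^+$ is an isomorphism. This projection is qcqs, so by \cite[Lemma~12.5]{Sch17} it suffices to check bijectivity on geometric points. Surjectivity comes from surjectivity of $\Spd R'^+\to \Spd R^+$. For injectivity one restricts to a connected component $\Spd C_x^+$: the hypothesis that $X$ formalizes geometric points produces a section $\Spd C_x^+\to X\times \Spd C_x^+$, and uniqueness of formalizations forces $\calF_x$ to land inside this section. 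This is exactly where the hypothesis enters, and it is what your fiberwise $\widetilde{f_x}$ were reaching for---but packaged so that the conclusion follows from a clean pointwise criterion rather than an ad hoc gluing.
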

    \begin{proof}
    Since strictly totally disconnected perfectoid spaces split open covers, we may assume that $X$ is an affine prekimberlite.	
    Consider the graph morphism $S\to X\times \Spd(R^+)$. 
    Let $S':=\Spa (R',R'^+)\to S$ be a v-cover that formalizes over $X$ and let $\calF$ denote the sheaf theoretic image of the map $\Spd(R'^+)\to X\times \Spd R^+$. 
    We may choose $S'$ to also be strictly totally disconnected.
    We wish to prove that the projection map $\calF\to \Spd(R^+)$ is an isomorphism.
   Consider the following commutative diagram:
   \begin{equation}
	   \label{basic diagram whih we basechange}
   \begin{tikzcd}
	   S' \ar{r}\ar{d} & \Spd R'^+ \ar{r}\ar{d}& X\times \Spd R'^+ \ar{r}\ar{d}& \Spd R'^+ \ar{d}\\	
	   S \ar{r} & \calF \ar{r} & X\times \Spd R^+ \ar{r} & \Spd R^+ \\	
   \end{tikzcd}	
   \end{equation}
   Since the map $\Spd R'^+\to \Spd R^+$ is surjective, the map $\calF\to \Spd R^+$ is also surjective.
   We now prove it is also injective.
   First, the map $\Spd R^+\times X\to \Spd R^+$ is quasiseparated since $X\to \ast$ is separated and since $\calF\subseteq X\times \Spd R^+$, then $\calF$ is also qs over $\Spd R^+$.
   Moreover the map $\Spd R'^+\to \Spd R^+$ is qcqs which implies that $\calF$ is qcqs over $\Spd R^+$.
   By \cite[Lemma 12.5]{Sch17}, we can check bijectivity of $\calF\to \Spd R^+$ on geometric points. 
   Any geometric point factors through a connected component of $\Spd R^+$. 
   Let $x\in \pi_0(\Spd R^+)$.
   Then $\calF_x$ is the sheaf-theoretic image of the map $\Spd R'^+_x\to X\times \Spd C_x^+$ all over $\Spd C_x^+$.
   Here $\Spd R'^+_x= \Spd R'^+\times_{\Spd R^+} \Spd C_x^+$. 
   This gives rise to the following commutative diagram which is simply the basechange of \eqref{basic diagram whih we basechange} along the closed immersion $\Spd C_x^+\to \Spd R_x^+$.
   \begin{equation}
   \begin{tikzcd}
	   \Spa(R'_x,R'^+_x) \ar{r}\ar{d} & \Spd R_x'^+ \ar{r}\ar{d}& X\times \Spd R_x'^+ \ar{r}\ar{d}& \Spd R_x'^+ \ar{d}\\	
	   \Spa(C_x,C_x^+) \ar{r} & \calF_x \ar{r} & X\times \Spd C_x^+ \ar{r} & \Spd C_x^+ \\	
   \end{tikzcd}	
   \end{equation}
   By hypothesis, the second projection map $X\times \Spd C_x^+\to \Spd C_x^+$ has a unique section $\Spd C_x^+\to X\times \Spd C_x^+$ compatible with the graph map $\Spd C_x\to X\times \Spd C_x$ induced by $f$.
   This section induces a formalization 
   \[\Spd R'^+_x\to \Spd C_x^+\to X\times \Spd C_x^+\] 
   of the map $\Spa (R_x',R'^+_x)\to X\times \Spa C^+_x$.
   Since formalizations over $X\times \Spd C_x^+$ are unique, we conclude that the map $\calF_x\to X\times \Spd C_x^+$ factors through the section $\Spd C_x^+\to X\times \Spd C_x^+$. 
   This proves that $\calF_x\to \Spd C_x^+$ is injective and surjective, consequently it induces a bijection of geometric points. 
    \end{proof}

    \begin{proposition}
	    \label{affineislocallyspatial}
	    Suppose that $X$ is an affine spatial kimberlite and $U\subseteq X^\red$ is an affine open subset, then $\widehat{X}_{/U}$ is an affine spatial kimberlite.	
    \end{proposition}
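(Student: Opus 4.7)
The plan is to verify \Cref{affinespatialdefi} for $\widehat{X}_{/U}$ by first establishing that it is an affine kimberlite, then checking the formalization axiom, and finally producing the required v-cover by an affine formal v-sheaf. For the affine kimberlite part, I will use \Cref{lemmaspecializing} to identify $\widehat{X}_{/U}$ with the open subsheaf of $X$ with $|\widehat{X}_{/U}|={\on{sp}}^{-1}(|U|)$; it is automatically a prekimberlite with reduced locus the affine scheme $U$, and inherits valuativity from $X$ via \cite[Proposition 4.34]{Gle24}. The analytic locus $(\widehat{X}_{/U})^{\on{an}}={\on{sp}}_{X^{\on{an}}}^{-1}(|U|)$ is a quasicompact open of $X^{\on{an}}$ because \Cref{twodefinitionsofkimberlite} guarantees ${\on{sp}}_{X^{\on{an}}}$ is spectral; being a qcqs open of the spatial diamond $X^{\on{an}}$, it is itself a spatial diamond by \cite[Proposition 11.19.(iii)]{Sch17}.

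For the formalization axiom, the strategy is to formalize a geometric point $f\colon \Spa(C,C^+)\to \widehat{X}_{/U}$ inside the ambient $X$ and then show the formalization automatically lands in $\widehat{X}_{/U}$. Once a formalization $\tilde{f}\colon \Spd C^+\to X$ is obtained from spatiality of $X$, the factoring claim is equivalent to the induced reduction map $\Spec C^+_\red\to X^\red$ having image in $|U|$. By compatibility of the specialization maps, the closed point of $\Spec C^+_\red$ is sent to ${\on{sp}}_X$ of the closed point of $\Spa(C,C^+)$, which lies in $|U|$ by hypothesis on $f$; since $C^+_\red$ is local and $|U|\subseteq |X^\red|$ is open (hence closed under generization), the whole image necessarily lands in $|U|$.

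The main obstacle will be producing the qcqs formally adic v-cover by an affine formal v-sheaf, since the naive pullback of the cover coming from $X$ is only an open subsheaf of an affine formal v-sheaf and need not itself be representable by an $I$-adic ring. Let $g\colon Y=\Spd B\to X$ be the cover witnessing spatiality of $X$. I first form the base change $Y_U := Y\times_X \widehat{X}_{/U}$, which is qcqs, formally adic, and v-covers $\widehat{X}_{/U}$, and then repair the failure of affineness by hand. Since $\widehat{X}_{/U}\hookrightarrow X$ is an open immersion, $Y_U$ is the open of $Y$ attached to the quasicompact open
\[V:=\Spec(B/I)\times_{X^\red} U \;\subseteq\; \Spec(B/I),\]
which is quasicompact because $X^\red$ is affine and $U$ is a qc open of it. I cover $V$ by finitely many principal opens $D(\overline{f}_i)$, lift each $\overline{f}_i$ to $f_i\in B$, and let $B_{\{f_i\}}$ denote the $I$-adic completion of $B[f_i^{-1}]$. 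By \Cref{lemmaspecializing} applied to $\Spd B$, each $\Spd B_{\{f_i\}}$ is the open of $Y$ with reduced locus $D(\overline{f}_i)$, hence contained in $Y_U$, and together they cover $Y_U$. The finite disjoint union
\[Y' := \coprod_{i=1}^n \Spd B_{\{f_i\}} = \Spd\!\Big(\prod_{i=1}^n B_{\{f_i\}}\Big)\]
is then an affine formal v-sheaf, and $Y'\to Y_U\to \widehat{X}_{/U}$ is the required qcqs formally adic v-cover, completing the verification.
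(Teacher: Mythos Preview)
Your proof is correct, but it takes a longer route than the paper in two places.

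First, your verification that $\widehat{X}_{/U}$ is an affine kimberlite is redundant: this is precisely the content of \Cref{definitionofkimberlite}(2), since $X$ is already assumed to be a kimberlite. The paper's proof therefore begins directly with the two spatiality axioms.

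Second, for the v-cover axiom you work around a potential failure of affineness by passing to principal opens; this works, but the concern is unfounded. The paper observes that $\widehat{X}_{/U}\times_X \Spd B$ is itself an affine formal v-sheaf: since $\widehat{X}_{/U}\to X$ is a formally adic open immersion, the fiber product is the formal neighborhood $\widehat{(\Spd B)}_{/V}$ with $V=(g^{\red})^{-1}(U)$. Now $V=\Spec(B/I)\times_{X^{\red}}U$ is a fiber product of affine schemes over an affine scheme, hence affine, and \cite[Corollary 4.28]{Gle24} (\'etale formal neighborhoods of affine formal v-sheaves are affine formal v-sheaves, applied to the open immersion $V\hookrightarrow \Spec(B/I)$) gives the conclusion directly. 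Your principal-open cover is a valid alternative that avoids quoting this result, at the cost of an extra step.

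For the formalization of geometric points your argument and the paper's coincide; the paper just phrases it more tersely via formal adicness of $\widehat{X}_{/U}\to X$: a map $\Spd C^+\to X$ factors through $\widehat{X}_{/U}$ if and only if $\Spa(C,C^+)$ does.
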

    \begin{proof}
	    By assumption, $\widehat{X}_{/U}\to X$ is a formally adic open immersion. 
	    Consequently, a map $\Spd C^+\to X$ factors through $\widehat{X}_{/U}$ if and only if $\Spa(C,C^+)$ does. 
	    In particular, $\widehat{X}_{/U}$ formalizes geometric points.   
	    If $\Spd B\to X$ is a qcqs formally adic v-cover, then 
	    $\widehat{X}_{/U}\times_X \Spd B$ is again an affine formal v-sheaf and $\widehat{X}_{/U}\times_X \Spd B\to \widehat{X}_{/U}$ is again a qcqs formally adic v-cover.
    \end{proof}

    \begin{definition}
	    \label{spatial-kimberlite-defi}
	    Let $X$ be a kimberlite.  
    \begin{enumerate}
    \item We say that $X$ is a \textit{locally spatial kimberlite} if for all affine open neighborhoods $U\subseteq |X^\red|$ the associated open formal neighborhood $\widehat{X}_{/U}$ is an affine spatial kimberlite. 
	    \item We say that $X$ is a \textit{spatial kimberlite} if it is a locally spatial kimberlite and $|X^\red|$ is qcqs.
    \end{enumerate}
    \end{definition}
    It follows from \Cref{affineislocallyspatial} that affine spatial kimberlites are spatial kimberlites.
    \Cref{affinecommunication} below shows that we can check being spatial on an affine open cover. 
    In particular, spatial kimberlites that are affine prekimberlites coincide with affine spatial kimberlites.

    \begin{proposition}
	    \label{affinecommunication}
	    Suppose that $X$ is an affine prekimberlite, and that there is an affine open cover $\coprod_{i\in I} U_i\to X^\red$ such that each $\widehat{X}_{/U_i}$ is an affine spatial kimberlite, then $X$ is an affine spatial kimberlite.	
    \end{proposition}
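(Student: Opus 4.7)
The plan is to verify each condition of \Cref{affinespatialdefi} together with what is needed for $X$ to be a kimberlite: valuativity, spatiality of $X^{\on{an}}$, formalization of geometric points, and the existence of an affine qcqs formally adic v-cover. First, to establish valuativity, \Cref{partial proper v-local} permits checking the valuative criterion v-locally on the target; applied to the open cover $\coprod_i U_i^{\diamond/\circ\circ} \to X^{\on{H}}$ and using the defining Cartesian square of $\widehat{X}_{/U_i}$, the base change of $\on{SP}_X$ is exactly $\on{SP}_{\widehat{X}_{/U_i}}$, which is partially proper by hypothesis, so $X$ is valuative.

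For the spatiality of $X^{\on{an}}$: since $X^\red$ is affine and hence quasicompact, one extracts a finite subcover $U_1, \ldots, U_n$. Each $(\widehat{X}_{/U_j})^{\on{an}}$ is a spatial diamond open in $X^{\on{an}}$, and these finitely many opens cover $X^{\on{an}}$; hence $X^{\on{an}}$ is a quasicompact locally spatial diamond, and by \Cref{quasiseparatednessofprekimberlites} it is also quasiseparated. Then \cite[Proposition 11.19.(iii)]{Sch17} gives that $X^{\on{an}}$ is a spatial diamond, so $X$ is an affine kimberlite.

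For a geometric point $f : \Spa(C, C^+) \to X$, composition with $\on{SP}_X$ corresponds to a morphism $\Spec C^+_\red \to X^\red$ whose unique closed point lies in some $U_i$; since opens are closed under generization in a scheme, the whole morphism factors through $U_i$, and so by the defining pullback of $\widehat{X}_{/U_i}$ the map $f$ factors through $\widehat{X}_{/U_i}$, which formalizes $f$ by hypothesis. For the remaining v-cover condition, let $\Spd B_j \to \widehat{X}_{/U_j}$ be the assumed qcqs formally adic v-covers by affine formal v-sheaves, with each $B_j$ an $I_j$-adic ring ($I_j$ finitely generated). Set $B := \prod_{j=1}^n B_j$ with its product topology, which is $I$-adic for the finitely generated ideal $I := \prod_j I_j$; then $\Spd B \cong \coprod_j \Spd B_j$, and the composite $\Spd B \to \coprod_j \widehat{X}_{/U_j} \to X$ is formally adic (composition and finite coproduct of such maps), qcqs, and a v-cover since the open immersions $\widehat{X}_{/U_j} \hookrightarrow X$ jointly cover $X$.

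The principal obstacle is the quasicompactness reduction: packaging everything into a single affine formal v-sheaf and a single qcqs analytic locus requires reducing the given open cover to a finite subcover, which crucially uses the affineness of $X^\red$, together with the identification $\Spd(\prod_j B_j) \cong \coprod_j \Spd B_j$ needed to glue the local v-covers into one.
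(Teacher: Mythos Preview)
Your proof is correct and in fact more thorough than the paper's own. The paper's argument jumps straight to the two conditions of \Cref{affinespatialdefi}---formalization of geometric points and the existence of a qcqs formally adic v-cover from an affine formal v-sheaf---without explicitly verifying that $X$ is an affine kimberlite (i.e.\ valuative with $X^{\on{an}}$ spatial). You supply both of these missing steps: valuativity via \Cref{partial proper v-local} applied to the cover $\coprod_i U_i^{\diamond/\circ\circ}\to X^{\on{H}}$ and the defining Cartesian square \eqref{formaletlaenbhosds}, and spatiality of $X^{\on{an}}$ via the finite-union argument together with \Cref{quasiseparatednessofprekimberlites} and \cite[Proposition 11.19.(iii)]{Sch17}.

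For the two conditions of \Cref{affinespatialdefi} your approach and the paper's coincide: geometric points factor through some $\widehat{X}_{/U_i}$ and formalize there, and the finite product $B=\prod_{j=1}^n B_j$ yields the global affine formal v-cover. The only place your justification is thinner than the paper's is the qcqs property of $\coprod_{j=1}^n \widehat{X}_{/U_j}\to X$: the paper invokes \cite[Theorem~4.27]{Gle24} (cf.\ also \cite[Lemma~4.25]{Gle24}, used in the proof of \Cref{spatialkimberliteisetalelocal}) to conclude that a formally adic \'etale map whose reduction is qcqs is itself qcqs; you assert this composition is qcqs without citing the relevant input. This is a minor omission and the claim is correct.
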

    \begin{proof}
    That $X$ formalizes geometric points is clear since geometric points factor through $\widehat{X}_{/U_i}$ for some $i\in I$ and formalize over it.
    Since $X^\red$ is affine we may assume that $I$ is finite.
    By \cite[Theorem 4.27]{Gle24}, the map $\coprod_{i=1}^n \widehat{X}_{/U_i}\to X$ is a qcqs formally adic v-cover. 
    By hypothesis, there are formally adic qcqs v-covers $\Spd B_i\to \widehat{X}_{/U_i}$ with each $\Spd B_i$ an affine formal v-sheaf.
    Then $\coprod_{i=1}^n \Spd B_i\to X$ is a qcqs formally adic v-cover and the source is still an affine formal v-sheaf. 
    \end{proof}

    \subsection{Being spatial is \'etale local.}

    \begin{proposition}
	    \label{spatialkimberliteisetalelocal}
    Let $f:X\to Y$ be a map of prekimberlites. The following hold:
    \begin{enumerate}
	    \item If $f$ is formally adic \'etale and $Y$ is a locally spatial kimberlite, then $X$ is a locally spatial kimberlite.
	    \item If $f$ is a formally adic \'etale cover and $X$ is a locally spatial kimberlite, then $Y$ is a locally spatial kimberlite.
    \end{enumerate}
    \end{proposition}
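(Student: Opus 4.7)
The plan is to prove (1) first and then deduce (2). For (1), the key input is \cite[Theorem 4.27]{Gle24}: since $f\co X \to Y$ is formally adic étale, one has $X \simeq \widehat{Y}_{/X^\red}$, and by transitivity of étale formal neighborhoods $\widehat{X}_{/U} \simeq \widehat{Y}_{/U}$ for any affine open $U \subseteq X^\red$. Thus the task reduces to showing that $\widehat{Y}_{/U}$ is affine spatial when $Y$ is locally spatial and $U$ is affine étale separated over $Y^\red$. I would refine $U$ by affine opens each mapping into an affine open $V_\alpha \subseteq Y^\red$ for which $\widehat{Y}_{/V_\alpha}$ is affine spatial, and invoke \Cref{affinecommunication} to reduce to the core claim: if $Z$ is affine spatial and $U \to Z^\red$ is affine étale separated, then $\widehat{Z}_{/U}$ is affine spatial.

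For the core claim I would verify the two axioms of \Cref{affinespatialdefi}. Geometric points formalize via the Cartesian description $\widehat{Z}_{/U} = Z \times_{Z^{\on{H}}} U^{\diamond/\circ\circ}$: a geometric point $\Spa(C,C^+) \to \widehat{Z}_{/U}$ formalizes in $Z$ by hypothesis, its reduction $\Spec C^+_\red \to Z^\red$ lifts uniquely to $U$ because the specialization already lies in $|U|$ and étale maps over a valuation ring are determined by their closed fiber (in the spirit of \Cref{finite-etale-lemma}), and one combines these lifts into the required map $\Spd C^+ \to \widehat{Z}_{/U}$. For the v-cover axiom, I would base change a qcqs formally adic v-cover $\Spd B \to Z$ along $\widehat{Z}_{/U} \to Z$ and identify the result with $\Spd \tilde B$, where $\tilde B$ is the $I$-adically complete perfect étale affine $B$-algebra lifting $U \times_{Z^\red} \Spec B_\red$ by perfect Hensel lifting.

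For (2), fix an affine open $V \subseteq Y^\red$ and cover its preimage in $X^\red$ by finitely many affine opens $U_1,\dots, U_n$ mapping into $V$, using that $X^\red \to Y^\red$ is étale surjective and $V$ is quasicompact. Each $\widehat{X}_{/U_i}$ is affine spatial, so the finite disjoint union $W:=\bigsqcup_i \widehat{X}_{/U_i}$ is affine spatial and $W \to \widehat{Y}_{/V}$ is a formally adic étale v-cover. I would conclude that $\widehat{Y}_{/V}$ is affine spatial by (i) formalizing a geometric point $\Spa(C,C^+) \to \widehat{Y}_{/V}$ by first splitting the étale cover $W \times_{\widehat{Y}_{/V}} \Spa(C,C^+) \to \Spa(C,C^+)$, possible since $C$ is algebraically closed, and then formalizing in $W$, and (ii) composing a qcqs formally adic affine v-cover of $W$ with $W \to \widehat{Y}_{/V}$ to obtain the required v-cover of $\widehat{Y}_{/V}$. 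The main obstacle is the core claim in (1): identifying $\widehat{Z}_{/U} \times_Z \Spd B$ with an affine formal v-sheaf, which requires producing and controlling the perfect Hensel lift of an affine étale scheme across the adic base $B$, and tracking that its topology matches the étale-formal-neighborhood topology.
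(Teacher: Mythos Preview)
Your approach is essentially the same as the paper's, and the structure is correct. Two remarks are worth making.

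First, what you flag as the ``main obstacle'' --- identifying $\widehat{Z}_{/U}\times_Z \Spd B$ with an affine formal v-sheaf via Hensel lifting --- is exactly the content of \cite[Corollary 4.28]{Gle24}, which the paper simply cites: for an affine formal v-sheaf $\Spd B$, formally adic \'etale maps to $\Spd B$ are equivalent to \'etale schemes over $\Spec B_\red$, and affine ones correspond to affine formal v-sheaves. The paper also uses the same corollary for formalizing geometric points (the fiber $\Spd C^+\times_V X$ decomposes as $\coprod \Spd C_i^+$), which is cleaner than your hands-on lifting argument, though yours works too; note that the lift $\Spec C^+_\red \to U$ you seek is in fact already provided by the map $\Spa(C,C^+)\to U^{\diamond/\circ\circ}$ coming from the Cartesian square, so no separate lifting step is needed.

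Second, in part (2) you omit a step that the paper does include: before verifying the two spatial axioms for $\widehat{Y}_{/V}$, you must first check it is an affine \emph{kimberlite}, i.e.\ that $(\widehat{Y}_{/V})^\an$ is a spatial diamond. The paper does this by noting that $X^\an\to Y^\an$ is an \'etale cover from a spatial diamond onto a quasiseparated v-sheaf (using \Cref{quasiseparatednessofprekimberlites}), hence the target is spatial. You should also record that $W\to \widehat{Y}_{/V}$ is qcqs (the paper invokes \cite[Lemma 4.25]{Gle24}), since this is needed for the composed v-cover to satisfy \Cref{affinespatialdefi}(2).
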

    \begin{proof}
	    The valuative property is stable under formal \'etale neighborhoods and can be checked after an \'etale formal v-cover, so we may assume $X$ and $Y$ are valuative.
	    For the first statement, by \Cref{affinecommunication} we can assume $X$ is an affine prekimberlite and that the map $X\to Y$ factors through $V=\widehat{Y}_{/V^\red}$ for $V^\red\subseteq Y^\red$ an affine open subset.
	     By \cite[Proposition 4.42.(2)]{Gle24}, $X=\widehat{Y}_{/X^\red}$ is a kimberlite. 
	    Let $\Spd(C^+)\to V$ be the formalization of a geometric point, and let $P=\Spd(C^+)\times_V X$.
	    By \cite[Corollary 4.28]{Gle24}, $P$ is of the form $\coprod_{i\in I}\Spd(C_i^+)$ where $C_i^+=C^+[\frac{1}{c}]$ for some element $c\in C^+$. 
	    This shows that $X$ formalizes geometric points, since $V$ does.
	    Let $W:=\Spd(B)\to V$ be a qcqs formally adic v-cover and let $\widetilde{W}=W\times_V X$. 
	    The map $\widetilde{W}\to W$ is formally adic \'etale and by \cite[Corollary 4.28]{Gle24}, $\widetilde{W}$ is an affine formal v-sheaf. 
	    Moreover, $\widetilde{W}\to X$ is again a qcqs formally adic v-cover, which shows that $X$ is an affine spatial kimberlite. 

	    Conversely, assume that $f$ is a formally adic \'etale cover and that $X$ is locally spatial. 
	    Replacing $Y$ by an affine neighborhood and $X$ by an affine cover $\coprod_{i\in I} \widehat{X}_{/U_i}$, and then choosing finite subset of it, we may assume without losing generality that $X$ is an affine spatial kimberlite and that $Y$ is an affine prekimberlite.

	    Now, $Y^{\on{an}}$ is a spatial diamond since it is quasi-separated and $X^{\on{an}}\to Y^{\on{an}}$ is an \'etale cover whose source is a spatial diamond.
	    This shows that $Y$ is a kimberlite.
	    Moreover, since $X^\red\to Y^\red$ is a qcqs \'etale map, by \cite[Lemma 4.25]{Gle24} $X\to Y$ is also qcqs.
	    We let $\calF\to X$ be a qcqs formally adic v-cover where the $\calF$ is an affine formal v-sheaves.
	    Then $\calF\to Y$ is a qcqs formally adic v-cover with source a formal v-sheaf. 
	    Moreover, any map $\Spa C\to Y$ lifts to a map $\Spa C\to X$ and formalizes over $X$, consequently it formalizes over $Y$.
	    This finishes the proof that $Y$ is an affine spatial kimberlite.
    \end{proof}

    \subsection{The thick-reduced decomposition}
    One can verify from the definition that for every scheme $S\in \PSch$ the v-sheaf $S^\diamond$ is a spatial kimberlite, but it has empty analytic locus. 
    We can call this type of spatial kimberlite \textit{reduced}, they typically arise from the reduction functor adjunction.  
    The following is the complementary notion and the more relevant object to study in the theory of kimberlites.

    \begin{definition}
	    \label{thick kimber}
	    We say that a spatial kimberlite $X$ is \textit{thick} if the specialization map $\on{sp}:|X^{\on{an}}|\to |X^\red|$ is surjective.
    \end{definition}

    As we show below every spatial kimberlite can be decomposed on a thick spatial kimberlite and a reduced spatial kimberlite.
    We make some preparations, the following lemma is completely analogous to {\cite[Lemma 2.2.3]{vcesnavivcius2024purity}.

    \begin{lemma}
	    \label{productofpointsandvaluations}
	    Let $Y=\Spd B$ an affine formal v-sheaf. 
	    Then there are a collection of perfect valuation rings $\{V_j\}_{j\in J_1}$ endowed with the discrete topology, a collection of algebraically closed non-Archimedean fields $\{C_j\}_{j\in J_2}$ together with open bounded valuation subrings $\{C^+_j\subseteq C_j\}_{j\in J_2}$, and a qcqs formally adic v-cover $\Spd( \prod_{j\in J_2} C_j^+)\coprod \Spd(\prod_{j\in J_1}V_j)\to \Spd B$. 
	    Moreover, if the specialization map $\on{sp}:Y^{\on{an}}\to \Spec B_\red$ is surjective already $\Spd( \prod_{j\in J_2} C_j^+)\to \Spd B$ is a v-cover.
    \end{lemma}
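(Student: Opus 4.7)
The plan is to handle separately the reduced locus $Y^{\Red} := (\Spec B_\red)^\diamond \subseteq Y$ and the analytic locus $Y^{\an} := Y \setminus Y^{\Red}$; since every geometric point of $Y = \Spd B$ lies in exactly one of these two pieces, it suffices to produce a qcqs formally adic v-cover of each piece and take their disjoint union. For the reduced part, $B_\red = B/\sqrt{I}$ is a perfect ring, and any perfect affine scheme admits a v-cover $\coprod_{j \in J_1} \Spec V_j \to \Spec B_\red$ by a disjoint union of spectra of perfect valuation rings (dominate the local ring at each point by a perfect valuation ring). Giving each $V_j$ the discrete topology, the composition $B \twoheadrightarrow B_\red \to V_j$ is automatically continuous, producing maps $\Spd V_j \to \Spd B$ factoring through $Y^{\Red}$. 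Equipping $\prod_j V_j$ with the discrete topology makes $\Spd(\prod_j V_j)$ an affine formal v-sheaf, and since the natural map $\coprod_j \Spec V_j \to \Spec \prod_j V_j$ coming from the projections is surjective on underlying spaces, the composition $\Spec \prod_j V_j \to \Spec B_\red$ is surjective as well. Using that $\Spec$ of any ring is qcqs, $\Spd(\prod_j V_j) \to Y^{\Red}$ is a qcqs formally adic v-cover.

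For the analytic part, $Y^{\an}$ is a qcqs diamond: it is cut out in $\Spd B$ by the condition that the finitely generated ideal $I$ generates the unit ideal, hence is a finite union of rational subdomains of a qcqs v-sheaf. By the fact that every qcqs diamond admits a v-cover by a strictly totally disconnected affinoid perfectoid (Scholze's products-of-points principle, \cite[\S 7--11]{Sch17}), we may find such a v-cover of the form $\Spa(\prod_{j \in J_2} C_j, \prod_{j \in J_2} C_j^+) \to Y^{\an}$ with each $C_j$ algebraically closed non-Archimedean and $C_j^+ \subseteq C_j$ an open bounded valuation subring. The ring map $B \to \prod_j C_j^+$ underlying this cover formalizes it to a qcqs formally adic map $\Spd(\prod_{j \in J_2} C_j^+) \to \Spd B$.

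Assembling the two pieces yields the asserted qcqs formally adic v-cover $\Spd(\prod_{j\in J_2} C_j^+) \sqcup \Spd(\prod_{j\in J_1} V_j) \to \Spd B$. For the moreover clause, if $\on{sp}\colon |Y^{\an}| \to |\Spec B_\red|$ is surjective, then applying \Cref{annoyinglemmaonsurjectivityofspecialization} to the v-cover $\Spa(\prod C_j, \prod C_j^+) \to Y^{\an}$ (which is formalized by $\Spd B$) yields that $\Spd(\prod_j C_j^+)^{\red} \to (\Spec B_\red)^\diamond = Y^{\Red}$ is already a v-cover. Thus $\Spd(\prod_j C_j^+) \to \Spd B$ surjects on geometric points in both the analytic and reduced loci, and the $V_j$-piece becomes redundant. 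The main obstacle is the analytic half: invoking the products-of-points principle in the asserted single-affinoid form for $Y^{\an}$ and producing the formalization (which is essentially automatic once one already sits over $\Spd B$). The reduced side is elementary once one accepts the standard fact that perfect affine schemes admit v-covers by disjoint unions of spectra of perfect valuation rings.
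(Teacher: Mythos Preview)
Your overall strategy—covering $Y^{\Red}$ and $Y^{\an}$ separately—is sound and differs from the paper's approach, which instead chooses a geometric point $\Spa(C_x,C_x^+)\to\Spd B$ for every $x\in|\Spd B|$, forms the single product $R^+=\prod_x C_x^+$ with the $I$-adic topology (checking $I$-adic completeness directly), and only afterwards splits $R^+$ into the factors where $I$ vanishes and where it does not. The paper's route is more uniform and sidesteps the issues below; your route is more conceptual but needs repair.

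The reduced half of your argument has a real gap. Your parenthetical construction—for each scheme point $x\in\Spec B_{\red}$, choose one perfect valuation ring $V_x$ dominating $\calO_{\Spec B_{\red},x}$—does \emph{not} produce a schematic v-cover. For instance, with $B_{\red}=k[x,y]$ and a rank-$2$ valuation ring $W$ of $k(x,y)$ centered at $(x,y)$, the map $\Spec W\to\Spec B_{\red}$ need not lift (even after extension) to any $\Spec V_{x'}$: your single choice $V_{(x,y)}$ may impose an incompatible valuation on $k(x,y)$, while each $V_{x'}$ for $x'\neq(x,y)$ forces some variable to be a unit. One must index over \emph{all} valuations (equivalently, over points of $|\Spd B_{\red}|$), which is exactly what the paper's construction does when restricted to the non-analytic points. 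Relatedly, your claim that $\coprod_j\Spec V_j\to\Spec\prod_j V_j$ is surjective is false for infinite $J_1$ (ultrafilter points), and in any case what you actually need—that $\Spd(\prod V_j)\to Y^{\Red}$ is a v-cover—requires more than surjectivity on prime spectra: you need the schematic v-cover property so that $\coprod\Spd V_j\to\Spd B_{\red}$ is already a v-cover of v-sheaves, after which the factorization through $\Spd\prod V_j$ finishes.

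On the analytic side, you should explain why the formalization $\Spd(\prod C_j^+)\to\Spd B$ is formally adic: since $\Spa(\prod C_j,\prod C_j^+)$ lands in $Y^{\an}$, the ideal $I$ becomes the unit ideal in $\prod C_j$, so $I\cdot\prod C_j^+$ contains a pseudo-uniformizer and is an ideal of definition. This is precisely the topology check the paper handles via its $I$-adic completeness computation. Your treatment of the ``moreover'' clause via \Cref{annoyinglemmaonsurjectivityofspecialization} matches the paper's.
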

    \begin{proof}
	    Let $I\subseteq B$ be the ideal that defines the topology on $B$ and let $\{i_1,\dots,i_n,p\}\subseteq I$ be a set of generators.
	    For every point $x\in \Spd B$ we can find a geometric point $\Spa(C_x,C_x^+)\to \Spd B$.
	    This gives a map of rings (without topology) $B\to \prod_x C_x^+$.
	    We claim that $\prod_x C_x^+$ is $I$-adically complete as a ring.
	    Indeed, $(\prod_x C_x^+)/I^n=\prod_x (C_x^+/I^n)$ since $I$ is finitely generated.
	    Moreover, products commute with limits and each $C^+_x$ is $I$-adically complete, the claim follows.

	    We endow $R^+:=\prod_x C_x^+$ with the $I$-adic topology, then by \cite[Lemma 2.23]{Gle24} the map $\Spd R^+\to \Spd B$ is a qcqs formally adic v-cover since by construction $|\Spd R^+|\to |\Spd B|$ is surjective.
	    We can separate the product in two factors $R^+=R_1^+\times R_2^+$ where $R_1^+$ corresponds to the factors where $I=0$ and $R_2^+$ corresponds to the factors where at least one of the generators $i_j$ is a non-zero divisor in $C_x^+$. 
	    Then $R_1^+=\prod_{j\in J_1} V_j$ and $R_2^+=\prod_{j\in J_2} C^+_j$. 
	    
	    Now, for the second statement observe that $(\Spd R^+)^{\on{an}}=(\Spd R_2^+)^{\on{an}}$ and that the map $\Spd R^+_2\to \Spd B$ is formally adic and qcqs.  
	    By \Cref{annoyinglemmaonsurjectivityofspecialization} the map $|\Spd R^+_2|\to |\Spd B|$ is surjective, which implies it is a v-cover. 
    \end{proof}

\begin{lemma}
	\label{rigproetalecover}
	If $X$ is a thick spatial kimberlite, then there is a universally open quasi-pro-\'etale map $f:\Spa (R,R^+)\to X^{\on{an}}$ such that $\Spa( R,R^+)$ is a strictly totally disconnected space and $f$ is formalizable. Moreover, the formalization $\Spd R^+\to X$ is a qcqs v-cover. 
\end{lemma}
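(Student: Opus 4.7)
The plan is to build $f$ as a finite disjoint union of quasi-pro-\'etale covers over affine pieces of $X$, formalize it via \Cref{prekimbproetaleformalizing}, and then upgrade surjectivity on geometric points to the v-cover property using \Cref{annoyinglemmaonsurjectivityofspecialization}. Since $X$ is spatial, $|X^\red|$ is qcqs, so I choose a finite affine open cover $U_1,\dots,U_n$ of $X^\red$; each $\widehat{X}_{/U_i}$ is an affine spatial kimberlite by definition. Each piece is thick: by \Cref{lemmaspecializing}, $|\widehat{X}_{/U_i}^{\on{an}}|=\on{sp}^{-1}(|U_i|)$, so surjectivity of $\on{sp}$ on $X$ restricts to surjectivity on $U_i$. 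This reduces the problem to producing, for each affine thick spatial kimberlite $\widehat{X}_{/U_i}$, a single strictly totally disconnected $\Spa(R_i,R_i^+)$ with a universally open quasi-pro-\'etale surjection $f_i$ and qcqs v-cover formalization; then $\Spa(R,R^+):=\Spa(\prod R_i,\prod R_i^+)$ and $\Spd R^+=\coprod\Spd R_i^+$ do the job globally, with the formalizations assembling because $X$ is formally separated and hence formalizations are unique.

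For a thick affine spatial kimberlite $X$, I first observe that $X^{\on{an}}$ is a spatial diamond: it is locally spatial by definition, and $|X^{\on{an}}|$ is qcqs because the specialization map $\on{sp}\colon |X^{\on{an}}|\to |X^\red|$ is a qc surjection onto a qcqs space by \Cref{twodefinitionsofkimberlite} and the thickness hypothesis. Standard theory of spatial diamonds then provides a qcqs strictly totally disconnected perfectoid space $\Spa(R,R^+)$ together with a universally open quasi-pro-\'etale surjection $f\colon \Spa(R,R^+)\to X^{\on{an}}$. Because $X$ formalizes geometric points by \Cref{affinespatialdefi}.(1), \Cref{prekimbproetaleformalizing} yields a unique formalization $\tilde f\colon \Spd R^+\to X$ extending $f$.

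It remains to verify that $\tilde f$ is a qcqs v-cover. Quasicompactness of $\Spd R^+$ follows from that of $\Spa(R,R^+)$, and since $X$ is separated, its diagonal is a closed immersion, so base changes of $\tilde f$ along qcqs perfectoids embed into qcqs products as closed subspaces; thus $\tilde f$ is qcqs. Surjectivity on geometric points decomposes along the partition $|X|=|X^\Red|\sqcup|X^{\on{an}}|$ from \Cref{defiitioon f prekimbelrite}: on the analytic part it is the hypothesis on $f$, and on the reduced part the thickness assumption says the closed subscheme $Z$ of \Cref{annoyinglemmaonsurjectivityofspecialization} is all of $X^\red$, so that lemma yields $|\Spd R^+_\red|\twoheadrightarrow |X^\red|$. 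Combining surjectivity with qcqs, \cite[Lemma 12.11]{Sch17} gives that $\tilde f$ is a v-cover.

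The main obstacle is really the bookkeeping around formalizations matching up across the affine cover and the identification of the image of $\Spd R^+$ with the reduced part of $|X|$; both are handled cleanly by combining formal separatedness with the thickness-to-surjectivity packaging of \Cref{annoyinglemmaonsurjectivityofspecialization}, which is why its hypotheses were set up in exactly that form.
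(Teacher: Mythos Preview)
Your construction of $f$ and its formalization are fine, and the reduction to the affine case is harmless. The genuine gap is your quasicompactness argument for $\tilde f\colon \Spd R^+\to X$. You write that ``quasicompactness of $\Spd R^+$ follows from that of $\Spa(R,R^+)$'' and then that base changes of $\tilde f$ ``embed into qcqs products as closed subspaces''. The second step fails: the product $\Spd R^+\times \Spa(S,S^+)$ is \emph{not} quasicompact in general. Indeed, the paper itself records (in the proof of \Cref{qcqs-is-formal-adic}) that $\Spd O_C\to\ast$ is not quasicompact, so already for $R^+=O_C$ the fiber product over $\ast$ with an affinoid test object is not quasicompact. Being closed inside a non-quasicompact ambient space tells you nothing. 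Notice also that your argument never invokes axiom~(2) of \Cref{affinespatialdefi} (the existence of a qcqs formally adic v-cover $\Spd B\to X$ by an affine formal v-sheaf); since that axiom is what distinguishes spatial kimberlites from arbitrary kimberlites that formalize geometric points, it would be surprising if the lemma went through without it.

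The paper's route is precisely to exploit that axiom. One first refines the given $\Spd B\to X$ via \Cref{productofpointsandvaluations} to a product-of-points cover $\Spd R_1^+\to X$, which inherits qcqs-ness from $\Spd B\to X$ because $\Spd R_1^+\to \Spd B$ is a formally adic map of affine formal v-sheaves. Then one uses that $\Spa(R_1,R_1^+)$ is extremally disconnected to split the quasi-pro-\'etale cover $\Spa(R,R^+)\to X^{\on{an}}$ over it, giving a map $\Spa(R_1,R_1^+)\to \Spa(R,R^+)$ whose unique formalization factors $\Spd R_1^+\to X$ through $\Spd R^+\to X$. Since $\Spd R_1^+\to X$ is a qcqs v-cover (the surjectivity comes from \Cref{annoyinglemmaonsurjectivityofspecialization} and thickness), the map $\Spd R^+\to X$ is forced to be a v-cover as well. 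The key point you are missing is this factorization through a map already known to be qcqs; your direct attempt to bound $\Spd R^+\times_X(-)$ inside $\Spd R^+\times(-)$ cannot work.
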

\begin{proof}
We may assume $X$ is an affine spatial kimberlite.
Let $\Spd B\to X$ be a formally adic qcqs v-cover. 
By \Cref{productofpointsandvaluations}, we may assume that $\Spd B=\Spd R_1^+\coprod \Spd R_2^+$ with $\Spa( R_1,R_1^+)$ a product of points. 

Since $X^{\on{an}}$ is a spatial diamond, there is a universally open quasi-pro-\'etale v-cover $f:\Spa (R,R^+)\to X^{\on{an}}$ (see \cite[Proposition 11.24]{Sch17}). 
By \Cref{prekimbproetaleformalizing}, $f$ is formalizable. 
We claim that $\Spd R_1^+\to X$ factors through a map $\Spd R_1^+\to \Spd R^+$.
	    It suffices to prove that this happens on analytic loci by uniqueness of formalizations.
	    Consider $W=\Spa (R,R^+)\times_{X^{\on{an}}} \Spa (R_1,R_1^+)$, it suffices to find a section to the map $W\to \Spa (R_1,R_1^+)$, but this is a quasi-pro-\'etale cover and $\Spa (R_1,R_1^+)$ is extremally disconnected so it admits a section.

	    Now, $\Spd R_1^+\to \Spd B$ is formally adic and consequently qcqs. 
	    This gives that $\Spd R_1^+\to X$ is qcqs and since $|\Spd R_1^+|\to |X|$ is surjective by \Cref{annoyinglemmaonsurjectivityofspecialization} it is also a v-cover. 
	    This shows that $\Spd R^+\to X$ is also a v-cover.
\end{proof}

\begin{definition}
Let $X$ be a spatial kimberlite. We let $X^\thi\subseteq X$ denote the smallest subsheaf with the property that if $Y$ is a thick spatial kimberlite and $f:Y\to X$ is a formally adic map then $f$ factors through $X^\thi$.
\end{definition}

\begin{proposition}
	    \label{splittedanalyticvsnonanalyticcover}
Let $X$ be a spatial kimberlite. Then $X^\thi\to X$ is a spatial kimberlite and the map $X^\thi \to X$ is a formally adic closed immersion.
In particular, $X^\Red\coprod X^\thi\to X$ is a v-cover and $X$ is obtained from $X^\Red$ and $X^\thi$ by glueing along their intersection.
\end{proposition}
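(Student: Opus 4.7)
The plan is to construct $X^\thi$ explicitly as the closed complement in $X$ of a naturally defined open subsheaf supported on the part of $X^\red$ that receives no specialization from the analytic locus. First I would invoke Proposition~\Cref{twodefinitionsofkimberlite}.(2), which guarantees that $\on{sp}_{X^{\on{an}}}: |X^{\on{an}}| \to |X^\red|$ is closed (as a spectral map of spectral spaces). Setting $Z := \on{sp}(|X^{\on{an}}|) \subseteq |X^\red|$ with its reduced closed subscheme structure and $U := X^\red \setminus Z$, the strategy is to use $U$ to isolate the ``purely reduced'' piece of $X$ and define $X^\thi$ as its complement.

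Next I would analyze the formal neighborhood $\widehat{X}_{/U}$. Since $U$ is disjoint from $\on{sp}(|X^{\on{an}}|)$ by construction, the tubular neighborhood $X^\circledcirc_{/U} = X^{\on{an}} \cap \widehat{X}_{/U}$ is empty. Consequently $\widehat{X}_{/U}$ factors through $X^\Red \hookrightarrow X$, and combined with $(\widehat{X}_{/U})^\red = U$ this identifies $\widehat{X}_{/U}$ with the open subsheaf $U^\diamond$ of $X^\Red = (X^\red)^\diamond$ corresponding to $U$. I would then define $X^\thi$ as the closed complement of $U^\diamond \hookrightarrow X$.

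The verification of the properties would proceed as follows. By construction $X^\thi \to X$ is a closed immersion; formal adicness follows from the computation $X^\thi \times_X X^\Red = X^\Red \setminus U^\diamond = Z^\diamond = (X^\thi)^\Red$, where the last equality $(X^\thi)^\red = Z$ is obtained from the adjunction between $\diamond$ and $\red$ applied to the factorization condition defining $X^\thi$. That $X^\thi$ is valuative and a kimberlite follows from Propositions~\Cref{formallyadicclosedisvaluative} and~\Cref{cite prop 4.41}. Thickness is automatic: $(X^\thi)^{\on{an}} = X^{\on{an}}$ (since analytic points cannot lie in $U^\diamond$) and $\on{sp}((X^\thi)^{\on{an}}) = Z = (X^\thi)^\red$ by design. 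For the universal property, given any formally adic $f: Y \to X$ with $Y$ a thick spatial kimberlite, the inclusion $f(|Y^{\on{an}}|) \subseteq |X^{\on{an}}|$ forces $f(|Y^\red|) = f(\on{sp}(|Y^{\on{an}}|)) \subseteq \on{sp}(|X^{\on{an}}|) = |Z|$, so $|f|$ avoids $|U^\diamond|$ and $f$ factors through the closed complement $X^\thi$. The v-cover claim and the glueing then follow: both $X^\Red \to X$ and $X^\thi \to X$ are qcqs closed immersions jointly surjective on topological spaces, and the intersection $X^\Red \cap X^\thi = Z^\diamond$ supplies the descent datum.

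The main obstacle I anticipate is the verification that $X^\thi$ is spatial (and not merely a kimberlite). Concretely, given a qcqs formally adic v-cover $\Spd B \to X$, one must produce an analogous cover of $X^\thi$ by an affine formal v-sheaf. I expect this to come from the quotient $B' := B/J$ where $J \subseteq B$ lifts the ideal cutting out the preimage of $Z$ in $\Spec B_\red$, but checking that $\Spd B'$ identifies with the pullback $\Spd B \times_X X^\thi$ and carries the correct adic structure requires careful bookkeeping between the topology of $B$, the reduction map $B \to B_\red$, and the closed subsheaf condition defining $X^\thi$.
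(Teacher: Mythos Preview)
Your approach is correct and dual to the paper's: you construct $X^\thi$ as the closed complement of $U^\diamond$ (where $U = X^\red \setminus Z$), obtaining closedness and formal adicness for free, whereas the paper constructs $X^\thi$ as the sheaf-theoretic image of a formalization $\Spd R^+ \to X$ coming from a pro-\'etale chart of $X^\an$ (via \Cref{rigproetalecover} and \Cref{prekimbproetaleformalizing}), obtaining the qcqs formally adic v-cover for free but needing the valuative criterion to verify closedness. Both routes identify the same subsheaf, since the paper shows $|X^\thi| = |X^\an| \cup |Z^\diamond|$, which is exactly the complement of $|U^\diamond|$.

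The obstacle you flag is genuine, and your proposed resolution does \emph{not} work as written. The quotient $B' = B/J$ will not satisfy $\Spd B' = \Spd B \times_X X^\thi$: the latter contains \emph{all} of $(\Spd B)^\an$ (since $X^\an \subseteq X^\thi$), whereas $\Spd(B/J)$ typically kills analytic points as well. In other words, $X^\thi$ is not cut out by an ideal sheaf in any useful sense. The clean fix is to bypass the quotient entirely. Either imitate the paper and take $\Spa(R,R^+) \to X^\an$ a strictly totally disconnected pro-\'etale chart; its formalization $\Spd R^+$ is a thick affine formal v-sheaf mapping formally adically to $X$, hence (by your own universal property argument) through $X^\thi$, and one checks it is a qcqs v-cover of $X^\thi$ using \Cref{annoyinglemmaonsurjectivityofspecialization}. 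Or invoke \Cref{productofpointsandvaluations} to refine the given $\Spd B$ to $\Spd(\prod C_j^+) \coprod \Spd(\prod V_j)$: the first summand is thick and factors through $X^\thi$ unchanged, while the second is reduced and its intersection with $X^\thi$ is the honest closed subscheme $\Spd((\prod V_j)/J')$; their disjoint union furnishes the required affine formal v-cover.
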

\begin{proof}
	Let $Z$ denote $\on{sp}(X^\an)\subseteq X^\red$.
	It follows from \Cref{rigproetalecover} that if $\Spa( R,R^+)\to X^\an$ is a formalizable v-cover, then the formalization $\Spd R^+\to X$ formally adic, qcqs and $X^\thi$ is its sheaf-theoretic image. 
Taking reduction, we get a diagram $\Spd R^+_\red \to X^\thi\times_X X^\Red\to X^\Red$, and the sheaf theoretic image of $\Spd R^+_\red\to X^\Red$ is $Z^\diamond$, so $X^\thi\times_X X^\Red=Z^\diamond$. 
This implies that $X^\thi$ is a prekimberlite, that $X^\thi\to X$ is qcqs, formally adic and that $(X^\thi)^\red=Z$.
To prove $X^\thi\to X$ is closed we may apply the valuative criterion \cite[Proposition 18.3]{Sch17}.
But $|(X^\thi)|=|X^\an|\cup |Z^\diamond|$ and the maps $X^\an\to X$ and $Z^\diamond\to X$ are partially proper.

By \Cref{formallyadicclosedisvaluative}, $X$ is a valuative prekimberlite and since $(X^\thi)^\an=X^\an$ it is also a kimberlite.
Since $X$ formalizes geometric points and $X^\thi\subseteq X$ is formally adic and closed $X^\thi$ also does. 
Since $\Spd R^+\to X^\thi$ is a qcqs formally adic surjection, $X^\thi$ is spatial.
This finishes showing the first claim.
 
Now, the map $X^\Red\coprod X^\thi\to X$ is qcqs and surjective, so it is a v-cover. 
In any topos, if $Y\to X$ is a surjective map, then 

\[
X= \on{co.eq}(R \;\substack{\xrightarrow{f}\\[-0.6ex]\xrightarrow[g]{} }\; Y) \]
where $R=Y\times_X Y$.
Since $X^\thi$ and $X^\Red$ are closed immersions, $X^\thi\cap X^\Red$ is also a closed immersion, and $R$ precisely defines data glueing $X^\thi$ to $X^\Red$ along $X^\thi\cap X^\Red$.
\end{proof}


\subsection{Being spatial is stable under formal neighborhoods}

The following statement was our main motivation to define spatial kimberlites.

    \begin{proposition}
	    \label{keytubularneighborhood}
	    Let $X$ be an affine spatial kimberlite and $Z\subseteq X^\red$ a constructible Zariski closed subset, then $\widehat{X}_{/Z}$ is an affine spatial kimberlite.
    \end{proposition}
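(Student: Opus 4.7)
The plan is to verify the two conditions of \Cref{affinespatialdefi} for $\widehat{X}_{/Z}$. Since $Z$ is constructible and closed in the affine scheme $X^{\red}$, it is itself affine, and the natural map $\widehat{X}_{/Z}\to X$ is an open immersion by \cite[Proposition 4.22]{Gle24}. Valuativity of $\widehat{X}_{/Z}$ is \cite[Proposition 4.34]{Gle24}. Since $X$ is spatial, \Cref{twodefinitionsofkimberlite} gives that the specialization map $\on{sp}\co |X^{\on{an}}|\to |X^{\red}|$ is spectral, so $\on{sp}^{-1}(Z)=|(\widehat{X}_{/Z})^{\on{an}}|$ is a constructible, hence quasicompact, open in $|X^{\on{an}}|$. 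Thus $(\widehat{X}_{/Z})^{\on{an}}$ is an open sub-v-sheaf of the spatial diamond $X^{\on{an}}$ with qcqs underlying space, hence itself a spatial diamond by \cite[Proposition 11.19]{Sch17}. This identifies $\widehat{X}_{/Z}$ as an affine kimberlite.

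Next, I would show that $\widehat{X}_{/Z}$ formalizes geometric points. Given $g\co \Spa(C,C^+)\to \widehat{X}_{/Z}$, composing with the open immersion to $X$ and using that $X$ formalizes geometric points yields a unique formalization $\tilde g\co \Spd C^+\to X$. The original $g$ also supplies, via the defining Cartesian square, a morphism $\Spa(C,C^+)\to Z^{\diamond/\circ\circ}$ that on points corresponds to a factorization of $\Spec C^+_{\red}\to X^{\red}$ through $Z$. This factorization forces the specialization of $\tilde g$ in $X^{\on{H}}$ to land in $Z^{\diamond/\circ\circ}$, so $\tilde g$ lifts uniquely to the desired formalization $\Spd C^+\to \widehat{X}_{/Z}$.

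The substantive step is to produce a qcqs formally adic v-cover by an affine formal v-sheaf. Choose such a cover $\Spd B\to X$ (which exists by the spatiality of $X$); its base change along the open immersion $\widehat{X}_{/Z}\to X$ is automatically qcqs, formally adic, and a v-cover, so it suffices to exhibit the source as an affine formal v-sheaf. Using that $(-)^{\diamond/\circ\circ}$ commutes with finite limits, one computes
\[
\Spd B\times_X \widehat{X}_{/Z}\;\simeq\;\widehat{\Spd B}_{/W},\qquad W:=\Spec B_{\red}\times_{X^{\red}}Z,
\]
so $W$ is a constructible closed subscheme of $\Spec B_{\red}$, cut out by a finitely generated ideal $\bar J\subseteq B_{\red}$. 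Lifting $\bar J$ to a finitely generated ideal $J\subseteq B$ and letting $I\subseteq B$ denote the ideal of definition, one identifies $\widehat{\Spd B}_{/W}$ with $\Spd B'$, where $B'$ is the $(I+J)$-adic completion of $B$; this is again an affine formal v-sheaf.

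The main obstacle is the concrete identification $\widehat{\Spd B}_{/W}\simeq \Spd B'$. I would verify it by evaluating both functors on a basis of strictly totally disconnected affinoids $\Spa(R,R^+)$: a section of either side unwinds to a continuous ring map $B\to R^+$ whose reduction modulo $R^{\circ\circ}$ annihilates a fixed set of lifts of the generators of $\bar J$, and this is exactly a continuous map $B'\to R^+$. The remaining items (valuativity, spatiality of the analytic locus, formalizability of geometric points) are formal consequences of the open-immersion description of $\widehat{X}_{/Z}\hookrightarrow X$ and the assumed spatiality of $X$.
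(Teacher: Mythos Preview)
Your argument has a genuine gap in the identification of the analytic locus. You write $\on{sp}^{-1}(Z)=|(\widehat{X}_{/Z})^{\an}|$, but this is false: the open immersion $\widehat{X}_{/Z}\hookrightarrow X$ is in general \emph{not} formally adic (as remarked just before \Cref{formalnbbhoosoughtotbedefined}), so $(\widehat{X}_{/Z})^{\an}=\widehat{X}_{/Z}\setminus Z^{\diamond}$ strictly contains the tubular neighborhood $X^{\circledcirc}_{/Z}=\widehat{X}_{/Z}\cap X^{\an}=\on{sp}^{-1}(Z)$. The extra points are those of $X^{\Red}\cap\widehat{X}_{/Z}$ lying outside $Z^{\diamond}$, i.e.\ maps $\Spa(C,C^+)\to X^{\Red}$ whose closed specialization lands in $Z$ but which do not themselves factor through $Z$. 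For a minimal example take $X=(\Spec k[x^{1/p^\infty}])^{\diamond}$, a reduced spatial kimberlite with $X^{\an}=\emptyset$, and $Z=V(x)$: then $\widehat{X}_{/Z}\simeq\Spd k\pot{x^{1/p^\infty}}$ has as analytic locus the perfectoid punctured open disk, which is certainly not contained in $X^{\an}=\emptyset$.

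This is precisely the obstruction the paper's proof works around. The bulk of that argument is devoted to showing directly that $(\widehat{X}_{/Z})^{\an}$ is a spatial v-sheaf, by constructing a basis of quasicompact open neighborhoods for each point of $X^{\Red}\cap(\widehat{X}_{/Z})^{\an}$; this uses the thick--reduced decomposition of \Cref{splittedanalyticvsnonanalyticcover} together with the universally open pro-\'etale cover from \Cref{rigproetalecover}, followed by \cite[Theorem 12.18]{Sch17} to upgrade from spatial v-sheaf to spatial diamond. Your treatment of the remaining two conditions---formalizing geometric points, and identifying $\widehat{\Spd B}_{/W}\simeq\Spd B'$ as the required affine formal v-cover---is correct and in line with the paper, but these are not the hard steps.
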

    \begin{proof}
	    Let $S=X^\Red$.
	    By \cite[Proposition 4.21, 4.34]{Gle24}, $\widehat{X}_{/Z}$ is an affine valuative prekimberlite, with $(\widehat{X}_{/Z})^\red=Z$.
	    We now prove $(\widehat{X}_{/Z})^\an$ is a spatial v-sheaf (see \cite[Definition 12.12]{Sch17}). 
	    By \Cref{quasiseparatednessofprekimberlites}, it is quasiseparated. 
	    Consider the cover $S \coprod X^\thi\to X$ of \Cref{splittedanalyticvsnonanalyticcover}, using \Cref{rigproetalecover} we can refine it to a v-cover $W=S\coprod \Spd R^+ \to X$, more precisely we get a qcqs formally adic v-cover $f:W\to X$ such that $W^\an\to X^\an$ is universally open.
	    Let $Z_W:=f^{-1}(Z)\subseteq W^\red$ and let $S_W=W^\Red\subseteq W$.
	    By \cite[Proposition 4.20]{Gle24}, we get a qcqs formally adic v-cover $\widehat{W}_{/Z_W}\to \widehat{X}_{/Z}$. 
	    By \cite[Proposition 4.19]{Gle24}, $\widehat{W}_{/Z_W}^\an$ is a spatial diamond. 
	    Since $(\widehat{W}_{/Z_W})^\an$ is qcqs and surjective over $(\widehat{X}_{/Z})^\an$, then $(\widehat{X}_{/Z})^\an$ is also qcqs.
	    We now prove every point of $(\widehat{X}_{/Z})^\an$ has a basis of open neighborhoods that are qc over $(\widehat{X}_{/Z})^\an$. 
	    This is clear for $x\in X^\an \cap (\widehat{X}_{/Z})$ since this is an open subset of a spatial diamond (see \cite[Proposition 4.22]{Gle24}). 

	    Let $x\in S \cap (\widehat{X}_{/Z})^\an$, and let $U\subseteq (\widehat{X}_{/Z})^\an$ an open subset containing $x$.
	    Since $S \cap (\widehat{X}_{/Z})=\widehat{S}_{/Z}$ is a kimberlite (even a formal v-sheaf), there is an open subset $V\subseteq U$ containing $x$ such that $V\cap S$ is qc over $(\widehat{S}_{/Z})^\an$.
	    Let $V_W=f^{-1}(V)\subseteq \widehat{W}_{/Z}^\an$. 
	    Observe that $V_W\cap S_W$ is qc over $\widehat{S_W}_{/Z_W}$.
	    We may find a qc open subset $V'_W\subseteq V_W\subseteq \widehat{W}_{/Z}^\an$ such that $V'_W\cap S_W=V_W\cap S_W$.
	    Indeed, since $\widehat{W}_{/Z}^\an$ is a spatial diamond and $V_W$ is an open subset we may cover $V_W$ by a union of open qc open subsets. 
	    From this, a priori infinite, family we may find finitely many opens whose restriction to $S_W$ covers $V_W\cap S_W$.
	    Let $V'\subseteq V$ denote the sheaf theoretic image of $V'_W\to V$, we claim $V'$ is qc open subsheaf containing $x$ and contained in $U$.
	    Containment in $U$ follows from 
	    \[|V'|=f(|V'_W|)\subseteq f(|V_W|)=|V|\subseteq |U|,\]
	    so it suffices to prove that the image of $|V'_W|\to |V|$ is open. 
	    Indeed, $V'_W\to (\widehat{X}_{/Z})^\an$ is the composition of quasicompact maps, so it surjects onto its topological image.
	    Since $|V|$ has the quotient topology along the map $|V_W|\to |V|$, it suffices to prove that $f^{-1}(f(|V'_W|))\subseteq |V_W|$ is open. 

	    Observe that by construction, $S_W\cap V'_W=S_W\cap f^{-1}(f(|V'_W|))=S_W\cap V_W$.
	    In particular, if $x\in S_W\cap f^{-1}(f(|V'_W|))$ there is an open subset containing $x$ and contained in $f^{-1}(f(|V'_W|))$, namely $V'_W$.
	    On the other hand, $W^\an \cap f^{-1}(f(|V'_W|))=f^{-1}(f(W^\an\cap V'_W))$, and since the map $f:W^\an\to X^\an$ is universally open, $f(V'_W\cap W^\an)\subseteq V\cap X^\an$ is an open subset. 
	    Consequently, $W^\an \cap f^{-1}(f(|V'_W|))$ is also open.
	    This finishes showing that $|V'|\subseteq |V|$ is open.

	    Now, since $V'$ is covered by $V'_W$ it is qc, and gives rise to a qc neighborhood of $x$ contained in $U$.
	    Since $U$ was arbitrary, we have shown that there is a basis of qcqs open neighborhoods of $x$. This finishes showing that $(\widehat{X}_{/Z})^\an$ is a spatial v-sheaf.  
	    
	    By \cite[Theorem 12.18]{Sch17}, to prove that $\widehat{X}_{/Z}^\an$ is a spatial diamond it suffices to find for every $x\in \widehat{X}_{/Z}^\an$ a quasi-pro-\'etale map $\iota_x:\Spa(C_x,C_x^+)\to \widehat{X}_{/Z}^\an$ mapping to $x$.
	    Over $X^\an\cap \widehat{X}_{/Z}^\an$ this is possible since $X^\an$ is by hypothesis a spatial diamond.
	    On the other hand, over $S\cap \widehat{X}_{/Z}^\an$ this is possible since $\widehat{S}_{/Z}^\an$ is a spatial diamond and the map $\widehat{S}_{/Z}^\an\to \widehat{X}_{/Z}^\an$ is a closed immersion, hence quasi-pro-\'etale.
	    This finishes the proof that $\widehat{X}_{/Z}$ is an affine kimberlite. 
	    Now, it is an affine spatial kimberlite since the map $\widehat{W}_{/Z}\to \widehat{X}_{/Z}$ is an adic qcqs v-cover and $\widehat{W}_{/Z}$ is an affine formal v-sheaf. 
	    Moreover, by hypothesis, geometric points of $\widehat{X}_{/Z}$ formalize in $X$ and by \cite[Proposition 4.20]{Gle24} any such formalization factors through $\widehat{X}_{/Z}$ so that $\widehat{X}_{/Z}$ also formalizes geometric points.
    \end{proof}

    \subsection{Spatial kimberlites vs. spatial diamonds}

    \begin{proposition}
	    \label{representabilityinlocallyspatialkimberlites}
    Let $X$ be an affine kimberlite that formalizes geometric points, then $X$ is spatial if and only if $X\to \ast$ is representable in locally spatial diamonds. 
    \end{proposition}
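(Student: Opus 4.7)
The plan is to prove both implications, using the thick--reduced decomposition from \Cref{splittedanalyticvsnonanalyticcover} as the common backbone.

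For the reverse direction, assume $X$ is an affine kimberlite formalizing geometric points such that $X \to \ast$ is representable in locally spatial diamonds. Since $|X^\red|$ is qcqs (from affineness of $X^\red$) and $X^\an$ is a quasi-separated spatial diamond by definition of affine kimberlite, $X$ is itself a (qcqs) locally spatial diamond. I would then invoke \cite[Proposition 11.24]{Sch17} to produce a universally open quasi-pro-\'etale v-cover $\Spa(R,R^+) \to X^\an$ with $\Spa(R,R^+)$ strictly totally disconnected. Because $X$ formalizes geometric points, \Cref{prekimbproetaleformalizing} supplies a formalization $\Spd R^+ \to X$. This map is automatically qcqs (both source and target are qcqs) and formally adic (formalizations are unique on formally separated v-sheaves, and compatibility with the reduction adjunction forces the required pullback square to be Cartesian); its surjectivity as a v-map follows from \Cref{annoyinglemmaonsurjectivityofspecialization} applied to its sheaf-theoretic image, mirroring the argument of \Cref{rigproetalecover}. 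This produces the required qcqs formally adic v-cover by an affine formal v-sheaf, so $X$ is affine spatial.

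For the forward direction, assume $X$ is an affine spatial kimberlite. The goal is to show that $X \times S$ is a locally spatial diamond for every perfectoid test object $S$. My plan is to apply the thick--reduced decomposition from \Cref{splittedanalyticvsnonanalyticcover} to reduce to handling $X^\Red \times S$ and $X^\thi \times S$ separately and then glueing along their formally adic closed intersection (which by construction is $(X^\thi \cap X^\Red) \times S$). The reduced piece $X^\Red \times S = (X^\red)^\diamond \times S$ is a locally spatial diamond since $X^\red$ is an affine perfect scheme. For the thick piece, I would use \Cref{rigproetalecover} to produce a qcqs formally adic v-cover $\Spd R^+ \to X^\thi$ with $\Spa(R,R^+)$ strictly totally disconnected and with the induced map on analytic loci universally open quasi-pro-\'etale. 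After base change to $S$, the analytic locus of $\Spd R^+ \times S$ is an affinoid perfectoid, and its reduced part is $(\Spec R^+_\red)^\diamond \times S$; together they assemble into a pro-\'etale cover of $X^\thi \times S$ by a locally spatial diamond, exhibiting the latter as a locally spatial diamond itself.

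The main obstacle I anticipate is in the forward direction: carefully executing the glueing of $X^\Red \times S$ and $X^\thi \times S$ along their formally adic closed intersection, and verifying that the v-cover extracted from \Cref{rigproetalecover} can in fact be upgraded to a pro-\'etale cover of the spatial diamond quotient rather than merely a v-cover. Once that step is carried out, the remaining quasi-separation and spectrality conditions follow directly from the affine kimberlite axioms combined with qcqs-ness of $X^\red$ and of $X^\an$.
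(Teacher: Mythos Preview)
Your reverse direction has two genuine gaps. First, the map $\Spd R^+\to X$ need not be surjective: \Cref{annoyinglemmaonsurjectivityofspecialization} and \Cref{rigproetalecover} only give surjectivity onto the locus $\on{sp}(X^{\on{an}})\subseteq X^\red$, so if $X$ is not thick you miss part of $X^\Red$. The paper fixes this by taking $W=\Spd R^+\coprod X^\Red$ as the candidate cover. Second, your justification that $\Spd R^+\to X$ is qcqs because ``both source and target are qcqs'' rests on the false assertion that $X$ is a qcqs locally spatial diamond; $X$ has non-analytic points, so it is not a diamond at all, and there is no a priori reason $X$ is qcqs as a v-sheaf. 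The paper instead proves qcqs of $W\to X$ by base-changing along the v-cover $(X\times\Spd\bbF_p\pot{t})^{\on{an}}\to X$: one checks directly that $(X\times\Spd\bbF_p\pot{t})^{\on{an}}$ is a spatial diamond using the hypothesis that $X\to\ast$ is representable in locally spatial diamonds, and then the base-changed map is a surjection between spatial diamonds, hence qcqs, and this descends.

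Your forward direction is a genuinely different strategy from the paper's, and the obstacle you flag is real: upgrading a v-cover to something that certifies local spatiality is exactly the hard part, and you have not explained how to do it. The paper sidesteps this entirely. Instead of decomposing $X$ and working with $X^\thi\times S$, the paper replaces the perfectoid test object $U=\Spa(R,R^+)$ by the spatial kimberlite $S=(\Spec R^+)^\diamond$, shows directly that $W=X\times S$ is again an affine spatial kimberlite, and then applies \Cref{keytubularneighborhood} to the constructible closed subset $Z_W=X^\red\times V(\varpi)$. This yields that $\widehat{W}_{/Z_W}$ is an affine spatial kimberlite, so $(\widehat{W}_{/Z_W})^{\on{an}}$ is a spatial diamond, and $X\times U$ is the open locus $\{\varpi\neq 0\}$ inside it. In other words, the crucial input is \Cref{keytubularneighborhood}, not the thick--reduced decomposition; this is precisely why formal neighborhoods of spatial kimberlites were shown to be spatial.
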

    \begin{proof}
	    We prove the first claim.
	   Suppose that $X$ is an affine spatial kimberlite.
	    Let $U=\Spa(R,R^+)$ be a strictly totally disconnected perfectoid space, by \cite[Proposition 11.24, Remark 11.25]{Sch17} it suffices to show that $X\times U$ is a locally spatial diamond. 	
	    Let $S=(\Spec R^+)^\diamond$ and consider $W:=X\times S$.
	    We claim that $W$ is an affine spatial kimberlite. 
	    By \Cref{valuativeprekimbfiberprod}, $W$ is an affine valuative prekimberlite.
	    Observe that the projection map $\pi_1:W\to X$ is formally adic so that $W^\an=\pi^{-1}(X^\an)=X^\an\times S$.
	    Since $S\to \ast$ is representable in spatial diamonds, $W^\an$ is again a spatial diamond.
	    This shows that $W$ is a kimberlite.

	    Let $\Spd B\to X$ be a qcqs formally adic v-cover with $B$ a ring in characteristic $p$ and $I\subseteq B$ an ideal of definition. 
	    Then $\Spd B\times S\to X\times S$ is a qcqs formally adic v-cover, and $\Spd B\times S$ is an affine formal v-sheaf corresponding to the $I$-adic completion of $R^+\otimes_{\bbF_p} B$. 
	    Now, $W$ formalizes geometric points since $X$ and $S$ do, this finishes proving that $W$ is an affine spatial kimberlite.

	    Let $\varpi\in R^+$ be a pseudo-uniformizer. 
	    This defines a constructible Zariski closed subset $Z\subseteq \Spec R^+$ of $S^\red$ and $Z_W\subseteq X^\red\times S^\red=X^\red\times \Spec R^+$ of $W^\red$.  
	    By \Cref{keytubularneighborhood}, $\widehat{W}_{Z_W}$ is an affine spatial kimberlite, and in particular $\widehat{W}_{Z_W}^\an$ is a spatial diamond. 
	   But $X\times U$ is the open locus in $\widehat{W}_{Z_W}^\an$ where $\varpi\neq 0$, and any open subsheaf of a spatial diamond is a locally spatial diamond.
	   
	   Conversely, assume that $X$ is an affine kimberlite that formalizes geometric points, and that $X\to \ast$ is representable in locally spatial diamonds. 
	   Let $\Spa (R,R^+)\to X^{\on{an}}$ be a universally open quasi-pro\'etale cover by a strictly totally disconnected space, and let $W=\Spd R^+\coprod X^\Red$.
	   We have a map $f:W\to X$, by \Cref{prekimbproetaleformalizing}, we claim that $W\to X$ is a formally adic qcqs v-cover. 
	   Now, $(X\times \Spd \bbF_p\pot{t})^{\on{an}}$ is a spatial diamond. 
	   Indeed, $X^{\on{an}}\times \Spd \bbF_p\pot{t}$ is a locally spatial diamond since $\Spd \bbF_p\pot{t}\to \ast$ is representable in locally spatial diamonds and $X^\an$ is by hypothesis a spatial diamond, and similarly $X\times \Spd \bbF_p\rpot{t}$ is also a locally spatial diamonds. 
	   Moreover, these two open subsheaves cover $(X\times \Spd \bbF_p\pot{t})^{\on{an}}$.
	   Moreover, $(X\times \Spd \bbF_p\pot{t})^\an$ is quasiseparated by \Cref{quasiseparatednessofprekimberlites}. 
	   Consequently, the map $f:|(W\times \Spd \bbF_p\pot{t})^{\on{an}}|\to | (X\times \Spd \bbF_p\pot{t})^{\on{an}}|$ is a surjective and spectral map of locally spectral spaces, each of which is quasiseparated.
	   Further, $|(W\times \Spd \bbF_p\pot{t})^{\on{an}}|$ is quasicompact, so $|(X\times \Spd \bbF_p\pot{t})^{\on{an}}|$ is also quasicompact. 
	   It follows from \cite[Proposition 11.19.(iii)]{Sch17}, that $(X\times \Spd \bbF_p\pot{t})^{\on{an}}$ is a spatial diamond.

	   The argument also shows that $f$ is a qcqs v-cover.
	   Indeed, we have a Cartesian diagram 
	   \begin{center}
	   \begin{tikzcd}
		   (W\times \Spd \bbF_p\pot{t})^{\on{an}}  \arrow{r}{g} \arrow{d}  & (X\times \Spd \bbF_p\pot{t})^{\on{an}} \arrow{d} \\
		   W \arrow{r}{f} & X,
	   \end{tikzcd}
	   \end{center}
	   and $g$ is a qcqs v-cover since it is a map of spatial diamonds that surjective on topological spaces.
	   Since $X\times \Spd \bbF_p\rpot{t}\to X$ and factors through $(X\times \Spd \bbF_p\pot{t})^{\on{an}}$, it follows from \cite[Proposition 10.11]{Sch17} that the map $W\to X$ is also qcqs and a v-cover.
    \end{proof}

    \begin{corollary}
    	Any map of locally spatial kimberlites, $X\to Y$, is representable in locally spatial diamonds.
    \end{corollary}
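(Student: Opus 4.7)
The plan is to reduce to the case that both $X$ and $Y$ are affine spatial kimberlites, and then to exhibit each fiber $X\times_Y T$, for $T$ an arbitrary perfectoid test object over $Y$, as a closed sub-v-sheaf of the locally spatial diamond $X\times T$, using formal separatedness of $Y$ together with the previous proposition.

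For the reduction step, I would use that the property of being representable in locally spatial diamonds is local on source and on target. Cover $Y^\red$ by affine opens $\{V_i\}$; each $\widehat{Y}_{/V_i}$ is an affine spatial kimberlite by the very definition of ``locally spatial,'' and by \cite[Proposition 4.22]{Gle24} the inclusions $\widehat{Y}_{/V_i}\to Y$ form an open cover. Passing to preimages under $f$ and then covering each preimage by affine formal neighborhoods $\widehat{X}_{/U_j}$ (again affine spatial kimberlites), we may assume that $X$ and $Y$ are both affine spatial kimberlites.

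The core step is the diagonal trick. Given $g\colon T\to Y$ with $T$ perfectoid, we rewrite
\[ X\times_Y T \;=\; (X\times T)\times_{Y\times Y} Y, \]
where the map $X\times T\to Y\times Y$ is $(f,g)$ and $Y\to Y\times Y$ is the diagonal. Since $Y$ is a prekimberlite, it is formally separated, so $\Delta_Y$ is a closed immersion; hence its base change $X\times_Y T \hookrightarrow X\times T$ is also a closed immersion. By \Cref{representabilityinlocallyspatialkimberlites} applied to the affine spatial kimberlite $X$, the structure map $X\to \ast$ is representable in locally spatial diamonds, so $X\times T$ is a locally spatial diamond. A closed sub-v-sheaf of a locally spatial diamond is again a locally spatial diamond (one covers the ambient diamond by spatial opens and uses \cite[Proposition 11.20]{Sch17} or \cite[Lemma 13.5]{Sch17}), which concludes the argument.

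The only real obstacle is the bookkeeping in the reduction: one must be careful that representability in locally spatial diamonds genuinely localizes along the open covers produced by formal neighborhoods on both source and target, i.e. that passing from $Y$ to $\widehat{Y}_{/V_i}$ and from $X$ to $f^{-1}(\widehat{Y}_{/V_i})$ does not lose information. Once this is granted, the key input—closedness of the diagonal via formal separatedness—makes the diagonal argument essentially automatic, and \Cref{representabilityinlocallyspatialkimberlites} supplies the geometric substance.
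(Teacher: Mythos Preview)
Your proof is correct and follows essentially the same strategy as the paper: reduce to the affine spatial case and then use \Cref{representabilityinlocallyspatialkimberlites}. The only difference is cosmetic: the paper invokes \cite[Corollary 11.29]{Sch17} as a black box (using that both $X\to\ast$ and $Y\to\ast$ are representable in locally spatial diamonds), whereas you unpack the diagonal argument by hand, using formal separatedness of $Y$ to get the closed immersion $X\times_Y T\hookrightarrow X\times T$ and then only the representability of $X\to\ast$. Both routes are standard and equally short once the reduction is done.
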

    \begin{proof}
	   The claim is local on the source and target so we may assume $X$ and $Y$ are affine spatial kimberlites. 
	   Since the maps $X,Y\to \ast$ are representable in locally spatial diamonds, the claim easily follows from \cite[Corollary 11.29]{Sch17}.
    \end{proof}

    \begin{proposition}
	    \label{closedimmersionpreservespatial}
	    Suppose that $f:Z\to X$ is a formally adic closed immersion of v-sheaves and that $X$ is an affine spatial kimberlite, then $Z$ is an affine spatial kimberlite. 
    \end{proposition}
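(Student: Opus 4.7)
The plan is to verify the three defining conditions of an affine spatial kimberlite: (a) $Z$ is an affine kimberlite, (b) $Z$ formalizes geometric points, and (c) $Z$ admits a qcqs formally adic v-cover from an affine formal v-sheaf. For (a), I would invoke \Cref{cite prop 4.41} to conclude $Z$ is a kimberlite. The reduced locus $Z^{\red}$ is a closed subscheme of the affine scheme $X^{\red}$, hence is affine. Since $Z\to X$ is formally adic, one has $Z^{\an}=Z\times_{X} X^{\an}$, a closed sub-v-sheaf of the spatial diamond $X^{\an}$, and the standard stability of spatial diamonds under closed immersions \cite[\S 11]{Sch17} yields that $Z^{\an}$ is spatial.

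For (b), let $g\colon \Spa(C,C^+)\to Z$ be a geometric point. I would first use that $X$ formalizes geometric points to obtain a formalization $\tilde{g}\colon \Spd C^+\to X$ of $f\circ g$, where $f\colon Z\to X$ is the given map. Setting $\calF:=\Spd C^+\times_{X}Z$, the fact that $f$ is a formally adic closed immersion implies the same for $\calF\hookrightarrow \Spd C^+$. The universal property of the fiber product provides a canonical map $\Spa(C,C^+)\to\calF$, exhibiting the open subset $\Spa(C,C^+)\subseteq\Spd C^+$ as contained in the formally adic closed subsheaf $\calF$. Applying \Cref{fixed up version formally dense} then forces $\calF=\Spd C^+$, and the resulting factorization $\Spd C^+\to Z$ is the required formalization of $g$.

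For (c), I would take $Y=\Spd B\to X$ to be the qcqs formally adic v-cover witnessing the affine spatial structure on $X$, and form $Z':=Y\times_{X}Z$. Then $Z'\to Z$ inherits each of the properties qcqs, formally adic, and v-cover by base change. The remaining step is to identify $Z'$ itself as an affine formal v-sheaf: since $f$ is a formally adic closed immersion, $Z'\hookrightarrow Y$ is a formally adic closed sub-v-sheaf of the affine formal v-sheaf $\Spd B$, and under the correspondence between such sub-v-sheaves and closed ideals $J\subseteq B$ for the $I$-adic topology, $Z'=\Spd(B/J)$ is again an affine formal v-sheaf.

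The only input beyond formal base-change stability is the final identification in (c), namely that every formally adic closed sub-v-sheaf of an affine formal v-sheaf $\Spd B$ is itself of the form $\Spd(B/J)$ for a closed ideal $J$; this is where the structure of affine formal v-sheaves genuinely enters and is the main technical point of the proof.
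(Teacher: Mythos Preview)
Your parts (a) and (b) match the paper's proof exactly: the paper cites \cite[Proposition 4.41.(4)]{Gle24} for (a) and uses \Cref{fixed up version formally dense} for (b) just as you do.

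For (c), the paper takes a different and cleaner route. Rather than verifying \Cref{affinespatialdefi}(2) directly, it invokes \Cref{representabilityinlocallyspatialkimberlites}: since $Z$ is now an affine kimberlite formalizing geometric points, it is spatial if and only if $Z\to\ast$ is representable in locally spatial diamonds. But $Z\to X$ is a closed immersion, hence representable in spatial diamonds by \cite[Proposition 11.20]{Sch17}, and $X\to\ast$ is representable in locally spatial diamonds by \Cref{representabilityinlocallyspatialkimberlites} applied to $X$; composing finishes.

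Your approach to (c) has a genuine gap at exactly the step you flag as the ``main technical point'': the assertion that every formally adic closed sub-v-sheaf of an affine formal v-sheaf $\Spd B$ is again of the form $\Spd(B/J)$. A closed immersion of v-sheaves is determined by the closed subset $|Z'|\subseteq|\Spd B|$ (equivalently, by its open complement), not by an ideal of $B$, and there is no general correspondence between closed subsets of $|\Spd B|$ and closed ideals of $B$. The formally adic hypothesis pins down $Z'^{\red}$ as a closed subscheme of $\Spec B_{\red}$, but it does not visibly force $Z'$ itself to be cut out by a ring-theoretic ideal of $B$. You give no argument for this, and it is not a standard fact; the paper's route via \Cref{representabilityinlocallyspatialkimberlites} sidesteps the issue entirely.
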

    \begin{proof}
	    By \cite[Proposition 4.41.(4)]{Gle24}, $Z$ is an affine kimberlite.
	    Let us show that $Z$ formalizes geometric points.
	    Let $\Spa (C,C^+)\to Z$ be a geometric point, since $X$ is spatial the induced geometric point formalizes to a map $\Spd C^+\to X$.
	    The base change $\Spd C^+\times_X Z$ is a formally closed subsheaf of $\Spd C^+$ containing $\Spa (C,C^+)$, so it must agree with $\Spd C^+$ (see \Cref{fixed up version formally dense}).
	    By \Cref{representabilityinlocallyspatialkimberlites}, it suffices to prove that $Z\to \ast$ is representable in locally spatial diamonds, but $Z\to X$ is (see \cite[Proposition 11.20]{Sch17}) and $X\to \ast$ also is. 
	    This shows that $Z\to \ast$ is representable in locally spatial diamonds.
%
    \end{proof}

%

    \begin{proposition}
	    \label{stable-under-cartesian}
	    The category of spatial kimberlites (respectively locally spatial kimberlites, respectively affine spatial kimberlites), is stable under fiber products and contains $\ast$.	
    \end{proposition}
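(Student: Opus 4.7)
The plan is to reduce the fiber-product statement to the case of ordinary products, then from arbitrary (locally) spatial kimberlites to the affine case, and finally to verify that the product of two affine spatial kimberlites is an affine spatial kimberlite. The object $\ast=\Spd k$ is trivially an affine spatial kimberlite. For a fiber product $W=X\times_Y Z$, I write $W$ as the base change of $X\times Z\to Y\times Y$ along $\Delta_Y$. Since every prekimberlite is formally separated, $\Delta_Y$ is a formally adic closed immersion, hence so is $W\to X\times Z$. By \Cref{closedimmersionpreservespatial}, if $X\times Z$ is an affine spatial kimberlite then so is $W$, which reduces the fiber-product problem to the product problem.

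To pass from affine to locally spatial, fix an affine open $U\subseteq W^{\red}=X^{\red}\times_{Y^{\red}} Z^{\red}$ and cover it by basic affines of the form $U_i\times_{W_i} V_i$ with $U_i\subseteq X^{\red}$, $V_i\subseteq Z^{\red}$, $W_i\subseteq Y^{\red}$ affine open. Because the presheaf $\diamond/\circ\circ$ and the formation of $(\cdot)^{\on{H}}$ both commute with limits, one has a natural identification $\widehat{W}_{/U_i\times_{W_i} V_i}\simeq \widehat{X}_{/U_i}\times_{\widehat{Y}_{/W_i}}\widehat{Z}_{/V_i}$, to which the affine case applies; then \Cref{affinecommunication} shows that $\widehat{W}_{/U}$ is an affine spatial kimberlite, finishing the locally spatial case. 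For the spatial case one additionally uses that fiber products of qcqs perfect schemes are qcqs.

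The core task is thus to show that the product $W=X\times Z$ of two affine spatial kimberlites is an affine spatial kimberlite. By \Cref{stabilityprekimberlites} and \Cref{valuativeprekimbfiberprod}, $W$ is an affine valuative prekimberlite; geometric points formalize because their components formalize in $X$ and $Z$ and the two formalizations assemble canonically; and if $\Spd B_X\to X$ and $\Spd B_Z\to Z$ are qcqs formally adic v-covers, then $\Spd B_X\times \Spd B_Z=\Spd B$, with $B$ the completion of $B_X\otimes_k B_Z$ at the ideal generated by the two ideals of definition, gives a qcqs formally adic v-cover of $W$ by an affine formal v-sheaf. The main obstacle is verifying that $W^{\on{an}}$ is a spatial diamond, which is required for $W$ to be a kimberlite in the first place. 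My approach is to use the open decomposition $W^{\on{an}}=(X^{\on{an}}\times Z)\cup(X\times Z^{\on{an}})$: by \Cref{representabilityinlocallyspatialkimberlites} each of $X,Z\to\ast$ is representable in locally spatial diamonds, so each piece of the cover is a locally spatial diamond, and hence so is $W^{\on{an}}$. Quasi-separatedness of $W^{\on{an}}$ is \Cref{quasiseparatednessofprekimberlites}, and quasi-compactness follows because $(\Spd B)^{\on{an}}\to W^{\on{an}}$ is a qcqs v-cover whose source is spatial (being the analytic locus of an affine formal v-sheaf); invoking \cite[Proposition 11.19]{Sch17} then upgrades $W^{\on{an}}$ from locally spatial to spatial and concludes the argument.
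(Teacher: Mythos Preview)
Your proof is correct and follows essentially the same route as the paper: both reduce the fiber product to the product via the formally adic closed immersion $W\to X\times Z$ and \Cref{closedimmersionpreservespatial}, then verify that $X\times Z$ is an affine spatial kimberlite by decomposing $(X\times Z)^{\on{an}}=(X^{\on{an}}\times Z)\cup(X\times Z^{\on{an}})$, using \Cref{representabilityinlocallyspatialkimberlites} to see each piece is locally spatial, and then upgrading to spatial via the qc cover $(\Spd B_X\times\Spd B_Z)^{\on{an}}$ and \cite[Proposition~11.19]{Sch17}. The only difference is that the paper dismisses the passage from the affine to the (locally) spatial case with ``the other claims easily follow from this one,'' whereas you spell out the localization argument via \Cref{affinecommunication} and the identification $\widehat{W}_{/U_i\times_{W_i}V_i}\simeq \widehat{X}_{/U_i}\times_{\widehat{Y}_{/W_i}}\widehat{Z}_{/V_i}$; this is a reasonable elaboration, not a different method.
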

    \begin{proof}
	    We only show the case of affine spatial kimberlites since the other claims easily follow from this one.
	    Suppose that $X$, $Y$ and $Z$ are affine spatial kimberlites, and $W=X\times_Y Z$.
	    We may write $W$ as the base change of the map $X\times Z\to Y\times Y$ by the diagonal $Y\to Y\times Y$.
	    By \Cref{closedimmersionpreservespatial}, it suffices to prove that $X\times Z$ is a spatial kimberlite. 
	    Let $\Spd B_X\to X$ and $\Spd B_Z\to Z$ be qcqs formally adic v-covers.

	    By \Cref{valuativeprekimbfiberprod}, $X\times Z$ is a valuative prekimberlite and 
	    \[(X\times Z)^{\on{an}}=(X^{\on{an}}\times Z)\cup (X\times Z^{\on{an}}).\]
	    Also, by \Cref{representabilityinlocallyspatialkimberlites}, $X\to \ast$ and $Z\to \ast$ are representable in locally spatial diamonds and since $X^{\on{an}}$ and $Z^{\on{an}}$ are spatial diamonds, then $(X\times Z)^{\on{an}}$ is a qs locally spatial diamond. 
	    Moreover, since $(\Spd B_X\times \Spd B_Z)^{\on{an}}\to (X\times Z)^{\on{an}}$ is a v-cover and the source is qc we conclude that $(X\times Z)^{\on{an}}$ is a spatial diamond and that $X\times Z$ is a kimberlite. 
	    Furthermore, the map $(\Spd B_X\times \Spd B_Z)\to (X\times Z)$ is a qcqs formally adic v-cover, and $\Spd B_X\times \Spd B_Z$ is an affine formal v-sheaf.
	    Finally, it formalizes geometric points since $X$ and $Z$ both do.
    \end{proof}

    \subsection{Quasicompact = formally adic}

    \begin{proposition}
	    \label{qcqs-is-formal-adic}
    Let $f:X\to Y$ be a map of spatial kimberlites. Then $f$ is formally adic if and only if it is quasicompact.
    \end{proposition}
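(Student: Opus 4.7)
The plan is to handle the two implications separately, reducing in both directions to the case where $X$ and $Y$ are affine spatial kimberlites, using \Cref{affinecommunication} together with the formal open neighborhoods $\widehat{Y}_{/U_i}$ of \Cref{keytubularneighborhood}. Both ``formally adic'' and ``quasicompact'' can be checked against such open covers of the target, so this reduction is harmless.

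For the implication from formally adic to quasicompact, I would test quasicompactness on strictly totally disconnected $V=\Spa(R,R^+)\to Y$, which form a basis of the v-topology. By \Cref{prekimbproetaleformalizing}, such a map formalizes to $\Spd R^+\to Y$. The v-sheaf $\Spd R^+$ is itself an affine spatial kimberlite: its reduction $\Spec R^+_\red$ is affine, it formalizes geometric points tautologically, and it is a qcqs formally adic v-cover of itself. By \Cref{stable-under-cartesian}, the fiber product $X\times_Y \Spd R^+$ is an affine spatial kimberlite. Since $f$ is formally adic, so is its base change $X\times_Y \Spd R^+\to \Spd R^+$, so the preimage of the analytic locus $\Spa(R,R^+)\subseteq \Spd R^+$ is precisely $(X\times_Y \Spd R^+)^{\on{an}}$. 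Thus $X\times_Y V=(X\times_Y \Spd R^+)^{\on{an}}$ is a spatial diamond, hence quasicompact.

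For the converse, suppose $f$ is quasicompact and set $T:=X\times_Y Y^\Red$, a formally adic closed subsheaf of $X$ whose reduction satisfies $T^\red=X^\red$ since $\red$ preserves finite limits. The map $f$ is formally adic precisely when the canonical inclusion $X^\Red\hookrightarrow T$ is an equality, equivalently when $T^{\on{an}}:=T\cap X^{\on{an}}$ is empty. Assuming for contradiction the existence of $\bar x:\Spa(C,C^+)\to T^{\on{an}}$, one obtains a factorization of $f\circ \bar x$ through $Y^\Red$. Formalizing $\bar x$ to $\Spd C^+\to X$, the separatedness of $\on{SP}\co Y\to Y^{\on{H}}$ forces the composite $\Spd C^+\to Y$ to also factor through $Y^\Red$. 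Setting $V=\Spa(C,C^+)$ with its $Y$-structure through $Y^\Red$, quasicompactness of $f$ forces $X\times_Y V=T\times_{Y^\Red}V$ to be a quasicompact v-sheaf. The contradiction then comes from showing that the non-trivial topologically nilpotent direction witnessing $\bar x\in X^{\on{an}}$ spreads inside the fiber to produce an open-unit-disc-like subfamily of $T\times_{Y^\Red}V$, which is not quasicompact; this is the spatial-kimberlite analogue of the fact that $\Spd \bbF_p\pot{t}\to \ast$ fails to be quasicompact despite its source being spectral.

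The main obstacle is making this last step rigorous. The cleanest route in my view is to first reduce via the decomposition of \Cref{splittedanalyticvsnonanalyticcover} to the case where $X=X^\thi$ is thick, then use \Cref{rigproetalecover} to choose a formally adic qcqs v-cover $\Spd B\to X$ by an affine formal v-sheaf carrying non-trivial topologically nilpotent elements over $\bar x$, and finally use these together with \Cref{annoyinglemmaonsurjectivityofspecialization} to exhibit an explicit non-qcqs open family inside $X\times_Y V$, contradicting quasicompactness of $f$.
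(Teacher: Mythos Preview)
Your forward direction is correct and essentially self-contained, though more elaborate than the paper's one-line citation of \cite[Lemma~2.23]{Gle24} after reducing to affine formal v-sheaves. Your argument via \Cref{stable-under-cartesian} and the identification $X\times_Y V=(X\times_Y\Spd R^+)^{\an}$ works fine.

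For the converse, your setup is right but you overcomplicate the contradiction and in fact leave a genuine gap. Finding a non-quasicompact open ``subfamily'' inside $T\times_{Y^\Red}V$ is \emph{not} a contradiction: quasicompact spaces routinely contain non-quasicompact open subsets. Your proposed fix via the thick decomposition and explicit families is heading in the wrong direction. What you are missing is that the contradiction is already in your hands once you have the formalization $\Spd C^+\to T$. Indeed:
\begin{itemize}
\item $\Spd C^+\to T$ is formally adic, hence quasicompact by the forward implication;
\item $T\to Y^\Red$ is quasicompact as a base change of $f$;
\item $Y^\Red\to\ast$ is quasicompact since $Y^\Red$ is the diamond of an affine scheme.
\end{itemize}
Composing, $\Spd C^+\to\ast$ would be quasicompact, which it is not (the pseudo-uniformizer witnessing $\bar x\in X^{\an}$ makes this an open-disc-type object). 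This is exactly the paper's argument; the paper further streamlines by choosing $\bar x$ of rank~$1$ so that $C^+=O_C$, and by first reducing $X$ and $Y$ all the way to affine formal v-sheaves via qcqs formally adic v-covers rather than merely to affine spatial kimberlites. Your justification that $\Spd C^+\to Y$ lands in $Y^\Red$ should invoke \Cref{fixed up version formally dense} rather than separatedness of $\on{SP}$.
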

    \begin{proof}
	    The properties of being formally adic and of being quasicompact are local on the source and target as long as we use qcqs formally adic v-covers.
	    In particular, we may assume that $X$ and $Y$ are affine formal v-sheaves (i.e. $X=\Spd B$ and that $Y=\Spd A$).
	    By \cite[Lemma 2.23]{Gle24}, if the map $X\to Y$ is formally adic, then it is quasicompact.

	    Suppose that $X\to Y$ is quasicompact and let $Z=X\times_Y Y^\Red$, this is an affine spatial kimberlite by \Cref{stable-under-cartesian}. 
	    Suppose there is $z\in Z^{\on{an}}$ of rank $1$. 
	    The subsheaf of points that factor through $z$ is a spatial diamond since $X^\an$ is a spatial diamond, and it can be covered by a geometric point $\Spa(C,O_C)\to z$.
	    We can formalize this to a map $\Spd O_C\to Z$ which is quasicompact since it is formally adic. 
	    By assumption, the composition $\Spd O_C\to Z\to  Y^\Red\to \ast$ is again quasicompact.
	    But $\Spd O_C\to \ast$ is not quasicompact. 
	    The contradiction shows that $Z^{\on{an}}=\emptyset$, so $f$ is formally adic. 
    \end{proof}

    We will need the following lemma when we discuss examples of spatial kimberlites.
    \begin{lemma}
	    \label{kimberlite-not-so-different-from-spatial-kimb}
Let $X$ be a spatial kimberlite, let $Y$ be a kimberlite and let $f:X\to Y$ be a formally adic map. The following hold:
	    \begin{enumerate}
		    \item If $f$ is a v-cover, then it is qcqs. 
		    \item If $|f|$ is surjective and $Y\times \Spd \bbF_p\pot{t}$ is a kimberlite, then $f$ is a formally adic cover. 
	    \end{enumerate}
    \end{lemma}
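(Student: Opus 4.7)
The plan for both parts is to reduce the assertions to facts about (locally) spatial diamonds, exploiting the $\Spd\bbF_p\pot{t}$ base change trick used in the proof of \Cref{representabilityinlocallyspatialkimberlites}.

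For (1), I would first reduce to the case where $Y$ is an affine kimberlite and $X$ is an affine spatial kimberlite. The reduction to affine $Y$ is obtained by working locally on $Y$ via its kimberlite structure. The reduction for $X$ uses that $|X^\red|$ is qcqs, giving a finite affine open cover whose corresponding formal neighborhoods $\widehat{X}_{/U_i}$ are affine spatial kimberlites (by \Cref{affineislocallyspatial}) and jointly cover $X$. In the reduced setting, I would choose a qcqs formally adic v-cover $g:\Spd B \to X$ afforded by \Cref{affinespatialdefi}.(2). The composition $f\circ g:\Spd B \to Y$ has qcqs source and target $Y$ which is quasiseparated as a v-sheaf (since $Y$ being a kimberlite is formally separated, so $\Delta_Y$ is a closed immersion, hence qc). Therefore $f\circ g$ is qcqs. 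Since qcqs descends along qcqs v-covers and $g$ is such a cover, it follows that $f$ itself is qcqs.

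For (2), I would use the base change trick: set $X' = X \times \Spd\bbF_p\pot{t}$ and $Y' = Y \times \Spd\bbF_p\pot{t}$, and consider $f' = f \times \on{id}: X' \to Y'$. By \Cref{stable-under-cartesian}, $X'$ is a spatial kimberlite; by hypothesis $Y'$ is a kimberlite. The map $f'$ remains formally adic with $|f'|$ surjective. Passing to analytic loci, $(X')^\an$ is a spatial diamond (by the argument from \Cref{representabilityinlocallyspatialkimberlites}), and $(Y')^\an$ is a locally spatial diamond (as the analytic locus of a kimberlite). The induced map $(f')^\an$ is surjective on topological spaces (since $|f|$ is surjective and $\Spd\bbF_p\pot{t}$-base-change preserves surjectivity), and quasicompact because its source is qc and $(Y')^\an$ is separated. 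A qc surjective map to a separated locally spatial diamond is a v-cover, so $(f')^\an$ is a v-cover. To upgrade this to the statement that $f'$ is a v-cover, I would test on strictly totally disconnected perfectoid spaces $\Spa(R,R^+) \to Y'$ and split into two cases depending on whether the test factors through the analytic locus $(Y')^\an$ (handled by the analytic v-cover statement above) or through the reduced part $(Y')^\Red = Y^\Red$ (handled by the surjectivity of $f^\red$ together with the strict henselianity of products of spectra of valuation rings with algebraically closed fraction fields, using the decomposition $X'^\Red \coprod X'^\thi \to X'$ from \Cref{splittedanalyticvsnonanalyticcover}). Finally, the closed immersion $\ast \hookrightarrow \Spd\bbF_p\pot{t}$ at $t=0$ gives $f = f' \times_{\Spd\bbF_p\pot{t}} \ast$, and since v-covers are stable under base change, $f$ is a v-cover.

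The main obstacle is the last step of (2): assembling the v-cover property of $f'$ from the separate behavior on the analytic and reduced loci. While the analytic piece is handled directly by spatial diamond arguments, the reduced piece requires a careful scheme-theoretic lifting using that $\Spec R^+_\red$ (for $\Spa(R,R^+)$ strictly totally disconnected) is a disjoint union of spectra of strictly henselian valuation rings, over which any surjective map with source a scheme admits a section after suitable v-cover.
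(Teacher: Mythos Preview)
For part~(1), your argument has a genuine gap. You claim that $Y$ is quasiseparated as a v-sheaf because $\Delta_Y$ is a closed immersion, and then conclude that $f\circ g:\Spd B\to Y$ is qcqs since its source is. But the absolute notion of quasiseparatedness for v-sheaves is strictly stronger than ``$\Delta_Y$ is quasicompact as a map'': as the paper stresses just before \Cref{quasiseparatednessofprekimberlites}, $\ast$ itself is not quasiseparated. Concretely, $\Spd\bbF_p\pot{s}$ and $\Spa(C,O_C)$ are each quasicompact, yet their product over $\ast$ is the open unit disk, which is not quasicompact. Hence from ``$\Spd B$ is qc'' and ``$\Delta_Y$ is a closed immersion'' one \emph{cannot} deduce that $\Spd B\times_Y T$ is quasicompact for an arbitrary affinoid perfectoid $T\to Y$. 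The paper's proof avoids this by exploiting the v-cover hypothesis in a different way: it base-changes $f$ along itself, obtaining $X\times_Y X\to X$; since $X\times_Y X$ is a formally closed sub of the spatial kimberlite $X\times X$, it is again a spatial kimberlite (\Cref{closedimmersionpreservespatial}, \Cref{stable-under-cartesian}), and the formally adic map $X\times_Y X\to X$ between spatial kimberlites is then quasicompact by \Cref{qcqs-is-formal-adic}.

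For part~(2), your strategy is correct in outline but unnecessarily convoluted, and the ``main obstacle'' you flag can be bypassed entirely. The paper never tries to show that $f'=f\times\id$ is a v-cover. Once $(X')^{\on{an}}\to(Y')^{\on{an}}$ is seen to be v-surjective (a surjective qcqs map of locally spatial diamonds, using \Cref{quasiseparatednessofprekimberlites} for the quasiseparatedness of the target), one simply restricts to the open locus $Y\times\Spd\bbF_p\rpot{t}\subseteq(Y')^{\on{an}}$ to get that $X\times\Spd\bbF_p\rpot{t}\to Y\times\Spd\bbF_p\rpot{t}$ is v-surjective. Composing with the v-surjection $Y\times\Spd\bbF_p\rpot{t}\to Y$ yields a v-surjection $X\times\Spd\bbF_p\rpot{t}\to Y$ that factors through $f$; hence $f$ is v-surjective. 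There is no need to analyze the reduced locus of $Y'$, nor to base-change along $\ast\hookrightarrow\Spd\bbF_p\pot{t}$.
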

    \begin{proof}
    Observe that $f$ is automatically quasiseparated since both v-sheaves are separated over $\ast$.
    Assume first that $f$ is v-surjective.
    Now, quasi compactness can be checked v-locally, so it suffices to prove that $X\times_Y X\to X$ is quasicompact.
    Since $X\times_Y X$ is the base change of $X\times X\to Y\times Y$ along the diagonal map $\Delta_Y$, by \Cref{closedimmersionpreservespatial} and \Cref{stable-under-cartesian}, we can conclude that $X\times_Y X$ is a spatial kimberlite.
    Moreover, the map $X\times X\to Y\times Y$ is formally adic, since $X\to Y$ is. 
    This implies that $X\times_Y X\to Y$ is formally adic, and consequently $X\times_Y X\to X$ is also formally adic. 
    We can now apply \Cref{qcqs-is-formal-adic} to show that $X\times_Y X\to X$ is quasicompact.
    This finishes the proof of the first claim.
    
    For the second claim, we consider the map $X\times \Spd \bbF_p\pot{t}\to Y\times \Spd \bbF_p\pot{t}$.
    By hypothesis, it is formally adic and surjective at the level of topological spaces. 
    We get a map $(X\times \Spd \bbF_p\pot{t})^{\on{an}}\to (Y\times \Spd \bbF_p\pot{t})^{\on{an}}$ of spatial diamonds that is surjective at the level of topological spaces, so it is v-surjective. Now, it follows that the map $X\times \Spd \bbF_p\rpot{t}\to Y$ is v-surjective and since it factors through $f$, then $f$ is also v-surjective. 
    \end{proof}
    \section{Examples of spatial kimberlites} 
    For this section we let $E$ be a local field with $O_E\subseteq E$ its ring of integers, $\pi\in O_E$ a uniformizer and $k=\bbF_q$ be its residue field.
    We let $\breve{E}$ denote the $\pi$-adic completion of the maximal unramified extension of $E$. 
    We fix $\calG$ a parahoric group scheme over $\on{Spec} O_E$ with reductive generic fiber $G$ over $E$.
    We also fix $\mu:\bbG_m\to G_{\overline{E}}$ a conjugacy class of geometric cocharacters with reflex field $F$.
    For any perfect ring $A$ in characteristic $p$ we let $\bbW(A)$ denote the ring of $O_E$-Witt vectors.
    More precisely, if $E$ is of characteristic $0$, then $\bbW A\coloneqq\calW(A)\otimes_{\bbZ_p}O_E$, where $\calW(A)$ denotes the $p$-typical Witt vectors, and if $E$ is of characteristic $p$ then $\bbW A=A\hat{\otimes}_{\bbF_q} O_E\cong A[[\pi]]$.

    If $S=\Spa(R,R^+)$, we let $\bbA_{\on{inf}}(R^+)$ denote $\bbW(R^+)$ endowed with the $(\pi,[\varpi])$-adic topology, where $\varpi$ denotes a(ny) pseudo-uniformizer of $R^+$.
    We let $\calY_S$ be the locus in $\Spa \bbA_{\on{inf}}(R^+)$ where $[\varpi]\neq 0$ for some pseudo-uniformizer $\varpi\in R^+$, and $Y_S$ be the locus in $\Spa \bbA_{\on{inf}}(R^+)$ where $\pi\cdot [\varpi]\neq 0$. 
    These are sous-perfectoid adic spaces \cite[Proposition II.1.1]{FS21}.
    \subsection{Positive absolute Banach Colmez spaces}
    Recall that given a $\lambda> 0$ one can consider the absolute Banach--Colmez space $\calB\calC(\calO(\lambda))$ as a v-sheaf \cite[$\mathsection$ II.2.2]{FS21}.
    Moreover, $\calB\calC(\calO(\lambda))$ has a unique non-analytic point $0:\ast \to\calB\calC(\calO(\lambda))$ and $\calB\calC(\calO(\lambda))\setminus \{0\}$ is a spatial diamond. 
    We will show that the v-sheaves $\calB\calC(\calO(\lambda))\setminus \{0\}$ are spatial kimberlites.
    \begin{theorem}
	    \label{BCareSpatialKbaby}
	    Let $D$ be an isocrystal with $\calE_D=\oplus_{\lambda\in \bbQ_{\geq 0}} \calO(\lambda)^{m_\lambda}$, the following hold:
	    \begin{enumerate}
		    \item $\calB\calC(D)$ is a locally spatial kimberlite with 
			    \[\calB\calC(D)^\Red\cong \calB\calC(\calO^{m_0})=\underline{E}^{m_0}.\]
		    \item If $m_0=0$ then $\calB\calC(D)$ is a pointed spatial kimberlite.
	    	
	    \end{enumerate}
    \end{theorem}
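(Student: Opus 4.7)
The plan is to exploit the additivity of the Banach--Colmez construction, reduce to a single slope at a time, and recombine via the product stability of \Cref{stable-under-cartesian}.

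The isocrystal decomposition yields $\calB\calC(D) \cong \prod_\lambda \calB\calC(\calO(\lambda))^{m_\lambda}$. Since the category of (locally) spatial kimberlites is closed under finite products (\Cref{stable-under-cartesian}) and reduction is a right adjoint, it suffices to prove: (a) $\calB\calC(\calO) = \underline{E}$ is a locally spatial kimberlite equal to its own $\Red$, and (b) for each $\lambda > 0$, $\calB\calC(\calO(\lambda))$ is a pointed spatial kimberlite. Assertion (2) of the theorem is immediate from (b), and assertion (1) follows by taking products.

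For (a), decompose $\underline{E} = \bigsqcup_{n \in \bbZ} \underline{\pi^n O_E}$; each summand is a profinite v-sheaf. I would identify $\underline{O_E}$ with $\Spd(R, R)$, where $R$ is the perfection of the discrete ring $C(O_E, k)$ of continuous functions, viewed as a topological ring with the trivial (i.e.\ $0$-adic) topology. This is an affine formal v-sheaf with empty analytic locus, equal to its own reduction, and serves as its own qcqs formally adic v-cover; all axioms of an affine spatial kimberlite are then tautological, and $\underline{E}$ is the corresponding disjoint union, which is locally spatial but typically not qcqs.

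For (b), the analytic locus $\calB\calC(\calO(\lambda))^{\on{an}} = \calB\calC(\calO(\lambda)) \setminus \{0\}$ is a spatial diamond by \cite{FS21}, and $0$ is the unique non-analytic geometric point, so $|\calB\calC(\calO(\lambda))^\red| = \ast$. What remains is to exhibit an affine formal v-sheaf $\calF$ together with a qcqs formally adic v-cover $\calF \to \calB\calC(\calO(\lambda))$. Writing $\lambda = s/r$ with $(s,r) = 1$, this can be produced (after base change to $\Spd \breve{O}_E$) from the universal cover of a height-$r$ $\pi$-divisible $O_E$-module of the appropriate type, \textit{à la} Scholze--Weinstein: $\calF$ is the $\Spd$ of a $\pi$-adically complete perfectoid power series ring, and the quotient map is a pro-\'etale torsor under a profinite $O_E$-lattice. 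Formalization of geometric points then reduces, via the Breuil--Kisin--Fargues classification, to the assertion that a section of $\calO(\lambda)$ over $\calY_{(C,C^+)}$ extends uniquely to $\Spd C^+$, which holds by convergence of Tate-style expansions in units of $C^+$. Valuativity follows from \Cref{valuativityisappropriatelyv-local} applied to $\calF \to \calB\calC(\calO(\lambda))$, since both $\calF$ (an affine formal v-sheaf) and the pro-\'etale period lattice are valuative.

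The main obstacle will be the construction in (b): packaging the Scholze--Weinstein universal cover so that it genuinely yields a qcqs formally adic v-cover by an affine formal v-sheaf, and checking that this cover is compatible with the spatial-diamond structure on the analytic locus coming from \cite{FS21}. Once this explicit formal model is in hand, the remaining verifications (formalization of geometric points, valuativity, spatiality of the analytic locus, pointedness) are routine consequences of the general theory of spatial kimberlites developed in the previous section.
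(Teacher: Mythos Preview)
Your overall reduction strategy---use additivity of $\calB\calC$, invoke \Cref{stable-under-cartesian} to reduce to a single slope, and treat $\lambda=0$ and $\lambda>0$ separately---matches the paper exactly. The slope-$0$ piece is handled in the paper simply by citing \cite[Proposition II.2.5.(ii)]{FS21} rather than your explicit $\Spd(C(O_E,k),C(O_E,k))$ description, but the content is the same.

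For $\lambda>0$ the paper takes a genuinely different route from yours. It does \emph{not} construct an explicit affine formal v-sheaf cover. Instead, it proves directly (via a Cartesian diagram of rings relating $B^S_{[1,\infty]}$ to $\bbW(R^+_\red)[\tfrac{1}{\pi}]$, together with the interpretation in \cite[Theorem 3.17, Remark 3.14]{GI_Bunmer}) that $\calB\calC(\calO(\lambda))$ formalizes every map from an affinoid perfectoid, not just geometric points. Spatiality is then obtained from \Cref{representabilityinlocallyspatialkimberlites}: since $\calB\calC(\calO(\lambda))\to\ast$ is already known to be representable in locally spatial diamonds by \cite[Proposition II.2.16]{FS21}, there is nothing left to build. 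This avoids entirely the delicate packaging of the Scholze--Weinstein universal cover that you flag as the main obstacle; in particular, the passage from universal covers over $\Spd\breve{O}_E$ to the \emph{absolute} Banach--Colmez space over $\Spd\bbF_q$ is never needed.

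There is also a genuine gap in your valuativity argument. \Cref{valuativityisappropriatelyv-local} lets you conclude valuativity of a prekimberlite $X$ mapping to a valuative $Y$ once a suitable base-change of $X$ is valuative; it does \emph{not} let you descend valuativity along a v-cover $\calF\to X$ (the paper explicitly warns that valuativity is not v-local in general). Fortunately the correct argument here is much simpler than what you propose: since $\calB\calC(\calO(\lambda))^\red=\ast$, the Heuer space $X^{\on{H}}$ is just $\ast$, so valuativity is literally the statement that $\calB\calC(\calO(\lambda))\to\ast$ is partially proper, which is \cite[Proposition II.2.16]{FS21}. You should replace your invocation of \Cref{valuativityisappropriatelyv-local} with this direct observation.
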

    \begin{remark}
    It is natural to ask if negative Banach--Colmez spaces are also spatial kimberlites. 
    The answer is \textbf{no} since these spaces are not v-locally formal. Indeed, their unique non-analytic point has a meromorphic limit rather than a formal limit.	
    \end{remark}
    \begin{proof}
	    Since $\calB\calC(\oplus_{\lambda\in \bbQ_{\geq 0}} \calO(\lambda)^{m_\lambda})= \calB\calC(\calO^{m_0})\times \prod_{\lambda\in \bbQ_{> 0 } }\calB\calC(\calO(\lambda))$ we can deduce the first statement from the second and from \cite[Proposition II.2.5.(ii)]{FS21}.
	    Moreover, by \Cref{stable-under-cartesian} we may reduce to the case where $m_\lambda=1$ for a chosen $\lambda$ and $m_\lambda=0$ otherwise. 
	    From now on $D$ has the form $D=\calO(\lambda)$ for some $\lambda>0$ and with $\lambda=\frac{n}{m}$ with $n$ and $m$ relatively prime. 
	    Let us prove $\calB\calC(D)$ is formally separated. 
	    The argument for separatedness can be found in \cite[Proposition II.2.16]{FS21}.
	    By \cite[Corollary 3.20]{GI_Bunmer}, $\calB\calC(\calO(\lambda))^{\on{red}}=\ast$ and the adjunction map is the inclusion of the origin. 
	    The diagonal $\Delta:\calB\calC(\calO(\lambda))\to \calB\calC(\calO(\lambda))^2$ is formally adic since only the origin maps to the origin. 
	    This finishes showing that $\calB\calC(\calO(\lambda))$ is formally separated. 

	    To prove that $\calB\calC(D)$ is v-locally formal we make some preparations.
    Recall that for any affinoid perfectoid $S=\Spa(R,R^+)$ and a choice of pseudo-uniformizer $\varpi\in R^+$, we can consider the ring $B^{S}_{[1,\infty]}=\Gamma(Y_{S,[1,\infty]},\calO)$. 
    We let $R^+_{\on{red}}=(R^+/\varpi)^{\on{perf}}$ and $R^\circ_{\on{red}}=(R^\circ/\varpi)^{\on{perf}}$.
    \begin{lemma}
	    \label{ring-computation}
	    Let $S=\Spa(R,R^+)$. The following diagram of rings is Cartesian:	
    \begin{center}
    \begin{tikzcd}
	    B^S_{[1,\infty]} \arrow{r} \arrow{d}  & \bbW(R^+_{\on{red}})[\frac{1}{\pi}] \arrow{d} \\
     B^{S}_{[1,\infty]} \arrow{r} & \bbW(R^\circ_{\on{red}})[\frac{1}{\pi}]
    \end{tikzcd}
    \end{center}
    \end{lemma}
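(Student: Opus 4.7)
The plan is to reduce the identity to a concrete computation on a basis of the v-topology and then verify the Cartesian property via explicit Teichmüller expansions.

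First I would spell out the four maps carefully. The horizontal arrows come from the functorial ring maps $\bbW(R^+)\to \bbW(R^+_\red)$ and $\bbW(R^\circ)\to \bbW(R^\circ_\red)$ induced by the reductions $R^+\twoheadrightarrow R^+_\red$ and $R^\circ\twoheadrightarrow R^\circ_\red$, combined with inversion of $\pi$ and restriction of sections from $Y_S$ to $Y_{S,[1,\infty]}$. The right vertical arrow is functorially induced by the inclusion $R^+_\red\hookrightarrow R^\circ_\red$, while the left vertical arrow is the natural comparison between the two flavors of $B^S_{[1,\infty]}$ appearing in the diagram. With these identifications the commutativity of the square is automatic from functoriality.

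Next I would show that all four corners behave well under v-base change. On the right-hand column, $\bbW(-)$ commutes with products of perfect rings, so the Witt constructions split as products over the connected components whenever $R^+$ is itself a product of valuation rings. For the left-hand column, the ring $B^S_{[1,\infty]}=\Gamma(Y_{S,[1,\infty]},\calO)$ inherits a compatible decomposition after v-cover, since $Y_{S,[1,\infty]}$ is a sous-perfectoid adic space and its structure sheaf satisfies v-descent in the sense of \cite[Proposition II.1.1]{FS21}. Using \Cref{productofpointsandvaluations} one then reduces to the case $S=\Spa(C,C^+)$ with $C$ an algebraically closed non-archimedean field, so that $R^+_\red$ is a perfect valuation ring and $R^\circ_\red$ is its fraction field.

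In this geometric point setting, I would invoke the explicit Teichmüller description: every element of $B^S_{[1,\infty]}$ admits a unique expansion $\sum_{n\geq 0} [x_n]\pi^n$ with $x_n\in C^+$ whose valuations grow so as to guarantee convergence on $Y_{S,[1,\infty]}$. The two horizontal maps are then obtained by replacing each $x_n$ by its image in $R^+_\red$ or $R^\circ_\red$ respectively. Given a compatible pair on the right, the common image pins down the residues in $R^+_\red$, which (since $R^+_\red\hookrightarrow R^\circ_\red$ is injective) lifts uniquely to a Teichmüller expansion on the left-hand side; compatibility of convergence bounds is preserved under the reduction because inverting $\pi$ and reducing modulo $[\varpi]$ commute with the $\pi$-adic topology in play.

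The step I expect to be the main obstacle is the bookkeeping around the completion defining $B^S_{[1,\infty]}$: one needs to check that the Teichmüller expansion is genuinely unique, that reduction modulo $[\varpi]$ commutes with the relevant $\pi$-adic completion, and that integrality on the $R^+$-side transfers properly to $R^\circ$-side data. Once these compatibilities are pinned down, the Cartesian property becomes a clean statement about lifting compatible reductions along the surjections $R^+\twoheadrightarrow R^+_\red$ and $R^\circ\twoheadrightarrow R^\circ_\red$.
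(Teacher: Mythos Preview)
Your approach has two genuine gaps.

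First, the reduction to geometric points does not work. You claim the corners ``split as products over the connected components'' when $R^+$ is a product of valuation rings, but this fails already for $R^+_\red$: if $R^+=\prod_i C_i^+$, then $R^{\circ\circ}$ consists of those $(x_i)$ with $\inf_i v(x_i)>0$, which is strictly smaller than $\prod_i C_i^{\circ\circ}$, so $R^+_\red\to\prod_i(C_i^+)_\red$ is not an isomorphism. More broadly, the four rings in the square are not v-sheaves in any useful sense, so knowing the Cartesian property after a v-cover says nothing about the original $(R,R^+)$. Citing \Cref{productofpointsandvaluations} only manufactures a cover; it does not let you descend a finite-limit condition on rings.

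Second, your Teichm\"uller expansion claim is wrong as written: $B^S_{[1,\infty]}$ has $\pi$ inverted, so an expansion $\sum_{n\geq 0}[x_n]\pi^n$ cannot hit every element. Even allowing $\sum_{n\gg-\infty}$, the uniqueness and convergence assertions for this completed ring are exactly the ``bookkeeping'' you flag as the main obstacle and never carry out; this is where the actual content lies.

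The paper sidesteps both issues by working directly with the presentation $B^S_{[1,\infty]}\simeq\bbW(R^+)\langle T\rangle/(\pi T-[\varpi])[\tfrac{1}{\pi}]$ for arbitrary $(R,R^+)$. Starting from the elementary Cartesian square $R^+=R^\circ\times_{R^\circ_\red}R^+_\red$ (and hence the same for $\bbW(-)[T]$), one quotients by $(\pi T-[\varpi],\pi^n)$ using the general fact that if $A=C\times_D B$ with $C\twoheadrightarrow D$ and $A/I\hookrightarrow C/I$, then $A/I=C/I\times_{D/I}B/I$; then one passes to the inverse limit over $n$ (limits commute with limits) and finally inverts $\pi$ (filtered colimits commute with finite limits). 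No reduction to points and no Teichm\"uller calculus is needed.
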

    \begin{proof}
	    We do this in steps. Observe that the following diagram is Cartesian,
    \begin{center}
    \begin{tikzcd}
	    \bbW(R^+)[T]/(\pi\cdot T- [\varpi],\pi^n) \arrow{r} \arrow{d}  & \bbW(R^+_{\on{red}})[T]/(\pi\cdot T,\pi^n) \arrow{d} \\
	    \bbW(R^\circ)[T]/(\pi\cdot T- [\varpi],\pi^n) \arrow{r} & \bbW(R^\circ_{\on{red}})[T]/(\pi\cdot T,\pi^n).
    \end{tikzcd}
    \end{center}
    Indeed, in general whenever  
    \begin{center}
    \begin{tikzcd}
     A \arrow{r} \arrow{d}  & B \arrow{d} \\
     C \arrow{r} & D
    \end{tikzcd}
    \end{center}
    is a Cartesian diagram of rings with $C\to D$ surjective and $I\subseteq A$ is an ideal such that $A/I\to C/I$ is injective, then 
    \begin{center}
    \begin{tikzcd}
	    A/I \arrow{r} \arrow{d}  & B/I \arrow{d} \\
     C/I \arrow{r} & D/I
    \end{tikzcd}
    \end{center}
    is also Cartesian.

    To wit, take $[c]\in C/I$ and $[b]\in B/I$ mapping to the same element in $D/I$. We have that $c-(a_1d_1+\dots a_nd_n)=b$ in $D$ and we can lift each $d_i$ to elements $c_i$ so that replacing $c$ by $c-(a_1c_1+\dots a_nc_n)$ we can arrange that $c=b$ in $D$ without changing the class $[c]\in C/I$. In particular, we can lift to an element $a\in A$ inducing an element $[a]\in A/I$ mapping to $[b]$ and to $[c]$. This proves that $A/I\to C/I\times_{D/I} B/I$ is surjective. Since by assumption $A/I\to C/I$ is injective we can say the same about the map to $A/I\to C/I\times_{D/I} B/I$ so that it is an isomorphism.

    Now, passing to limits as $n\to \infty$ and using that limits commute with Cartesian diagrams we get the following Cartesian diagram:
    \begin{center}
    \begin{tikzcd}
	    \bbW(R^+)\langle T\rangle/(\pi\cdot T- [\varpi]) \arrow{r} \arrow{d}  & \bbW(R^+_{\on{red}})\langle T\rangle /(\pi\cdot T) \arrow{d} \\
	    \bbW(R^\circ)\langle T\rangle/(\pi\cdot T- [\varpi]) \arrow{r} & \bbW(R^\circ_{\on{red}})\langle T\rangle /(\pi\cdot T)
    \end{tikzcd}
    \end{center}
    The diagram in the statement is constructed from the one above by inverting $\pi$, and it remains Cartesian. Indeed, we can realize the localization by $\pi$ as a filtered colimit along transition morphisms given by multiplication by $\pi$. Since filtered colimits commute with finite limits the remaining diagram is still Cartesian.
    \end{proof}

    We now prove that $\calB\calC(D)$ formalizes every map coming from an affinoid perfectoid $(R,R^+)$. In particular, it is v-formalizing.
    Let $S=\Spa(R,R^+)$ and let $S_0=\Spa(R,R^\circ)$.
    Recall that restriction along the map $Y_{S_0,[1,\infty)}\to Y_{S_0,[1,\infty]}$ induces a fully-faithful functor of categories of $\varphi$-equivariant vector bundles (see \cite[Proposition 2.1.4]{PR21}).
    Using \cite[Theorem 3.17]{GI_Bunmer} we can reinterpret the full-faithfulness result we just mentioned to prove that the inclusion map $\Spa(R,R^\circ)\to \Spd(R^\circ, R^\circ)$ induces a group isomorphism
    \[\calB\calC(D)(\Spd(R^\circ))\to \calB\calC(D)(\Spa(R,R^\circ)).\]
    Indeed, $Y_{\Spd(R^\circ)}=Y_{S_0,(0,\infty]}$ as in \cite[Remark 3.14]{GI_Bunmer}.
    Since the slope is positive, the Banach--Colmez space is defined as a space of global sections.
    An element $\alpha \in \calB\calC(D)(\Spd(R^\circ,R^\circ))$ can be interpreted as an element in $\alpha\in (B^{S_0}_{[1,\infty]})^m$ which is appropriately $\varphi$-equivariant and where $m=\on{rank}(\calE_D)$.
    Furthermore, this induces a map $\Spd(R^\circ_{\on{red}},R^\circ_{\on{red}})\to \calB\calC(D)$ which is necessarily the $0$ map, this shows that $\alpha=0$ in $\bbW(R^\circ_{\on{red}})^m$.
    By \Cref{ring-computation}, $\alpha \in  B^{S}_{[1,\infty]}$ which produces a map $\Spd(R^+,R^+)\to  \calB\calC(D)$ appealing again to \cite[Theorem 3.17]{GI_Bunmer} and \cite[Remark 3.14]{GI_Bunmer}.

    As of now, we have proved that $\calB\calC(\calO(\lambda))$ is a pointed prekimberlite that formalizes points. 
    By \cite[Proposition II.2.16]{FS21}, it is a valuative prekimberlite. 
    Indeed, the Heuer specialization map gets identified with the structure morphism $\calB\calC(D)\to \ast$ which is partially proper. 
    By \cite[Proposition II.3.7.(i)]{FS21} $\calB\calC(D)^{\on{an}}$ is a spatial diamond, which proves that $\calB\calC(D)$ is a kimberlite.
    By \Cref{representabilityinlocallyspatialkimberlites}, $\calB\calC(D)$ is spatial whenever $\calB\calC(D)\to \ast$ is representable in locally spatial diamonds. This is follows from \cite[Proposition II.2.16]{FS21}. 
    \end{proof}

    Recall that $G$ denotes a reductive group over $E$.
    Let $B(G)$ denote Kottwitz' set and let $b\in B(G)$. 
    Recall that for every $S=\Spa R\in \Perf_{\overline{\bbF}_p}$ the element $b$ defines a $G$-bundle $\calE_b$ over the relative Fargues--Fontaine curve $X_{S,\on{FF}}$. 
    This gives rise to a v-sheaf over $\Spd \overline{\bbF}_p$ parametrizing the automorphisms of $\calE_b$. It is denoted by $\widetilde{G}_b$.
    Moreover, we can write $\widetilde{G}_b=\widetilde{G}^{>0}_b\rtimes \underline{G_b(E)}$, as in \cite[Proposition III.5.1]{FS21}.
    \begin{theorem}
	    Let the notation be as above, the following hold.
	    \begin{enumerate}
		    \item $\widetilde{G}_b$ is a locally spatial kimberlite.
		    \item $\widetilde{G}^{>0}_b$ is a pointed spatial kimberlite. 
	    \end{enumerate}
    \end{theorem}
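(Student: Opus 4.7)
The plan is to deduce both statements from \Cref{BCareSpatialKbaby} and the structure theorem for $\widetilde{G}_b$ recalled in \cite[Proposition III.5.1]{FS21}. That proposition provides two pieces of input: a semi-direct product decomposition $\widetilde{G}_b = \widetilde{G}_b^{>0} \rtimes \underline{G_b(E)}$, which in particular gives an isomorphism of underlying v-sheaves $\widetilde{G}_b \cong \widetilde{G}_b^{>0} \times \underline{G_b(E)}$; and a finite decreasing filtration $\widetilde{G}_b^{>0} = F^0 \supseteq F^1 \supseteq \cdots \supseteq F^n = \{1\}$ by normal v-subsheaves whose graded pieces are positive Banach--Colmez spaces $F^i/F^{i+1} \cong \calB\calC(\calO(\lambda_i)^{m_i})$ with $\lambda_i > 0$. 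Statement (1) will follow from (2), \Cref{stable-under-cartesian}, and a short analysis of $\underline{G_b(E)}$.

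To prove (2), I would argue by induction on the length $n - i$ of the filtration, with the base case $F^n = \ast$ trivial. For the inductive step I have an extension
\[1 \to F^{i+1} \to F^i \to \calB\calC(\calO(\lambda_i)^{m_i}) \to 1\]
whose outer terms are pointed spatial kimberlites (by induction and \Cref{BCareSpatialKbaby}). Applying $\red$ to the corresponding fiber sequence and using that $\red$ is a right adjoint, hence preserves limits, shows $(F^i)^\red = \ast$, so $F^i$ is pointed. It remains to verify the axioms of \Cref{affinespatialdefi} for $F^i$, which I would do by mimicking the four-step verification in the proof of \Cref{BCareSpatialKbaby}: formal separatedness propagates through the extension; geometric points of $F^i$ formalize by combining formalization in $F^{i+1}$ (inductive hypothesis) with formalization in the base $\calB\calC(\calO(\lambda_i)^{m_i})$ (\Cref{BCareSpatialKbaby}), after possibly passing to a v-cover; a qcqs formally adic v-cover of $F^i$ by an affine formal v-sheaf is built from the analogous covers of both outer terms (using \Cref{stable-under-cartesian} to confirm the fiber structure); and $F^i{}^\an$ is a spatial diamond because it fits in an analogous extension of spatial diamonds, the outer terms being spatial by induction and \cite[Proposition II.3.7.(i)]{FS21}.

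For (1), by the decomposition above and \Cref{stable-under-cartesian}, it suffices to show that $\underline{G_b(E)}$ is a locally spatial kimberlite. Writing the locally profinite group as $G_b(E) = \bigsqcup_\alpha g_\alpha K$ for $K$ a compact open subgroup, we have $\underline{G_b(E)} = \coprod_\alpha \underline{g_\alpha K}$ with each factor isomorphic to the profinite v-sheaf $\underline{K}$. The latter has empty analytic locus, its reduction is $\Spec C^0(K, \overline{\bbF}_p)$ (a reduced affine perfect scheme), and it is identified with the affine formal v-sheaf $\Spd(C^0(K, \overline{\bbF}_p))$ where $C^0(K, \overline{\bbF}_p)$ is endowed with the discrete topology; hence $\underline{K}$ is a (reduced) spatial kimberlite, and a disjoint union of spatial kimberlites is locally spatial. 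The main obstacle I anticipate is ensuring, in the inductive step of (2), that the $F^{i+1}$-torsor $F^i \to \calB\calC(\calO(\lambda_i)^{m_i})$ trivializes v-locally in a way compatible with the formal structure on $\Spd R^+$, rather than only over the analytic locus $\Spa(R, R^\circ)$. The key technical input here is \Cref{ring-computation}, which upgrades analytic trivializations to formal ones and was already the pivot of the proof of \Cref{BCareSpatialKbaby}; once this is checked uniformly along the filtration, the rest is a formal diagram chase.
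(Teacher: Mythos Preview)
Your approach is genuinely different from the paper's and, as you yourself anticipate, the torsor step is where it runs into real trouble.

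The paper does not induct along the filtration of \cite[Proposition III.5.1]{FS21} at all. Instead, it fixes a faithful representation $\rho:G\hookrightarrow \GL_n$ and observes that the sequence of Zariski closed immersions
\[
\on{Aut}_{X_{S,\on{FF}}}(\calE_b)\ \hookrightarrow\ \on{Aut}_{X_{S,\on{FF}}}(\calE_{\rho(b)})\ \hookrightarrow\ \bbV\bigl(\underline{\on{End}}(\calE_{\rho(b)})^2\bigr)
\]
gives a closed immersion of v-sheaves $\widetilde{G}_b\hookrightarrow \calB\calC\bigl(\underline{\on{End}}(\calE_{\rho(b)})^2\bigr)$. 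One then checks it is formally adic by identifying $\widetilde{G}_b^\Red\simeq \underline{G_b(E)}$ with the graded automorphisms, which can be tested after $\rho$. Now \Cref{closedimmersionpreservespatial} and \Cref{BCareSpatialKbaby} give (1) immediately. Statement (2) is deduced from (1), not the other way around: one writes $\widetilde{G}_b^{>0}$ as the fiber of $\widetilde{G}_b\to\underline{G_b(E)}$ over $e$, applies \Cref{stable-under-cartesian}, and notes its reduction is a point. This sidesteps the filtration entirely.

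Your inductive route is not hopeless, but the obstacle you flag is a genuine gap rather than a technicality. To push formalization through the extension $1\to F^{i+1}\to F^i\to \calB\calC\to 1$ you need the $F^{i+1}$-torsor $F^i\times_{\calB\calC}\Spd R^+$ to trivialize over $\Spd R^+$, not just over the analytic locus. \Cref{ring-computation} is a statement about \emph{sections} of $B^S_{[1,\infty]}$, i.e.\ it handles the additive case of a single Banach--Colmez space; it does not directly give vanishing of $H^1_{\mathrm{v}}(\Spd R^+,F^{i+1})$ for the non-abelian group $F^{i+1}$. One would need a group-valued analogue (essentially: any $F^{i+1}$-valued $1$-cocycle over $\Spd R^\circ$ that is trivial on the reduced locus descends to $\Spd R^+$), and proving this amounts to redoing most of the work for each filtration step. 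The paper's closed-immersion trick buys you exactly that: it packages all of $\widetilde{G}_b$ at once inside a single Banach--Colmez space, so \Cref{ring-computation} only needs to be applied once, additively.
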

    \begin{proof}
    Let $\rho:G\to \on{GL}_n$ be a fully faithful embedding. We claim that $\widetilde{G}_b\to  \widetilde{G}_{\rho(b)}$ is a formally adic closed immersion.
    Let $S=\Spa (R,R^+)\in \Perf_{\overline{\bbF}_p}$. 
    We have a closed immersion of reductive group schemes $\on{Aut}_{X_{S, \on{FF}}}(\calE_b)\to \on{Aut}_{X_{S, \on{FF}}}(\calE_{\rho(b)})$ over ${X_{S, \on{FF}}}$. 
    Furthermore, $\on{Aut}_{X_{S, \on{FF}}}(\calE_{\rho(b)})$ is a Zariski closed subset of $\bbV(\calE nd^2)$, where $\bbV(\calE nd^2)$ denotes the total space of the endomorphism vector bundle $\calE nd^2:=\underline{\on{End}}_{X_{S, \on{FF}}}(\calE_{\rho(b)})^2$. 
    Let $\calI_{S}\subseteq \calO_{\bbV(\calE nd^2)}$ be the ideal sheaf defining $\on{Aut}_{X_{S, \on{FF}}}(\calE_b)$ as a closed subscheme of $\bbV(\calE nd^2)$.

    Any $\alpha\in \widetilde{G}_{\rho(b)}(R,R^+)$ corresponds to a section 
    \[\Gamma_\alpha:{X_{S, \on{FF}}} \to \on{Aut}_{X_{S, \on{FF}}}(\calE_{\rho(b)}),\]
    and $\Gamma_\alpha^{-1}(\calI_S)$ defines a Zariski closed subset of ${X_{S, \on{FF}}}$. 
    By \cite[Lemma IV.4.23]{FS21}, this defines a Zariski closed subset in $\Spa(R,R^+)$ representing the subsheaf of points whose composition with $\alpha$ factor through $\on{Aut}_{X_{S, \on{FF}}}(\calE_{b})$. 
    This proves that $\widetilde{G}_b\to \calB\calC(\underline{\on{End}}_{X_{S, \on{FF}}}(\calE_{\rho(b)})^2)$ is a closed immersion. 

    We can identify the map $\underline{{G}_b(E)}\to \widetilde{G}_b$ with the reduction adjunction $\widetilde{G}_b^\Red\to\widetilde{G}_b$ by \cite[Theorem 7.14]{GI_Bunmer}. 
    On the other hand $\underline{G_b(E)}$, is the moduli space of graded automorphisms of $\calE_b$ (see \cite[Proposition III.4.7]{FS21}). 
    Since $\rho:G\to \on{GL}_n$ is fully faithful, being graded can be verified in $\on{Aut}(\calE_{\rho(b)})$. 
    But $\calB\calC(\underline{\on{End}}(\calE_{\rho(b)}))^\Red$ also parametrizes graded endomorphisms. 
    This proves that $\widetilde{G}_b\to \calB\calC(\underline{\on{End}}(\calE_{\rho(b)})^2)$ is formally adic. 
    By \Cref{closedimmersionpreservespatial} and \Cref{BCareSpatialKbaby}, $\widetilde{G}_b$ is a locally spatial kimberlite. 
    On the other hand, $\widetilde{G}_b=\widetilde{G}^{>0}_b\times \underline{{G}_b(E)}$ sheaf theoretically (without group structure), and $\widetilde{G}_b^{\on{red}}=\underline{{G}_b(E)}$ which implies that $(\widetilde{G}^{>0}_b)^{\on{red}}=\Spec \overline{\bbF}_p$, so it is a pointed spatial kimberlite, by \Cref{stable-under-cartesian}. 
    \end{proof}
    \subsection{Beilinson--Drinfled affine Grassmanians}

    In our work \cite{AGLR22} with Ansch\"utz, Louren\c{c}o and Richarz the notion of flat $\pi$-adic kimberlite was introduced for v-sheaves with a structure map $X\to \Spd O$ and $O$ the ring of integers of a local field \cite[Definition 2.30]{AGLR22}. 
    This notion is a precursor of the notion of \textit{thick} spatial kimberlites used in this article (see \Cref{thick kimber}).
    For the convenience of the reader, we spell down \Cref{flatvsthick} below to facilitate comparability with the statements written in \cite{AGLR22}.

	    \begin{remark}
	    At the time that \cite{AGLR22} was written, the word kimberlite referred to a weakened version of what we call kimberlites in this article and in the latest version of \cite{Gle24}.
	    Indeed, when \cite{AGLR22} was written, the Heuer specialization map for specializing v-sheaves had not been introduced and we had not found the definition of valuative prekimberlite which is the correct thing to consider when one does not require properness. 
	    Moreover, we had also subestimated the importance of requiring the specialization map to be quasicompact. 
	    Since the local models are proper, in the end, one ended up with the same concept. We clarify this below.
	    \end{remark}
    \begin{proposition}
	    \label{flatvsthick}
    Let $O$ be a complete discrete valuation ring over $\bbZ_p$ with uniformizer $\pi\in O$.
    Let $\calF$ be a prekimberlite with a formally adic map $\calF\to \Spd O$.
    Suppose that $\calF^{\on{an}}$ is a locally spatial diamond. The following statements hold.
    \begin{enumerate}
	    \item If $\calF\to \Spd O$ is proper, $\calF$ is $\pi$-flat (as in \cite[Definition 2.31]{AGLR22}) and formalizes geometric points, then $\calF$ is a thick spatial kimberlite.
	    \item If $\calF$ is a thick spatial kimberlite, then $\calF$ is $\pi$-flat.
    \end{enumerate}
    \end{proposition}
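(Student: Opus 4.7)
For part $(2)$, note that $\pi$-flatness of $\calF$ in the sense of \cite[Definition 2.31]{AGLR22} asks precisely that the topological specialization map $\on{sp}$ surject onto the reduced special fiber. Since $\calF\to\Spd O$ is formally adic, one has $\calF^{\on{an}}=\calF\times_{\Spd O}\Spa(E,O_E)$ and $\calF^\red$ is the reduced special fiber, so this coincides with thickness as in \Cref{thick kimber}; the implication is thus tautological. For $(1)$, I will check in turn the four conditions defining a thick spatial kimberlite: valuativity of $\calF$, spatialness of $\calF^{\on{an}}$, the existence of an affine formal v-sheaf cover on each formal neighborhood, and thickness (which is the hypothesis).

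For valuativity, the key preliminary observation is that $\Spd O\to(\Spec k)^{\diamond/\circ\circ}$ is partially proper: its valuative criterion reduces to the fact that any continuous $O\to R^\circ$ automatically lands in an open integrally closed subring $R^+\subseteq R^\circ$, because $\pi O$ maps into $R^{\circ\circ}\subseteq R^+$ while the induced map on reductions is fixed by the lower compatibility. Composing with the assumed properness of $\calF\to\Spd O$, the composite $\calF\to(\Spec k)^{\diamond/\circ\circ}$ is partially proper. As $\calF^\red\to\Spec k$ is a separated map of perfect schemes, \Cref{closedimmersiongivespartiallyproper} gives that $\calF^{\on{H}}=(\calF^\red)^{\diamond/\circ\circ}\to(\Spec k)^{\diamond/\circ\circ}$ is weakly separated, so cancellation via \Cref{standardpropertiesweaklypar}.(4) yields that $\on{SP}:\calF\to\calF^{\on{H}}$ is weakly partially proper. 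Combined with the standing fact that $\on{SP}$ is separated, hence quasiseparated, this upgrades to partial properness of $\on{SP}$, proving valuativity. For the analytic locus, $\calF^{\on{an}}\to\Spa(E,O_E)$ is proper by base change, and since $\Spa(E,O_E)$ is a single point the space $\calF^{\on{an}}$ is qcqs; being also locally spatial by hypothesis, it is a spatial diamond.

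For the covering step, properness of $\calF\to\Spd O$ forces $\calF^\red$ to be a proper perfectly finitely presented perfect $k$-scheme, in particular qcqs, so one may fix a finite affine open cover $\{U_i=\Spec A_i\}_{i=1}^n$. By \cite[Theorem 4.27]{Gle24} the induced map $\coprod_i\widehat{\calF}_{/U_i}\to\calF$ is a formally adic qcqs v-cover, so it suffices to treat each $\widehat{\calF}_{/U_i}$ separately. Each inherits valuativity and the formalization of geometric points, and its analytic locus is a quasicompact open of $\calF^{\on{an}}$, hence a spatial diamond. Mimicking \Cref{rigproetalecover}, pick a universally open quasi-pro-\'etale v-cover $\Spa(R_i,R_i^+)\to\widehat{\calF}_{/U_i}^{\on{an}}$ by a strictly totally disconnected space; by \Cref{prekimbproetaleformalizing} it formalizes to a qcqs formally adic map $\Spd R_i^+\to\widehat{\calF}_{/U_i}$, and combined with the closed immersion $U_i^\diamond\to\widehat{\calF}_{/U_i}$ one obtains a qcqs formally adic map from the affine formal v-sheaf $\Spd(R_i^+\times A_i)$. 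The main obstacle, and the only place where $\pi$-flatness truly enters, is checking that this map is surjective on geometric points over the reduced locus $U_i^\diamond$; here one invokes \Cref{annoyinglemmaonsurjectivityofspecialization}, whose hypothesis $|Z|=\on{sp}(X^{\on{an}})$ is supplied by $\pi$-flatness, to guarantee that the image of $\Spd R_{i,\red}^+\to U_i^\diamond$ is all of $U_i^\diamond$. This produces the required affine formal v-sheaf cover, so each $\widehat{\calF}_{/U_i}$ is an affine spatial kimberlite and $\calF$ is a locally spatial kimberlite; qcqsness of $\calF^\red$ and the thickness hypothesis then conclude.
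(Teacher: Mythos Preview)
The central issue is a mischaracterization of $\pi$-flatness. In \cite[Definition 2.31]{AGLR22} (as the paper invokes it), $\pi$-flatness means the existence of a v-cover $\coprod_{i\in I}\Spd R_i^{+,\sharp}\to\calF$ with each $\Spd R_i^{+,\sharp}\to\Spd O$ formally adic (i.e.\ $\pi$ is a pseudo-uniformizer in $R_i^{+,\sharp}$); it is not defined as surjectivity of the specialization map, though that is a consequence. This affects both parts of your argument.

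For (2), you cannot conclude $\pi$-flatness merely from surjectivity of $\on{sp}$: you must construct the cover. The paper does this by taking a formalizable v-cover $\Spa(R,R^+)\to\calF^{\on{an}}$, then using \Cref{annoyinglemmaonsurjectivityofspecialization} and \Cref{qcqs-is-formal-adic} to show that $\Spd R^+\to\calF$ is a formally adic v-cover; composing with the formally adic map $\calF\to\Spd O$ then yields the required formally adic map $\Spd R^+\to\Spd O$.

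For (1), the paper's route is much shorter: after establishing that $\calF$ is a kimberlite (your valuativity argument is essentially the same as the paper's), it takes the v-cover handed to it by the \emph{definition} of $\pi$-flatness, passes to finitely many terms using quasicompactness of $\calF$, and then \Cref{kimberlite-not-so-different-from-spatial-kimb}(1) shows the resulting formally adic v-cover is qcqs. Your alternative construction, formalizing a pro-\'etale cover of the analytic locus, has a genuine gap: \Cref{prekimbproetaleformalizing} gives only formalizability, not that the formalization $\Spd R_i^+\to\widehat{\calF}_{/U_i}$ is qcqs. To upgrade via \Cref{kimberlite-not-so-different-from-spatial-kimb} you would need either to already know it is a v-cover (circular), or to know that $\widehat{\calF}_{/U_i}\times\Spd\bbF_p\pot{t}$ is a kimberlite---but this is precisely the kind of stability question (\Cref{naturalquestions}) that is open for general kimberlites and which the spatial hypothesis is designed to supply.
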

    \begin{proof}
	    Lets prove the first statement. 
	    We have a commutative diagram 
	    \begin{center}
	    \begin{tikzcd}
		    \calF \arrow{r} \arrow{d}{\on{SP}}  & \Spd O \arrow{d}{\on{SP}} \\
		    \calF^{\on{H}} \arrow{r} & \ast 
	    \end{tikzcd}
	    \end{center}
	    This shows that $\calF\to \ast$ is partially proper, and since $\calF^{\on{H}}\to \ast$ is weakly separated, by \Cref{standardpropertiesweaklypar}, $\calF\to \calF^{\on{H}}$ is partially proper, so it is a valuative prekimberlite. 
	    Now, $\calF^\red$ is proper over $\Spec O/\pi$ and $\calF^{\on{an}}$ is spatial, by \cite[Theorem 4.40]{Gle24} the map $\on{sp}:|\calF^{\on{an}}|\to |\calF^\red|$ is a spectral map of locally spectral spaces, but since both the target and the source are quasicompact the map is quasicompact. 
	    By \Cref{twodefinitionsofkimberlite}, $\calF$ is a kimberlite.

	    By definition of $\pi$-flatness, see \cite[Definition 2.31]{AGLR22}, we have a v-cover $\coprod_{i\in I}\Spd R^{+,\sharp}_i\to \calF$ where the map $\Spd R_i^{+,\sharp}\to \Spd O$ is formally adic, and $(R_i^\sharp,R_i^{+,\sharp})$ is a perfectoid Huber pair.  
	    Since $\calF$ is quasicompact, only finitely many $i\in I$ are needed and $\coprod_{i=1}^n \Spd R^{+,\sharp}_i\to \calF$ is a qcqs formally adic v-cover, so $\calF$ is a spatial kimberlite. 
	    It is thick since the specialization map is surjective for the $\Spd R^{+,\sharp}_i$, and consequently for $\calF$.  
	    This finishes the proof of the first statement.

	    For the second statement, assume that $\calF$ is a thick spatial kimberlite. 
	    Let $\Spa (R,R^+)\to \calF^{\on{an}}$ be a formalizable v-cover.
	    By \Cref{annoyinglemmaonsurjectivityofspecialization} and \Cref{qcqs-is-formal-adic}, $\Spd R^+\to \calF$ is a formally adic v-cover.  
	    Since the map $\calF\to \Spd O$ is assumed to be formally adic, then the map $\Spd R^+\to \Spd O$ is formally adic and given as the formalization of a map $\Spa(R^{\sharp},R^{\sharp,+})\to \Spa C$ for $R^\sharp$ an untilt of $R$ (possibly in characteristic $p$).
    \end{proof}

    Recall that there is a Beilinson--Drinfeld affine Grassmannian 
    \[\on{Gr}_\calG\to \Spd O_E\]
    which is an integral model v-sheaf of Scholze's $\on{B}_{dR}$-Grassmannian, and that there is a local model $\calM_{\calG,\mu}\to \Spd O_F$ which is an integral model of the Schubert variety defined by $\mu$ \cite[$\mathsection 4$]{AGLR22}.

    The following \Cref{Grassmanianiskimberlite} is a translation of the basic kimberlite-theoretic structural results in \cite{AGLR22}.
    \begin{theorem}[\cite{AGLR22}]
	    \label{Grassmanianiskimberlite}
    	With notation as above $\calM_{\calG,\mu}$, is a thick spatial kimberlite proper and formally adic over $\Spd O_F$. 
    \end{theorem}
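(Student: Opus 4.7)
The plan is to reduce \Cref{Grassmanianiskimberlite} directly to \Cref{flatvsthick}.(1), which converts the work into verifying four ingredients about $\calM_{\calG,\mu}$: that it is a prekimberlite with locally spatial analytic locus, that $\calM_{\calG,\mu}\to \Spd O_F$ is proper and formally adic, that $\calM_{\calG,\mu}$ is $\pi$-flat in the sense of \cite[Definition 2.31]{AGLR22}, and that it formalizes geometric points. Each of these inputs is already contained in \cite{AGLR22}; the remaining task is a translation between the vocabulary used there and the refined definitions introduced in this article.

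First, I would recall from \cite[\S 4]{AGLR22} that $\calM_{\calG,\mu}\subseteq \on{Gr}_\calG\times_{\Spd O_E}\Spd O_F$ is defined as the v-sheaf-theoretic closure of the generic-fiber Schubert cell associated to $\mu$, and that its reduced locus $\calM_{\calG,\mu}^\red$ is representable by the perfection of the classical $\mu$-Schubert variety in the Witt-vector affine Grassmannian of the special fiber of $\calG$. Combined with the closed-immersion statement $\calM_{\calG,\mu}^\Red\hookto \calM_{\calG,\mu}$ and separatedness over $\Spd O_F$, this exhibits $\calM_{\calG,\mu}$ as a prekimberlite. The analytic locus $\calM_{\calG,\mu}^\an$ is the generic-fiber Schubert variety, which is a proper rigid-analytic Schubert variety and therefore a spatial diamond.

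Next, properness of $\calM_{\calG,\mu}\to \Spd O_F$ is one of the main results of \cite{AGLR22}, while formal adicness of the structure map is immediate from the construction (the local model has no generic-only component). The $\pi$-flatness theorem of \cite{AGLR22} provides, by definition, a v-cover $\coprod_{i\in I}\Spd R^{\sharp,+}_i\to \calM_{\calG,\mu}$ with each $\Spd R^{\sharp,+}_i\to \Spd O_E$ formally adic and each $(R^\sharp_i, R^{\sharp,+}_i)$ perfectoid; properness and the quasicompactness of $|\calM_{\calG,\mu}^\red|$ allow us to extract a finite subcover, which assembles into a qcqs formally adic v-cover by an affine formal v-sheaf. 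Finally, formalization of geometric points follows from the valuative criterion of properness applied to $\calM_{\calG,\mu}\to \Spd O_F$: a geometric point $\Spa(C,C^+)\to \calM_{\calG,\mu}$ over a map $\Spa(C,C^+)\to \Spd O_F$ lifts uniquely through the formally adic extension $\Spd O_C\to \Spd O_F$ by properness.

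Having these inputs in hand, \Cref{flatvsthick}.(1) produces the thick spatial kimberlite structure on $\calM_{\calG,\mu}$ automatically. The main obstacle is not any single step but rather bookkeeping: the notion of ``kimberlite'' used in \cite{AGLR22} predates the sharper axioms of this article (valuativity, quasicompactness of the specialization map, and the distinction between kimberlites and spatial kimberlites), so one must carefully check that the cited statements really yield the hypotheses of \Cref{flatvsthick}. This is precisely the role that \Cref{flatvsthick} was designed to play, so once the correspondence is spelled out there is nothing substantive left to prove beyond invoking the $\pi$-flatness and properness results of \cite{AGLR22}.
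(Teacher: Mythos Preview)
Your approach is essentially the same as the paper's: both reduce directly to \Cref{flatvsthick}.(1), with all hypotheses supplied by \cite[Proposition 4.14]{AGLR22}. The paper's proof is literally one sentence to this effect.

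One step in your write-up needs correction. Your argument that $\calM_{\calG,\mu}$ formalizes geometric points via the valuative criterion of properness is not valid: the valuative criterion concerns extending along $\Spa(R,R^\circ)\hookrightarrow\Spa(R,R^+)$ (adding higher-rank specializations within the analytic world), whereas formalization asks for extension along $\Spa(C,C^+)\hookrightarrow\Spd C^+$ (adding the non-analytic points coming from $\Spec C^+_\red$). These are orthogonal phenomena, and you also wrote $\Spd O_C$ where $\Spd C^+$ is meant. Formalization of geometric points for $\calM_{\calG,\mu}$ is a separate input from \cite{AGLR22}, established there using the moduli-theoretic description (extension of $\calG$-torsors over $\Spec \bbW(C^+)$); it should be cited as part of what \cite[Proposition 4.14]{AGLR22} provides rather than rederived from properness.
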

    \begin{proof}
	    This follows directly from \Cref{flatvsthick} and \cite[Proposition 4.14]{AGLR22}.
    \end{proof}

   \section{The Henselian property for proper spatial kimberlites}
   We finish this section by showing that the condition of being a spatial kimberlite is useful and relevant to study the \'etale cohomology of a v-sheaf. 

   For this section, we let $k$ be a characteristic $p$ algebraically closed field and we work with v-sheaves over $\ast=\Spd(k,k)$.
   We fix a coefficient ring $\Lambda$ which is a torsion ring with $n\cdot \Lambda=0$ for $n$ relatively prime to $p$.
   We work with Scholze's $6$-functor formalism \cite{Sch17}. 
   We use derived notation i.e.~all of our functors are derived.

   Let $X$ be a spatial kimberlite, let $Y=X^{\on{an}}$, let $Z=X^\Red$ and denote by $j:Y\to X$ and $i:Z\to X$ the complementary open and closed immersions.
   We let $\pi_X:X\to \ast$ denote the structure morphism to $\ast$, similarly for $\pi_Y$ and $\pi_Z$.
   We assume that $X^\red$ is locally of perfectly finite presentation over $\Spec k$, and that it is perfectly proper over it. 
   We assume that $\on{dim.trg}\pi_Y<\infty$, and that $\on{sp}:|Y|\to |X^\red|$ is surjective (i.e. $X$ is a thick spatial kimberlite). 
   With the assumptions above, we show that $X$ satisfies two Henselian properties along $Z\to X$.
   \begin{lemma}
	   The map $\pi_X$ is representable in locally spatial diamonds, compactifiable and with $\on{dim.trg}\pi_X<\infty$ (i.e. $\pi_X$ is fdcs). 
	   In particular, 
	   \[\pi_{X,!}:\calD_{\acute{e}t}(X,\Lambda)\to \calD_{\acute{e}t}( \ast,\Lambda)\] 
	   exists and admits a right adjoint $\pi_X^!$. 
	   Moreover, $Y$ is a spatial diamond and $Y\to \ast$ is partially proper.
   \end{lemma}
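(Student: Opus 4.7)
The plan is to verify the four assertions in sequence. Representability of $\pi_X$ in locally spatial diamonds is immediate from \Cref{representabilityinlocallyspatialkimberlites} and the subsequent corollary that every map of locally spatial kimberlites is representable in locally spatial diamonds. For compactifiability I would prove the stronger statement that $\pi_X$ is proper. Formal separatedness of $X$ makes $\Delta_X$ a formally adic closed immersion, so $\pi_X$ is weakly separated and quasiseparated. Factoring $\pi_X$ through the Heuer specialization as $X \xrightarrow{\on{SP}} X^{\on{H}} = (X^\red)^{\diamond/\circ\circ} \to \ast$, the first arrow is partially proper by valuativity (\Cref{valuative prekimberlite defi}) and the second is weakly partially proper by \Cref{valuativepreki} applied to the perfectly proper $X^\red$; \Cref{standardpropertiesweaklypar} then yields weak partial properness of the composite. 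Quasicompactness of $|X|$ follows from $|X| = |Y| \cup |Z|$, with $|Z| = |X^\red|$ qcqs by perfect properness and $|Y|$ the finite union of the spatial pieces $(\widehat{X}_{/U_i})^{\on{an}}$ indexed by a finite affine cover $\{U_i\}$ of the qcqs $|X^\red|$. Combining, $\pi_X$ is partially proper and quasicompact, hence proper, hence (trivially) compactifiable.

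To bound $\dim.\on{trg}\,\pi_X$, I use the v-cover $Y \coprod Z \to X$ supplied by \Cref{splittedanalyticvsnonanalyticcover} (available because thickness gives $X^{\on{thi}} = X$). On $Y$ the bound is given by hypothesis; on $Z = (X^\red)^\diamond$, the structure map to $\ast$ has vanishing transcendence degree since it factors through the diamondification of a perfect scheme of perfectly finite presentation, contributing trivially. Descent of $\dim.\on{trg}$ along the v-cover then concludes the finiteness. With $\pi_X$ now fdcs, the existence of $\pi_{X,!}$ and its right adjoint $\pi_X^!$ follows from Mann's formalism \cite[Definition 5.4]{Mann2022NuclearSheaves}.

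Finally, $Y = X^{\on{an}}$ is a locally spatial diamond by the definition of locally spatial kimberlite, quasiseparated by \Cref{quasiseparatednessofprekimberlites}, and $|Y|$ is quasicompact by the finite-cover argument above; together these make $Y$ spatial. Separatedness of $Y \to \ast$ is inherited from $X$. For the valuative criterion of partial properness, given compatible data $\Spa(R, R^\circ) \to Y \subseteq X$ and $\Spa(R, R^+) \to \ast$, properness of $\pi_X$ produces the unique extension $\Spa(R, R^+) \to X$. To see that this extension factors through $Y$, I would argue that the pullback $\Spa(R, R^+) \times_X Z \to \Spa(R, R^+)$ is a formally adic closed sub-v-sheaf whose restriction to $\Spa(R, R^\circ)$ is empty by hypothesis; invoking the formal-density principle (\Cref{fixed up version formally dense}) together with the fact that $Z$ has empty analytic locus ($Z = Z^\Red$) then forces this pullback to be globally empty.

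I expect the last step to be the main obstacle: open immersions are not partially proper in general, so one must exploit precisely the formally adic nature of $Z \hookrightarrow X$ together with the fully analytic nature of perfectoid test spaces $\Spa(R, R^+)$ to rule out stray specializations of the extended map landing in $Z$. A secondary subtle point is the bound $\dim.\on{trg}\,\pi_Z = 0$, which requires pinning down that diamondifications of perfectly finitely presented perfect schemes contribute nothing to transcendence degree over $\ast$.
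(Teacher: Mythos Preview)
Your overall strategy matches the paper's: factor $\pi_X$ through $X \xrightarrow{\on{SP}} X^{\on{H}} \to \ast$, use valuativity plus \Cref{valuativepreki} for partial properness, and bound $\dim.\on{trg}$ via the decomposition $X = Y \cup Z$. Two points deserve correction, and one step admits a much simpler argument than the one you propose.

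\textbf{Properness vs.\ partial properness.} You aim to show $\pi_X$ is proper, but your quasicompactness argument conflates $|Z| = |(X^\red)^\diamond|$ with $|X^\red|$. These are different spaces: for a discrete perfect ring $A$ the v-sheaf $\Spd(A,A)$ has many more points than $\Spec A$ (it is closer to a Riemann--Zariski space), and there is no reason for $|X|$ to be quasicompact. Fortunately this overreach is harmless: the paper only claims partial properness, which already implies compactifiability (take $X$ as its own compactification), so you can simply drop the quasicompactness discussion.

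\textbf{Transcendence degree of $\pi_Z$.} Your claim $\dim.\on{trg}\pi_Z = 0$ is not correct: for $X^\red$ of Krull dimension $d$ one gets $\dim.\on{trg}\pi_Z \leq d$, not $0$ (e.g.\ for $A = k[t^{1/p^\infty}]$, sending $t$ to an element transcendental over the base field gives transcendence degree $1$). The paper accordingly only asserts finiteness, which is what one needs and which follows from $X^\red$ being locally of perfectly finite presentation. You flagged this yourself, so just weaken the claim.

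\textbf{Partial properness of $Y \to \ast$.} Here the paper's route is much shorter than yours: rather than lifting to $X$ and then arguing that the extension avoids $Z$, the paper observes directly that the open immersion $j: Y \to X$ is itself partially proper, because its closed complement $Z$ is a sub-v-sheaf (not merely a closed topological subset). Concretely: given $\Spa(R,R^\circ) \to Y$ and $\Spa(R,R^+) \to X$, any geometric point $\Spa(C,C^+)$ of $W := \Spa(R,R^+)\times_X Z$ restricts to a map $\Spa(C,O_C) \to W$; but $\Spa(C,O_C)$ factors through $\Spa(R,R^\circ)$, hence through $Y$, contradicting $Y \cap Z = \emptyset$. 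Then compose with the partial properness of $\pi_X$. Your invocation of \Cref{fixed up version formally dense} is misapplied here --- that lemma concerns formally closed subsheaves of $\Spd(R^+)$, not closed subsheaves of the perfectoid space $\Spa(R,R^+)$ --- and the ``$Z$ has empty analytic locus'' heuristic does not directly force $W = \emptyset$ without the generization argument above.
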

   \begin{proof}
	   By \Cref{representabilityinlocallyspatialkimberlites}, the map is representable in locally spatial diamonds.
	   The map $\on{SP}_X:X\to X^{\on{H}}$ is partially proper since $X$ is valuative, and the map $X^{\on{H}}\to \ast$ is weakly partially proper by \Cref{valuativepreki} since $X^\red$ is assumed to be proper over $\Spec k$.
	   Since $\pi_X$ factors through these maps it is weakly partially proper, but it is also separated so it is partially proper.
	   By hypothesis $\on{dim.trg}\pi_Y<\infty$, and since $X^\red$ is locally of finite presentation, we also have that $\on{dim.trg}\pi_Z<\infty$. 
	   This is enough to conclude that $\on{dim.trg}\pi_X<\infty$.
	   The second claim is \cite[Definition 22.18]{Sch17} and \cite[Theorem 23.1]{Sch17}.

	   The third claim follows from the definition of spatial kimberlite since $Y=X^\an$. 
	   It is also clear that the inclusion $Y\to X$ is partially proper.
	   Indeed, any open subsheaf whose closed complement is representable by a v-sheaf must be partially proper.
   \end{proof}

   \begin{theorem}
	   \label{nearbycyclestheorem}
Let the context be as in the beginning of this section. 
Let $\calF\in \calD_{\acute{e}t}(Y,\Lambda)$ the following hold:
	   \begin{enumerate}
		   \item $\Gamma(X,j_!\calF)=0$. 
		   \item $\Gamma_c(X,j_*\calF)=0$.
	   \end{enumerate}
   \end{theorem}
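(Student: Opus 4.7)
The plan is to derive both statements from the \emph{henselian property} for $X$: the canonical map $R\Gamma(X,G)\to R\Gamma(Z,i^*G)$ is an equivalence for every $G\in\calD_{\acute{e}t}(X,\Lambda)$. Applied to $G=j_!\calF$ this gives (1), since $i^*j_!\calF\simeq 0$. For (2) I would use Verdier duality for the open immersion $j$, namely $D_X(j_*\calF)\simeq j_!(D_Y\calF)$, together with $\pi_{X,!}\circ D_X\simeq D_\ast\circ \pi_{X,*}$ (available because $\pi_X$ is fdcs by the lemma preceding the theorem), to reduce the vanishing of $\pi_{X,!}(j_*\calF)$ to that of $\pi_{X,*}(j_!(D_Y\calF))$; this handles dualizable $\calF$ and the general case follows by writing $\calF$ as a colimit. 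Equivalently, (2) is the dual henselian property $R\Gamma_c(X,G)\simeq R\Gamma_c(Z,i^!G)$ evaluated at $G=j_*\calF$, using the identity $i^!j_*\simeq 0$.

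\textbf{Reduction to affine formal v-sheaves.} To establish the henselian property, I would first cover the quasi-compact scheme $X^{\on{red}}$ by finitely many affine opens $U_1,\ldots,U_n$ and pass to the Zariski cover $\{\widehat{X}_{/U_i}\}$ of $X$ supplied by \Cref{lemmaspecializing}; each $\widehat{X}_{/U_i}$ is an affine spatial kimberlite by \Cref{affineislocallyspatial}, so Mayer--Vietoris reduces to a single such object. Next, thickness of $X$ combined with \Cref{rigproetalecover} and \Cref{productofpointsandvaluations} produces a qcqs formally adic v-cover $g\colon W=\Spd R^+\to X$ where $R^+$ is a product of strictly henselian valuation rings. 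By \Cref{stable-under-cartesian}, each term $W_n$ of the \v{C}ech nerve of $g$ is again an affine spatial kimberlite of the form $\Spd B_n$ for a perfect $I$-adic $\bar{\bbF}_p$-algebra $B_n$. V-descent for étale cohomology, applicable in the fdcs framework, expresses $R\Gamma(X,G)$ as the limit of $R\Gamma(W_n,g_n^*G)$ and, using base change for $i^*$ along $g$, similarly expresses $R\Gamma(Z,i^*G)$; this reduces the henselian property for $X$ to its counterpart on each $\Spd B_n$.

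\textbf{The affine case.} For an affine formal v-sheaf $\Spd B$ with $(B,I)$ a complete perfect pair, the equivalence $R\Gamma(\Spd B,G)\simeq R\Gamma(\Spec(B/I),i^*G)$ is built from two ingredients: (a) the comparison between the étale cohomology of $\Spd B$ as a v-sheaf and the algebraic étale cohomology of $\Spec B$ for torsion coefficients prime to $p$, and (b) Gabber's rigidity theorem for the henselian pair $(B,I)$, which gives invariance of étale cohomology under $\Spec(B/I)\hookrightarrow \Spec B$ for such coefficients. Combining (a) and (b) yields the desired equivalence.

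\textbf{Main obstacle.} The most delicate point is the cohomological comparison (a) for the large rings arising at higher levels of the \v{C}ech nerve, namely iterated self-products of $R^+$ over $X$. The specific structure of $R^+$ as a product of strictly henselian valuation rings provided by \Cref{productofpointsandvaluations} is crucial: henselianity and étale-triviality are inherited by such products, and the v/étale comparison may be checked factor-wise. The hypotheses that $X^{\on{red}}$ be proper and $\on{dim.trg}\pi_Y<\infty$ enter only indirectly, ensuring $\pi_X$ is fdcs so that $\pi_{X,!}$ and the duality deducing (2) from (1) are well defined; the henselian descent argument itself is local in nature.
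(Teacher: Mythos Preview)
Your sketch for (1) is along reasonable lines and close in spirit to \cite[Lemma~4.3]{GL22}, which the paper simply cites. One imprecision: the \v{C}ech nerve terms $W_n$ are spatial kimberlites by \Cref{stable-under-cartesian}, but there is no reason they are literally of the form $\Spd B_n$, since a formally adic closed sub-v-sheaf of $\Spd B$ need not arise from a ring quotient.

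The genuine gap is your reduction of (2) to (1) via Verdier duality. The identity $\pi_{X,!}\circ D_X\simeq D_*\circ\pi_{X,*}$ you invoke is the \emph{reverse} of the Verdier duality that fdcs actually provides (namely $D_*\circ\pi_{X,!}\simeq\pi_{X,*}\circ D_X$); it only follows under biduality, as does $D_X(j_*\calF)\simeq j_!(D_Y\calF)$. So at best you obtain $D_*(\pi_{X,!}j_*\calF)=0$ for reflexive $\calF$, and concluding $\pi_{X,!}j_*\calF=0$ requires this object to be reflexive over $\ast$ as well---for which you would need $j_*$ to preserve constructibility, which is not established here. The extension to arbitrary $\calF$ by colimits is also unjustified: vanishing on a generating class does not propagate unless $j_*$ commutes with filtered colimits, which you have not checked. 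The paper's argument for (2) is entirely different: it base-changes along the $\ell$-cohomologically smooth map $\Spd k\rpot{t}\to\ast$, exhausts $X\times\Spd k\rpot{t}$ by quasicompact opens $U_{[a,\infty]}$ indexed by a radius parameter $a\to 0$, and after an excision manipulation recognizes the resulting colimit as the functor of \cite[Definition~IV.5.2]{FS21}, which vanishes by \cite[Theorem~IV.5.3]{FS21}. The partial properness of $Y\to\ast$ (coming from properness of $X^{\red}$) and the bound $\dimt\pi_Y<\infty$ are precisely the hypotheses of that theorem and are used essentially there, contrary to your final paragraph.
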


   \begin{remark}
	   \label{remark work with lourenco}
	   In our work with Louren\c{c}o (see \cite{GL22}), we have already shown that the functor 
	   \[\Gamma(Y,j_!-):\calD_{\acute{e}t}(Y,\Lambda)\to \calD_{\acute{e}t}(\ast,\Lambda) \]
vanishes (i.e. it is isomorphic to the constant $0$ functor). 
   \end{remark}

   \begin{proof}
	   The first statement holds much more generally and it is the content of \cite[Lemma 4.3]{GL22} (see \Cref{remark work with lourenco}).
	   The second statement is more subtle. 
	   We wish to show that $\pi_{X,!}j_*:\calD_{\acute{e}t}(Y,\Lambda)\to \calD_{\acute{e}t}(\ast,\Lambda)$ is the $0$-functor. 

	   Let $S=\Spd k\rpot{t}$ and $s:\Spd k\rpot{t}\to \ast$ be the structure map, it is $\ell$-cohomologically smooth and $s^\ast$ is conservative. 
	   It suffices to show that $s^*\pi_{X,!}j_*$ is the $0$-functor. 
	   By smooth and proper base change, it suffices to show that $\pi_{s,!}\circ j_{s,*}\cong 0$ for the natural maps $j_s:Y\times S\subseteq X\times S$ and $\pi_s:X\times S\to S$.

	   Let $T\to Y$ be a universally open quasi-pro\'etale cover with $T=\Spa( R,R^+)$ a totally disconnected perfectoid space.
	   Fix a $\varpi\in R$ a pseudo-uniformizer. 
	   There is a continuous map 
	   \[\kappa:|(\Spd R^+\times \Spd k\pot{t})^{\on{an}}|\to [0,\infty]\]
	   measuring the relative value of $t$ against $\varpi$ and for every interval $I\subseteq [0,\infty]$ we let $U_{T,I}$ denote the open subset associated to the interior of $\kappa^{-1}(I)$.
	   For example, $T\times \Spd k\pot{t}=U_{T,[0,\infty)}$. 
	   Now, for all $I \subseteq (0,\infty)$ we let $U_I\subseteq Y\times S$ denote the open subset that is the image of $U_{T,I}\to Y\times S$.
	   We note that if $I\subseteq (0,\infty)$ is compact, then $U_I$ is a spatial diamond. 
	   Moreover, we claim that $|U_{[a,\infty)}|\cup |Z\times S|\subseteq |X\times S|$ defines a quasicompact open subset, and that in particular it gives rise to a spatial diamond that we will denote $U_{[a,\infty]}\subseteq X\times S$.
	   We prove our claim as follows, we have a map of spatial diamonds 
	   \[U_{T,[0,\infty]}=(\Spd R^+\times \Spd k\pot{t})^{\on{an}} \to (X\times \Spd k\pot{t})^{\on{an}}\]
	   coming from the formally adic map of spatial kimberlites 
	   \[\Spd R^+\times \Spd k\pot{t} \to X\times \Spd k\pot{t}.\] 
	   In general, 
	   \[f:|U_{T,[0,\infty]}| \to |(X\times \Spd k\pot{t})^{\on{an}}|\] 
	   might not be an open map, but it is still a quotient map.
	   The locus $|U_{[a,\infty)}|\cup |Z\times S|$ corresponds precisely to the image of $U_{T,[a,\infty]}$ which is quasicompact. 
	   Moreover, we have that $f^{-1}(f(U_{T,[a,\infty]}))=|U_{T,[a,\infty]}|\cup f^{-1}(f(U_{T,[a,\infty)})$ which is open. This implies that $U_{[a,\infty]}$ is open.

	   Let $\overline{U}_{[a,\infty]}$ denote the closure of $U_{[a,\infty]}$ inside of $U_{[0,\infty]}$, and let $k_{a,\infty}:U_{[a,\infty]}\to \overline{U}_{[a,\infty]}$ and $\overline{k}_{a,\infty}:\overline{U}_{[a,\infty]}\to U_{(0,\infty]}$ denote the open and respectively closed immersions. 
	   Analogously, we let $k_{a}:U_{[a,\infty)}\to \overline{U}_{[a,\infty)}$ and $\overline{k}_{a}:\overline{U}_{[a,\infty)}\to U_{(0,\infty)}$ denote the open and respectively closed immersions.
	   We also let $j_a:U_{[a,\infty)}\to U_{[a,\infty]}$ and $\overline{j}_a:\overline{U}_{[a,\infty)}\to \overline{U}_{[a,\infty]}$ denote the open immersion.

	   For the convenience of the reader we tabulate the maps involved.
	   \begin{enumerate}
		   \item  $k_{a,\infty}:U_{[a,\infty]}\to \overline{U}_{[a,\infty]}$ (open dense immersion, not partially proper).
		   \item $\overline{k}_{a,\infty}:\overline{U}_{[a,\infty]}\to U_{(0,\infty]}$ (closed immersion).
		   \item $k_{a}:U_{[a,\infty)}\to \overline{U}_{[a,\infty)}$ (open dense immersion, not partially proper).
		   \item $\overline{k}_{a}:\overline{U}_{[a,\infty)}\to U_{(0,\infty)}$ (closed immersion).
		   \item $j_a:U_{[a,\infty)}\to U_{[a,\infty]}$ (open dense immersion, partially proper).
		   \item $\overline{j}_a:\overline{U}_{[a,\infty)}\to \overline{U}_{[a,\infty]}$ (open dense immersion, partially proper).
		   \item $j_s: U_{(0,\infty)}\to  U_{(0,\infty]}$ (open dense immersion, partially proper).
		   \item $\pi_s: U_{(0,\infty]}\to  S$.
	   \end{enumerate}

	   For any $A\in \calD_{\acute{e}t}(X\times S,\Lambda)$, we may compute $\pi_{s,!}A \in \calD_{\acute{e}t}(S,\Lambda)$ by the formula 
	   \[\pi_{s,!}A\simeq \varinjlim \limits_{a \to 0}\pi_{s,*}\overline{k}_{a,\infty,*}k_{a,\infty,!}A_{|_{U_{[a,\infty]}}}.\]
	   Indeed, this follows from the fact that $X\times S\to S$ is a map of locally spatial diamond, the fact that the map $\pi_s\circ \overline{k}_{a,\infty}:\overline{U}_{[a,\infty]}\to S$ is proper, and the fact that the family $U_{[a,\infty]}$ is cofinal among quasicompact open subset of $U_{(0,\infty]}=X\times S$ \cite[Definition 22.13, Definition 22.4]{Sch17}.

	   When $A=j_{s,*}B$, we may rewrite this as 
	   \begin{align*}
		   \varinjlim \limits_{a \to 0}\pi_{s,*}\overline{k}_{a,\infty,*}k_{a,\infty,!} A_{|_{U_{[a,\infty]}}} 
		   &\cong \varinjlim \limits_{a \to 0}\pi_{s,*}\overline{k}_{a,\infty,*}k_{a,\infty,!}j_{a,*}B_{|_{U_{[a,\infty)}}} \\
		   &\cong \varinjlim \limits_{a \to 0}\pi_{s,*}\overline{k}_{a,\infty,*} \overline{j}_{a,*}k_{a,!}B_{|_{U_{[a,\infty)}}} \\
		   &\cong \varinjlim \limits_{a \to 0}\pi_{s,*} {j}_{s,*}\overline{k}_{a,*} k_{a,!}B_{|_{U_{[a,\infty)}}} \\
	   \end{align*}
	   The only subtle step of the computation is to justify that 
	   \[k_{a,\infty,!}j_{a,*}B_{|_{U_{[a,\infty)}}}\cong   \overline{j}_{a,*}k_{a,!}B_{|_{U_{[a,\infty)}}},\]
	   but this holds since $i_{a,s}^! k_{a,\infty,!}j_{a,*}B_{|_{U_{[a,\infty)}}}\cong 0$ where $i_{a,s}$ denotes the inclusion 
	   \[i_{a,s}:Z\times S\to \overline{U}_{[a,\infty]}\]
   and the term $\overline{j}_{a,*}k_{a,!}B_{|_{U_{[a,\infty)}}}$ is obtained from applying excision to get the triangle 
   \[0=i_{a,s,!}i_{a,s}^! k_{a,\infty,!}j_{a,*}B_{|_{U_{[a,\infty)}}}\to k_{a,\infty,!}j_{a,*}B_{|_{U_{[a,\infty)}}}\to \overline{j}_{a,*}k_{a,!}B_{|_{U_{[a,\infty)}}} .\]
	   
	   Finally, we can recognize that the expression 
	   \[\varinjlim \limits_{a \to 0}\pi_{s,*} \overline{j}_{s,*}\overline{k}_{a,*} k_{a,!}B_{|_{U_{[a,\infty)}}}\] 
	   computes the functors of \cite[Definition IV.5.2]{FS21}, which by \cite[Theorem IV.5.3]{FS21} vanish.
	   Indeed, $Y\to \ast$ is partially proper, $Y$ is a spatial diamond and $\on{dim.trg}\pi_Y<\infty$ which are the required hypothesis to apply \cite[Theorem IV.5.3]{FS21}.
   \end{proof}

    \section{The stack of shtukas.}
    \label{section on stack of shtukas}
    We finish this section with a more intricate and interesting example. 
    Recall the stack of $\calG$-shtukas, $\Sht_\calG\to \Spd O_E$, parametrizing the groupoid of triples 
    \[S=\Spa(R,R^+)\mapsto \{(S^\sharp,\calE,\Phi)\},\]
    where $S^\sharp$ is an isomorphism class of untilts of $S$ over $O_E$, $\calE$ is a $\calG$-torsor over $\calY_S$ and 
    \[\Phi: \varphi^*\calE\to \calE\]
    is an isomorphism defined away from the divisor $S^\sharp \hookrightarrow \calY_{S,[0,\infty)}$ that is meromorphic along this divisor (see \cite[Definition 5.3.5]{SW20}).
    Recall the local Hecke stack $\on{Hk}_\calG=[L^+\calG\backslash \Gr_\calG ]\to \Spd O_E$ (see \cite[\S 6]{FS21}), and its bounded version $\on{Hk}_{\calG,\mu}=[L^+\calG\backslash \calM_{\calG,\mu} ]\to \Spd O_F$, with 
    \[\on{Hk}_{\calG,\mu}\subseteq \on{Hk}_\calG\times_{\Spd O_E}\Spd O_F\]
    the only closed substack whose pullfack to $\Gr_\calG$ is $\calM_{\calG,\mu}$.

    Recall that we have a map 
    \[\Sht_\calG\to \on{Hk}_\calG\] and that we can pullback along the closed immersion 
    \[\on{Hk}_{\calG,\mu}\subseteq \on{Hk}_\calG\times_{\Spd O_E} \Spd O_F\] to obtained a closed substack 
    \[\Sht_{\calG,\mu}\subseteq \Sht_\calG\times_{\Spd O_E}\Spd O_F.\] 
    $\Sht_{\calG,\mu}$ is the moduli stack of $\mu$-bounded local $\calG$-shtukas.

   Further, recall that we have a map of small v-stacks 
   \[\sigma:\Sht_{\calG}\to \Bun_G,\]  
   that on functor of points, takes a triple $(S^\sharp,\calE,\Phi)$ to the only $G$-bundle on $X_{\on{FF},S}$ that agrees, as a $\varphi$-equivariant $G$-bundle, with $(\calE,\Phi)$ after pullback to $Y_{S,[r,\infty)}$ for a sufficiently large $r$ that avoids the untilt $S^\sharp\subseteq \calY_{S}$. 
     Recall that for all $b\in B(G)$ we have a map
     \[\calE_b:\Spd \overline{\bbF}_p\to \Bun_G,\]
     that gives rise to a locally closed substack $\Bun^b_G\subseteq \Bun_G$.
     We let 
     \[\Sht^b_{\calG,\mu}=\Sht_{\calG,\mu}\times_{\Bun_G} \Bun_G^b\]
     and 
     \[\Sht_{\calG,\mu}(b)=\Sht_{\calG,\mu}\times_{\Bun_G,\calE_b} \ast.\]
     By the definitions and by \cite[Proposition V.2.2]{FS21}, the map 
     \[\Sht_{\calG,\mu}(b)\to \Sht^b_{\calG,\mu}\] is a $\widetilde{G}_b$-torsor. 
     Proving the next theorem was our main motivation to introduce and study spatial kimberlites, and its proof will occupy the rest of the section.
    \begin{theorem}
	    \label{shtukasareactualllyspatial}
    The following hold.	
    \begin{enumerate}
	    \item The map $\Sht_{\calG,\mu}\to \Bun_G$ is representable in locally spatial diamonds. 
	    \item The stack $\Sht_{\calG,\mu}$ is an Artin v-stack.  
	    \item The v-sheaf $\Sht_{\calG,\mu}(b)$ is a locally spatial kimberlite.  
    \end{enumerate}
    \end{theorem}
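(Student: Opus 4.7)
The plan is to first establish (3), and then derive (1) and (2) as sketched in \Cref{intro the methods}. The implication $(1)\Rightarrow (2)$ is immediate from \cite[Proposition IV.1.8.(iii)]{FS21} combined with \cite[Theorem IV.1.19]{FS21}, and $(3)\Rightarrow (1)$ follows by combining \cite[Lemma 13.5]{Sch17} with \Cref{representabilityinlocallyspatialkimberlites}. So the substantive task is $(3)$.

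For $(3)$, I would begin by identifying the reduced locus. By existing results going back to \cite{Gle21} and \cite{AGLR22}, one has $\Sht_{\calG,\mu}(b)^{\red}\simeq X_{\calG,\mu}(b)$, which is the perfection of a perfectly finitely presented $\overline{\bbF}_p$-scheme, and in particular admits a cover by affine opens. By \Cref{affinecommunication}, to conclude that $\Sht_{\calG,\mu}(b)$ is a locally spatial kimberlite it suffices to show that for each such affine open $U\subseteq X_{\calG,\mu}(b)$, the formal neighborhood $\widehat{\Sht_{\calG,\mu}(b)}_{/U}$ is an affine spatial kimberlite.

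The next step is to produce, for each point $x\in X_{\calG,\mu}(b)$, a chart for $\Sht_{\calG,\mu}(b)$ obtained from the local model diagram. After trivializing the $\calG$-torsor along the special untilt (an $L^+\calG$-torsor operation), the meromorphic modification data defining a shtuka becomes a point of $\Gr_\calG$, and the bound by $\mu$ places this in $\calM_{\calG,\mu}$, which by \Cref{Grassmanianiskimberlite} is a thick spatial kimberlite. I would then use the framework of \cite{GI_Bunmer} (for the meromorphic bundle space $\VEC$) and \cite{güthge2024perfectprismaticfcrystalspadicshtukas} (for the prismatic $F$-crystal presentation of local shtukas) to realize a neighborhood of $x$ in $\Sht_{\calG,\mu}(b)$ as a suitable fiber product and formal completion built from $\calM_{\calG,\mu}$, from $\widetilde{G}_b$ (already known to be a locally spatial kimberlite), from loop-group type affine formal v-sheaves, and from the formally adic closed immersion cutting out the condition that the associated Fargues--Fontaine bundle be identified with $\calE_b$.

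Once such a chart is in hand, the plan is to conclude by stringing together the stability results of the previous sections: \Cref{stable-under-cartesian} (finite limits), \Cref{keytubularneighborhood} (formal neighborhoods along constructible closed subsets), \Cref{closedimmersionpreservespatial} (formally adic closed immersions), and \Cref{spatialkimberliteisetalelocal} (étale-local nature). The main obstacle will be geometric rather than formal: one must concretely realize the formal neighborhood $\widehat{\Sht_{\calG,\mu}(b)}_{/U}$ as an honest formal neighborhood inside a spatial kimberlite built from the ingredients above, which is where the full force of the inputs from \cite{GI_Bunmer} and \cite{güthge2024perfectprismaticfcrystalspadicshtukas} is essential; the stability theorems developed in this paper then finish the job cleanly.
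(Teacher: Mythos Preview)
Your high-level strategy matches the paper's: prove (3) first, then deduce (1) via \cite[Lemma 13.5]{Sch17} and \Cref{representabilityinlocallyspatialkimberlites}, and (2) from (1) via \cite{FS21}. The chart you describe is also essentially the right one: the paper trivializes the $W^+\calG$-torsor over $\calM_{\calG,\mu}$ to get a spatial kimberlite $W\calG_\mu$, then takes its formal neighborhood along the Newton-$b$ locus of the reduced fiber using \Cref{keytubularneighborhood}, and finally pulls back along $\ast\to\Bun_G^b$ to obtain a locally spatial kimberlite $\WSht_{\calG,\mu}(b)$ covering $\Sht_{\calG,\mu}(b)$ (this step uses the factorization $\Spd R^+\to\ast\to\Bun_G^b$, which is where the inputs from \cite{GI_Bunmer}, \cite{güthge2024perfectprismaticfcrystalspadicshtukas}, \cite{PR21} and \Cref{ring-computation} enter, together with the identification of the fiber with $\Spd R^+\times\widetilde{G}_b$).

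Two points need correction or sharpening. First, the phrase ``formally adic closed immersion cutting out the condition that the bundle be identified with $\calE_b$'' is not how the fiber over $b$ arises: the pullback along $\ast\to\Bun_G^b$ is a $\widetilde{G}_b$-torsor, not a closed immersion, and the $b$-locus is carved out on the \emph{reduced} side as a constructible locally closed subset, after which one applies \Cref{keytubularneighborhood}. Second, and more substantively, your outline stops at producing a spatial-kimberlite chart; it does not address the descent from $\WSht_{\calG,\mu}(b)$ down to $\Sht_{\calG,\mu}(b)$ along the $W^+\calG$-torsor. None of the stability results you list (\Cref{stable-under-cartesian}, \Cref{keytubularneighborhood}, \Cref{closedimmersionpreservespatial}, \Cref{spatialkimberliteisetalelocal}) handles quotients by $W^+\calG$. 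The paper supplies two extra ingredients here: it shows this torsor map is universally open (reducing to $W^+\calG\to\ast$), and then, to verify that $(\widehat{\Sht_{\calG,\mu}(b)}_{/U})^{\an}$ is a spatial diamond, it invokes \cite[Theorem 12.18]{Sch17} and constructs quasi-pro-\'etale geometric points in two cases --- over $\Spd E$ via the Grothendieck--Messing period map, and over the special fiber via the meromorphic Newton stratification of \cite{GI_Bunmer}. Without this descent argument your plan does not conclude.
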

    \begin{proof}
	    By \cite[Proposition IV.1.8.(iii)]{FS21} and \cite[Theorem IV.1.19]{FS21}, the first claim implies the second. 
	    Let us prove the first claim, we do this by reducing it to the third claim.
	    Let $W\calG$ (respectively $W^+\calG$) denote the functor classifying pairs
	    \[(R,R^+)\mapsto (R^\sharp, g)\]
	    where $R^\sharp$ is an isomorphism class of untilts given by $\xi\in \bbW(R^+)$ with $R^{\sharp,+}=\bbW(R^+)/\xi$ and $g\in \calG(\bbW(R^+)[\xi^{-1}])$ (respectively $g\in \calG(\bbW(R^+))$.
	    We have a v-surjective map $W\calG\to \on{Gr}_\calG$ that is a $W^+\calG$-torsor \cite{Ans18} \cite[Theorem 2.8]{Gle21}.
	    Let $W\calG_{\mu}= W\calG\times_{\on{Gr}_\calG} \calM_{\calG,\mu}$.
	    We claim that $W\calG_{\mu}$ is a spatial kimberlite.
	    It is clearly separated, and $\pi$-adic. 
	    This shows that it is formally separated. 
	    We can pick a formally adic v-cover $\Spd R^+\to  \calM_{\calG,\mu}$ over which the $W^+\calG$-torsor is trivial, this constructs for us a formally adic v-cover of the form \[\Spd R^+\times W^+\calG\to W\calG_{\mu}.\]
	    This proves that $W\calG_{\mu}$ is v-locally formal, so it is a specializing v-sheaf (see \Cref{specializing v-sheaf}).  
	    Also, $(W\calG_{\mu})^\red$ is a qcqs scheme, indeed it is a closed subscheme of the Witt-vector loop group. 
	    This shows that $W\calG_{\mu}$ is a prekimberlite (see \Cref{defiitioon f prekimbelrite}). 
	    Using \Cref{valuativityisappropriatelyv-local}, one shows that $W\calG_{\mu}$ is a valuative prekimberlite (see \Cref{valuative prekimberlite defi}).

	    Let $\Spa (R,R^+)\to (\calM_{\calG,\mu})^{\on{an}}$ a universally open cover by a strictly totally disconnected space.
	    Since $\calM_{\calG,\mu}$ formalizes points, by \Cref{prekimbproetaleformalizing} we get a map $\Spd R^+\to \calM_{\calG,\mu}$ which we may interpret as an untilted v-$\calG$-torsor over $\Spd \bbW(R^+)$ together with a trivialization away from $\xi_{R^\sharp}$. 
	    By the main theorem of \cite{güthge2024perfectprismaticfcrystalspadicshtukas}, this torsor is classical.
	    Since $\Spd R^+$ is totally disconnected, it must be trivial.
	    This shows that the $W^+\calG$-torsor over $\Spd R^+$ is trivial.
	    In particular, $\Spa( R,R^+)\times W^+\calG\to (W\calG_\mu)^{\on{an}}$ is a universally open, qcqs and pro\'etale map whose source is a spatial diamond, so the target is also a spatial diamond \cite[Proposition 11.24]{Sch17}. 
	    Moreover, $\Spd R^+\times W^+\calG\to W\calG_{\mu}$ is formally adic, qcqs, and surjective with source a spatial kimberlite. 
	    This implies that $W\calG_{\mu}$ is a spatial kimberlite (see \Cref{spatial-kimberlite-defi}).

	    Now that we have shown that $W\calG_{\mu}$ is a spatial kimberlite, we use this space as a v-surjective chart for the stack of shtukas.
	    We have an evident map 
	    \[f:W\calG_{\mu}\to \Sht_{\calG,\mu}\] 
	    that takes a $\calG$-matrix to the shtuka that such a matrix defines.
	    This map is v-surjective by \cite[Corollary 7.12, and by the proof of Theorem 7.13.(3)]{GI_Bunmer}.
     We claim that $f$ is qcqs. 

     Let us give the argument for quasicompactness, with the argument for quasiseparatedness being analogous. 
     Given a map $\Spa(R,R^+)\to \Sht_{\calG,\mu}$ with $\Spa( R,R^+)$ a product of points, we can lift it to a map $N:\Spa( R,R^+)\to W\calG_{\mu}$ and $\calF:=W\calG_\mu \times_{\Sht_{\calG,\mu}} \Spa( R,R^+)$ consists of those matrices in $M\in\calG(\bbW(R^\circ))$ such that 
     \[M^{-1}N\varphi(M)\in\calG(\bbW(R^+)[\xi^{-1}])\subseteq \calG(\bbW(R^\circ)[\xi^{-1}]).\]
     In other words, we have the following Cartesian diagram of sheaves over $\Spa( R,R^+)$,
    \begin{center}
    \begin{tikzcd}
	    \calF \arrow{rrr} \arrow{d}& &  & \arrow{d} W\calG_R \\
	    W^{+,\dagger}\calG_R    \arrow{rrr}{(-)^{-1}\cdot N\cdot \varphi(-) } & & & W^\dagger\calG_R.
    \end{tikzcd}
    \end{center}
    Here $W^\dagger\calG(A,A^+)=\calG(\bbW(A^\circ)[\frac{1}{\xi}])$ and $W^{+,\dagger}\calG(A,A^+)=\calG(\bbW(A^\circ))$.
    The map $W\calG\to W^\dagger\calG$ is a pro-open immersion along quasicompact transition maps and consequently quasicompact. 
    Moreover, after fixing Zariski closed immersions $\calG\to \on{GL}_n\to \on{M}_{n\times n} \times \on{M}_{n\times n}$ we may regard $W^{+,\dagger}\calG_R$ as a Zariski closed subsheaf of the functor $A\mapsto \on{M}_{n\times n}\times \on{M}_{n\times n}(\bbW(A^\circ))$ which is an infinite dimensional compact unit ball over $\Spa( R,R^+)$ and consequently quasicompact. This finishes showing that $\calF$ is quasicompact over $\Spa(R,R^+)$, and that the map $f:W\calG_{\mu}\to \Sht_{\calG,\mu}$ is qcqs as we wanted to show.

	    The map $W\calG_{\mu}\to \Bun_G$ is representable in locally spatial diamonds. 
	    Indeed, if $\Spa (R,R^+)$ is strictly totally disconnected, then by \Cref{representabilityinlocallyspatialkimberlites} $W\calG_{\mu}\times \Spa (R,R^+)$ is a locally spatial diamond and $W\calG_{\mu}\times_{ \Bun_G} \Spa(R,R^+)$ coincides with the pullback of the map \[W\calG_{\mu}\times \Spa( R,R^+)\to \Bun_G\times \Bun_G\] along the diagonal 
	    \[\Bun_G\xrightarrow{\Delta_{\Bun_G}} \Bun_G \times \Bun_G,\] which is representable in locally spatial diamonds, since $\Bun_G$ is an Artin v-stack (see \cite[Definition IV.1.1]{FS21}). 

	    We want to show that $Y=\Sht_{\calG,\mu}\times_{\Bun_G} \Spa( R,R^+)$ is a locally spatial diamond for all $\Spa (R,R^+)$ a strictly totally disconnected space. 
	    Let $T=\pi_0(\Spa( R,R^+))$ as a profinite topological space.
	    We have a continuous map $|Y|\to T$ that induces a map of v-sheaves $Y\to \underline{T}$.
	    Moreover, for every $t\in T$, $Y\times_{\underline{T}} \ast$ coincides with $\Sht_{\calG,\mu}\times_{\Bun_G} \Spa( C_t,C_t^+)$ where $\Spa( C_t,C_t^+)$ denotes the residue field at the closed point of the connected component $t\in T$.
	    Note that we have a v-surjective and qcqs map 
	    \[W\calG_{\mu}\times_{\Bun_G} \Spa( R,R^+)\to \Sht_{\calG,\mu}\times_{\Bun_G} \Spa( R,R^+)\]
	    whose source is a locally spatial diamond. 
	    By \cite[Lemma 13.5]{Sch17}, it suffices to show that each 
	    \[\Sht_{\calG,\mu}\times_{\Bun_G} \Spa (C_t,C_t^+),\]
	    for $t\in \pi_0(T)$, is a locally spatial diamond.
	    Now, we can use \Cref{representabilityinlocallyspatialkimberlites} to finish proving that the first statement is implied by the third statement. 
	    Indeed, 
	    \[\Sht_{\calG,\mu}\times_{\Bun_G} \Spa (C_t,C_t^+)\simeq (\Sht_{\calG,\mu}\times_{\Bun_G} \ast)\times_{\ast} \Spa (C_t,C_t^+),\]
and 
\[\Sht_{\calG,\mu}(b):=\Sht_{\calG,\mu}\times_{\Bun_G} \ast.\]

	    Finally, let us prove the third statement. 
	    Consider the map $(W\calG_{\mu}^\red)^\diamond\to \Bun_G$. 
	    Using \cite[Theorem 1.11]{GI_Bunmer}, this corresponds by adjunction to a map $W\calG_{\mu}^\red\to \ICG$, and for any $b\in B(G)$ we get a constructible locally closed subset $(W\calG_{\mu})_b^\red\subseteq (W\calG_{\mu})^\red$ (see \cite[Theorem 3.31.(1)]{zhu2025tamecategoricallocallanglands}).  
	    Let 
	    \[\WSht^b_{\calG,\mu}:=\widehat{W\calG_{\mu}}_{/(W\calG_{\mu})_b^\red}\] denote the formal neighborhood of $W\calG_{\mu}$ along $(W\calG_{\mu})_b^\red$ as in \Cref{formalnbbhoosoughtotbedefined}.  
	    This is the open subsheaf 
	    \[\WSht^b_{\calG,\mu}\subseteq W\calG_{\mu}\times_{\Bun_G}\Bun^b_G\] corresponding to the locus of points $x$ with Newton polygon $b_x=b$ and whose specialization $s=\on{sp}(x)\in W\calG_{\mu}^\red$ also has Newton polygon $b_s=b$.
The map $\WSht^b_{\calG,\mu}\to \Sht_{\calG,\mu}$ evidently factors through 
\[\Sht^b_{\calG,\mu}:=\Sht_{\calG,\mu}\times_{\Bun_G} \Bun_G^b.\]
Moreover, by \Cref{keytubularneighborhood}, $\WSht^b_{\calG,\mu}$ is a spatial kimberlite.  
We finish showing that $\Sht_{\calG,\mu}(b)$ is a locally spatial kimberlite through a sequence of lemmas.
\begin{lemma}
The $\varphi$-conjugation action of $W^+\calG$ on $\WSht^b_{\calG,\mu}$ realizes this space as a $W^+\calG$-torsor over $\Sht^b_{\calG,\mu}$.   
\end{lemma}
\begin{proof}
The map 
\[\WSht^b_{\calG,\mu}\to \Sht^b_{\calG,\mu}\]
is clearly $W^+\calG$-equivariant, we now show it is v-surjective.

Given $\alpha\in \Sht^b_{\calG,\mu}(R,R^+)$ with $(R,R^+)$ a product of points, we may find an isomorphism over $Y_{[r,\infty)}$ with $\calE_b$ and after fixing such isomorphism the $\calG$-torsor over $Y_{[0,\infty)}$ extends canonically to a $\calG$-torsor over $\Spec(\bbW(R^+))$, which is necessarily trivial.  
After fixing a trivialization, the data of the shtuka defines a matrix in $\WSht^b_{\calG,\mu}$ giving rise to the original shtuka. 
This takes care of the v-surjectivity part.

To show that the map is a $W^+\calG$-torsor, it suffices to show that if $\Spa(R,R^+)$ is a product of points and we are given two matrices, 
\[M_1,M_2\in\WSht^b_{\calG,\mu}(R,R^+),\]
	that define the same shtuka, then they are $W^+\calG$-$\varphi$-conjugate.
	If $R^+=\prod_{i\in I}C^+_i$, then $\calG(\bbW(R^+))=\prod_{i\in I}\calG(\bbW(C_i^+))$ so it suffices to do this pointwise.
	In other words, we may assume that $R^+=C^+$ for a $C^+$ a valuation ring with algebraically closed non-Archimedean field, and that 
	\[M_1,M_2\in \WSht^b_{\calG,\mu}(C,C^+).\]
	From Fargues' theorem (\cite[Theorem 1.12]{Fargues_courbe}), it follows that $M_1$ is $\varphi$-conjugate to $M_2$ by the action of some $N\in \calG(\bbW(O_{C}))$. 
Let $k$ be the residue field of $O_{C}$ and let $k^+\subseteq k$ be the valuation ring defining $C^+\subseteq O_{C}$, it suffices to $\varphi$-conjugate the residue matrices,
\[\overline{M_1},\overline{M_2}\in  (W\calG_{\mu})_b^\red(k^+),\]
by the action of some $\overline{N}_+\in \calG(\bbW(k^+))$. 
We claim that any two elements in $(W\calG_{\mu})_b^\red(k^+)$ whose induced points in $(W\calG_{\mu})_b^\red(k)$ are $\varphi$-conjugate have to be $\varphi$-conjugate already in $(W\calG_{\mu})_b^\red(k^+)$.
This boils down to the separatedness of affine Deligne--Lusztig varieties as we explain below.
Consider the stacky quotient of scheme-theoretic v-sheaves 
\[\Sht^{\on{sch}}_{\calG,\mu,b}:=\frac{(W\calG_{\mu})_b^\red}{\on{Ad}_{\varphi}(W^+\calG)^\red}.\]
This is the scheme-theoretic v-stack of $\mu$-bounded Witt vector shtukas whose Newton point is constant and equal to $b$.
We want to show that two $k^+$-points 
\[\underline{M_1}, \underline{M_2}\in \Sht^{\on{sch}}_{\calG,\mu,b}(k^+)\]
are isomorphic if and only if their induced $k$-points are isomorphic.
We have the following Cartesian diagram of schematic v-stacks
\begin{center}
\begin{tikzcd}
 X_{\calG,\mu}(b) \arrow{d} \ar{r} & \ast \ar{d} \\
  \Sht^{\on{sch}}_{\calG,\mu,b}\arrow{r} & \ast/\underline{G_b(E)},
\end{tikzcd}
\end{center}
where $X_{\calG,\mu}(b)$ is the affine Delinge--Lusztig variety attached to $(b,\mu)$.
From this diagram, and since $\Spec k$ and $\Spec k^+$ split every pro-\'etale cover, we obtain quotient formulas of groupoids 
\[\Sht^{\on{sch}}_{\calG,\mu,b}(k^+)=[X_{\calG,\mu}(b)(k^+)/\underline{G_b(E)}(k^+)]=[X_{\calG,\mu}(b)(k^+)/{G_b(E)}]\]
and similarly 
\[\Sht^{\on{sch}}_{\calG,\mu,b}(k)=[X_{\calG,\mu}(b)(k)/{G_b(E)}].\]
But, by separatedness of $X_{\calG,\mu}(b)$, we have a $G_b(E)$-equivariant injection\footnote{Since ADLV's are ind-proper this is actually a bijection.} 
\[X_{\calG,\mu}(b)(k^+)\to X_{\calG,\mu}(b)(k).\]
This finishes the proof.
\end{proof}
We will now consider the following diagram with Cartesian squares 
\begin{center}
\begin{tikzcd}
	\WSht_{\calG,\mu}(b) \arrow{r} \arrow{d}  & \Sht_{\calG,\mu}(b) \arrow{r} \arrow{d}  & \ast \arrow{d} \\
	\WSht^b_{\calG,\mu} \arrow{r} & \Sht^b_{\calG,\mu}\arrow{r} & \Bun_G^b\cong [\ast/\widetilde{G}_b]
\end{tikzcd}
\end{center}

\begin{lemma}
	\label{charts are locally spatial}
The space $\WSht_{\calG,\mu}(b)$ is a locally spatial kimberlite. 	
\end{lemma}
\begin{proof}
	By \cite[Lemma 2.31]{Gle21}, and since the map $\Sht_{\calG,\mu}(b)\to \ast$ is partially proper, $\Sht_{\calG,\mu}(b)$ is a valuative prekimberlite.
	Using \Cref{valuativityisappropriatelyv-local}, it is not hard to see that $\WSht_{\calG,\mu}(b)$ is valuative, since $\WSht_{\calG,\mu}(b)\to \Sht_{\calG,\mu}(b)$ is $W^+\calG$-torsor.  
	Since $(\WSht^b_{\calG,\mu})^{\on{an}}$ is a spatial diamond, we may find a universally open pro\'etale cover $\Spa( R,R^+)\to (\WSht^b_{\calG,\mu})^{\on{an}}$ and by \Cref{prekimbproetaleformalizing} a formalization $\Spd R^+\to \WSht^b_{\calG,\mu}$. 
	We claim that the map $\Spd R^+\to \Bun_G^b$ factors through $\ast$. 
	First we justify that $S=\Spa( R,R^+)\to \Bun_G^b$ factors through $\ast$.
	Since the map $[\ast/\underline{G_b(E)}]\to [\ast/\widetilde{G}_b]$ is formally smooth (see \cite[Definition IV.3.1, Proposition IV.3.5, Proposition IV.4.24]{FS21}), it admits \'etale local sections which are split by $S$.  
	Fixing a compact open subgroup $K\subseteq G_b(E)$ we get \'etale maps $[\ast/\underline{K}]\to [\ast/\underline{{G}_b}]$ along which we can also lift to obtain maps $S \to [\ast/\underline{K}]$. 
	We can express $\ast=\varprojlim_{K_n\subseteq K} [\ast/\underline{K_n}]$, as $K_n$ varies over the finite index subgroups of $K$.
	Since $S$ lifts to each of the $[\ast/\underline{K_n}]$ individually, and we can find compatible sections, it also lifts to the limit. 
	This finishes constructing a factorization $S\to \ast \to \Bun_G^b$.

	As in \cite[Remark 3.15]{GI_Bunmer}, we can interpret the map $\Spd R^+\to \Bun_G^b$ as $\varphi$-equivariant vector bundle over $Y_{S,(0,\infty]}$, and constructing a lift to $\ast$ is equivalent to constructing a $\varphi$-equivariant isomorphism with $\calE_b$. 
	Let $T=\Spa(R,R^\circ)$.
	By the reasoning above, we have a $\varphi$-equivariant isomorphism defined over $Y_{S,(0,\infty)}$, which extends canonically to an isomorphism over $Y_{T,(0,\infty]}$ by \cite[Proposition 2.1.3]{PR21}.

	We get the following commutative diagram of scheme theoretic v-sheaves.
	\begin{center}
	\begin{tikzcd}
	 \Spec R^\circ_\red \arrow{r} \arrow{d}  & \ast \arrow{d} \\
	 \Spec R^+_\red \arrow{r} & \ICG_b\cong [\ast/\underline{G_b(E)}]
	\end{tikzcd}
	\end{center}
	Arguing as above, we see that the map $\Spec R^+_\red\to [\ast/\underline{G_b(E)}]$ factors through $\ast$ since $\Spec R^+_\red$ splits every \'etale map. 
	This shows that $\Spec R^+_\red \times_{\ICG_b} \ast$ is a trivial $\underline{G_b(E)}$-torsor.
	Moreover, there is a unique trivialization restricting to the specified section $\Spec R^\circ_\red\to \Spec R^+_\red \times_{\ICG_b} \ast$.
	By \Cref{ring-computation}, our $\varphi$-equivariant trivialization extends to $Y_{S,(0,\infty]}$ which finishes showing that $\Spd(R^+)\to \Bun_G^b$ factors through $\ast$. 

	As a consequence of the above, we get an identification 
	\[\Spd R^+\times_{\Bun_G^b}\ast=\Spd R^+\times \widetilde{G}_b, \]
	which finishes showing that $\Spd R^+\times_{\Bun_G^b}\ast$ is a locally spatial kimberlite, by applying \Cref{stable-under-cartesian}.
	Since $\Spd R^+\times_{\Bun_G^b} \ast\to \WSht_{\calG,\mu}(b)$ is a qcqs formally adic v-cover, and it is universally open pro\'etale on analytic loci, then $\WSht_{\calG,\mu}(b)$ is also a locally spatial kimberlite. 
\end{proof}

\begin{lemma}
	\label{universallyopenn}
	The map $\WSht_{\calG,\mu}(b)\to \Sht_{\calG,\mu}(b)$ is universally open.	
\end{lemma}
\begin{proof}
	We prove that $\ast\to [\ast/W^+\calG]$ is universally open, this can be checked v-locally in the target. 
	In particular, it suffices to check that $W^+\calG\to \ast$ is universally open, or even better, that $W^+\calG_{R}\to \Spa(R,R^+)$ is open.
	We present $W^+\calG_R$ as the limit $W^+\calG_R=\varprojlim W^+\calG_R/W^{\leq n}\calG_R$, and each 
	\[\varprojlim W^+\calG_R/W^{\leq n}\calG_R\to \Spa(R,R^+)\]
	is $\ell$-cohomologically smooth and consequently an open mapping (see \cite[Proposition 23.11]{Sch17}). 
	The transition maps are qcqs, surjective and open maps of spatial diamonds, which allow us to conclude that $W^+\calG_R\to \Spa(R,R^+)$ is open (see \cite[Lemma 11.22]{Sch17}).
\end{proof}

By \cite[Lemma 2.31]{Gle21} and partial properness, $\Sht_{\calG,\mu}(b)$ is a valuative prekimberlite. 
Let $U=\Spec A\subseteq \Sht_{\calG,\mu}(b)^\red$ be an affine open subset we wish to show that $\widehat{\Sht_{\calG,\mu}(b)}_{/U}$ is an affine spatial kimberlite.  
Let $\tilde{U}\subseteq \WSht_{\calG,\mu}(b)^\red$ be the pullback of $U$. 
This is an affine subset of $\WSht_{\calG,\mu}(b)^\red$ so, by \Cref{charts are locally spatial}, $\widehat{\WSht_{\calG,\mu}(b)}_{/U}$ is a spatial kimberlite.
The map
\[ \widehat{\WSht_{\calG,\mu}(b)}_{/U}\to \widehat{\Sht_{\calG,\mu}(b)}_{/U}\]
is formally adic, v-surjective and qcqs, so if we knew that $\widehat{\Sht_{\calG,\mu}(b)}_{/U}$ was a kimberlite, it would follow from the definitions that $\widehat{\Sht_{\calG,\mu}(b)}_{/U}$ is a spatial kimberlite. 
Moreover, we already know that $\widehat{\Sht_{\calG,\mu}(b)}_{/U}$ is a valuative prekimberlite, so it suffices to show that $(\widehat{\Sht_{\calG,\mu}(b)}_{/U})^\an$ is a spatial diamond.
This is what we will argue.

Consider the map, 
\[(\widehat{\WSht_{\calG,\mu}(b)}_{/\tilde{U}})^\an\to (\widehat{\Sht_{\calG,\mu}(b)}_{/U})^\an\]
it is surjective, qcqs, and a universally open map.
Since the source of the map is a spatial diamond (see \Cref{universallyopenn}) and the target is a quasiseparated v-sheaf (see \Cref{quasiseparatednessofprekimberlites}), it follows that the target is a locally spatial v-sheaf (see \cite[Definition 12.12]{Sch17}).
To prove that $\Sht_{\calG,\mu}(b)^{\on{an}}$ is a spatial diamond it suffices to show, by \cite[Theorem 12.18]{Sch17}, that every point $x\in |\Sht_{\calG,\mu}(b)^{\on{an}}|$ admits a quasi-pro\'etale map from a geometric point $\Spa( C,C^+)$.
We do this in two cases.

For points over $x\in |\Sht_{\calG,\mu}(b)^{\on{an}}|$ lying over $\Spd E$, we may construct a geometric point over $x$ using the Grothendieck--Messing period map since on this loci $\Sht_{\calG,\mu}(b)$ admits an \'etale map to a $B_{\on{dR}}^+$-Grassmannian.

For the analytic points lying over $\Spd O_E/\pi$ we argue as follows.
Recall from \cite{GI_Bunmer} that there is a map $\Sht_{\calG,\overline{\bbF}_p}\to \Bun_G^\mer$.
We can stratify $\Sht_{\calG,\mu, \overline{\bbF}_p}(b)$ by the generic Newton polygon stratification (see \cite[Theorem 6.13]{GI_Bunmer}) and if $x\in |\Sht_{\calG,\mu,\overline{\bbF}_p}(b)^{\on{an}}|$, then its image in $\Bun_G^{\mer}$ lies in ${\calM}_{b_x}^\circ$ with $b_x\neq b$.
Since $\widetilde{\calM}_{b_x}^\circ$ is a spatial diamond (see \cite[Proposition V.3.6]{FS21}), we can construct a quasi-pro\'etale map $\Spa ( C,C^+)\to \Bun_G^\mer$ lying over the image of $x$ in $|\Bun_G^\mer|$.  
Finally, the fibers $\Spa(C,C^+)$-fibers of the map $\Sht_{\calG,\overline{\bbF}_p}\to \Bun_G^\mer$ are of the form $(\calF\ell^{\on{Witt}}_{\calG})^\diamond \times \Spd(C,C^+)$ (see \cite[Theorem 6.6.(2)]{GI_Bunmer}), and since this space is an ind-spatial diamond, all of the points $x\in |(\calF\ell^{\on{Witt}}_{\calG})^\diamond \times \Spd(C,C^+)|$ admit quasi-pro\'etale maps from a geometric point.
This finishes the proof that $\Sht_{\calG,\mu}(b)$ is a locally spatial kimberlite and the proof of \Cref{shtukasareactualllyspatial}. 
%
%
%
    \end{proof}

	\bibliography{biblio.bib}
	\bibliographystyle{alpha}
	
\end{document}